\documentclass[11pt]{amsart}
\usepackage{amsmath,amssymb}
\usepackage{amsfonts}
\usepackage{amsbsy}
\usepackage{fullpage}
\usepackage{stmaryrd}
\usepackage{latexsym}
\usepackage{mathrsfs}
\usepackage{accents}
\usepackage[all]{xy}
\usepackage{tikz}
\usepackage{tikz-cd}
\usepackage{longtable}
\usepackage{cancel}
\usepackage{cite}
\usepackage{url}
\usepackage{hyperref}
\hypersetup{colorlinks}

\usepackage{xcolor}
\usepackage{boxedminipage}
\usepackage{framed}

\usepackage{bm}
\usepackage{enumitem}
\usepackage{datetime2}
\usepackage{comment}

\newcommand{\fg}{\mathfrak{g}}

\newcommand{\fM}{\mathfrak{M}}

\newcommand{\cF}{\mathcal{F}}
\newcommand{\cL}{\mathcal{L}}

\newcommand{\N}{\mathbb{N}}
\newcommand{\R}{\mathbb{R}}

\newcommand{\bbF}{\mathbb{F}}

\newcommand{\bbU}{\mathbb{U}}

\newcommand{\scA}{\mathscr{A}}
\newcommand{\scB}{\mathscr{B}}
\newcommand{\scC}{\mathscr{C}}
\newcommand{\scD}{\mathscr{D}}

\newcommand{\scM}{\mathscr{M}}
\newcommand{\scN}{\mathscr{N}}

\newcommand{\scQ}{\mathscr{Q}}

\newcommand{\scS}{\mathscr{S}}

\DeclareMathAlphabet{\mathpzc}{OT1}{pzc}{m}{it}
\newcommand{\svee}{{\scriptscriptstyle{\vee}}}

\newcommand{\cin}{C^\infty}

\newcommand{\op}[1]{{#1}^{\mbox{\sf{\tiny{op}}}}}
\newcommand{\hpsi}{\widehat{\psi}}

\newcommand{\lb}{\left[ \cdot\,,\cdot\right]}
\newcommand{\Pb}{\left\{ \cdot\,,\cdot\right\}}

\newcommand{\hooklongrightarrow}{\lhook\joinrel\longrightarrow}

\DeclareMathOperator{\colim}{colim}

\DeclareMathOperator{\Der}{Der}
\DeclareMathOperator{\cDer}{C^\infty Der}

\DeclareMathOperator{\CDer}{\cin Der}
\DeclareMathOperator{\Hom}{Hom}
\DeclareMathOperator{\id}{id}
\DeclareMathOperator{\im}{im}

\DeclareMathOperator{\Spec}{Spec}

\DeclareMathOperator{\pr}{\mathsf{pr}}

\DeclareMathOperator{\llb}{\llbracket}
\DeclareMathOperator{\rrb}{\rrbracket}
\DeclareMathOperator{\lbr}{\bm{\{} \!}
\DeclareMathOperator{\rbr}{\! \bm{\}} }
\DeclareMathOperator{\ad}{ad}
\DeclareMathOperator{\grph}{graph}
\DeclareMathOperator{\Free}{\mathsf{Free}}

\newcommand{\cmring}{\mathsf{ComRing}}
\newcommand{\cring}{\cin\mathsf{Ring}}
\newcommand{\clr}{\cin\mathsf{LR}}
\newcommand{\pcring}{\mathsf{P}\cin\mathsf{Ring}}

\newcommand{\Mod}{\mathsf{Mod}}
\newcommand{\Man}{\mathsf{Man}}
\newcommand{\PMan}{\mathsf{PMan}}

\newcommand{\LCRS}{\mathsf{L\cin RS}}

\newcommand{\Set}{\mathsf{Set}}
\newcommand{\FinSet}{\mathsf{FinSet}}

\theoremstyle{definition}
\newtheorem{thm}{Theorem}[section]
\newtheorem{lemma}[thm]{Lemma}
\newtheorem{theorem}[thm]{Theorem}
\newtheorem{proposition}[thm]{Proposition}
\newtheorem{corollary}[thm]{Corollary}
\newtheorem*{corollary*}{Corollary}

\newtheorem*{claim*}{Claim}
\newtheorem{definition}[thm]{Definition}
\newtheorem{remark}[thm]{Remark}
\newtheorem*{remark*}{Remark}
\newtheorem{example}[thm]{Example}
\newtheorem{notation}[thm]{Notation}

\numberwithin{equation}{thm}

\begin{document}

\title{Lie-Rinehart and Poisson algebras over $\cin$-rings}
\author{Eugene Lerman}
\address{Department of Mathematics, University of Illinois at Urbana-Champaign,  214 Harker Hall,
1305 W. Green St,
Urbana, IL 61801 USA}
\email{lerman@illinois.edu}

\author{Ruben Louis}
\address{Department of Mathematics, University of Illinois at Urbana-Champaign, 214 Harker Hall,
1305 W. Green Street, Urbana, IL 61801 USA}
\email{rlouis@illinois.edu}

\maketitle

\begin{abstract} 
We define the analogue of Lie-Rinehart algebras over $\cin$-rings. We show that given a Poisson $\cin$-ring $\scA$ its module $\Omega^1_\scA$ of $\cin$-K\"{a}hler differentials is (part of) a Lie-Rinehart algebra. Conversely, given a Lie-Rinehart algebra $\scM \xrightarrow{\rho} \CDer (\scA)$ over a $\cin$-ring $\scA$, there is a natural Poisson bracket on the $\cin$-ring $\cF(\scM)$  associated with the $\scA$-module $\scM$ (the $\cin$-ring analogue of an $\scA$-algebra freely generated by the module $\scM$). In the case where $\scA$ is the $\cin$-ring of smooth functions on a manifold $M$ and $\scM$ is the module $\Gamma(E)$ of sections of a Lie algebroid $E\to M$, the $\cin$-ring $\cF(\Gamma(E))$ is the ring of functions $\cin(E^{\svee})$ on the total space of the vector bundle $E^{\svee} \to M$ dual to the vector bundle $E$.
\end{abstract}
\tableofcontents
\section{Introduction}
We study Lie-Rinehart algebras in the context of differential geometry. Fair amount of work has already been done on this subject, notably by Huebschmann.  It was not entirely successful due to a disconnect between differential geometry and algebraic geometry over commutative rings.
The issue is this. 
It is well known that one can embed (contravariantly) the category $\Man$ of (second countable Hausdorff) manifolds into the category $\R$-$\mathsf{alg}$ of $\R$-algebras.  This fact is often referred to as Milnor's exercise; it is a theorem of Pursell \cite[Section 8]{Pursell}.  More precisely the global sections functor
\[
\Gamma: \op{\Man} \to \R\textrm{-}\mathsf{alg}, \qquad 
(M\xrightarrow{f}N) \mapsto (\cin(N)\xrightarrow{f^*} \cin(M))
\]
is fully faithful.
However there is no functor 
\[
\scS: \op{\R\textrm{-}\mathsf{alg}} \to \scC
\]
to some category $\scC$ containing the category of manifolds so that 
\[
\scS(\cin(N)\xrightarrow{f^*} \cin(M))\, \simeq\, (M\xrightarrow{f}N).
\]
The constructions coming from algebraic geometry over commutative rings do not work.  So if one starts with a Poisson manifold $(M, \pi)$ ($\pi$ is the Poisson bivector field) then the algebra $\cin(M)$ of smooth functions is a Poisson algebra over the reals, but its real spectrum is not, in general, the manifold $M$.   The problem is compounded by the fact that $\cin(\R^n)$ is not a very nice commutative ring. For example  it is not Noetherian. Nor is $\cin(\R^n)$ free as an $\R$-algebra.

Our preferred solution to the problems outlined above is to replace $\R$-algebras with $\cin$-rings (see Subsection~\ref{subsec:c-ring}).  We take the category $\scC$ to be the category $\LCRS$ of local $\cin$-ring spaces (see \cite{Joy}) into which the category $\Man$ of manifolds embeds. Thanks to a theorem of Dubuc \cite{Dubuc, Joy} there is a pair of adjoint functors
\[
\xy
  (-21,0)*+{\Gamma :\LCRS}="1";
  (-9,1)*+{}="3";
  (-9,-1)*+{}="5";
  (21,0)*+{\op{\cring}:\Spec}="2";
  (5,1)*+{}="4";
  (5,-1)*+{}="6";
  {\ar@{->}_{} "3";"4"}; {\ar@{->}_{} "6";"5"};
  \endxy,
\]
where $\Gamma$ is the global sections functor and $\Spec$ is Dubuc's spectrum functor. Moreover, for any manifold $M$, the affine scheme $\Spec(\cin(M))$ is the manifold $M$ itself (with its structure sheaf $\cin_M$ of smooth functions). This does solve the problem with Poisson manifolds and Poisson algebras: for any Poisson $\cin$-ring $(\scA, \Pb)$ the affine $\cin$-scheme $\Spec(\scA)$ is a Poisson scheme (see \cite{L-poisson}).  So in particular the functor $\Spec$ recovers  a Poisson manifold from its Poisson algebra of functions.  The results in \cite{L-poisson} also shed light on singular coisotropic reduction by allowing to attach a Poisson scheme to objects that until recently have only been viewed algebraically, such as, for example, the Sniatycki-Weinstein reduction.  Note that there is no restriction on the Poisson ring to be finite dimensional --- the functor $\Spec$ does not care --- $\Spec(\scA)$ makes sense for an inifnite dimensional Poisson $\cin$-ring $\scA$.

We note parenthetically that $\cin(\R^n)$ is very nice  as a $\cin$-ring: it is freely generated by the coordinate functions $x_1,\ldots, x_n:\R^n\to \R$. See \cite{MR} and Appendix~\ref{app:free}.

A similar problem arises with vector bundles and modules. Recall that by Serre-Swan theorem \cite{Jet} the module $\Gamma(E)$ of sections of a vector bundle $E\xrightarrow{p}M$ is a finitely generated projective module over $\cin(M)$. And conversely any finitely generated projective module $\scM$ over $\cin(M)$ is the module of global sections of some vector bundle $E\to M$.  It is not clear how to handle geometrically the modules that are not projective or not finitely generated.  Such modules arise, for example, in the study of singular foliations \cite{AS,LLL1}. 

Our approach to geometrizing arbitrary modules comes from the following two observations:
\begin{itemize}
    \item[1.] For any vector bundle $H\xrightarrow{p}M$ the homomorphism $p^*:\cin(M)\to \cin(H)$ makes $\cin(H)$ into a $\cin(M)$-algebra and
    \item[2.] For a vector bundle $E\to M$ the module $\Gamma(E)$ of global sections embeds into the $\cin(M)$-algebra $\cin (E^\svee)$ of smooth functions on the dual vector bundle $E^\svee \to M$.
\end{itemize}
The first main result of the paper is:
for any $\cin$-ring $\scA$ and any $\scA$-module $\scM$  there is a ``free" $\scA$-algebra $\cF(\scM)$; the reader may wish to think of $\cF(\scM)$ as a $\cin$-ring analogue of the symmetric algebra $S^\bullet (\scM)$. More precisely we prove in Theorem~\ref{prop1.1} that the forgetful functor 
\[
U: \scA\textrm{-}\mathsf{Alg} \to \scA\textrm{-}\Mod
\]
from $\scA$-algebras (technically speaking $\scA\textrm{-}\mathsf{Alg}$ is  the coslice category $\scA/\cring$) to $\scA$-modules has a left adjoint 
\[
\cF: \scA\textrm{-}\Mod \to \scA\textrm{-}\mathsf{Alg}.
\]

Moreover if $\scA = \cin(M)$ for a manifold $M$ and $\scM = \Gamma(E)$ for a vector bundle $E\to M$ then 
\[
\cF(\Gamma(E)) = \cin(E^\svee);
\]
see Theorem~\ref{thm:4.01}. So in particular if $\scM = \mathfrak X(M)$, the module of vector fields, then $\cF(\mathfrak X(M)) = \cin (T^\svee M)$. And if $\scM = \Omega^1(M)$, the module of de Rham 1-forms, then $\cF(\Omega^1(M)) = \cin(TM)$.

While the existence of the left adjoint $\cF$ can be proved by an application of the adjoint functor theorem, our proof is constructive and the $\cin$-ring $\cF(\scM)$ can be computed explicitly even $\scM$ is not projective nor finitely generated.  We demonstrate this in many examples.

Closely related to the issue of modules is the correspondence between Lie algebroids and Lie--Rinehart algebras.  Recall that any Lie algebroid $E\xrightarrow{\rho} TM$ over a manifold $M$ gives rise to a Lie-Rinehart algebra $\rho_*:\Gamma(E) \to \Gamma(TM) \simeq \Der (\cin(M))$. On the other hand there are naturally occurring Lie--Rinehart algebras, particularly in situations involving singularities, where the corresponding modules are not sections of vector bundle (i.e., they are not finitely generated projective modules). For instance, Poisson vector fields of a Poisson manifold may be viewed as a Lie--Rinehart algebra over the algebra of Casimir functions; similarly, the symmetries of a singular foliation form a Lie--Rinehart algebra over the algebra of functions constant along the leaves \cite{RL,LLL1}. Other examples include singular subalgebroids \cite{Zambon} and vector fields on affine varieties. In differential geometry, generic Lie–Rinehart subalgebras of vector fields often exhibit pathological behavior, and relatively little can be said about their structure in general. 

To bring in the technology of $\cin$-rings, we  define the analogue of Lie-Rinehart algebras and their morphisms  over $\cin$-rings (see Subsection~\ref{subsec:L-R}). It involves replacing commutative rings with $\cin$-rings and derivations with $\cin$-ring derivations.  As a sanity check we prove that given a Poisson $\cin$-ring $\scA$ its module $\Omega^1_\scA$ of $\cin$-K\"{a}hler differentials is (part of) a Lie-Rinehart algebra (see Theorem~\ref{thm:4.1}).  As a more interesting result, we  show that given a Lie-Rinehart algebra $\scM \xrightarrow{\rho} \CDer (\scA)$ over a $\cin$-ring $\scA$, there is a natural Poisson bracket on the $\cin$-ring $\cF(\scM)$  --- see Theorem~\ref{thm:main}. Conversely, given a \(C^\infty\)-ring \(\scA\), an \(\scA\)-module \(\scM\), and a linear Poisson structure on \(\cF(\scM)\) (in the sense of Definition~\ref{def:lin_Poisson}), there exists a Lie--Rinehart algebra structure $\scM \xrightarrow{\rho} \CDer (\scA)$ over the $\cin$-ring $\scA$. Moreover, the last two constructions are in one-to-one correspondence.

We believe that our results  have ramifications in the framework of singular foliations in the sense of \cite{AS, LLL1} and the references therein, where to every singular foliation one associates a Poisson structure encoding its singularities. This perspective should allow one to reformulate questions about singular foliations in the language of Poisson geometry. For instance, the problem of integrability of a singular foliation into a Lie algebroid can be interpreted as a Poisson extension problem from the viewpoint of Poisson geometry. Our approach also applies more generally to singular subalgebroids in the sense of \cite{Zambon}. {When such a subalgebroid is finitely generated, the associated Poisson structure is induced by an \(\mathcal I\)-Poisson manifold in the sense of \cite[Definition~4.3]{NS}.} These ramifications will be developed elsewhere in a forthcoming work. In addition, in a separate forthcoming paper, we shall study the analogue of this problem for Lie \(\infty\)-algebroids, in particular for the universal Lie \(\infty\)-algebroid associated with a Lie--Rinehart algebra in the sense of \cite{LGL}.\\

The paper is organized as follows. We start by briefly reviewing some $\cin$-ring theoretic background in Section~\ref{sec:prelim}, recall the definition of a Poisson $\cin$-ring
and introduce the analogue of Lie-Rinehart algebras for $\cin$-rings.  These are tuples 
$((\scM, [\cdot\,, \cdot]_\scM),  \rho: \scM\to \CDer(\scA))$, where $\scA$ is a $\cin$-ring, $\scM$ is an $\scA$-module which is also a real Lie algebra, $\CDer(\scA)$ is the $\scA$-module of $\cin$-ring derivations of $\scA$ (Definition~\ref{def:der2}) and $\rho$ is a map of $\scA$-modules and of real Lie algebras.  Additionally, the Lie bracket on $\scM$ is compatible with the anchor $\rho$ in the usual way. See Definition~\ref{def:cin-LR}.  Naturally every Lie algebroid $E\to TM$ over a manifold $M$ gives rise to a Lie-Rinehart algebra over the $\cin$-ring $\cin(M)$. %
This is because for a manifold $M$ a derivation of the commutative $\R$-algebra $\cin(M)$ is also an honest differential operator hence a $\cin$-ring derivation: $\Der(\cin(M)) = \CDer(\cin(M))$.
In Section~\ref{sec:free1} for a fixed $\cin$-ring $\scA$,  we explicitly construct a left adjoint $\cF:\scA\textrm{-}\Mod \to \scA/\cring$ to the forgetful functor $U: \scA/\cring \to \scA\textrm{-}\Mod$. Here and elsewhere $\scA\textrm{-}\Mod $ denotes the category of $\scA$-modules and $\scA/\cring$ denotes the category of $\scA$-algebras. In Section~\ref{sec:free2} we show that the functor $\cF$ sends the module $\Gamma(E)$ of sections of a vector bundle $E\to M$ to the $\cin$-ring of smooth functions $\cin(E^\svee)$ on the dual bundle $E^\svee\to M$.

In Section~\ref{sec:5}
we show that for a Poisson $\cin$-ring $\scA$ (Definition~\ref{def:Poisson_ring}) there is a Lie bracket on the module $\Omega^1_\scA$ of $\cin$-K\"{a}hler differentials (Definition~\ref{def:Kahler}) and a map of $\scA$-modules $\rho: \Omega^1_\scA \to \CDer(\scA)$ that makes $((\Omega^1_\scA, [\cdot\,, \cdot]_{\Omega^1_\scA}),  \rho: \Omega^1_\scA\to \CDer(\scA))$ into a $\cin$-Lie-Rinehart algebra.  See Theorem~\ref{thm:4.1}.  The special case of this theorem is the fact that for a Poisson manifold $(P,\pi)$ there is a Lie bracket on the module $\Omega^1(P)$ of differential 1-forms and that $\pi^ {\sharp} : T^ {\ast} P \longrightarrow TP$ is a Lie algebroid.  The analogous result in commutative algebra was proved by Huebschmann, who showed that for a Poisson algebra $(A, \{\cdot\,, \cdot\})$ there is a Lie bracket on the module $\Omega^1_A$ of ordinary K\"{a}hler differentials and a map of modules $\pi^\#: \Omega^1_A \to \Der (A)$ making $((\Omega^1_A, [\cdot\,, \cdot]_{\Omega^1_A}), \pi^\#: \Omega^1_A \to \Der (A))$ into a Lie-Rinehart algebra \cite[Theorem~3.8]{Hueb}.

In Section~\ref{sec:6} we generalize a theorem of Ted Courant \cite{courant} who proved that for any Lie algebroid $r: E\to M$ over a manifold $M$ there is a linear Poisson bracket on the total space of the dual vector bundle $E^{\svee}\to M$.  The analogous result for a Lie-Rinehart algebra $((\scM, [\cdot\,, \cdot]_\scM), \rho:\scM\to \Der(A))$ with $A$ a commutative ring is that the symmetric algebra $S^\bullet(\scM)$ of the $A$-module $\scM$ is a Poisson algebra \cite[Example 3.16]{Hueb}.  

We prove that for any $\cin$-ring $\scA$ and for any Lie-Rinehart algebra 
$((\scM, [\cdot\,, \cdot]_\scM), \rho: \scM\to \CDer(\scA))$
 the free $\cin$-ring $\cF(\scM)$ on the module $\scM$ is naturally a Poisson $\cin$-ring --- see Theorem~\ref{thm:main}.  

 In the last section, Section~\ref{sec:7}, we touch upon cotangent bundles of quotients of manifolds by group actions.  The reader may wish to compare our approach with the approach in \cite{HPR}.  Given an action of a compact Lie group $G$ on a manifold $M$ we describe three, in generally different,  Poisson $\cin$-rings that may reasonably  be called $\cin(T^\svee M/G)$, the algebra of ``functions" on the ``cotangent bundle" of the orbit space $M/G$.

 We end the paper with several technical appendices.  In Appendix~\ref{app:free} we  review a well-known construction of the free $\cin$-ring functor $\bbF: \Set \to \cring$ (the left adjoint to the forgetful functor $\bbU: \cring \to \Set$). We show that the $\bbF(X)$-module $\Omega^1_{\bbF(X)}$ of K\"ahler differentials of a free $\cin$-ring $\bbF(X)$ is freely generated by the set of symbols $\{dx\}_{x\in X}$ (see Remark~\ref{rmrk:free}) and use this to prove that a skew-symmetric biderivation on a  free $\cin$-ring $\bbF(X)$ (cf.\ Remark~\ref{rmrk:3.12a}) can be defined and is uniquely determined by this choice by specifying its value on generators.  See Theorem~\ref{thm:bider_gen}.  

 In Appendix~\ref{app:B} we show that for a $\cin$-ring $\scA$, a skew symmetric $\cin$-biderivation $\lbr\cdot\,, \cdot \rbr: \scA \times \scA \to \scA$ is a Poisson bracket if and only if the corresponding { Jacobiator} $J_{\lbr\cdot\,,\, \cdot \rbr}$ vanishes identically on the generators of the $\cin$-ring $\scA$.  

 In Appendix~\ref{app:counter-ex} we provide an example of a $\cin$-ring $\scA$ and an $\R$-algebra derivation $X:\scA\to \scA$  which is not a \(C^\infty\)-derivation, see Lemma~\ref{lem:C1}. Thus 
$\Der (\scA)\nsubseteq  \CDer(\scA)$.
To the best of our knowledge,  this is the first explicit example of this kind.

\smallskip

{\bf Acknowledgments.} The authors would like to thank Rui Loja Fernandes for several valuable discussions during the early stages of this work.
 
\section{Background}\label{sec:prelim}
\subsection{$\cin$-rings} \label{subsec:c-ring}\mbox{}\\[4pt]
Recall that a {\sf $\cin$-ring} is a product preserving functor from the
category of coordinate vector spaces $\R^n$, $n\in \N$, and smooth maps to the
category of sets \cite{MR, Joy}.  Equivalently a $\cin$-ring is a set
$\scA$ together with a collection of $n$-ary operations, $n\geq 0$,
one $n$-ary operation
\[
  f_\scA:\scA^n\to \scA
\]
for each smooth function $f\in \cin(\R^n)$. These operations are required to be associative. Additionally, one requires that if $\pr_i: \R^n\to \R$ is the projection on the $i$th factor then the corresponding operation $(\pr_i)_\scA : \scA^n \to \scA$ is given by
\begin{equation} \label{eq:201}
(\pr_i)_\scA (a_1,\ldots, a_n) = a_i.   
\end{equation}

Maps between $\cin$-rings are natural transformation
between functors.  Equivalently they are maps of sets preserving the
operations. That is, $\varphi:\scA \to
\scB$ is a map (morphism) of $\cin$-rings if for every $n\geq 0$, all $f\in
\cin(\R^n)$ and all $a_1,\ldots, a_n\in \scA$
\[
\varphi(f_\scA (a_1,\ldots, a_n) )= f_\scB (\varphi(a_1),\ldots, \varphi(a_n)).
\]
Here, as above, $f_\scA:\scA^n\to \scA$ denotes the operation
defined by the function $f$ (our notation is different from the one used by Joyce \cite{Joy}).

Since the identity map $\id_\scA$ preserves the operations on a $\cin$-ring $\scA$ and since the composite of two operation-preserving maps is operation-preserving, $\cin$-rings form a category.
\begin{notation} We denote the category of $\cin$-rings by $\cring$. 
\end{notation}
\begin{remark}
Note that maps of $\cin$-rings are automatically unital. This is because they preserve the nullary operation corresponding to the function $f:\R^0 =\{*\} \to \R$, $f(*) = 1$; this nullary operation on a $\cin$-ring $\scA$ is the unity $1_\scA$.
\end{remark}

\begin{example}\label{ex:smooth-functions}
For any manifold $M$ the algebra $\cin(M)$ of smooth functions is
naturally a $\cin$-ring: given $f\in \cin(\R^n)$ the corresponding $n$-ary
operation
\[
f_{\cin(M)} :\cin(M)^n\to \cin(M)
\]
is given by
\[
f_{\cin_M  } (a_1, \ldots, a_n) := f\circ (a_1,\ldots, a_n) \quad \textrm{ for all 
$a_1, \ldots, a_n\in \cin(M)$}.
\]
In particular $\cin(\R^n)$ is a $\cin$-ring for any $n\geq 0$.  In
fact $\cin(\R^n)$ is a {\sf  free} $\cin$-ring (see Appendix \ref{app:free}) on the generators
$x_1,\ldots, x_n: \R^n\to \R$. 

Note that $\R = \cin(\R^0)$ is a free
$\cin$-ring on the empty set of generators.  Note also that for the $\cin$-ring  $\R$ the $n$-ary operations are tautological: for any $n\geq 0$, for any $f\in \cin(\R^n)$ the operation $f_\R:\R^n\to \R$ is the function $f$:
\[
f_\R(a_1,\ldots, a_n) := f(a_1,\ldots, a_n).
\]    
\end{example}

\begin{remark}
The $\cin$-ring $\R$ is initial in the category $\cring$: for any $\cin$-ring $\scA$ there is a unique map of $\cin$-rings $\varphi:\R\to \scA$ with $\varphi (1) = 1_\scA$.
\end{remark}

\begin{remark}
It follows quickly from the definition that any
$\cin$-ring has an underlying $\R$-algebra. The $+$ on the $\R$-algebra underlying a $\cin$-ring $\scA$ comes from the function $f(x,y) = x+y$, the $\cdot$ from the function $f(x,y) = xy$ and multiplication by a scalar $\lambda\in \R$ comes from $f(x) = \lambda x$.

We might not notationally distinguish between a $\cin$-ring and its underlying $\R$-algebra.
\end{remark}

\subsection{Free objects and generators} \label{subsec:free} \mbox{}\\[4pt]
If a functor $U:\scD\to \scC$ has a left adjoint $F:\scC\to \scD$, we think of an object $F(c)$ of the category $\scD$ as being freely generated by an object $c\in \scC$.  We think of the component
\[
\eta_c : c \to UF(c)
\]
of the unit of adjunction $\eta: \id \Rightarrow UF$ as a map from generators to $F(c)$.

For example, for any $\cin$-ring $\scA$ there is a forgetful functor
\[
\mathbf{U}:\scA\textrm{-}\Mod \to \Set
\]
from the category $\scA\textrm{-}\Mod$ of $\scA$-modules (Definition~\ref{def:module} and Notation~\ref{not:A-mod}) to sets.  The functor $\mathbf{U}$ has a left adjoint $\Free: \Set\to \scA\textrm{-}\Mod$ (Remark~\ref{rmrk:free}). In this case the components of the unit of adjunction are injective, and for a given set $I$ we have a map 
\[
I\to \Free(I)
\]
(really to $\mathbf{U}(\Free(I))$ but we suppress $\mathbf{U}$) that embeds $I$ as a basis of the module $\Free(I)$.\\

There is a forgetful functor $\bbU: \cring \to \Set$ from the category of $\cin$-rings to sets.  It has a left adjoint
\[
\bbF:\Set \to \cring
\]
(see Appendix~\ref{app:free}).  Given a set $X$ we think of the $\cin$-ring $\bbF(X)$ as being freely generated by the set $X$.  In this case we denote the unit of adjunction by $\delta$:
\[
\delta_X:X\to \bbU(\bbF(X)),
\]
which happens to be injective. We therefore may suppress the map $\delta_X$ and think of the set $X$ as being contained in the $\cin$-ring $\bbF(X)$.  Thus $X\subset \bbF(X)$ is a set of generators of the $\cin$-ring $\bbF(X)$.\\

For a $\cin$-ring $\scA$ there is a forgetful functor 
\[
U: \scA/\cring \to \scA\textrm{-}\Mod
\]
from the category $\scA/\cring$ of $\scA$-algebras (Definition~\ref{def:A-alg}) to $\scA$-modules.  The functor $U$ has a left adjoint
\[
\cF :\scA\textrm{-}\Mod \to \scA/\cring;
\]
see Section~\ref{sec:free1}. Again for any $\scA$-module $\scM$ the component of the unit of the adjunction
\[
\eta_\scM:\scM \to U(\cF(\scM))
\]
is injective (Lemma~\ref{lem:3.06}), and we can think of the $\scA$-algebra $\cF(\scM)$ as being freely generated by the module $\scM$.

\begin{definition}
 Given a $\cin$-ring $\scA$, a set $X$ and a surjective map $\Pi:\bbF(X)\to \scA$ of $\cin$-rings, (where as above and as in Appendix~\ref{app:free} the $\cin$-ring  $\bbF(X)$ is the $\cin$-ring freely generated by the set $X$) we say that the set $X$ {\sf generates} the $\cin$-ring $\scA$.
  
 If the set $X$ can be chosen to be finite, we say that the $\cin$-ring $\scA$ is {\sf finitely generated}.
\end{definition}

\begin{remark}
    For a $\cin$-ring $\scA$ there is a surjective map $\Pi:\bbF(\scA)\to \scA$. So any $\cin$-ring is generated by {\em some} set.
\end{remark}

\begin{remark}
    A $\cin$-ring $\scA$ is finitely generated if and only if it is isomorphic to the quotient $\cin(\R^n)/J$ for some ideal $J$ (cf.\ Remark~\ref{rmrk:congr}).
\end{remark}

\begin{definition} \label{def:module}
A {\sf module } over a $\cin$-ring $\scA$ is a module over the
underlying $\R$-algebra (cf.\ \cite{Joy, LdR}).
\end{definition}

\begin{remark} \label{rmrk:congr}
    Any $\cin$-ring $\scA$ is a module over $\scA$.  Any submodule $J$ of $\scA$ is an ideal in (the $\R$-algebra underlying) $\scA$.  It is a somewhat surprising but elementary fact for any ideal $J$ of a $\cin$-ring $\scA$ the quotient $\R$-algebra $\scA/J$ is a $\cin$-ring.  See \cite{MR, Joy}.
\end{remark}
\begin{example} \label{ex:2.16}
Let $M$ be a manifold and $Z\subset M$ a closed subset.  Recall that a
Whitney smooth function on $Z$ is the restriction of a function $f\in
\cin(M)$ to $Z$.   We write $\cin(Z)$ for the set of all Whitney
smooth functions on $Z$.  It is a standard fact that $\cin(Z)$ is an
$\R$-algebra which is the quotient of $\cin(M)$ by the ideal of smooth
functions that vanish on $Z$.  By Remark~\ref{rmrk:congr}, the
$\R$-algebra $\cin(Z)$ is a $\cin$-ring.    The fact that $\cin(Z)$ is
a $\cin$-ring can also be shown directly.
\end{example}
\begin{notation}\label{not:A-mod}
    The collection of all $\scA$-modules form a category that we denote by $\scA$-${\Mod}$.
\end{notation}

\begin{remark}[Modules, Beck modules and square zero extensions]\label{rmrk:m-bm-sz}
A module $\scM$ over a
$\cin$-ring $\scA$ is a Beck module \cite{Beck, Barr}. Namely,  just as in the
case of commutative rings,  given a $\cin$-ring $\scA$ and an
$\scA$-module $\scM$, there is a $\cin$-ring structure on the product  $\scA\times \scM$. We denote it by $\scA \ltimes \scM$ and refer to it as the {\sf square zero extension of the $\cin$-ring $\scA$ by the module $\scM$}. (The notation $\ltimes$ has nothing to do with the semi-direct products of groups.) Then   the projection $p:\scA\ltimes \scM\to \scA$ on the
first factor makes $\scA \ltimes \scM$ into an Abelian group object in the slice category
$\cring/\scA$ of $\cin$-rings over $\scA$.  
In more detail the $C^\infty$-ring structure on $\scA\ltimes \scM$ is as follows. For a function
$f \in C^\infty(\mathbb{R}^n)$ and 
$(a_1,m_1),\dots,(a_n,m_n)\in \scA\ltimes \scM$ the corresponding operation is 
\begin{equation}
f_{\scA\ltimes \scM}((a_1,m_1),\dots,(a_n,m_n)) := 
(f_\scA(a_1,\dots,a_n), 
\sum_{i=1}^n 
(\partial_i f)_{\scA}(a_1,\dots,a_n)\,\cdot\, m_i).
\end{equation}
In particular, by taking $n=2$ and  $f(x,y) = xy$ we get the multiplication on $\scA \ltimes  \scM$:
\begin{equation}
    (a, m) \cdot (a', m') := (aa', \, am' + a'm):
\end{equation}
And by taking $f(x,y) = x+y$ we get the addition map 
\[
+: (\scA\ltimes \scM)\times(\scA\ltimes \scM)  \to \scA\ltimes \scM, \qquad 
((a_1, m_1), (a_2, m_2)) \mapsto (a_1+a_2, m_1+ m_2).
\]
 \end{remark}  

\begin{remark} \label{rmrk:free}
For an ordinary $\R$-algebra $R$, forgetful functor from $R$-modules to sets has a left adjoint. Consequently, for any $\cin$-ring $\scA$ the forgetful functor $\mathbf{U}:\scA\textrm{-}\Mod \to \Set$ has a left adjoint $\Free_\scA:\Set\to \scA\textrm{-}\Mod$.  

We may  also denote the $\scA$-module freely generated by a set $I$ by $\scA^{\oplus I}$, since it is a direct sum of $I$-copies of the free rank 1 $\scA$-module $\scA$.  Thus 
\[
\Free_\scA(I) = \oplus_I \scA \equiv \scA^{\oplus I}.
\]
\end{remark}
\begin{definition}\label{def:der2}
Let $\scM$ be a module over a $\cin$-ring $\scA$.  A {\sf $\cin$-ring
derivation} (or a {\sf $\cin$-derivation} for short) with values in
the module 
$\scM$ is an $\R$-linear map $v:\scA\to \scM$ so that for any $n$, any $f\in
\cin(\R^n)$ and any $a_1,\ldots, a_n\in \scA$
\[
v\left(f_\scA(a_1,\ldots,a_n)\right) = \sum_{i=1}^n (\partial_i
  f)_\scA(a_1,\ldots, a_n)\cdot v (a_i).
\]  
\end{definition}

\begin{remark}
    Note that $\cin$-ring derivations of $\scA$ with values in a module $\scM$ correspond exactly to maps from $\scA\xrightarrow{\id}\scA$ to the square zero extension $\scA\ltimes \scM\xrightarrow{p} \scA$ in the slice category in $\cring/\scA$, as they should.
\end{remark}
\begin{example}
    Let $M$ be a manifold.  Then a vector field $v$ on $M$ thought of as map 
    $v:\cin(M)\to \cin(M)$ is a $\cin$-ring derivation (and not just an $\R$-algebra derivation):
\[
v( f\circ (a_1,\ldots, a_n) ) = \sum_{i=1}^n \big(\partial_i
f\circ (a_1,\ldots, a_n) \big)\cdot v(a_i).
\]
for $n>0$, $f\in \cin(\R^n)$ and $a_1, \ldots, a_n\in \cin(M)$
\end{example}
    
\begin{example} \label{ex:exter_der}
    Let $M$ be a manifold. Then the exterior derivative $d:\cin(M)\to \Omega^1(M)$ is a $\cin$-ring derivation with values in the module $\Omega^1(M)$ of ordinary differential forms.
\end{example}

\begin{notation} \label{nota:cder}
    We denote the set of all $\cin$-ring derivations of a $\cin$-ring $\scA$ with values in a module $\scM$ by $\CDer(\scA, \scM)$.  In the case where $\scM = \scA$ we write $\CDer(\scA)$.
\end{notation}
\begin{remark}
The set $\CDer(\scA, \scM)$ of derivations of a $\cin$-ring $\scA$ with values in a module $\scM$ is an $\scA$-module with the module operations defined ``point-wise:"
\[
(a \cdot v) (b):= a \cdot v(b)
\]
and 
\[
(v+w)(b) := v(b) + w(b)
\]
for all $a,b\in \scA$ and $v,w\in \CDer(\scA, \scM)$.
\end{remark}

Example~\ref{ex:exter_der} generalizes to arbitrary $\cin$-rings.
\begin{definition}\label{def:Kahler}
 A {\sf module of K\"ahler
differentials} over a $\cin$-ring $\scA$ is an $\scA$-module $\Omega^1_\scA$
together with a $\cin$-ring derivation $d\equiv d_\scA:\scA\to
\Omega^1_\scA$ so that for any $\scA$-module $\scN$ and any $\cin$-ring derivation
$X:\scA\to \scN$ there exists a unique map of modules $\varphi_X:
\Omega^1_A\to \scN$ with
\[
\varphi_X \circ d = X.
 \] 
\end{definition}

\begin{remark}
The universal property of the derivation $d:\scA\to \Omega^1_\scA$ says
that for any $\scA$-module $\scM$, the map of modules
\[
  d^*:\Hom_{\scA\textrm{-}\Mod} (\Omega^1_\scA, \scM)\to \Der (\scA, \scM),\quad
  d^*(\varphi): = \varphi \circ d 
\]
is an isomorphism (of $\scA$-modules).
\end{remark}  

\begin{lemma}[Dubuc and Kock]
 \label{thm:DK}
For any $\cin$-ring $\scA$ there exists a module of K\"ahler
differentials $\Omega_\scA^1$ together with the structure map
$d\equiv d_\scA:\scA\to \Omega_\scA^1$.
\end{lemma}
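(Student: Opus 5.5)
The plan is to build $\Omega^1_\scA$ by generators and relations, imitating the classical construction of Kähler differentials but imposing the full chain rule for every smooth operation rather than just the Leibniz rule. Let $\Free_\scA(\scA)=\scA^{\oplus \scA}$ be the free $\scA$-module on the underlying set of $\scA$ (Remark~\ref{rmrk:free}). Write $[a]$ for the basis element indexed by $a\in \scA$. Let $R\subset \scA^{\oplus\scA}$ be the submodule generated by all elements
\[
[f_\scA(a_1,\ldots,a_n)] - \sum_{i=1}^n (\partial_i f)_\scA(a_1,\ldots,a_n)\cdot [a_i],
\]
ranging over all $n\geq 0$, $f\in\cin(\R^n)$ and $a_1,\ldots,a_n\in\scA$. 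Define $\Omega^1_\scA := \scA^{\oplus\scA}/R$ and $d(a):=[a]+R$.

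First I check that $d$ is a $\cin$-derivation in the sense of Definition~\ref{def:der2}. The chain rule identity holds by construction of $R$. For $\R$-linearity, take $f(x,y)=x+y$, whose partial derivatives are the constant $1$ and which operates as addition on $\scA$; this gives $d(a+b)=d(a)+d(b)$. Similarly, $f(x)=\lambda x$ gives $d(\lambda a)=\lambda\, d(a)$. Note that the scalar $(\partial_i f)_\scA$ for a constant function is the image of that constant under the unique map $\R\to\scA$, which agrees with the $\R$-module structure.

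Next comes the universal property. Let $X:\scA\to\scN$ be a $\cin$-derivation. By freeness there is a unique module map $\tilde\varphi:\scA^{\oplus\scA}\to\scN$ with $\tilde\varphi([a])=X(a)$. Because $X$ satisfies the chain rule, $\tilde\varphi$ kills every generator of $R$, hence kills $R$, and so descends to $\varphi_X:\Omega^1_\scA\to\scN$ with $\varphi_X\circ d=X$. Uniqueness follows because $\Omega^1_\scA$ is generated as an $\scA$-module by the elements $d(a)$, so any module map is determined by its values on them.

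There is no serious obstacle. The only point needing care is that linearity of $d$ must follow from the relations rather than be assumed, and this is handled by the choice of $f$ above. Alternatively, one could avoid this by taking the quotient of the free $\scA$-module in the category of $\scA$-modules with $R$ also containing the $\R$-linearity relations; this is harmless since they are already implied.
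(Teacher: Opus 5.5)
Your construction is correct: the chain-rule relations already force $\R$-linearity of $d$ (via $f(x,y)=x+y$ and $f(x)=\lambda x$, since constant operations act through $\R\to\scA$). The universal property then follows from freeness of $\scA^{\oplus\scA}$ together with the fact that the elements $d(a)$ generate $\Omega^1_\scA$. The paper gives no argument of its own and simply cites Dubuc--Kock \cite{DK} and Joyce \cite{Joy}; your presentation (the free module on symbols $[a]$ modulo the chain-rule relations) is essentially the construction used in the latter reference.
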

\begin{proof}
    See \cite{DK}.  For an alternative proof see \cite{Joy}.
\end{proof}

\begin{remark} \label{rmrk:2.17}
    For any manifold $M$ the module of K\"ahler differentials $\Omega^1_{\cin(M)}$ is the module of de Rham 1-forms $\Omega^1(M)$.  This is mentioned without proof in \cite{Joy} and proven in \cite{FSJ}.
\end{remark}

\begin{remark}
    The $\scA$-module $\Omega^1_\scA$ is generated (as an $\scA$-module) by the set $\{da\}_{a\in \scA}$ of ``exact" K\"ahler differentials.
\end{remark}
\begin{remark}\label{alg_Kahler}
    Let \(\scA\) be a \(C^\infty\)-ring. Viewing \(\scA\) merely as an \(\mathbb R\)-algebra, one may consider its module of \(\mathbb R\)-algebraic Kähler differentials, which we denote by
\(
d^{\mathrm{alg}}\colon \scA\to\Omega^1_{\scA,\mathrm{alg}}
\). This object should be distinguished from the module of $\cin$-algebraic Kähler differentials 
\(
d\colon \scA\to \Omega^1_{\scA}
\)
of \(\scA\).  
In the special case where
\(
\scA=C^\infty(M),
\)
for a smooth manifold \(M\), one has
\(
\Omega^1_{\scA}=\Omega^1(M),
\)
the projective \(C^\infty(M)\)-module of smooth differential \(1\)-forms on \(M\) (cf.\ Remark~\ref{rmrk:2.17}). There is a canonical morphism modules
\(
\Omega^1_{C^\infty(M),\mathrm{alg}}
\rightarrow
\Omega^1_{C^\infty(M)}
=
\Omega^1(M).
\)
This morphism is not an isomorphism. For instance, for $M=\mathbb R$, the identity
\[
d^{\mathrm{alg}}(\sin t)=\cos(t)\,d^{\mathrm{alg}}t
\]
does not hold in \(\Omega^1_{C^\infty(\mathbb R),\mathrm{alg}}\), see \cite{Osborn}. More generally, Osborn proved that
\(
d^{\mathrm{alg}}(f(t))=f'(t)\,d^{\mathrm{alg}}t
\)
holds in \(\Omega^1_{C^\infty(M),\mathrm{alg}}\) if and only if \(f(t)\) is an  algebraic function of $t$. Consequently, the universal algebraic derivation
\[
d^{\mathrm{alg}}:C^\infty(\mathbb R)\longrightarrow
\Omega^1_{C^\infty(\mathbb R),\mathrm{alg}}
\]
is an \(\mathbb R\)-derivation which is not a \(C^\infty\)-derivation. Furthermore, the module
\(
\Omega^1_{C^\infty(M),\mathrm{alg}}
\)
can be extremely large. Indeed, G\'omez  \cite{Gomez} proved that if the manifold \(M\) has dimension at least \(1\), then the cardinality of any generating set of
\(
\Omega^1_{C^\infty(M),\mathrm{alg}}
\)
as a \(C^\infty(M)\)-module is at least the cardinality of \(\mathbb R\).
\end{remark}

\begin{remark} \label{rmrk:Omega_functor}
Just as in the case of commutative rings the assignment $\scA \mapsto \Omega^1_\scA$ is functorial in the $\cin$-ring $\scA$: given a map $\varphi: \scA\to \scB$ of $\cin$-rings  there is a (unique) map $\Omega^1_\varphi: \Omega^1_\scA \to \Omega^1_\scB$ making the diagram
\begin{equation} \label{eq:2.12.1}
\xy
(-10,10)*+{\scA}="1";
(15,10)*+{\scB}="2";
 (-10,-8)*+{\Omega^1_\scA}="3";
(15,-8)*+{\Omega^1_\scB}="4"; 
{\ar@{->}_{d_\scA} "1";"3"};
{\ar@{->}^{d_\scB} "2";"4"};
{\ar@{->}^{\varphi} "1";"2"};
{\ar@{->}_{\Omega^1_\varphi} "3";"4"};
\endxy
\end{equation}
commute.  This is because $d_\scB\circ \varphi: \scA \to \Omega^1_\scB$ is a derivation of $\scA$ and therefore has to factor uniquely through $\Omega^1_\scA$.
\end{remark}

\begin{definition} \label{def:2.270} Let $\Omega^1_\scA$ be the module of
  $\cin$-K\"ahler differentials of a $\cin$-ring $\scA$.  For $k>0$ we
 denote the  $k$-th exterior power of the $\scA$-module
 $\Omega^1_\scA$ by $\Lambda^k (\Omega^1_\scA)$.
Note that the wedge is taken over the $\R$-algebra $\scA$, not over
$\R$.  By convention 
\[
\Lambda^0 (\Omega^1_\scA):= \scA.
 \]
 We set
 \[
\Lambda^\bullet\Omega^1_\scA := \{ \Lambda^k\Omega^1_\scA\}_{k\geq 0}
\]
which is a graded-commutative algebra (often referred to as commutative graded algebra or a CGA) over the $\cin$-ring $\scA$.
\end{definition}

We now turn $\Lambda ^\bullet \Omega^1_\scA$ into a commutative 
  {\em differential}  graded algebra (a CDGA).
\begin{theorem}\label{thm:6.2}
Let $\scA$ be a $\cin$-ring.  The universal differential $d_\scA:\scA
\to \Omega^1_\scA$ extends to unique degree 1 $\R$-linear map 
\[
d: \Lambda^\bullet\Omega^1_\scA \to \Lambda^{\bullet+1}\Omega^1_\scA 
\]
so that for all $k>0$ and all $a, b_1,\ldots, b_k\in \scA$
\begin{equation} \label{eq:6.d}
d(a \, d_\scA b_1\wedge  \ldots \wedge d_\scA b_k) = d_\scA a \wedge d_\scA b_1\wedge  \ldots \wedge d_\scA b_k.
\end{equation}
Consequently $d\circ d =0$.
\end{theorem}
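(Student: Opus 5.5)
The plan is to handle uniqueness by a generation argument, construct $d$ through an explicit presentation of $\Lambda^k\Omega^1_\scA$, and then read off $d\circ d=0$ from the defining formula.

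\emph{Uniqueness.} Since $\Omega^1_\scA$ is generated as an $\scA$-module by $\{d_\scA b\}_{b\in\scA}$, each $\Lambda^k\Omega^1_\scA$ is spanned over $\R$ by elements of the form $a\, d_\scA b_1\wedge\cdots\wedge d_\scA b_k$. An $\R$-linear map is determined by its values on a spanning set, so \eqref{eq:6.d} determines $d$ on each $\Lambda^k\Omega^1_\scA$ with $k>0$. In degree $0$, $d$ is $d_\scA$ by hypothesis. Everything therefore reduces to existence, i.e.\ to showing that \eqref{eq:6.d} is well defined.

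\emph{A presentation.} First I would make $\Omega^1_\scA$ explicit. Let $\scN$ be the free $\scA$-module on symbols $[b]$, $b\in\scA$. Divide it by the submodule generated by $[\lambda b+\mu c]-\lambda[b]-\mu[c]$ and by
\[
[f_\scA(b_1,\ldots,b_n)]-\sum_i(\partial_i f)_\scA(b_1,\ldots,b_n)[b_i], \qquad f\in\cin(\R^n).
\]
In the quotient, $b\mapsto[b]$ is tautologically a $\cin$-derivation, and any $\cin$-derivation $X:\scA\to\scM$ factors uniquely through it via $[b]\mapsto X(b)$. Hence by uniqueness of universal objects (Definition~\ref{def:Kahler}) this quotient is $\Omega^1_\scA$.

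Next, let $V_k$ be the $\R$-vector space $\scA\otimes_\R\scA^{\otimes_\R k}$. By the construction of exterior powers over the $\R$-algebra $\scA$, there is a surjection
\[
\pi:V_k\to\Lambda^k\Omega^1_\scA,\qquad a\otimes b_1\otimes\cdots\otimes b_k\mapsto a\,d_\scA b_1\wedge\cdots\wedge d_\scA b_k.
\]
Its kernel $K_k$ is spanned by three kinds of elements: tensors with two equal $b$-slots (alternation), and the two relation types above inserted into one $b$-slot with all other entries arbitrary. In the chain-rule type, the coefficient $(\partial_i f)_\scA(b)$ is absorbed into the first factor $a$. The $\scA$-balancing of the tensor product is automatic, because $\scA$-scalars only ever appear in the first slot.

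\emph{Existence.} Define the $\R$-linear map
\[
\tilde d:V_k\to\Lambda^{k+1}\Omega^1_\scA,\qquad a\otimes b_1\otimes\cdots\otimes b_k\mapsto d_\scA a\wedge d_\scA b_1\wedge\cdots\wedge d_\scA b_k.
\]
It suffices to show $\tilde d(K_k)=0$; then $\tilde d$ descends to $d$ on $\Lambda^k\Omega^1_\scA$. Linearity and alternation relations map to zero immediately. The main step is the chain-rule relation in slot $j$. With $c=(b_1,\ldots,b_n)$ and the other slots suppressed, $\tilde d$ sends it to
\[
d_\scA a\wedge\cdots\wedge d_\scA f_\scA(c)\wedge\cdots\;-\;\sum_i d_\scA\big(a\,(\partial_i f)_\scA(c)\big)\wedge\cdots\wedge d_\scA b_i\wedge\cdots .
\]
Expanding with the $\cin$-Leibniz rule for $d_\scA$, the terms containing $d_\scA a$ cancel. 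The remainder is
\[
-a\sum_{i,j}(\partial_j\partial_i f)_\scA(c)\,\cdots\wedge d_\scA b_j\wedge d_\scA b_i\wedge\cdots,
\]
which vanishes because the Hessian is symmetric while the wedge is antisymmetric. I expect this computation, together with a careful justification of the description of $K_k$, to be the only real content of the proof.

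\emph{Finally, $d\circ d=0$.} Since $1_\scA$ is the nullary operation of $f\equiv1$, the derivation property gives $d_\scA 1=0$. By \eqref{eq:6.d},
\[
d(d(a\,d_\scA b_1\wedge\cdots))=d(1\cdot d_\scA a\wedge d_\scA b_1\wedge\cdots)=d_\scA 1\wedge d_\scA a\wedge\cdots=0 .
\]
In degree $0$, $d(d_\scA a)=d(1\cdot d_\scA a)=d_\scA 1\wedge d_\scA a=0$. Since these elements span each $\Lambda^k\Omega^1_\scA$ over $\R$ and $d$ is $\R$-linear, $d\circ d=0$.
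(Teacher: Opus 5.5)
Your proof is correct and self-contained. The paper gives no argument for this statement; it only cites \cite[Theorem~4.2]{LdR}.

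You build $d$ degree by degree from the $\R$-linear presentation $V_k=\scA^{\otimes_\R(k+1)}\twoheadrightarrow\Lambda^k\Omega^1_\scA$. The only substantive check is that the chain-rule relations are sent to zero, and they are: $(\partial_j\partial_i f)_\scA(c)$ is symmetric in $(i,j)$, while $d_\scA b_j\wedge\cdots\wedge d_\scA b_i$ is antisymmetric. Uniqueness and $d\circ d=0$ (via $d_\scA 1=0$) are also handled correctly.

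The step you should write out carefully is the description of $K_k$. Alternation in $\Lambda^k\Omega^1_\scA$ is imposed on arbitrary elements of $\Omega^1_\scA$, not only on the generators $d_\scA b$. Reducing it to tensors with two equal $b$-slots requires the polarization identity $b\otimes c+c\otimes b=(b+c)\otimes(b+c)-b\otimes b-c\otimes c$ in $V_k$. This identity is available because the spanning set $\{d_\scA b\}$ is closed under addition. The chain-rule part of $K_k$ then follows from right exactness of $\otimes_\scA$.

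A more structural alternative is to write $\Lambda^\bullet\Omega^1_\scA$ as a quotient of a complex that already carries a differential. Candidates are the algebraic de Rham complex $\Lambda^\bullet\Omega^1_{\scA,\mathrm{alg}}$ of Remark~\ref{alg_Kahler}, or the forms on a free $\cin$-ring $\bbF(X)\twoheadrightarrow\scA$. One then shows the kernel is a differential ideal.
\begin{itemize}
\item In that route the same Hessian cancellation (or $d^2=0$ for ordinary forms) is the key point, and the graded Leibniz rule for $d$ comes for free.
\item Your route needs no auxiliary complex, but pays for it with the kernel bookkeeping.
\end{itemize}
The Leibniz rule is not part of the statement, though the later Cartan calculus uses it implicitly. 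It also follows from your formula in one line, by checking it on spanning elements $a\,d_\scA b_1\wedge\cdots\wedge d_\scA b_k$.
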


\begin{proof}
    This is \cite[Theorem 4.2]{LdR}
\end{proof}

There is also a version of Cartan calculus for $\cin$-rings \cite{LCar} which we will need.  We now briefly review it.

\begin{definition}[The contraction map]
    For any module $\scM$ over an $\R$-algebra $A$ a homomorphism
$\varphi:\scM =\Lambda^1\scM \to A =\Lambda^0\scM$ extends uniquely to a degree -1 derivation $
\varphi^\bullet:\Lambda^\bullet \scM\to \Lambda^{\bullet -1} \scM $ of the 
CGA $(\Lambda^\bullet \scM, \wedge)$.   Consequently, there is an
isomorphism of $A$-modules
\[
 \Hom(\scM, A) \xrightarrow{\, \simeq \,} \Der^{-1} (\Lambda^\bullet \scM).
\]  
Since for any $\cin$-ring $\scC$ the module of $\cin$-derivations
$\cin\Der (\scC)$ is isomorphic, as a $\scC$-module, to
$\Hom(\Omega^1_\scC, \scC)$ (cf.\ \eqref{eq:2.19.2}) there is a canonical injective map
\[
\iota_\scC: \cin\Der (\scC) \to \Der^{-1} (\Lambda^\bullet\Omega^1_\scC),
\]
the {\sf contraction map}.

The contraction map $\iota_\scC$ is a map of $\scC$-modules.
\end{definition}

\begin{definition}[Lie derivative] \label{def:Lie_der}
 Given  a derivation $v\in \cDer(\scC)$ of a $\cin$ ring $\scC$ we   {\em define}
the {\sf Lie derivative} $\cL_\scC(v): \Lambda^\bullet \Omega^1_\scC  \to
\Lambda^\bullet \Omega^1_\scC $ by the Cartan magic formula:
\[
\cL_\scC(v) := \iota_\scC(v) \circ d + d\circ \iota_\scC(v).  
\]
The Lie derivative $\cL_\scC(v)$ is a degree 0 derivation of the CGA
$(\Lambda^\bullet\Omega^1_\scC, \wedge)$  for every $v\in \cin\Der (\scC)$.
We thus obtain an $\R$-linear map
\[
\cL_\scC: \cin\Der (\scC) \to \Der^{0} (\Lambda^\bullet\Omega^1_\scC),
\qquad v\mapsto \cL_\scC(v) = \iota_\scC(v) \circ d + d\circ \iota_\scC(v),
\]  
which we call the {\sf Lie derivative map. }     
\end{definition}

We are now in position to formulate Cartan
calculus on a $\cin$-ring.  We drop the subscript ${\, }_\scC$ to reduce
the clutter.

\begin{theorem} \label{prop:cartan_calc_level0}
For any $\cin$-ring $\scC$ the contraction $\iota: \cin\Der (\scC) \to \Der^{-1}
(\Lambda^\bullet\Omega^1_\scC)$  map and the Lie derivative $\cL:  \cDer(\scC)
\to \Der^{0} (\Lambda^\bullet\Omega^1_\scC)$ map, which were
described above,  satisfy the
following identities: 
\begin{align*}
&(\textrm{i})\quad \cL(v) \circ d = d \circ \cL(v) \qquad \qquad \qquad \qquad (\textrm{ii}) &\cL([v,w]) = [\cL(v), \cL(w)]\\
&(\textrm{iii})\quad[\cL(v), \iota(w)] = \iota([v,w]) \qquad \qquad \qquad (\textrm{iv}) &[\iota(v),
                                                      \iota(w)] = 0\\
&(\textrm{v}) \quad \cL(v) = [d, \iota(v)]
\end{align*}
for all $v,w\in \cDer (\scC)$.
\end{theorem}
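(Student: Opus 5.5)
The strategy is to reduce every identity to a comparison of graded derivations of the CGA $(\Lambda^\bullet\Omega^1_\scC,\wedge)$. We then use the principle that a graded derivation of $\Lambda^\bullet\Omega^1_\scC$ is determined by its values on $\Lambda^0=\scC$ and on the exact forms $d_\scC a$, $a\in\scC$. This holds because $\Lambda^\bullet\Omega^1_\scC$ is generated as an algebra by $\scC$ and $\{d_\scC a\}$; the remark after Definition~\ref{def:Kahler} gives this in degree one, and Theorem~\ref{thm:6.2} gives the form $a\,d b_1\wedge\cdots\wedge db_k$ in general. Here $[\cdot,\cdot]$ denotes the graded commutator, which is again a graded derivation, of degree equal to the sum of the degrees. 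We take as given the facts recorded in Definition~\ref{def:Lie_der} and Theorem~\ref{thm:6.2}: $d$ is a degree $1$ derivation with $d\circ d=0$, and $\iota(v)$ is the degree $-1$ derivation extending $\iota(v)(d_\scC a)=v(a)$.

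Identity (v) is simply the definition of $\cL(v)$: for $\iota(v)$ of odd degree, the graded commutator $[d,\iota(v)]$ equals $d\iota(v)+\iota(v)d$. Identity (i) follows from (v) and $d^2=0$, since $\cL(v)d=d\iota(v)d=d\cL(v)$. For (iv), $[\iota(v),\iota(w)]=\iota(v)\iota(w)+\iota(w)\iota(v)$ is a derivation of degree $-2$. It vanishes on $\Lambda^0$ and $\Lambda^1$ for degree reasons, and these generate the algebra, so it is zero.

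For (iii), both sides are degree $-1$ derivations, so both vanish on $\scC$. It remains to compare them on $d_\scC a$. By (i) we have $\cL(v)d_\scC a=d_\scC(\cL(v)a)=d_\scC(v(a))$, using $\cL(v)a=\iota(v)d_\scC a=v(a)$. Hence
\[
[\cL(v),\iota(w)]\,d_\scC a=\cL(v)\big(w(a)\big)-\iota(w)\,d_\scC(v(a))=v(w(a))-w(v(a))=[v,w](a)=\iota([v,w])\,d_\scC a .
\]
This step needs $[v,w]\in\cDer(\scC)$, i.e.\ that the commutator of two $\cin$-derivations is again a $\cin$-derivation. To check this, apply $v$ to $w(f_\scC(a_1,\dots,a_n))=\sum_i(\partial_if)_\scC(a)\,w(a_i)$ and expand with the $\cin$-Leibniz rule. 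The second-derivative terms $\sum_{i,j}(\partial_j\partial_if)_\scC(a)\,v(a_j)w(a_i)$ are symmetric in $v$ and $w$ by equality of mixed partials, so they cancel in the commutator.

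For (ii), both sides are degree $0$ derivations. On $a\in\scC$ both give $[v,w](a)$. On $d_\scC a$, use (i) to move $d$ outward: $[\cL(v),\cL(w)]d_\scC a=d_\scC\big([v,w]a\big)=\cL([v,w])d_\scC a$. Alternatively, (ii) follows formally from (iii) and (v) through the graded Jacobi identity: $[\cL(v),[d,\iota(w)]]=[[\cL(v),d],\iota(w)]+[d,[\cL(v),\iota(w)]]=0+[d,\iota([v,w])]$.

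The main obstacle is the generation principle together with well-definedness of the extensions, namely that $\iota(v)$ and $\cL(v)$ are genuine derivations on $\Lambda^\bullet\Omega^1_\scC$. These rest on the universal property of $\Omega^1_\scC$ and on Theorem~\ref{thm:6.2}, and I would take them from the definitions above. The only genuinely $\cin$-specific verification is the closure of $\cDer(\scC)$ under commutators.
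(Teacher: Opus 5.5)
Your argument is correct. Note that the paper gives no argument of its own for this theorem: its proof is a one-line citation of \cite[Theorem~1.1]{LCar}, so there is no in-paper proof to match.

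What you supply is the standard self-contained proof. It uses only facts the paper records:
\begin{itemize}
\item the CDGA structure and $d\circ d=0$ from Theorem~\ref{thm:6.2};
\item the definitions of $\iota$ and $\cL$, including that $\cL(v)$ is a degree $0$ derivation;
\item the fact that $\Lambda^\bullet\Omega^1_\scC$ is generated as an $\R$-algebra by $\scC$ and $\{d_\scC a\}_{a\in\scC}$, so a graded derivation is determined by its values on $a$ and on $d_\scC a$.
\end{itemize}
With these, (v) and (i) are formal and (iv) is a degree count. Identities (iii) and (ii) reduce to the two computations $\cL(v)|_{\scC}=v$ and $\cL(v)\,d_\scC a=d_\scC(v(a))$. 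Your alternative derivation of (ii) from (iii), (v) and the graded Jacobi identity is also fine.

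The one genuinely $\cin$-ring-specific input is that $[v,w]\in\cDer(\scC)$. The statement takes this for granted, as does the paper's example of the tautological Lie--Rinehart algebra $\id:\cDer(\scA)\to\cDer(\scA)$. Your verification, cancelling the symmetric second-derivative terms $(\partial_j\partial_i f)_\scC(a)\,v(a_j)\,w(a_i)$ in the commutator, is exactly right. What your route buys over the paper's is that this part of the background becomes self-contained, with no dependence on \cite{LCar} beyond facts already stated in the paper.
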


\begin{proof}
    This \cite[Theorem~1.1]{LCar}.
\end{proof}

\subsection{Poisson $\cin$-rings}
\begin{definition} \label{def:Poisson_ring}
A {\sf Poisson $\cin$-ring} is a $\cin$-ring $\scA$ together with a
Poisson bracket $\{\cdot\,, \cdot\}: \scA\times \scA \to \scA$ on the
underlying $\R$-algebra so that for any $a\in \scA$ the map
\[
ad(a)\equiv \{a, \cdot \}: \scA\to \scA, \quad b\mapsto \{a, b\}
\]  
is a $\cin$-ring derivation: for any $n>0$, any $f\in \cin(\R^n)$ and any $a_1,\ldots, a_n\in
\scA$
\begin{equation} 
  \left\{a, f_\scA(a_1,\ldots,a_n) \right\} = \sum_{i=1}^n (\partial_i
  f)_\scA(a_1,\ldots, a_n)\cdot \{ a, a_i\}.
 \end{equation} 
\end{definition}

\begin{example}
Let $(P, \pi\in \Gamma \Lambda^2 TM)$ be a Poisson manifold and
$\{f,g\} = \langle df\wedge dg , \pi\rangle $ the corresponding
bracket.  Then $(\cin(P), \{\cdot\,, \cdot \})$ is a $\cin$-Poisson
ring.  This is because $ad(f)$ is a vector field for every $f\in \cin(P)$.
\end{example}

\begin{definition} \label{def:Poisson_map}
A {\sf map} of Poisson $\cin$-rings is a map of $\cin$-rings that
preserve the Poisson brackets.
\end{definition}

\begin{definition}[A category $\pcring$ of Poisson $\cin$-rings] \label{def:pscring} 
 There is an ``evident" category $\pcring$ of {\sf Poisson $\cin$-rings}. An object of $\pcring$ is a pair $(\scA, \{\cdot\,, \cdot\})$ where $\scA$ is a $\cin$-ring and $\{\cdot\,, \cdot\}:\scA\times \scA \to \scA$ is a Poisson bracket.  A morphism (a map) in $\pcring$ is a map (homomorphism) of $\cin$-rings preserving the brackets.   
\end{definition}

\begin{remark}
The  scare quotes around the word {\sf evident} in Definition~\ref{def:pscring} are there because there should be a more general notion of morphism of $\cin$-rings.  Indeed, in case of Poisson manifolds, there are Poisson maps and there are Poisson {\em relations}.  So there should be analogues of Poisson relations for $\cin$-rings.  These analogues would have to be co-relations in the category of $\cin$-rings (with an extra structure to make them ``coisotropic")  since the functor  which embeds Poisson manifolds and Poisson maps into $\pcring$ is contravariant. We will address this elsewhere.
\end{remark}

We note an easy result that describes a relation between the categories $\PMan$ of Poisson manifold and $\pcring$ of Poisson $\cin$-rings.  Recall that the objects of the category $\PMan$ of Poisson manifolds are Poisson manifolds and morphisms are smooth maps that preserves their Poisson structures.  This means that if $f$ and $g$
are smooth functions on the target manifold, the Poisson map $\phi:M\to N$
 satisfies:
\[
\{f\circ \phi , g\circ \phi \}_M=\{f,g\}_N\circ \phi.
\]
\begin{lemma} \label{lem:pman-pring}
The global sections functor 
    \[
    \Gamma: \op{\PMan} \to \pcring
    \]
from the category of Poisson manifolds $\PMan$ described above to the category $\pcring$ of Poisson $\cin$-rings is a fully faithful embedding. 
\end{lemma}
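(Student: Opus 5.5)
We first check that $\Gamma$ is a functor $\op{\PMan}\to\pcring$, then deduce faithfulness and fullness from Pursell's theorem. Pursell's theorem says the global sections functor $\op{\Man}\to \R\textrm{-}\mathsf{alg}$ is fully faithful. Every $\R$-algebra map $\cin(N)\to\cin(M)$ is of the form $\phi^*$ and hence preserves all $\cin$-operations. So the functor $\op{\Man}\to\cring$, $M\mapsto\cin(M)$, is fully faithful as well.

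To see that $\Gamma$ is well defined, note that for a Poisson manifold $(P,\pi)$ the example preceding Definition~\ref{def:Poisson_map} shows that $(\cin(P),\{\cdot,\cdot\})$ is a Poisson $\cin$-ring. For a Poisson map $\phi:M\to N$ the pullback $\phi^*:\cin(N)\to\cin(M)$ is a map of $\cin$-rings, since $\phi^*(f\circ(a_1,\dots,a_n))=f\circ(a_1\circ\phi,\dots,a_n\circ\phi)$. The defining identity $\{f\circ\phi,g\circ\phi\}_M=\{f,g\}_N\circ\phi$ says precisely that $\phi^*$ preserves brackets. Functoriality, $(\psi\circ\phi)^*=\phi^*\circ\psi^*$ and $\id^*=\id$, is immediate.

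Faithfulness is inherited from the underlying manifolds. Suppose $\phi,\psi:M\to N$ satisfy $\phi^*=\psi^*$. If $\phi(x)\neq\psi(x)$ for some $x$, choose $f\in\cin(N)$ separating the two points (bump functions exist since $N$ is Hausdorff). Then $\phi^*f(x)\neq\psi^*f(x)$, a contradiction; this is also the faithful half of Pursell's theorem.

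Fullness is the main step. Let $\varphi:\cin(N)\to\cin(M)$ be a morphism in $\pcring$. By Pursell's theorem (\cite[Section 8]{Pursell}), as recalled in the introduction, there is a unique smooth map $\phi:M\to N$ with $\varphi=\phi^*$. Since $\varphi$ preserves Poisson brackets, $\{f\circ\phi,g\circ\phi\}_M=\{f,g\}_N\circ\phi$ for all $f,g\in\cin(N)$, which is exactly the condition that $\phi$ is a Poisson map. Hence $\varphi=\Gamma(\phi)$ with $\phi$ a morphism of $\PMan$.

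The only real obstacle is the invocation of Pursell's theorem. It needs second countability, which guarantees that every real character of $\cin(M)$ is evaluation at a point. It also needs the fact that a map $\phi$ with $\phi^*$ sending smooth functions to smooth functions is smooth, which follows by composing with coordinate functions extended by bump functions. Since the paper takes Pursell's theorem as known, this reduces fullness to the bracket check above, and the Poisson condition adds nothing beyond compatibility with brackets.
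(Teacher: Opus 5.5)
Your proposal is correct and follows essentially the same route as the paper: Pursell's theorem gives full faithfulness of $M\mapsto \cin(M)$ into $\cin$-rings, and a smooth map is Poisson exactly when its pullback preserves the brackets. Your additional details just make explicit what the paper's two-line proof leaves implicit: why $\R$-algebra maps between rings of smooth functions are automatically $\cin$-ring maps, and the direct separation-of-points argument for faithfulness.
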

\begin{proof}
By ``Milnor's exercise", i.e. by a theorem of Pursell \cite[Section 8]{Pursell}, the global sections functor
\[
\Gamma: \op{\Man} \to \cring, \quad \Gamma(M\xrightarrow{\phi} N) = \cin(N)\xrightarrow{\phi^*} \cin(M)
\]
is fully faithful.  A map $\phi:M\to N$ between two Poisson manifolds is a Poisson map if and only if the pullback map $\phi^*: \cin(N)\to \cin(M)$ is a map of Poisson algebras.
\end{proof}

It will be convenient to have a slight generalization of a notion of a Poisson $\cin$-ring.
\begin{definition} \label{def:1} \label{def:bracket}
Let $\scA$ be a $\cin$-ring.  A {\sf bracket} is  a skew-symmetric $\cin$-ring biderivation  
$\lbr\cdot\,, \cdot\rbr:\scA \times \scA \to
\scA$.  
 That is, 
\begin{itemize} \item  $\lbr a,b\rbr = -\lbr b,a\rbr $ for all
  $a,b\in \scA$ and
\item    for all $n$,
for all $f\in \cin(\R^n)$ and all $a_1,\ldots, a_n, b\in \scA$
\[
\lbr f_\scA(a_1,\ldots, a_n), b\rbr  = \sum (\partial_i f)_\scA(a_1, \ldots,
a_n) \lbr a_i ,b\rbr.
\]
\end{itemize} 
\end{definition}
\begin{lemma} \label{lem:2.19}
A bracket $\lbr \cdot\,, \cdot\rbr :\scA \times \scA \to \scA$ on a $\cin$-ring $\scA$ (Definition~\ref{def:1}) corresponds to a
unique map of $\scA$-modules
\[
  B: \Lambda^2 \Omega^1_\scA \to \scA
\]
and conversely: for any $B\in \Hom (\Lambda^2 \Omega^1_\scA,\scA)$
there is a unique bracket $\lbr \cdot,\cdot\rbr :\scA \times \scA \to \scA$ with
\begin{equation}
\lbr a,b\rbr  = B(da\wedge db)
\end{equation}
for all $a, b\in \scA$.  Here as before $\Lambda^2 \Omega^1_\scA$ is the second exterior power of the $\scA$-module $\Omega^1_\scA$ of $\cin$-K\"ahler differentials (cf.\ Definition~\ref{def:2.270}).

Moreover, for any bracket $\lbr \cdot\,, \cdot\rbr :\scA \times \scA \to \scA$  there is an $\scA$-module map 
\[
\rho: \Omega^1_\scA \to \CDer(\scA), \quad \rho (\sum a_i db_i) = \sum a_i \lbr b_i, \cdot \rbr.
\] 
\end{lemma}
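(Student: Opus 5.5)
The plan is to reduce everything to the universal property of $d:\scA\to\Omega^1_\scA$ (Definition~\ref{def:Kahler}), applied twice: once in each variable.

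\emph{From a bracket to $B$.} Fix $b\in\scA$. By the biderivation property in Definition~\ref{def:1}, the map $a\mapsto \lbr a,b\rbr$ is a $\cin$-derivation $\scA\to\scA$. So there is a unique module map $\beta_b:\Omega^1_\scA\to\scA$ with $\beta_b(da)=\lbr a,b\rbr$. Now fix $\omega\in\Omega^1_\scA$ and consider $b\mapsto\beta_b(\omega)$.
\begin{itemize}
\item For $\omega=da$ this map is $b\mapsto \lbr a,b\rbr=-\lbr b,a\rbr$, which is a $\cin$-derivation by skew-symmetry.
\item Derivations form an $\scA$-module, and the assignment $\omega\mapsto(b\mapsto\beta_b(\omega))$ is $\scA$-linear in $\omega$.
\item Since the $da$ generate $\Omega^1_\scA$, the map $b\mapsto\beta_b(\omega)$ is a $\cin$-derivation for every $\omega$.
\end{itemize}
Applying the universal property again gives a unique module map $\gamma_\omega:\Omega^1_\scA\to\scA$ with $\gamma_\omega(db)=\beta_b(\omega)$. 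This yields an $\scA$-bilinear map
\[
\tilde B(\omega,\eta):=\gamma_\omega(\eta), \qquad \tilde B(da,db)=\lbr a,b\rbr .
\]
To check that $\tilde B$ is alternating, note first that $\tilde B(da,db)=-\tilde B(db,da)$ on generators. By bilinearity, $\tilde B(\omega,\eta)=-\tilde B(\eta,\omega)$ for all $\omega,\eta$. We need $\tilde B(\omega,\omega)=0$, which follows from skew-symmetry since $2$ is invertible in the $\R$-algebra $\scA$. Hence $\tilde B$ descends to $B:\Lambda^2\Omega^1_\scA\to\scA$. Uniqueness of $B$ holds because the elements $da\wedge db$ generate $\Lambda^2\Omega^1_\scA$ as an $\scA$-module.

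\emph{Converse.} Given $B$, define $\lbr a,b\rbr:=B(da\wedge db)$.
\begin{itemize}
\item Skew-symmetry is immediate from antisymmetry of $\wedge$.
\item Since $d$ is a $\cin$-derivation, $d(f_\scA(a_1,\dots,a_n))=\sum(\partial_if)_\scA(a_1,\dots,a_n)\,da_i$.
\item Wedging with $db$ and applying the $\scA$-linear map $B$ gives the biderivation identity of Definition~\ref{def:1}.
\end{itemize}
The two constructions are mutually inverse: both are determined by their values on $da\wedge db$, respectively on pairs $(a,b)$.

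\emph{The map $\rho$.} Let $\rho$ be the adjoint $\omega\mapsto B(\omega\wedge d(\cdot))$, which is the map $\gamma_\omega$ above up to sign conventions. Concretely, $\rho(\omega)(b):=B(\omega\wedge db)$. This is a $\cin$-derivation in $b$ by the same chain-rule argument as in the converse. It is $\scA$-linear in $\omega$, and on $\omega=\sum a_i\,db_i$ it gives $\sum a_i\lbr b_i,b\rbr$. Hence $\rho$ is well defined as an $\scA$-module map, independently of how $\omega$ is written as a combination of exact forms. That independence is exactly the subtle point, and defining $\rho$ through $B$ rather than by the formula handles it.

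The main obstacle is the second application of the universal property: showing that $b\mapsto\beta_b(\omega)$ is a $\cin$-derivation for non-exact $\omega$. The plan handles it by linearity over generators, as in the bullet list of the first step.
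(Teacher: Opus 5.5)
Your argument is correct and is essentially the paper's: both apply the universal property of $d\colon \scA\to\Omega^1_\scA$ once in each slot to get an $\scA$-bilinear form $\tilde B$ on $\Omega^1_\scA$ with $\tilde B(da,db)=\lbr a,b\rbr$, then use skew-symmetry to descend to $\Lambda^2\Omega^1_\scA$; your remark that $2$ is invertible, which is needed for the alternating condition, is left implicit in the paper. The only difference is packaging: the paper applies the universal property to $\ad\colon\scA\to\CDer(\scA)$, viewed as a $\cin$-derivation with values in the module $\CDer(\scA)$, which produces $\rho$ first and makes your ``main obstacle'' (that $b\mapsto\beta_b(\omega)$ is a derivation for non-exact $\omega$) automatic, and it then obtains $\tilde B$ from $\CDer(\scA)\cong\Hom(\Omega^1_\scA,\scA)$ and tensor--hom adjunction.
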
  
\begin{proof}
Given a bracket  $\lbr \cdot\,, \cdot\rbr :\scA \times \scA \to \scA$  consider
\[
\ad :\scA \to \CDer(\scA), \qquad \ad(a) b := \lbr a, b\rbr .
\]
Since the bracket is a $\cin$-derivation in both slots, the universal
property of the derivation $d= d_\scA:\scA \to \Omega^1_\scA$ implies that there is a
unique $\scA$-linear map
\[
  \rho: \Omega^1_\scA \to \CDer(\scA) \quad (\simeq \Hom(\Omega^1_\scA, \scA))
\]
so that 
\[
\rho (da) = \ad(a)\quad  (= \lbr a, \cdot\rbr )
\]
for all $a\in \scA$. 
By following $\rho$ with the isomorphism
\begin{equation} \label{eq:2.19.2}
    \imath: \CDer(\scA)\to \Hom(\Omega^1_\scA, \scA), \qquad X\mapsto \imath(X)
\end{equation}
we get $\imath\circ \rho\in \Hom_\scA(\Omega^1_\scA, \Hom_\scA (\Omega^1_\scA, \scA))$.
Under the canonical
isomorphism
\[
\Hom(\Omega^1_\scA, \Hom(\Omega^1_\scA, \scA))
\stackrel{\sim}{\longleftrightarrow} \Hom (\Omega^1_\scA\otimes_\scA
\Omega^1_\scA, \scA).
\]
$\imath\circ \rho$ corresponds to an $\scA$-linear map $\widetilde{B}: \Omega^1_\scA\otimes_\scA
\Omega^1_\scA\to  \scA$ with
\begin{equation}
\widetilde{B} (\alpha \otimes \beta) = \imath (\rho(\alpha)) \beta
\end{equation}
for all $\alpha, \beta \in \Omega^1_\scA$.
In particular
\begin{equation}
\widetilde{B} (da \otimes db) = \imath (\rho(da))\, db = \rho(da)\, (b) = \lbr a,b\rbr
\end{equation}
for all $a,b\in \scA$.
 
Since the set $\{da\mid a\in \scA\}$ generates
the module $\Omega^1_\scA$ of K\"ahler differentials and since the
bracket is skew-symmetric, the map $\widetilde{B}$ is skew-symmetric.  That is, it
factors through
\[
  B:\Lambda^2\Omega^1_\scA\to \scA, \qquad B(\alpha \wedge \beta) = \imath(\rho(\alpha))\beta.
\]
with
\begin{equation} \label{eq:1}
B(da\wedge db) = \lbr a,b \rbr
\end{equation}
for all $a,b\in \scA$.
Conversely, any element $B\in \Hom (\Lambda^2\Omega^1_\scA, \scA)$
defines a (unique) bracket on $\scA$ which is given by \eqref{eq:1}.
\end{proof}

\subsection{Categories of Lie-Rinehart algebras} \label{subsec:L-R}
We recall the definition of a Lie-Rinehart algebra (a Lie-Rinehart pair) over a commutative ring $k$.  This definition is due to neither Lie nor Rinehart.  In the form that appears below (Definition~\ref{def:Lie-Rinehart}) it is due to Palais \cite{Palais}.  A similar notion was introduced earlier by Hertz \cite{Hertz}. It is equivalent to the one in Definition~\ref{def:Lie-Rinehart} if the module in question is faithfully generated 
in the sense of Definition~\ref{def:tor-free}. 

\begin{definition} \label{def:Lie-Rinehart}
Let $k$ be a commutative ring.  A {\sf Lie-Rinehart algebra} consists of a commutative $k$-algebra $A$ and an $A$-module $\scM$ so that $\scM$ is a $k$-Lie algebra and a map $\rho: \scM\to \Der_k(A)$ of $A$-modules (where $\Der_k(A)$ is the module of $k$-derivations of $A$) so that 
\begin{enumerate}
        
        \item $\rho$ is a map of $k$-Lie algebras: $\rho ([m_1, m_2]_\scM) = [\rho(m_1), \rho(m_2)]$ for all $m_1, m_2\in \scM$ and
        \item $
        [m_1, am_2]_\scM = (\rho(m_1) a) m_2 + a [m_1, m_2]_\scM$\\
        for all $m_1, m_2\in \scM$ and $a\in A$.
\end{enumerate}
Thus, a Lie-Rinehart algebra is a tuple $(k, (\scM, [\cdot\,, \cdot]_\scM),  \rho: \scM\to \Der_k(A))$. %
\end{definition}
There is a standard example of Lie-Rinehart algebras in differential geometry literature. It comes from a Lie algebroid:
\begin{example}\label{ex:2.21}
  $k= \R$, $E\to M$ a Lie algebroid.  Then $A= \cin(M)$ is the ring of smooth functions, which is an algebra over the commutative ring $\R$ of  real numbers, and $\scM= \Gamma(E)$ is the $\cin(M)$-module of sections of the algebroid.  The anchor map $\rho:E\to TM$ of the algebroid induces a map $\rho: \Gamma(E)\to \Gamma(TM) \simeq \Der (\cin(M))$ (really $\rho_*$, but the abuse of notation is standard), which is a map of real Lie algebras and of $\cin(M)$-modules.  The tuple 
 $(\R, (\Gamma(E), [\cdot\,, \cdot]_E), \rho: \Gamma(E)\to \Der(\cin(M))\simeq \Gamma(TM))$ is a Lie-Rinehart algebra in the sense of Definition~\ref{def:Lie-Rinehart}. 

Note that in this example the $\R$-algebra $\cin(M)$  of smooth functions on the manifold $M$ is a $\cin$-ring and that every $\R$-algebra derivation of $\cin(M)$ is a differential operator of first order, hence a $\cin$-ring derivation.  
\end{example}
Example~\ref{ex:2.21} above suggests the following definition of a Lie-Rinehart algebra over a $\cin$-ring.  We believe the definition is new.
\begin{definition} \label{def:cin-LR}
A {\sf Lie-Rinehart algebra} over a $\cin$-ring $\scA$  is a  $\scA$-module $\scM$ which is  a $\R$-Lie algebra,  and a map $\rho: \scM\to \CDer(\scA)$ of $\scA$-modules (where $\CDer(\scA)$ is the module of $\cin$-ring derivations of $\scA$, Notation~\ref{nota:cder}) so that 
\begin{enumerate}      
        \item $\rho$ is a map of real Lie algebras: $\rho ([m_1, m_2]_\scM) = [\rho(m_1), \rho(m_2)]$ for all $m_1, m_2\in \scM$ and
        \item $
        [m_1, am_2]_\scM = (\rho(m_1) a) m_2 + a [m_1, m_2]_\scM$\\
        for all $m_1, m_2\in \scM$ and $a\in A$.    
\end{enumerate}
Thus, a Lie-Rinehart algebra over $\cin$-rings is a tuple  $((\scM, [\cdot\,, \cdot]_\scM),  \rho: \scM\to \CDer(\scA))$.

If the bracket $\lb_\scM: \scM\times \scM\to \scM$ is skew-symmetric but fails to satisfy the Jacobi identity we refer to $((\scM, [\cdot\,, \cdot]_\scM), \scA, \rho: \scM\to \CDer(\scA))$ as an {\sf almost Lie-Rinehart algebra}.
\end{definition}

\begin{remark}
    Note that a Lie--Rinehart algebra over a \(C^\infty\)-ring \(\scA\) is also a Lie--Rinehart algebra in the usual sense over its underlying \(\mathbb{R}\)-algebra \(\scA\), since the anchor map takes values in \(\CDer(\scA)\subseteq \Der (\scA)\). However, the converse is not true, since an $\R$-algebra derivation is not necessarily a \(C^\infty\)-ring derivation. See Appendix~\ref{app:counter-ex}.
\end{remark}

\begin{example}
    Let \(\scA\) be a \(C^\infty\)-ring. Then \(((\scM = \cin\mathrm{Der}(\scA), [\cdot\,,\cdot]), \rho = \mathrm{id}\colon \cin\mathrm{Der}(\scA)\to \cin\mathrm{Der}(\scA))\) is a Lie--Rinehart algebra, where \([\cdot\,,\cdot]\) denotes the commutator bracket.
\end{example}
\begin{example} \label{ex:2.034}
If $E\xrightarrow{\rho} M$ is a Lie algebroid then 
\[    (\Gamma(E), [\cdot\,, \cdot]_E),  \Gamma(E)\xrightarrow{\rho}\CDer(\cin(M))
\]
is a Lie-Rinehart algebra over the $\cin$-ring $\cin(M)$.
\end{example}
\begin{remark} \label{rmrk:basic}
In Example \ref{ex:2.034}, the image of the anchor map $\rho\colon E\to TM$ at the level of sections, $\rho(\Gamma(E)) \subset \mathfrak{X}(M)$, is closed under taking the commutator of vector fields. Consequently, it (i.e., $\rho(\Gamma(E)) $ with all the appropriate structure maps) forms a Lie–Rinehart algebra over $C^\infty(M)$, whose anchor map is given by the inclusion. We will refer to 
\[
((\rho(\Gamma(E)), [\cdot\,, \cdot]_E), \rho(\Gamma(E))\hookrightarrow\CDer(\cin(M))
\]
as the {\sf basic} Lie-Rinehart algebra of the Lie algebroid $E\xrightarrow{\rho} TM$.
  \end{remark}

We next discuss morphisms of Lie-Rinehart algebras. 
There is more than one way to define them. 
The simplest definition, which should be adequate for our purposes, proceeds as follows.  
Given a map of $\cin$-rings $\varphi:\scA\to \scB$ there is, in general,  no map from  
the set of derivations $\CDer(\scA)$ to the set of derivations $\CDer(\scB)$ --- not on the level of sets nor of Lie algebras nor of $\scA$-modules  ($\CDer(\scB)$ is an $\scA$-module by way of $\varphi:\scA\to \scB$). Recall, however that a pair of derivations $(Y, X)\in \CDer(\scB)\times \CDer(\scA)$ is {\sf $\varphi$-related} if the diagram
\[
\xy
(-10,10)*+{\scA}="1";
(15,10)*+{\scB}="2";
 (-10,-8)*+{\scA}="3";
(15,-8)*+{\scB}="4"; 
{\ar@{->}_{ X} "1";"3"};
{\ar@{->}^{Y} "2";"4"};
{\ar@{->}^{\varphi} "1";"2"};
{\ar@{->}_{\varphi} "3";"4"};
\endxy
\]
commutes.  The simplest definition of a morphism of Lie-Rinehart algebras is then as follows:

\begin{definition} \label{def:mor_LR}
A {\sf morphism} from a Lie-Rinehart algebras $(\scM\xrightarrow{\rho}\CDer(\scA))$ to a Lie-Rinehart algebra $(\scN\xrightarrow{\rho'}\CDer(\scB))$
is a pair $(\varphi:\scA\to \scB, \Phi:\scM\to \scN)$ where $\varphi$ is a map of $\cin$-rings, $\Phi$ is a map of real Lie algebras and of $\scA$-modules so that for any $m\in \scM$ the derivations $\rho'(\Phi(m))$ and $\rho(m)$ are $\varphi$-related. 
Equivalently 
\begin{equation}
(\rho'\times \rho)\, (\grph(\Phi)) \subseteq \CDer(\varphi),
\end{equation}
where $\CDer(\varphi)$ is the collection of all pairs of related derivations:
\[
\CDer(\varphi):= \{ (Y, X)\in \CDer(\scB)\times \CDer(\scA) \mid Y\circ \varphi = \varphi \circ X\}.
\]
\end{definition}
\begin{remark}
Note that set  $\CDer(\varphi)$ in Definition~\ref{def:mor_LR} is a real Lie subalgebra of the product $\CDer(\scB)\times \CDer(\scA)$. It is also an $\scA$-submodule: For  all $(Y, X)\in \CDer(\varphi)$ and all $a\in \scA$
\[
(\varphi(a) Y, a X) \in \CDer(\varphi)
\]
as well.   And for all     $(Y_1, X_1), (Y_2, X_2)\in \CDer(\varphi)$ we have
\[
(Y_1+Y_2, X_1 + X_2) \in \CDer(\varphi).
\]  
\end{remark}
We will check shortly that  morphisms as defined above do compose and consequently that Lie-Rinehart algebras form a category.  But first, a remark on a limitation of Definition~\ref{def:mor_LR}.

\begin{remark}
The notion of morphism as given in Definition~\ref{def:mor_LR} is fairly restrictive.  For example, given a map $\varphi:\scA\to \scB$ of $\cin$-rings there is no morphism from the Lie-Rinehart algebra $\CDer(\scA)\xrightarrow{\id} \CDer(\scA)$ to the Lie-Rineart algebra $\CDer(\scB)\xrightarrow{\id} \CDer(\scB)$.   We can fix that by replacing a map $\Phi:\scM\to \scN$ in Definition~\ref{def:mor_LR} with a relation $R\subseteq \scN\times \scM$ subject to the condition that 
\[
(\rho'\times \rho)\, (R) \subseteq \CDer(\varphi).
\]  

If $E_1\xrightarrow{\rho_1} M_1$, $E_2\xrightarrow{\rho_2} M_2$ are two Lie algebroids, and $F:E_1\to E_2$ is a vector bundle map covering a smooth map $f:M_1\to M_2$, then in general the map $F$ does not induce a map on sections. 
One way to deal with this issue is to impose a condition on $F$: its graph should be a Lie subalgebroid of the direct product $E_2 \times E_1 \to M_2 \times M_1$ --- see for instance \cite[Definition~7.1]{Me}.  But such a condition does not give us a map between the corresponding Lie-Rinehart algebras $(\Gamma(E_1) \xrightarrow{\rho_1} \CDer(\cin(M_1))$ and $(\Gamma(E_2) \xrightarrow{\rho_2} \CDer(\cin(M_2))$. 

Both of these issues with Definition~\ref{def:mor_LR} can be fixed by replacing the module map $\Phi:\scM\to \scN$ by a relation $R\subset \scN\times \scM$.
This  will be discussed elsewhere. 
\end{remark}

\begin{lemma} \label{lem:composition_morphisms}
    If $(\varphi, \Phi): (\scM\xrightarrow{\rho}\CDer(\scA))  \to (\scN\xrightarrow{\rho'}\CDer(\scB))$ and $(\psi, \Psi): (\scN\xrightarrow{\rho'}\CDer(\scB)) \to (\scQ\xrightarrow{\rho''}\CDer(\scC))$ are two maps of Lie-Rinehart algebras then so is $(\psi\circ \varphi, \Psi\circ \Phi)$.
\end{lemma}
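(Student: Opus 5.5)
We have to check three things for the pair $(\psi\circ\varphi, \Psi\circ\Phi)$: that $\psi\circ\varphi$ is a map of $\cin$-rings, that $\Psi\circ\Phi$ is a map of real Lie algebras and of $\scA$-modules, and that for each $m\in\scM$ the derivations $\rho''(\Psi(\Phi(m)))$ and $\rho(m)$ are $(\psi\circ\varphi)$-related.

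The first point holds because $\cin$-ring maps are closed under composition; we already noted this when observing that $\cring$ is a category.

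For the second point, a composite of real Lie algebra maps is a Lie algebra map. For $\scA$-linearity we must be precise about the module structures. The module $\scN$ is an $\scA$-module via $\varphi$, and $\scQ$ is a $\scB$-module via $\psi$, hence an $\scA$-module via $\psi\circ\varphi$. So for $a\in\scA$ and $m\in\scM$,
\[
\Psi(\Phi(am)) = \Psi(\varphi(a)\Phi(m)) = \psi(\varphi(a))\,\Psi(\Phi(m)),
\]
using first the $\scA$-linearity of $\Phi$ and then the $\scB$-linearity of $\Psi$. Additivity is immediate. Thus $\Psi\circ\Phi$ is $\scA$-linear when $\scQ$ carries the $\scA$-module structure induced by $\psi\circ\varphi$. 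This bookkeeping of which ring acts through which map is the only point needing care.

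For the third point, fix $m\in\scM$ and set $X=\rho(m)$, $Y=\rho'(\Phi(m))$ and $Z=\rho''(\Psi(\Phi(m)))$. By hypothesis $Y\circ\varphi=\varphi\circ X$, since $(\varphi,\Phi)$ is a morphism. Applying the hypothesis on $(\psi,\Psi)$ to the element $\Phi(m)\in\scN$ gives $Z\circ\psi=\psi\circ Y$. We then paste the two commuting squares:
\[
Z\circ(\psi\circ\varphi) = (Z\circ\psi)\circ\varphi = \psi\circ Y\circ\varphi = \psi\circ\varphi\circ X.
\]
Hence $(Z,X)\in\CDer(\psi\circ\varphi)$, which is exactly the condition $(\rho''\times\rho)(\grph(\Psi\circ\Phi))\subseteq \CDer(\psi\circ\varphi)$.

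None of these steps is a real obstacle. The proof is diagram pasting together with the module-structure bookkeeping in the second point.
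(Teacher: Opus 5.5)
Your proposal is correct and takes the same route as the paper, whose entire proof is the observation that $\varphi$-relatedness followed by $\psi$-relatedness gives $(\psi\circ\varphi)$-relatedness. Your extra checks fill in details the paper leaves implicit: that $\scQ$ becomes an $\scA$-module via $\psi\circ\varphi$, making $\Psi\circ\Phi$ $\scA$-linear, and that composites of $\cin$-ring maps and of Lie algebra maps are again such maps.
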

\begin{proof}
The lemma follows from the fact that if $Y\in \CDer(\scB)$ if $\varphi$-related to $X\in \CDer(\scA)$ and $Z\in \CDer(\scC)$ is $\psi$-related to $Y$ then $Z$ is $\psi\circ \varphi$-related to $X$.
\end{proof}
\begin{remark} \label{rmrk:CLR}
 A routine argument using Lemma~\ref{lem:composition_morphisms}   shows that Lie-Rinehart algebras and their morphisms  (as in Definition~\ref{def:mor_LR}) form a category $\clr$ of ($\cin$-)Lie-Rinehart algebras.
\end{remark}
\section{Free (``symmetric") algebras over $\cin$-rings} \label{sec:free1}
Recall that for a commutative ring $R$ and an $R$-module $\fM$ the symmetric algebra 
\[
S^\bullet(\fM) = R \oplus \fM \oplus S^2(\fM) \oplus \cdots  \equiv \bigoplus_{i=0}^\infty S^i (\fM)
\]
is a free $R$-algebra generated by $\fM$: given any map $\varphi : R \to R'$  of commutative rings (i.e., given any $R$-algebra $R'$) and an $R$-linear map $f : \fM \to R'$ (i.e., a map in the category of $R$-modules) there is a unique map $\psi : S^\bullet(\fM) \to R'$ of $R$-algebras with $\psi|_R = \varphi$ and $\psi|_\fM = f$.
In this section we discuss what the $\cin$-ring analogue of the symmetric algebra is, why it exists  and provide an explicit construction.  In the case where the $\cin$-ring is the ring $\cin(M)$ of smooth functions on a manifold $M$ and the module in question is the module $\Gamma(E)$ of sections of a vector bundle $E\to M$, the ``symmetric" algebra turns out to be the $\cin$-ring of {\em all} smooth functions $\cin(E^{\svee})$ on the total space of the dual bundle $E^{\svee}\to M$ (see Theorem~\ref{thm:4.01} below). The ordinary symmetric algebra $S^\bullet (\Gamma(E))$, on the other hand, is the algebra of smooth functions on $E^{\svee}$ that are {\em polynomial} along the fibers.   Thus $S^\bullet (\Gamma(E))$ is much smaller than $\cin(E^\svee)$.

We start by discussing $\scA$-algebras (where $\scA$ is a $\cin$-ring). To motivate the definition, which is known to experts, 
recall that given a commutative ring $R$, an $R$-algebra can be defined as a commutative ring $S$ together with a unital ring homomorphism $\varphi_S:R\to S$.   Thus, an $R$-algebra is an object in the undercategory (coslice category) $R/\cmring$, where $\cmring$ denotes the category of commutative rings. This point of view allows one to define the analogue of algebras in the setting of  $\cin$-rings (and, more generally, in the setting of Lawvere theories, but we will not need that).

\begin{definition} \label{def:A-alg}
Let $\scA$ be a $\cin$-ring.  An {\sf $\scA$-algebra}  is a $\cin$-ring $\scB$ together with a map $\varphi_\scB:\scA\to \scB$ of $\cin$-rings.  That is, an $\scA$-algebra is an object of the  undercategory (coslice category) $\scA/\cring$.
\end{definition}
\begin{remark}
$\scA$-algebras form a category: the coslice category $\scA/\cring$.
Recall that a
morphism in $\scA/\cring$ from $\scA\xrightarrow{\varphi_\scB} \scB$ to
$\scA\xrightarrow{\varphi_{\scB'}} \scB'$ is a map $f:\scB\to \scB'$ in $\cring$ so that
 the triangle $
\xy
(-0,10)*+{\scA  }="1";
(15,-8)*+{\scB'}="3";
 (-15,-8)*+{\scB}="2";
{\ar@{->}_{f} "2";"3"};
{\ar@{->}^{\varphi_{\scB'}} "1";"3"};
{\ar@{->}_{\varphi_\scB} "1";"2"};
\endxy
$ commutes (in $\cring$).
\end{remark}
We also have the category $\scA\textrm{-}\Mod$ of $\scA$-modules.  Recall that
these are ordinary modules over the $\R$-algebra underlying the
$\cin$-ring $\scA$ (Definition~\ref{def:module}).

There is a forgetful functor 
\[
U: \scA/\cring \to \scA\textrm{-}\Mod
\]
of $\scA$: it sends an $\scA$-algebra $\scA\xrightarrow{\varphi}\scB$ to $\scB$, which is 
viewed as an Abelian group and an $\scA$-module by way of $\varphi$: $a\cdot b:=
\varphi(a)b$ for all $a\in \scA$ and all $b\in \scB$.  The adjoint functor theorem guarantees that the forgetful functor $U$ has a left adjoint, which we now formally record.

\begin{theorem} \label{prop1.1} Let $\scA$ be a $\cin$-ring. The forgetful
  functor $U: \scA/\cring \to \scA\textrm{-}\Mod$ has a left adjoint
  $\cF: \scA\textrm{-}\Mod \to \scA/\cring$.
\end{theorem}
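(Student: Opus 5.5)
I will construct $\cF(\scM)$ explicitly as a quotient of a free $\cin$-ring, which fits the paper's stated preference for a constructive proof over the adjoint functor theorem. Let $\scA$ be a $\cin$-ring and $\scM$ an $\scA$-module. Let $\bbF(\scA\sqcup \scM)$ be the free $\cin$-ring on the disjoint union of the underlying sets (Appendix~\ref{app:free}). Write $\delta(a)$ and $\delta(m)$ for the images of the generators, and take the ideal $J\subset \bbF(\scA\sqcup\scM)$ generated by the following elements:
\begin{itemize}
\item $\delta(f_\scA(a_1,\ldots,a_n)) - f_{\bbF}(\delta(a_1),\ldots,\delta(a_n))$ for all $n$, all $f\in\cin(\R^n)$ and all $a_i\in\scA$; these force $\scA\to\bbF/J$ to be a $\cin$-ring map;
\item $\delta(m+m')-\delta(m)-\delta(m')$ and $\delta(am)-\delta(a)\delta(m)$ for all $m,m'\in\scM$, $a\in\scA$; these force $\scM\to\bbF/J$ to be $\scA$-linear.
\end{itemize}
Set $\cF(\scM):=\bbF(\scA\sqcup\scM)/J$. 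By Remark~\ref{rmrk:congr} this quotient is a $\cin$-ring. The composite $\scA\to\bbF(\scA\sqcup\scM)\to\cF(\scM)$ is a $\cin$-ring map, so $\cF(\scM)$ is an object of $\scA/\cring$. The composite $\eta_\scM:\scM\to U(\cF(\scM))$ is $\scA$-linear, with the module structure on $\cF(\scM)$ induced via $\varphi_{\cF(\scM)}$. A point to check here is that $\delta(\lambda m)=\lambda\delta(m)$ for real $\lambda$. This follows from the module relation together with the fact that $\delta(\lambda 1_\scA)=\lambda$ in the quotient, which is an instance of the first family of relations applied to nullary and unary operations.

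Next I verify the universal property. Given an $\scA$-algebra $\varphi_\scB:\scA\to\scB$ and an $\scA$-linear map $g:\scM\to U(\scB)$, the freeness of $\bbF$ gives a unique $\cin$-ring map $\tilde\psi:\bbF(\scA\sqcup\scM)\to\scB$ restricting to $\varphi_\scB$ on $\scA$ and to $g$ on $\scM$. This map kills every generator of $J$:
\begin{itemize}
\item the first family, because $\varphi_\scB$ preserves the $\cin$-operations;
\item the second family, because $g$ is additive and satisfies $g(am)=\varphi_\scB(a)g(m)$.
\end{itemize}
Since the kernel of a $\cin$-ring map is an ideal, $\tilde\psi$ kills all of $J$. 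It therefore descends to $\psi:\cF(\scM)\to\scB$, which is a morphism in $\scA/\cring$ and satisfies $U(\psi)\circ\eta_\scM=g$.

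Uniqueness follows because $\cF(\scM)$ is generated as a $\cin$-ring by the images of $\scA$ and $\scM$. Any morphism in $\scA/\cring$ is fixed on the image of $\scA$, and the condition $U(\psi)\circ\eta_\scM=g$ fixes it on the image of $\scM$. A $\cin$-ring map is determined by its values on generators, so $\psi$ is unique.

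This gives a universal arrow from each $\scM$ to $U$. The standard result then assembles $\cF$ into a functor left adjoint to $U$, defined on a module map $h$ by applying the universal property to $\eta\circ h$.

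The main obstacle is the bookkeeping in the quotient step: confirming that $J$ is an ideal of the underlying $\R$-algebra, so that Remark~\ref{rmrk:congr} applies, and that a $\cin$-ring map vanishing on the generating set of $J$ vanishes on all of $J$. Both are immediate once one uses that kernels of $\cin$-ring maps are ideals. Nothing subtle arises, since injectivity of $\eta_\scM$ (Lemma~\ref{lem:3.06}) is not needed for the adjunction itself.
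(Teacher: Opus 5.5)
Your proof is correct, but it is organized differently from the paper's. You present $\cF(\scM)$ in one step as $\bbF(\scA\sqcup\scM)/J$, with $J$ encoding the full $\cin$-ring structure of $\scA$ and the full module structure of $\scM$, and you verify the universal property directly. The paper works in two stages instead. For a free module with basis $\{e_y\}_{y\in Y}$ it takes the coproduct $\scA\otimes_\infty\bbF(Y)$ and checks universality via the coproduct and freeness of $\bbF(Y)$ (Lemma~\ref{lem:3.08}). For an arbitrary module it chooses a surjection $\tau$ from a free module and passes to the quotient by the ideal generated by $\eta(\ker\tau)$ (Lemma~\ref{lem:3.09} and Corollary~\ref{cor:3.150}).

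Your route is canonical, since it involves no choice of basis or generators. It also uses only free $\cin$-rings and quotients by ideals, so you never need the existence of coproducts in $\cring$ or the facts collected in Remark~\ref{rmrk:3.12b}.

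The paper's route buys a small, computable presentation $\cF(\scM)\cong(\scA\otimes_\infty\bbF(Y))/\langle\eta(\ker\tau)\rangle$ attached to any presentation of $\scM$. That presentation is what the rest of the paper relies on. It drives the identification $\cF(\Gamma(E))\cong\cin(E^\svee)$ in Section~\ref{sec:free2}, the explicit examples, and the generator-level construction of the bracket via $\bbF(Y\sqcup I)$ in the proof of Theorem~\ref{thm:main}. Your $\bbF(\scA\sqcup\scM)/J$ is isomorphic to it by uniqueness of universal arrows. To actually compute with your presentation, though, you would essentially have to re-derive the quotient argument of Lemma~\ref{lem:3.09}.
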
  

\begin{remark}
The proof of this theorem by way of the adjoint functor theorem is not  constructive.  In particular if the ring $\scA$ is the $\cin$-ring of smooth functions on a manifold $M$ and $\scM$ is the $\cin(M)$-module of sections of some vector bundle $E\to M$, it is not at all clear what the $\cin(M)$-algebra  $\cF(\scM)$ is.
\end{remark}
\begin{remark}
Strictly speaking both the forgetful functor $U$ and its left adjoint $\cF$ should be decorated with the $\cin$-ring $\scA$:
\[\xy
  (-21,0)*+{\cF_\scA :\scA\textrm{-}\Mod}="1";
  (-9,1)*+{}="3";
  (-9,-1)*+{}="5";
  (21,0)*+{\scA/\cring:U_\scA}="2";
  (5,1)*+{}="4";
  (5,-1)*+{}="6";
  {\ar@{->}_{} "3";"4"}; {\ar@{->}_{} "6";"5"};
  \endxy
\]
    
\end{remark}
\begin{notation} \label{nota:phi_eta}
    We denote the unit of adjunction $\cF \dashv U$ of  Theorem~\ref{prop1.1} by $\eta$.
    We denote the map from the $\cin$-ring $\scA$ to the $\cin$-ring $\cF(\scM)$ that makes $\cF(\scM)$ into an $\scA$-algebra by $\varphi_\scM: \scA\to \cF(\scM)$.
    We refer to the maps $\eta_\scM: \scM \to U(\cF(\scM))$ and $\varphi_\scM:\scA\to \cF(\scM)$ as the {\sf structure maps}.
\end{notation}

\begin{definition} \label{def:sym-alg}
    Given a $\cin$-ring $\scA$ and an $\scA$-module $\scM$, we refer to the corresponding  $\scA$-algebra $\cF(\scM)$ either as {\sf free}  or as a {\sf ``symmetric"} algebra generated by the module $\scM$ (cf.\ Subsection~\ref{subsec:free}).
\end{definition}
Before we give a constructive proof of Theorem~\ref{prop1.1} we record a quick consequence which will be useful for us later.

\begin{lemma} \label{lem:3.06}
    For any $\cin$-ring $\scA$ and any $\scA$-module $\scM$ the structure maps $\varphi_\scM:\scA\to \cF(\scM)$ and $\eta_\scM:\scM \to U(\cF(\scM))$ are both injective.
\end{lemma}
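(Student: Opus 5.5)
Since $\cF \dashv U$, injectivity of $\varphi_\scM$ and $\eta_\scM$ can be tested by mapping out of $\cF(\scM)$: it suffices to find one $\scA$-algebra $\scB$ and one $\scA$-module map $f:\scM\to U(\scB)$ such that $f$ is injective and the structure map $\scA\to \scB$ is injective. Indeed, by the universal property there is a unique map $\psi:\cF(\scM)\to \scB$ of $\scA$-algebras with $U(\psi)\circ \eta_\scM = f$. Being a morphism in the coslice category $\scA/\cring$, $\psi$ also satisfies $\psi\circ \varphi_\scM = \varphi_\scB$. If $\varphi_\scB$ and $f$ are injective, then so are $\varphi_\scM$ and $\eta_\scM$, since a composite $g\circ h$ being injective forces $h$ to be injective.

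The natural candidate for $\scB$ is the square zero extension $\scA\ltimes\scM$ of Remark~\ref{rmrk:m-bm-sz}. Its structure map is $\varphi_\scB:\scA\to \scA\ltimes \scM$, $a\mapsto (a,0)$. First I check that this is a map of $\cin$-rings: by the displayed formula for $f_{\scA\ltimes\scM}$, for $f\in\cin(\R^n)$ we have
\[
f_{\scA\ltimes\scM}((a_1,0),\ldots,(a_n,0)) = (f_\scA(a_1,\ldots,a_n), 0).
\]
This map is evidently injective. The induced $\scA$-module structure on $\scA\ltimes \scM$ is
\[
a\cdot(b,m) = (a,0)(b,m) = (ab, am),
\]
using the multiplication formula. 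So $U(\scA\ltimes\scM)\cong \scA\oplus\scM$ as $\scA$-modules. Then $f:\scM\to U(\scA\ltimes\scM)$, $m\mapsto (0,m)$, is $\scA$-linear, because $a\cdot(0,m)=(0,am)$ and addition is componentwise, and it is clearly injective.

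Combining the two paragraphs gives the lemma. The only step needing care is confirming that $\scA\ltimes\scM$ really is a $\cin$-ring, i.e.\ that the operations in Remark~\ref{rmrk:m-bm-sz} satisfy associativity (composition) and the projection axiom \eqref{eq:201}. This is asserted in that remark, so I may take it as given. If I wanted to verify it, I would use the chain rule for the derivative terms when composing operations, and note that $\partial_j \pr_i=\delta_{ij}$ gives \eqref{eq:201}. There is no real obstacle beyond this bookkeeping, and the argument does not depend on the explicit construction of $\cF$, only on the existence of the adjunction (Theorem~\ref{prop1.1}).
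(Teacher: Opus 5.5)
Your argument is correct and is essentially the paper's proof: both test the universal property of $\eta_\scM$ against the square zero extension $\scA\ltimes\scM$ with the module map $m\mapsto(0,m)$. The only difference is that the paper proves injectivity of $\varphi_\scM$ with a separate test object, namely $\scA$ itself with the zero module map, which gives $\hpsi\circ\varphi_\scM=\id_\scA$; you instead get it from the same square zero extension via $\psi\circ\varphi_\scM=\varphi_{\scA\ltimes\scM}$, which is a harmless streamlining.
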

\begin{proof}  
The identity map $\id:\scA\to \scA$ makes $\scA$ into an $\scA$ algebra. Consider the zero map $\psi: \scM\to U(\scA)$:
\[
    \psi(m) = 0
\]
for all $m\in \scM$.  We then have a map $\hpsi:\cF(\scM)\to \scA$ of $\scA$-algebras. Thus,
\[
\hpsi \circ \varphi_\scM = \id_\scA.
\]
Hence $\varphi_\scM$ is injective.  To show that $\eta_\scM$ is injective we consider the square zero extension $\scA\ltimes \scM$ (Remark~\ref{rmrk:m-bm-sz}).  There is a canonical map $\varphi: \scA \to \scA \ltimes \scM$  that makes the extension $\scA\ltimes \scM$ into an $\scA$-algebra.  It corresponds to the zero derivation: $0: \scM\to \scM$.  Explicitly
\[
\varphi(a) = (a, 0)
\]
for all $a\in \scA$.
The underlying $\scA$-module is $\scA \oplus \scM$:
\[
U(\scA\ltimes \scM) = \scA \oplus \scM. 
\]
We  
have an $\scA$-module map 
\[
\psi: \scM \to \scA\oplus \scM, \qquad \psi(m) := (0, m).
\]
The corresponding map $\hpsi: \cF(\scM) \to \scA\ltimes \scM$ has the property that 
\[
(U(\hpsi) \circ \eta_\scM)\, (m) = (0, m)
\]
for all $m\in \scM$.  Therefore, the map $\eta_\scM$ is injective.
\end{proof}

To obtain a constructive proof of Theorem~\ref{prop1.1} we use universal arrows.
Recall  the definition.
\begin{definition}\label{def:universal_arrow}
Given a functor $U:D\to C$, a {\sf universal arrow} from an object $c\in C$ to the functor $U$ is an object $F(c)$ of the category $D$ and a morphism $\eta_c:c\to U(F(c))$ with the property that for any object $d\in D$ and any morphism $\psi:c\to U(d)$ there exists a unique morphism $\widetilde{\psi}: F(c)\to d$ so that the diagram
\[
  \xy
  (-15,10)*+{c }="1";
(10,10)*+{U(F(c))}="2";
 (10,-8)*+{U(d)}="3";
 {\ar@{->}^<<<<<<<{\eta_c} "1";"2"};
 {\ar@{->}_{\psi} "1";"3"};
 {\ar@{->}^{U(\widetilde{\psi})} "2";"3"};
\endxy
\]
commutes in $C$.   
\end{definition}
Thus, if 
$\xy
  (-15,0)*+{F:C }="1";
  (-9,1)*+{}="3";
  (-9,-1)*+{}="5";
  (10,0)*+{D:U}="2";
  (5,1)*+{}="4";
  (5,-1)*+{}="6";
  {\ar@{->}_{} "3";"4"}; {\ar@{->}_{} "6";"5"};
  \endxy
$   
     
is a pair of adjoint functors and $\eta: \id\Rightarrow U\circ F$ is the unit of adjunction, then for any object $c\in C$ the pair $(\eta_c: c\to U(F(c)), F(c))$ is a universal arrow from $c$ to $U$.
Conversely, given a functor $U:D\to C$ so that for any object $c\in C$ there is a universal arrow  $(\eta_c: c\to U(F(c)), F(c))$ from $c$ to the functor $U$, then the map
\[
c \mapsto F(c)
\]
extends to a functor $F:C\to D$ which is left adjoint to $U$ and has $\eta = \{\eta_c\}_{c\in C}$ as the unit of adjunction.

In particular, it follows from Theorem~\ref{prop1.1} and the discussion of universal arrows that for any $\cin$-ring $\scA$ and any $\scA$ module $\scM$ we have an $\scA$-algebra $\varphi_\scM :\scA\to  \cF(\scM)$ and a map of $\scA$-modules 
\[
    \eta_\scM:\scM\to U(\cF (\scM))
\]
and a map $\scM\xrightarrow{\eta_\scM} \cF(\scM)$ of $\scA$-modules so that
for any $\scA$-algebra $\scA\xrightarrow{\varphi_\scB} \scB$ and any map
$\psi:\scM\to U(\scB)$ of $\scA$-modules there exists a unique map
$\hpsi:\cF(\scM))\to \scB$ of $\scA$-algebras making the diagram
\begin{equation}\label{eq:1.2.1}
  \xy
  (-15,10)*+{\scM  }="1";
(10,10)*+{U(\cF(\scM))}="2";
 (10,-8)*+{U(\scB)}="3";
 {\ar@{->}^<<<<<<<{\eta_\scM} "1";"2"};
 {\ar@{->}_{\psi} "1";"3"};
 {\ar@{->}^{U(\hpsi)} "2";"3"};
\endxy
\end{equation}
commute.  

Conversely, a collection of universal arrows $\{\eta_\scM:\scM\to U(\cF (\scM))\}_{\scM\in\scA\textrm{-}\Mod }$ gives rise to a functor $\cF:\scA\textrm{-}\Mod \to \scA/\cring $ left adjoint to the forgetful functor $U: \scA/\cring \to \scA\textrm{-}\Mod$.  Since any two functors left adjoint to $U$ are isomorphic, we may as well take the functor $\cF$ produced by the universal arrows to be ``the" left adjoint to $U$.

\begin{remark}[Pedantic] \label{rmrk:pedantic_arrow}
Suppose $U:D\to C$ is a functor, $f:c\to c'$ an isomorphism in the category $C$, $g:d \to d'$ an isomorphism in the category $D$ and $\eta_c:c\to U(d)$, $\eta_{c'}:c'\to U(d')$ are a pair of morphism in $C$ that make the diagram
\[
\xy
(-15,10)*+{c}="1";
(15,10)*+{U(d)}="2";
 (-15,-8)*+{c'}="3";
(15,-8)*+{U(d')}="4"; 
{\ar@{->}_{f} "1";"3"};
{\ar@{->}^{U(g)} "2";"4"};
{\ar@{->}^{\eta_{c}} "1";"2"};
{\ar@{->}_{\eta_{c'}} "3";"4"};
\endxy
\]   
commute.  If $\eta_c$ is a universal arrow from $c$ to $U$, then the map $\eta_{c'}$ is also  universal arrow (from $c'$ to $U$).
\end{remark}
We now start our proof of existence of universal arrows from $\scA$-modules to the forgetful functor $U:\scA/\cring \to \scA\textrm{-}\Mod$ and thereby a construction of the left adjoint functor $\cF$.

\begin{lemma} \label{lem:3.08}
  Let $\scA$ be a $\cin$-ring. For any {\em free} $\scA$-module $\scM$ there is a universal arrow to the functor   $U: \scA/\cring \to \scA\textrm{-}\Mod$.
\end{lemma}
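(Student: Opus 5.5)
For a free module $\scM = \scA^{\oplus I}$ with basis $\{e_i\}_{i\in I}$, I would take as candidate the coproduct in $\cring$ of $\scA$ with the free $\cin$-ring on $I$:
\[
\cF(\scM) := \scA \otimes_\infty \bbF(I),
\]
with $\varphi_\scM:\scA\to \cF(\scM)$ the coproduct inclusion. The unit $\eta_\scM:\scM\to U(\cF(\scM))$ is the unique $\scA$-module map sending $e_i$ to the image of the generator $\delta_I(i)\in \bbF(I)$. Concretely, this coproduct is the pushout of $\scA$ and $\bbF(I)$ over the initial object $\R$. If one wants to avoid citing existence of coproducts, choose a presentation $\bbF(X)\to \scA$ with kernel $J$. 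Then take $\bbF(X\sqcup I)/\langle J\rangle$, where $\langle J\rangle$ is the ideal generated by the image of $J$ under $\bbF(X)\to\bbF(X\sqcup I)$. This is a $\cin$-ring by Remark~\ref{rmrk:congr}.

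To check universality, let $\varphi_\scB:\scA\to\scB$ be an $\scA$-algebra and $\psi:\scM\to U(\scB)$ a map of $\scA$-modules. Since $\scM$ is free, $\psi$ is determined by the elements $b_i:=\psi(e_i)\in\scB$, and any family $\{b_i\}$ arises this way. By the adjunction $\bbF\dashv\bbU$ there is a unique map of $\cin$-rings $\bbF(I)\to\scB$ sending $i\mapsto b_i$. Together with $\varphi_\scB$, the coproduct property gives a unique map of $\cin$-rings $\hpsi:\cF(\scM)\to\scB$ restricting to $\varphi_\scB$ on $\scA$, which makes it a morphism in $\scA/\cring$, and sending $\delta_I(i)\mapsto b_i$.

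Next I would verify $U(\hpsi)\circ\eta_\scM=\psi$. Both sides are $\scA$-linear, because $\hpsi$ is a ring map compatible with $\varphi_\scM$ and $\varphi_\scB$. Both also agree on the basis $\{e_i\}$, so they are equal.

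For uniqueness, suppose $g:\cF(\scM)\to\scB$ is a morphism of $\scA$-algebras with $U(g)\circ\eta_\scM=\psi$. Then $g\circ\varphi_\scM=\varphi_\scB$, and $g(\delta_I(i))=b_i$. By uniqueness in the coproduct and in the free adjunction, $g=\hpsi$.

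The main obstacle is justifying the coproduct $\scA\otimes_\infty\bbF(I)$ concretely and with the correct universal property. In the quotient description, I must check that a map out of $\bbF(X\sqcup I)$ factors through the quotient by $\langle J\rangle$ exactly when it kills $J$. This holds because kernels of $\cin$-ring maps are ideals, and quotients by ideals are $\cin$-rings with the evident universal property (Remark~\ref{rmrk:congr}). I would also note that $\bbF(X\sqcup I)$ is the coproduct of $\bbF(X)$ and $\bbF(I)$, since $\bbF$ is a left adjoint and therefore preserves coproducts. With these in hand, the remaining verifications above are formal.
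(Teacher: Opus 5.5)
Your proposal is correct and follows essentially the same route as the paper: take $\cF(\scM)=\scA\otimes_\infty\bbF(Y)$, define $\eta_\scM$ on the basis, obtain $\hpsi$ from the free adjunction $\bbF\dashv\bbU$ combined with the coproduct property, and get uniqueness from the same two uniqueness statements. The paper simply cites cocompleteness of $\cring$ for the coproduct instead of using your explicit presentation $\bbF(X\sqcup I)/\langle J\rangle$, which it records separately in Remark~\ref{rmrk:3.12b}. It also leaves implicit the $\scA$-linearity of $U(\hpsi)\circ\eta_\scM$, which you state explicitly.
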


Our proof of Lemma~\ref{lem:3.08} uses existence of the coproduct $\otimes_\infty$ of $\cin$-rings and existence of the left adjoint $\bbF: \Set \to \cring$  to the forgetful functor $\bbU: \cring \to \Set$. 
\begin{remark} \label{rmrk:3.12a}
We think of $\bbF(Y)$ as the  $\cin$-ring freely generated by the set $Y$.  
We denote the unit of adjunction $\bbF\dashv \bbU$ by $\delta$. Thus, for each set $Y$ we have a map 
\[
\delta= \delta_Y:Y\to \bbF(Y),
\]
where we suppressed the forgetful functor $\bbU$.  
We may want to suppress the map $\delta$ (which is injective) as well, and view elements of $Y$ as elements of (the set underlying the $\cin$-ring) $\bbF(Y)$.  Often the $\cin$-ring $\bbF(Y)$ is constructed as the ring $\cin (\R^Y)$ of all smooth functions on the vector space $\R^Y$ that depend on only finitely many variables. And then the meaning of ``smooth" is supposed to be clear (cf.\ \cite[Example~4.31]{Joy}).  In particular for any natural number $k$ the ring $\cin(\R^k)$ is freely generated by $Y=\{1,\ldots, k\}$.  The coordinate function $y_i: \R^k\to \R$ is the image of $i\in \{1,\ldots, k\}$ under the map $\delta: \{1,\ldots, k\}\to \cin(\R^k)$.  See Appendix~\ref{app:free} for more details.
\end{remark}
\begin{remark} \label{rmrk:3.12b}
Since the category $\cring$ of $\cin$-rings is cocomplete, the coproduct $\otimes_\infty$ exists.  The issue is computing it.  We will use the following facts:\\[-10pt]
\begin{enumerate}
    \item \label{item:3.13.1} If $\scA = \bbF(X)$ and $\scB = \bbF(Y)$, i.e., if $\scA$ is freely generated (as $\cin$-ring) by a set $X$ and $\scB$ is freely generated by a set $Y$,  then $\scA \otimes _\infty \scB = \bbF(X\sqcup Y)$ and the coproduct structure maps
    \[
   \scA \xrightarrow{ \imath_\scA} \scA \otimes_\infty \scB \xleftarrow{ \imath_\scB}
    \scB 
    \]
are induced by $X \hookleftarrow X\sqcup Y \hookrightarrow Y$ respectively.   This is because the free functor $\bbF$,  being left adjoint, preserves colimits. \\ 
\item If $\scA = \cin(M)$ and $\scB = \cin(N)$ where $M$, $N$ are manifolds, then $\scA \otimes_\infty \scB = \cin(M\times N)$ and the structure maps
\[
    \imath_\scA: \cin(M) \to \cin(M) \otimes_\infty \cin(N), \qquad
    \imath_\scB: \cin(N) \to \cin(M) \otimes_\infty \cin(N) 
\]
are induced by the canonical projections $p_M:M\times N\to M$, $p_N:M\times N\to N$, respectively.  See \cite{MR}.

\item \label{item:3.013.3} If $\Pi:\bbF(Y)\to \scA$ is surjective and $X$ is a set, then
\[
\scA \otimes_\infty \bbF(X) = (\bbF(Y)\otimes_\infty \bbF(X)/ \langle \imath_{\bbF(Y)} (\ker \Pi ) \rangle
\]
 and the map 
\[
\Pi\otimes_\infty \id: \bbF(Y)\otimes_\infty \bbF(X) \to 
\scA \otimes_\infty \bbF(X) 
=(\bbF(Y)\otimes_\infty \bbF(X)/ \langle \imath_{\bbF(Y)} (\ker \Pi ) \rangle
\]
is the quotient map.  Hence 
\[
\ker (\Pi\otimes_\infty \id) = \langle \imath_{\bbF(Y)} (\ker \Pi ) \rangle.
\]

\end{enumerate}
\end{remark}
\begin{proof}[Proof of Lemma~\ref{lem:3.08}]
    Denote the basis of the free module $\scM$ by $\{e_y\}_{y\in Y} $ (where $Y$ is an appropriate indexing set).  Let 
\[
\cF(\scM):= \scA \otimes _\infty \bbF(Y),
\] 
the coproduct of the $\cin$-ring $\scA$ and the free $\cin$-ring $\bbF(Y)$ generated by the set $Y$.  The coproduct comes with the structure maps
\begin{equation} \label{eq:3.8.01}
   \scA \xrightarrow{ \imath_\scA} \scA \otimes_\infty \bbF(Y) \xleftarrow{ \imath_{\bbF(Y)}}
    \bbF(Y).
\end{equation}
The map $\imath_\scA:\scA \to \cF(\scM)$ makes $\cF(\scM)$ into a $\scA$-algebra.  Abusing notation (i.e., we suppress the forgetful functor to $\Set$) we get a map of sets
\[
\imath_{\bbF(Y)} \circ \delta:Y \to \scA \otimes _\infty \bbF(Y) \equiv \cF(\scM).
\]
Since $\{e_y\}_{y\in Y}$ is a basis of the module $\scM$ there is a unique map 
\[
\eta\equiv \eta_\scM:\scM \to U(\cF(\scM))
\]
of $\scA$-modules with 
\[
\eta_\scM (e_y) = (\imath_{\bbF(Y)} \circ \delta)(y).
\]
Explicitly, since elements of $\scM$ are sums $\sum_{y\in Y} a_y e_y$ with all but finitely many coefficients $a_y$ are zero,  the map $\eta_\scM$ is 
\begin{equation}\label{eq:eta_free}
\eta_\scM\left(\sum_{y\in Y} a_y e_y \right) = \sum_{y\in Y} a_y \cdot y
\end{equation}
where on the left we severely abused notation and wrote $a_y$ for $\imath_\scA (a_y)$ and $y$ for $(\imath_{\bbF(Y)} \circ \delta)(y)$, $\cdot$ for  the multiplication in $\bbF(\scM)$ and $\sum$ for  the addition.

We next check that $\eta_\scM$ is a universal arrow. Suppose $\varphi_\scB:\scA\to \scB$ is an $\scA$-algebra and $\psi: \scM\to U(\scB)$ is a map of $\scA$-modules.  We argue that there is a unique map 
$\hpsi: \cF(\scM) \to \scB$ of $\scA$-algebras so that the diagram
\begin{equation} \label{eq:3.8.02}
  \xy
  (-15,10)*+{\scM }="1";
(10,10)*+{U(\cF(\scM))}="2";
 (10,-8)*+{U(\scB)}="3";
 {\ar@{->}^<<<<<<<{\eta_\scM} "1";"2"};
 {\ar@{->}_{\psi} "1";"3"};
 {\ar@{->}^{U(\widehat{\psi})} "2";"3"};
\endxy
\end{equation}
commutes in $\scA$-$\Mod$. This is straightforward.  Here are the details.  Since $\bbF(Y)$ is freely generated by the set $Y$ there is a unique map $\overline{\psi}:\bbF(Y)\to \scB$ of $\cin$-rings with 
\[
\overline{\psi}(\delta(y))  = \psi (e_y)
\]
for all $y\in Y$. The universal property of the coproduct \eqref{eq:3.8.01} gives us a unique map of $\cin$-rings $\hpsi: \scA \otimes_\infty \bbF(Y) \to \scB$ so that the diagram
\begin{equation}
  \xy
  (-15,10)*+{\scA }="1";
(10,10)*+{\scA\otimes_\infty \bbF(Y)}="2";
 (10,-8)*+{\scB}="3";
 (40,10)*+{\bbF(Y)}="22";
 {\ar@{->}^<<<<<<<{\imath_\scA} "1";"2"};
 {\ar@{->}_<<<<<<<{\imath_{\bbF(Y)}} "22";"2"};
 {\ar@{->}_{\varphi_\scB} "1";"3"};
 {\ar@{-->}^{\hpsi} "2";"3"};
  {\ar@{->}^{\overline{\psi}} "22";"3"};
\endxy
\end{equation}
commutes.  Thus $\hpsi$ is a map of $\scA$-algebras.  Moreover,
\[
U (\hpsi)(\eta_\scM (e_y) ) = U (\hpsi)(\imath_{\bbF(Y)}(\delta(y))) = U(\overline{\psi}) (\delta(y)) = \psi (e_y)
\]
for any element $y\in Y$, so \eqref{eq:3.8.02} commutes.  Finally, suppose that $\widetilde{\psi}: \cF(\scM) \to \scB$ is another map of $\scA$-algebras so that 
\[
U(\widetilde{\psi}) \circ \eta_\scM = \psi.
\]
We argue that $\hpsi = \widetilde{\psi}$ and, thus, that the map $\hpsi$ is unique.   For any $y\in Y$
\[
\psi(e_y) = U(\widetilde{\psi}) (\eta_\scM (e_y)) =  U(\widetilde{\psi}) (\imath_{\bbF(Y)}(\delta(y)) )= U (\widetilde{\psi} \circ \imath_{\bbF(Y)}) (\delta(y)).
\]
On the other hand, for any $y\in Y$,
\[
\overline{\psi} (\delta(y)) = \psi(e_y)
\]
as well.   Since the $\cin$-ring $\bbF(Y)$ is freely generated by the set $Y$,
\[
\widetilde{\psi} \circ \imath_{\bbF(Y)} = \overline{\psi}.
\]
Since $\overline{\psi} = \hpsi \circ \imath_{\bbF(Y)}$, it follows that
\[
\hpsi \circ \imath_{\bbF(Y)} = \widetilde{\psi} \circ \imath_{\bbF(Y)}.
\]
On the other hand both $\hpsi$ and $\widetilde{\psi}$ are both maps of $\scA$-algebras, so 
\[
\widetilde{\psi} \circ \imath_\scA = \varphi_\scB = \hpsi \circ \imath_\scA.
\]
The universal property of the coproduct \eqref{eq:3.8.01} implies that $\widetilde{\psi} = \hpsi$.
Thus, the map $\hpsi:\cF(\scM)\to \scB$ is unique and $\eta_\scM :\scM \to U (\scA \otimes _\infty \bbF (Y))$ is a universal arrow.
\end{proof}

\begin{corollary} \label{cor:3pi}
   In the case when the $\cin$-ring $\scA$ is the reals $\R$ (cf.\ Example~\ref{ex:smooth-functions}), the free (``symmetric") $\R$-algebra $\cF(\scM)$ is $\cin(\scM^\svee)$, the $\cin$-ring of smooth functions on the dual module.
\end{corollary}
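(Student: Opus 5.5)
Since $\scA=\R$ is a field, every $\R$-module $\scM$ is a real vector space and so is free. The plan is therefore to apply Lemma~\ref{lem:3.08} directly and then identify the resulting $\cin$-ring with functions on the dual. Choose a Hamel basis $\{e_y\}_{y\in Y}$ of $\scM$. Lemma~\ref{lem:3.08} gives
\[
\cF(\scM)=\R\otimes_\infty \bbF(Y),
\]
with $\eta_\scM(\sum_y a_y e_y)=\sum_y a_y\, y$. Because $\R$ is initial in $\cring$, the coproduct $\R\otimes_\infty\bbF(Y)$ is canonically $\bbF(Y)$, with $\imath_\R$ the unique map $\R\to\bbF(Y)$. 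By Remark~\ref{rmrk:3.12a} and Appendix~\ref{app:free}, $\bbF(Y)$ is the $\cin$-ring $\cin(\R^Y)$ of functions on $\R^Y$ that depend smoothly on finitely many coordinates.

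Next I identify $\R^Y$ with the algebraic dual $\scM^\svee=\Hom_\R(\scM,\R)$. A linear functional $\lambda$ is determined by the arbitrary family $(\lambda(e_y))_{y\in Y}$, so $\lambda\mapsto(\lambda(e_y))_y$ is a linear bijection $\scM^\svee\to\R^Y$. Under this bijection the coordinate function $y$ becomes evaluation $\mathrm{ev}_{e_y}$. Consequently $\eta_\scM(m)$ becomes $\mathrm{ev}_m$, the linear function $\lambda\mapsto\lambda(m)$ on $\scM^\svee$.

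The main point to address is what ``$\cin(\scM^\svee)$'' means intrinsically, and why the identification does not depend on the choice of basis. I will define
\[
\cin(\scM^\svee)=\{\, g\circ(\mathrm{ev}_{m_1},\dots,\mathrm{ev}_{m_n}) \;:\; n\ge 0,\ m_i\in\scM,\ g\in\cin(\R^n)\,\},
\]
with $\cin$-operations defined pointwise. To show this equals the pullback of $\cin(\R^Y)$, take finitely many $m_1,\dots,m_n$. They lie in the span of finitely many basis vectors $e_{y_1},\dots,e_{y_k}$, so each $\mathrm{ev}_{m_i}$ is a linear combination of $\mathrm{ev}_{e_{y_1}},\dots,\mathrm{ev}_{e_{y_k}}$. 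Hence $g\circ(\mathrm{ev}_{m_i})_i$ is a smooth function of finitely many coordinates. The reverse inclusion is immediate by taking $m_i=e_{y_i}$. Thus the bijection $\scM^\svee\cong\R^Y$ identifies $\cin(\scM^\svee)$ with $\bbF(Y)$ as $\cin$-rings, the operations being pointwise on both sides.

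Finally, by Remark~\ref{rmrk:pedantic_arrow}, transporting the universal arrow along this isomorphism shows that $m\mapsto\mathrm{ev}_m$, $\scM\to\cin(\scM^\svee)$, is a universal arrow. Since this arrow is described without reference to any basis, we obtain $\cF(\scM)\cong\cin(\scM^\svee)$ canonically.
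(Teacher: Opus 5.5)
Your proof is correct and follows the paper's route. You choose a basis $Y$, apply Lemma~\ref{lem:3.08}, use that $\R$ is initial to get $\cF(\scM)=\bbF(Y)=\cin(\R^Y)$, and identify $\R^Y$ with the algebraic dual $\scM^\svee$. Your one addition is the intrinsic description of $\cin(\scM^\svee)$ as the functions $g\circ(\mathrm{ev}_{m_1},\dots,\mathrm{ev}_{m_n})$, together with $\eta_\scM(m)=\mathrm{ev}_m$; this makes the identification basis-independent, whereas the paper simply takes $\cin(\R^Y)$ as the definition of $\cin(\scM^\svee)$.
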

    
\begin{proof}
Choose a basis $Y\subset \scM$ of the $\R$-module (real vector space) $\scM$.  Then 
\[
\scM \simeq \R^{\oplus Y}.
\] 
Hence the (algebraic) dual module $\scM^\svee$ is 
\[
\scM^\svee \simeq \Hom_\R(\R^{\oplus Y}, \R)  = \prod_{y\in Y}\R=  \R^Y,
\]
the product of $Y$-copies of $\R$. On the other hand by (the proof of) Lemma~\ref{lem:3.08}
\[
\cF(\scM) = \cF(\R^{\oplus Y}) = \R\otimes_\infty \bbF(Y).
\]
Since $\R$ is initial in the category $\cring$ of $\cin$-rings, $\R\otimes_\infty \bbF(Y) = \bbF(Y)$.  The free $\cin$-ring $\bbF(Y)$ may be taken to be $\cin(\R^Y)$, the $\cin$-ring of smooth functions on the vector space $\R^Y$ --- see Appendix~\ref{app:free}.  Therefore,
\[
\cF(\scM) \simeq \cin(\R^Y) = \cin(\scM^\svee),
\]
where the equality may be taken to be the {\em definition} of the $\cin$-ring  $\cin(\scM^\svee)$.
\end{proof}

\begin{lemma} \label{lem:3.09} As before let $\scA$ be a $\cin$-ring. Suppose $\eta_\scM: \scM\to U (\scC)$ is a universal arrow from an $\scA$-module $\scM$ to the functor $U: \scA/\cring \to \scA\textrm{-}\Mod$.
Suppose further that $\tau:\scM\to \scN$ is a surjective map of $\scA$-modules with kernel $K$ and $\langle \eta_\scM (K)\rangle $ is the ideal in the $\scA$-algebra $\scC$ generated by $\eta_\scM(K)$. Then the map
\begin{equation}
    \overline{\eta}:\scN\to U(\scC/\langle \eta_\scM (K)\rangle)
\end{equation}
induced by $\eta_\scM$ is a universal arrow from the module $\scN$ to the functor $U$.
\end{lemma}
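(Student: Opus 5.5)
Write $I:=\langle \eta_\scM(K)\rangle\subseteq \scC$ and let $q:\scC\to \scC/I$ be the quotient map. By Remark~\ref{rmrk:congr}, $\scC/I$ is a $\cin$-ring and $q$ is a map of $\cin$-rings. Hence $q\circ\varphi:\scA\to\scC/I$ (with $\varphi:\scA\to\scC$ the structure map) makes $\scC/I$ an $\scA$-algebra, and $q$ is a morphism in $\scA/\cring$. The composite $U(q)\circ\eta_\scM:\scM\to U(\scC/I)$ is $\scA$-linear and kills $K$. Since $\tau$ is surjective with kernel $K$, it therefore descends to a unique $\scA$-linear map $\overline{\eta}:\scN\to U(\scC/I)$ with $\overline{\eta}\circ\tau = U(q)\circ \eta_\scM$. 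This is the map in the statement.

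For existence in the universal property, let $\scA\xrightarrow{\varphi_\scB}\scB$ be an $\scA$-algebra and $\psi:\scN\to U(\scB)$ an $\scA$-module map. Apply the universal property of $\eta_\scM$ to $\psi\circ\tau:\scM\to U(\scB)$. This gives a unique $\scA$-algebra map $\widehat{\psi\tau}:\scC\to\scB$ with $U(\widehat{\psi\tau})\circ\eta_\scM=\psi\circ\tau$. For $k\in K$ we get $\widehat{\psi\tau}(\eta_\scM(k))=\psi(\tau(k))=0$. Since $\widehat{\psi\tau}$ is in particular a ring homomorphism, its kernel is an ideal, so it contains $I$. The map therefore factors through a unique ring map $\hpsi:\scC/I\to\scB$.

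The one point needing care is that $\hpsi$ is a map of $\cin$-rings, not merely of $\R$-algebras. This holds because the $\cin$-operations on $\scC/I$ are defined so that $q$ preserves them and $q$ is surjective. For $f\in\cin(\R^n)$ and $c_i\in\scC$,
\[
\hpsi(f_{\scC/I}(q c_1,\ldots,q c_n))=\widehat{\psi\tau}(f_\scC(c_1,\ldots,c_n))=f_\scB(\hpsi(qc_1),\ldots,\hpsi(qc_n)).
\]
Moreover $\hpsi\circ q\circ\varphi=\varphi_\scB$, so $\hpsi$ is a map of $\scA$-algebras. Finally,
\[
U(\hpsi)\circ\overline{\eta}\circ\tau=U(\widehat{\psi\tau})\circ\eta_\scM=\psi\circ\tau,
\]
and since $\tau$ is surjective, $U(\hpsi)\circ\overline\eta=\psi$.

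For uniqueness, suppose $\widetilde\psi:\scC/I\to\scB$ is another $\scA$-algebra map with $U(\widetilde\psi)\circ\overline\eta=\psi$. Then $\widetilde\psi\circ q$ is an $\scA$-algebra map $\scC\to\scB$ satisfying $U(\widetilde\psi\circ q)\circ\eta_\scM=\psi\circ\tau$. By uniqueness in the universal property of $\eta_\scM$, $\widetilde\psi\circ q=\widehat{\psi\tau}=\hpsi\circ q$. Since $q$ is surjective, $\widetilde\psi=\hpsi$.

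The main potential obstacle is the $\cin$-compatibility of quotients. This is handled entirely by Remark~\ref{rmrk:congr}, which says ideals are $\cin$-congruences, so there is no real difficulty.
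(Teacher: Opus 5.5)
Your proof is correct and takes essentially the same route as the paper. You factor the map $\scC\to\scB$ induced by $\psi\circ\tau$ through the quotient by $\langle\eta_\scM(K)\rangle$, use surjectivity of $\tau$ to get the commuting triangle, and use surjectivity of the quotient map together with uniqueness for $\eta_\scM$. You also spell out two points the paper leaves implicit: that the induced map on the quotient is a map of $\cin$-rings and of $\scA$-algebras.
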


\begin{proof}
Let $\varphi_\scB:\scA\to \scB$ be an $\scA$-algebra and $\psi: \scN \to U(\scB)$ a map of $\scA$-modules.   We argue that there is a unique map $\hpsi: \scC/\langle \eta_\scM (K)\rangle \to \scB$ so that 
\[
\psi = U(\,\hpsi\,) \circ \overline{\eta}.
\]
Since $\eta_\scM$ is a universal arrow, there is a unique map $(\psi \circ \tau)^\sim: \scC \to \scB$ so that the diagram
\[
\xy
(-10,10)*+{\scM }="1";
(15,10)*+{U(\scC)}="2";
 (-10,-8)*+{\scN}="3";
 (30,-20)*+{U(\scB)}="5";
{\ar@{->}^{U((\psi \circ \tau)^\sim)} "2";"5"};
{\ar@{->}_{\tau} "1";"3"};
{\ar@{->}^{\eta_\scM} "1";"2"};
{\ar@{->}_{\psi} "3";"5"};
\endxy
\] commutes. Since $\psi (\tau(K))=0$, $\eta_\scM(K)$ lies in the kernel of $U((\psi\circ\tau)^\sim)$.  Therefore, the ideal $\langle \eta_\scM (K)\rangle$ generated by $\eta_\scM(K)$ lies in the kernel of $(\psi \circ \tau)^\sim \colon \scC\to \scB$. Let 
\[
\pi: \scC\to \scC/\langle \eta_\scM (K)\rangle
\]
denote the quotient map. Then $\eta_\scM$ induces 
\[
\overline{\eta}: \scN \to U(\scC/\langle \eta_\scM (K)\rangle ) \simeq U(\scC)/\langle \eta_\scM (K)\rangle
\]
making the diagram 
\begin{equation} \label{eq:3.9.2}
\xy
(-30,10)*+{\scM }="1";
(15,10)*+{U(\scC)}="2";
 (-30,-8)*+{\scN}="3";
(15,-8)*+{U(\scC)/\langle \eta_\scM (K)\rangle )}="4"; 
{\ar@{->}_{\tau} "1";"3"};
{\ar@{->}_{\pi} "2";"4"};
{\ar@{->}^{\eta_\scM} "1";"2"};
{\ar@{->}_<<<<<<<<<<<<<<<<<{\overline{\eta}} "3";"4"};
\endxy
\end{equation}
commute.  
Hence, there is a unique map $\widetilde{\psi}:\scC/\langle \eta_\scM (K)\rangle \to \scB$ so that 
\begin{equation} \label{eq:3.9.3}
    \widetilde{\psi} \circ \pi = (\psi \circ \tau)^\sim.
\end{equation}
We next argue that 
\begin{equation} \label{eq:3.9.4}
    U(\widetilde{\psi}) \circ\overline{\eta} = \psi.
\end{equation}
This is because 
\[
\psi \circ \tau = U((\psi \circ \tau)^\sim)\circ \eta_\scM \stackrel{\quad \eqref{eq:3.9.3}\quad }{=} U(\widetilde{\psi})\circ \pi \circ \eta_\scM \stackrel{\eqref{eq:3.9.2}\textrm{ commutes}}{=} U(\widetilde{\psi})\circ \overline{\eta} \circ \tau.
\]
Since $\tau$ is surjective, \eqref{eq:3.9.4} holds.  

It remains to prove that if $\overline{\psi}: \scC/\langle \eta_\scM(K)\rangle \to \scB$ is another map of $\scA$-algebras with 
\begin{equation}
    U(\overline{\psi}) \circ \overline{\eta} = \psi,
\end{equation}
then
\begin{equation} \label{eq:3.9.6}
    \overline{\psi} = \widetilde{\psi}.
\end{equation}
On the one hand 
\[
\psi \circ \tau = U(\overline{\psi})\circ \overline{\eta} \circ \tau = U(\overline{\psi})\circ  U(\pi) \circ \eta_\scM = U(\overline{\psi} \circ \pi)\circ \eta_\scM.
\]
On the other hand
\[
\psi \circ \tau = U((\psi \circ \tau)^\sim) \circ \eta_\scM
\]
as well.  Since $\eta_\scM$ is a universal arrow,
\[
\overline{\psi} \circ \pi = (\psi \circ \tau)^\sim.
\]
Since $(\psi \circ \tau)^\sim = \widetilde{\psi} \circ \pi$,
\[
\overline{\psi} \circ \pi = \widetilde{\psi} \circ \pi.
\]
Since the map $\pi$ is surjective, $\overline{\psi}= \widetilde{\psi}$ and we are done.
\end{proof}

\begin{corollary}\label{cor:3.150}
    For any $\cin$-ring $\scA$ and for any $\scA$-module $\scN$ there is a universal arrow from $\scN$ to the functor $U: \scA/\cring \to \scA\textrm{-}\Mod$.      
\end{corollary}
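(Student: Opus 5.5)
The argument combines Lemma~\ref{lem:3.08} and Lemma~\ref{lem:3.09} through a free presentation of $\scN$. Given an arbitrary $\scA$-module $\scN$, first choose a surjection from a free module onto it. The canonical choice is the set $I = \mathbf{U}(\scN)$ underlying $\scN$, with $\scM := \Free_\scA(I) = \scA^{\oplus I}$ as in Remark~\ref{rmrk:free}. The counit of the adjunction $\Free_\scA \dashv \mathbf{U}$ gives the $\scA$-module map
\[
\tau: \scA^{\oplus I} \to \scN, \qquad \tau\Big(\sum_{n\in I} a_n e_n\Big) = \sum_{n\in I} a_n n .
\]
It is surjective, since $\tau(e_n) = n$ for every $n \in \scN$. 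Let $K = \ker \tau$.

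Next apply Lemma~\ref{lem:3.08} to the free module $\scM$. This yields the universal arrow
\[
\eta_\scM : \scM \to U(\scC), \qquad \scC := \scA\otimes_\infty \bbF(I),
\]
given explicitly by \eqref{eq:eta_free}. Then apply Lemma~\ref{lem:3.09} to $\eta_\scM$ and the surjection $\tau$. It says that the induced map
\[
\overline{\eta}: \scN \to U\big(\scC/\langle \eta_\scM(K)\rangle\big)
\]
is a universal arrow from $\scN$ to $U$. This is exactly the claim.

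One point needs a check: $\scC/\langle \eta_\scM(K)\rangle$ must really be an object of $\scA/\cring$. By Remark~\ref{rmrk:congr}, the quotient of a $\cin$-ring by any ideal is again a $\cin$-ring, and the quotient map is a map of $\cin$-rings. Composing $\imath_\scA$ with the quotient map then makes the quotient an $\scA$-algebra. This fact was already used implicitly in Lemma~\ref{lem:3.09}, so there is no real obstacle here; the corollary is a direct assembly of the two lemmas. If desired, one can also note that the construction is explicit:
\[
\cF(\scN) = \big(\scA\otimes_\infty\bbF(I)\big)\big/\big\langle \textstyle\sum a_n\, n - \sum b_m\, m \;:\; \sum a_n n = \sum b_m m \text{ in } \scN\big\rangle .
\]
Finally, together with the discussion of universal arrows above, this yields Theorem~\ref{prop1.1} constructively.
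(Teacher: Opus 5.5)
Your proposal is correct and is the paper's own proof: present $\scN$ as a quotient of a free module, take the universal arrow from Lemma~\ref{lem:3.08}, and pass to the quotient with Lemma~\ref{lem:3.09}. Using the canonical presentation indexed by $I=\mathbf{U}(\scN)$ is just a particular instance of the paper's arbitrary choice of free module and surjection, and your explicit description of the resulting ideal is also right.
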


\begin{proof}
    For any $\scA$-module $\scN$ there is a free module $\scM$ and a surjective map $\tau: \scM\to \scN$.  Choose a basis $\{e_y\}_{y\in Y}$ of $\scM$ (which exists since $\scM$ is a free module). By Lemma~\ref{lem:3.08} there is a universal arrow 
\[   
\eta_\scM \to  \scA \otimes_\infty \bbF(Y), \quad {\eta}_\scM \, (\sum _{y\in Y} a_y f_y) = \sum_{y\in Y} a_y \cdot y,
\] 
c.f. \eqref{eq:eta_free}.
Here as before $\bbF(Y)$ is the $\cin$-ring freely generated by the set $Y$. Let $K := \ker(\tau: \scM\to \scN)$.
By Lemma~\ref{lem:3.09} the map
    \[
    \overline{\eta}:\scN \simeq \scM/K \to U( (\scA \otimes_\infty \bbF(Y))/\langle \eta_\scM(K)\rangle),
    \]
induced by $\eta_\scM$ is a universal arrow from the module $\scN$ to the functor $U$.
\end{proof}
\begin{remark}
We note that there is an explicit formula for the map $\overline{\eta}:\scN\to U( (\scA \otimes_\infty \bbF(Y))/\langle \eta_\scM(K)\rangle)$ constructed in the course of the proof of Corollary~\ref{cor:3.150}.  Namely, the set $\{f_y:= \tau(e_y)\}_{y\in Y}$ generates the module $\scN$.  Therefore, any $n\in \scN$ can be written as $n = \sum_{y\in Y} a_y f_y$ for some (not necessarily unique) choice of coefficients $a_y\in \scA$ with all but finitely many $a_y$'s being 0.  And then
\begin{equation} \label{eq:eta_formula}
\overline{\eta} \, (\sum _{y\in Y} a_y f_y) = \sum_{y\in Y} a_y \cdot y + \langle \eta(K)\rangle.
\end{equation}
\end{remark}
The proof of Corollary~\ref{cor:3.150} concludes our construction of the functor $\cF: \scA\textrm{-}\Mod \to \scA/\cring$ left adjoint to the forgetful functor $U: \scA/\cring \to \scA\textrm{-}\Mod$.   \\

We will see in the next section (Section~\ref{sec:free2}) that in the case where the $\cin$-ring $\scA$ is the $\cin$-ring $\cin(M)$ of smooth functions on a manifold $M$ and the $\cin(M)$-module $\scM$ is the module of sections $\Gamma(E)$ of a vector bundle $E\to M$, the free algebra $\cF(\Gamma(E))$ is the algebra $\cin(E^\svee)$ of smooth functions on the vector bundle $E^\svee \to M$ dual to $E\to M$. See Theorem~\ref{thm:4.01}. So in this particular case the $\cin$-ring $\cF(\scM)$ is easy to compute. For instance, we have: 

\begin{example}[Tangent and cotangent bundles of manifolds]\label{ex:3.160}
    Fix a manifold $M$.  
      Then the module $\scM=\CDer(\cin(M))$ of derivations of $\scA=\cin(M)$ is isomorphic to the module of sections $\Gamma(TM)$ of its tangent bundle $TM\to M$.  Hence by Theorem~\ref{thm:4.01}, the $\cin$-ring $\cF(\CDer(\cin(M)))$ is (isomorphic to) the ring of smooth functions $\cin(T^\svee M)$ on the cotangent bundle $T^\svee M$ of $M$.

    The module $\Omega^1(M)$ of de Rham 1-forms on the manifold $M$ is the module of sections of the cotangent bundle $T^\svee M$.  Hence, the $\cin$-ring $\cF(\Omega^1(M))$ is (isomorphic to) the ring of smooth functions on the tangent bundle of $M$.
\end{example}  
\begin{remark} \label{rmrk:cotangent}
Motivated by Example~\ref{ex:3.160},  
 given a $\cin$-ring $\scA$ we view the $\scA$-algebra 
$\cF(\CDer(\scA))$ as the ``cotangent bundle" of the $\cin$-ring $\scA$.  Since the identity map $\id: \CDer(\scA)\to \CDer(\scA)$ makes the $\scA$-module $\CDer(\scA)$ into a Lie-Rinehart algebra, Theorem~\ref{thm:main} below implies that the $\cin$-ring $\cF(\CDer(\scA))$ comes equipped with a natural Poisson bracket.  If $\scA = \cin(M)$, the ring of smooth functions on a manifold $M$, then $\cF(\CDer(\cin(M))$ is $\cin(T^\svee M)$ with its natural Poisson bracket.
\end{remark}

\begin{remark}
Proposition~\ref{prop1.1} also allows us to define  the $\cin$-ring analogue of
``tangent bundles'' of commutative rings.  Recall that for a
commutative ring $R$ its tangent bundle is the symmetric power
$S^\bullet (\Omega^1_R)$, where $\Omega^1_R$ denotes the $R$-module of
(commutative algebraic) K\"ahler differentials.    Consequently for a
$\cin$-ring $\scA$ the $\cin$-ring $\cF(\Omega^1_\scA)$ should be its
tangent bundle.  Here $\Omega^1_\scA$ is the Dubuc-Kock module of
$\cin$-algebraic K\"ahler differentials (Definition~\ref{def:Kahler} and Theorem~\ref{thm:DK}).  In fact more is true: there is a  tangent category structure on the category $\op{\cring}$, the category opposite to the category $\cring$ of $\cin$-rings; see \cite{Ivan}.   Tangent categories were  introduced by Rosicky \cite{Ros} in 1984 and later generalized by Cockett and Cruttwell in 2014 \cite{CC}.
\end{remark}

Computing the free algebra $\cF(\scM)$ is not  hard.  We give several examples.
\begin{example}\label{ex:single_vf}
    Let \( X = f\,\partial_x \) be a vector field on \( \mathbb{R} \), where \( f \in C^\infty(\mathbb{R}) \) satisfies \( f(x) = 0 \) for all \( x \le 0 \). Let \( \mathscr{M} = \mathrm{Span}_{C^\infty(\mathbb{R})}(X) \subset \mathfrak X(\mathbb R)\) be the module over the \( C^\infty \)-ring \(\scA= C^\infty(\mathbb{R}) \) generated by \( X \). Consider the \(\scA\)-linear surjection
\[
\tau : \scA \longrightarrow \scM, \,1\to X.
\]We have \[\ker\tau =\{a\in \scA\mid af=0\}=:\mathrm{Ann}(f).\] Thus, \[\scM  \stackrel{\eta_\scM}{\longrightarrow}\cF(\scM)\simeq C^\infty(\mathbb R^2)/\langle a(x)\xi, a\in \mathrm{Ann}(f) \rangle\stackrel{\varphi_\scM\;\;\;}{\longleftarrow \scA},\]where $x$ and  $\xi$ denote the standard coordinates on $\mathbb{R}^2$. Here 
\[
\eta_\scM(X)=\xi+ \langle a(x)\xi, a\in \mathrm{Ann}(f) \rangle
\]
and $\varphi_\scM$  is the quotient map induced by the pullback of the projection on the first factor $\pr_1:\mathbb R^2\to \mathbb R$, $\pr_1(x,\xi) = x$. If the function $f$ vanishes exactly on $(-\infty,0]$, for example if
\[
f(x)=
\begin{cases}
e^{-1/x^2}, & x>0,\\
0, & x\le 0,
\end{cases}
\] 
then 
\[
\cF(\scM)\simeq C^\infty(\mathbb R^2)/\langle a(x)\xi, a|_{(0, \infty)}\equiv 0 \rangle.
\]
\end{example}

\begin{example}\label{ex:order2_vanishing}We consider the following example of vector fields on $\mathbb{R}^2$ vanishing to order two. Let  $\scM\subset \mathfrak X(\mathbb R^2)$ denote the module over $\scA=C^\infty(\mathbb R^2)$ generated by them. Explicitly, $\scM$ is globally generated by the vector fields \[x^2\partial_x,\, y^2\partial_x,\, xy\partial_x,\, x^2\partial_y,\, y^2\partial_y,\, xy\partial_y.\]
Define the \(\scA\)-linear surjection
\[
\tau :  \scA^{\oplus 6} \longrightarrow \scM
\]
by sending the standard basis \(e_1, \dots, e_6\) to
\[
\tau(e_1) = x^2 \partial_x, \quad
\tau(e_2) = xy \partial_x, \quad
\tau(e_3) = y^2 \partial_x,
\]
\[
\tau(e_4) = x^2 \partial_y, \quad
\tau(e_5) = xy \partial_y, \quad
\tau(e_6) = y^2 \partial_y.
\]
The  kernel of $\tau$ is given by
\[
\ker(\tau) =\operatorname{Span}_\scA \{ x e_2-y e_1,\; x e_3-y e_2,\; x e_5-y e_4 ,\; x e_6-y e_5 \}.
\]    We have
\[
\cF(\scM) \simeq \cF(\scA^{\oplus 6})\big/ \langle \eta_{\scA^{\oplus 6}}(\ker \tau) \rangle,
\]
where
\[
\cF(\scA^{\oplus 6}) \simeq \scA \otimes_{\infty} C^\infty(\mathbb{R}^6),
\]
and, since ${\scA^{\oplus 6}}$ is a free module,
\[
\eta_{\scA^{\oplus 6}}\left(\sum_{i=1}^6 a_i e_i\right) = \sum_{i=1}^6 a_i \,\xi_i,\quad\text{and}\quad \varphi_{\scA^{\oplus 6}}=\iota_{\scA^{\oplus 6}}
\]
with $\xi_1,\dots,\xi_6$ denoting the standard coordinates on $\mathbb{R}^6$. More precisely, \[\cF(\scM)\simeq C^\infty(\mathbb R^2\times \mathbb R^6)/\langle\, x \xi_2-y \xi_1,\; x \xi_3-y \xi_2,\; x \xi_5-y \xi_4 ,\; x \xi_6-y \xi_5 \,\rangle.\]    
\end{example}

In the next  two examples  \( C^\infty \)-rings are not algebras of functions on smooth manifolds and the modules  are neither free nor projective. 

\begin{example}[The cotangent bundle of dual numbers, cf.\ Remark~\ref{rmrk:cotangent}]\label{ex:dual_numbers}
Consider the $\cin$-ring $\scA := \cin(\R)/(x^2)$. Hadamard's lemma implies that
\[
\scA = \{a + b \varepsilon \mid \varepsilon^2 = 0, a,b\in \R\}.
\]      
Since $x\frac{d}{dx} (x^2) = 2x^2\in \langle x^2\rangle$, the vector field $x\frac{d}{dx}$ induces a derivation $v:\scA\to \scA$.  Note that 
\[
v (a + b x +\langle x^2\rangle) = bx +\langle x^2\rangle.
\]
That is, $v(\varepsilon) = \varepsilon$    and $v(1) = 0$ (as one should expect).  We thus write
\[
v = \varepsilon \frac{d}{d\varepsilon}.
\]
We next argue that any other derivation $w\in \CDer(\scA)$ is of the form $w = c v$ for some $c\in \R$.
Indeed, $w(1) = 0$ and $w(\varepsilon) = \alpha + \beta \varepsilon$ for some $\alpha, \beta\in \R$.  But 
\[
0 = w(0) = w(\varepsilon^2) = 2 \varepsilon w(\varepsilon) = 2\varepsilon \cdot (\alpha + \beta \varepsilon).
\]
Hence $\alpha = 0$. 
Since 
\[
((a+ b \varepsilon) v )\,(\varepsilon) = (a+ b\varepsilon) \varepsilon = a \varepsilon = (av) (\varepsilon),
\]
\[
(a+ b \varepsilon) v  = a v.
\]
It follows that $\CDer(\scA)$ is {\em not} a free module over $\scA$. Since $\scA$ is a local ring, a theorem of Kaplansky implies that the module $\CDer(\scA)$  is not projective.  In fact, it is not even torsion free. 
We have a short exact sequence of $\scA$-modules
\[
0\to \R\varepsilon \to \scA \to \CDer(\scA) \to 0.
\]
Since $\scA$ is a free rank 1 $\scA$-module, $\cF(\scA) = \scA \otimes_\infty\cin(\R^1)$ and $\eta:\scA \to \cF(\scA)$ is 
\[
\eta(a) = \imath_\scA (a) \imath_{\cin(\R)} (y)
\]
where $y:\R\to \R$ is the coordinate function.   It follows that $\cF(\CDer(\scA))$ is the quotient of $\cF(\scA)$ by the ideal generated by $\eta(\R \varepsilon)$ (see proof of Corollary~\ref{cor:3.150}). If we abuse notation and identify  
\[
\scA \otimes_\infty\cin(\R^1) = \left(\cin(\R)/\langle x^2 \rangle\right)\otimes_\infty\cin(\R^1) \simeq \cin(\R^2)/\langle x^2 \rangle,
\]
then 
\[
\eta (a+ b \varepsilon) = \eta( a + bx +(x^2)) = (a + b x)y +\langle x^2\rangle.
\]
In particular 
\[
\eta (\varepsilon) = xy +\langle x^2 \rangle.
\]
It follows that 
\[
\cF (\CDer(\scA) )\simeq \cin(\R^2))/\langle x^2, xy \rangle
\]
and that $\eta_{\CDer(\scA)}: \CDer(\scA) \to \cF(\CDer(\scA))$ is determined by 
\[
\eta_{\CDer(\scA)} (v) = y +\langle x^2, xy\rangle.
\]
\end{example}

\begin{example}\label{ex:the_cross}
    Consider the ideal $\langle xy \rangle $ in the $\cin$-ring $\cin(\R^2)$ generated by the product of the two standard coordinate functions $x$ and $y$.   Let $\scA = \cin(\R^2)/\langle xy\rangle$ be the quotient $\cin$-ring. Let  $\scM=\CDer(\scA)$. A direct computation using Hadamard’s theorem twice shows that $\scM$ is generated by the classes of the elements $x\partial_x, y\partial_y$. The kernel of the $\scA$-linear map $\scA^2\to \CDer(\scA)$ that sends the standard basis elements $e_1$ and $e_2$ to $x\partial_x$ and $y\partial_y$, respectively, is generated by $(y,x)$.  The corresponding $\cin$-ring $\cF(\CDer(\scA))$ is 
\[
C^{\infty}(\mathbb R^2\times \mathbb R^2)/\langle xy,  y\xi, x\nu \rangle\simeq  \scA/\langle y+ \langle xy \rangle\rangle\oplus \scA/\langle x+ \langle xy \rangle \rangle
\]
    $x,y, \xi, \nu$ are the standard coordinates for $\mathbb R^4=\mathbb R^2\times \mathbb R^2$. The structure maps are given as follows:\begin{align*}
        \eta_\scM(x\partial_x)=\xi\quad&\text{and}\quad \eta_\scM(y\partial_y)=\nu\\\\\varphi_\scM(x + \langle xy\rangle)= x +\langle xy,  y\xi, x\nu \rangle \quad&\text{and}\quad  \varphi_\scM(y + \langle xy\rangle)= y +\langle xy,  y\xi, x\nu \rangle.
    \end{align*}
\end{example} 

\section{The ``symmetric algebra" functor and projective modules over manifolds} \label{sec:free2}

The goal of this section is to prove that when the $\cin$-ring $\scA$ is the ring of smooth functions on a manifold $M$ and $\scM= \Gamma(E)$ is the module of sections of a vector bundle $E\to M$ then the ``symmetric algebra" $\cF(\scM)$ is the $\cin$-ring $\cin(E^{\svee})$ of smooth functions on the total space of the dual bundle $E^{\svee}\xrightarrow{p} M$.  The $\cin(M)$-algebra structure on $\cin(E^{\svee})$ is given by the pullback map
\[
p^*: \cin(M)\to \cin(E^{\svee}).
\]
More precisely we  prove
\begin{theorem}\label{thm:4.01}
 Let $p:E\to M$ be a vector bundle over a manifold $M$, $p^{\svee}:E^{\svee}\to M$ the dual bundle.  Then the map
\begin{equation}    \label{eq:4.2.01}
\eta_E: \Gamma(E) \to \cin (E^{\svee}), \qquad \eta_E (s) (q, \ell) = \langle \ell, s(q)\rangle
\end{equation}
(where $q\in M$, $\ell \in E^{\svee}_q$ and $\langle \cdot\,, \cdot \rangle$ the canonical pairing between the fibers of the bundles $E\to M$ and $E^{\svee}\to M$) is the universal arrow from the $\cin(M)$-module $\Gamma(E)$ of sections of $E$ to the functor $U: \cin(M)/\cring \to \cin(M)\textrm{-}\Mod$.
\end{theorem}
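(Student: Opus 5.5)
The plan is to reduce to the trivial bundle case via a complement, and then use the quotient construction of Lemma~\ref{lem:3.09} together with Remark~\ref{rmrk:pedantic_arrow}. Since $M$ is second countable, there is a vector bundle $F\to M$ with $E\oplus F\simeq M\times \R^N$ for some $N$. This follows from the Serre--Swan theorem, or from embedding $E$ into a trivial bundle. Write $\scA=\cin(M)$. Then $\Gamma(E)\oplus\Gamma(F)\simeq \scA^{\oplus N}$ is free with basis the constant sections $e_1,\dots,e_N$.

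By Lemma~\ref{lem:3.08} together with Remark~\ref{rmrk:3.12b}(2), the universal arrow for $\scA^{\oplus N}$ is
\[
\scA^{\oplus N}\to \cin(M)\otimes_\infty \cin(\R^N)=\cin(M\times\R^N),\qquad \sum a_ie_i\mapsto \sum a_i(q)\xi_i .
\]
This is precisely $\eta_{M\times\R^N}$ from \eqref{eq:4.2.01} for the trivial bundle, once $(M\times\R^N)^\svee$ is identified with $M\times\R^N$. So the theorem holds for trivial bundles.

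Next, let $\tau:\Gamma(E\oplus F)\to\Gamma(E)$ be the projection. Its kernel is $K=\Gamma(F)$. By Lemma~\ref{lem:3.09}, the universal arrow for $\Gamma(E)$ is the induced map into
\[
\cin\big((E\oplus F)^\svee\big)\big/\langle \eta(\Gamma(F))\rangle .
\]
We have $(E\oplus F)^\svee=E^\svee\oplus F^\svee$, and $\eta(\Gamma(F))$ consists of the fiberwise linear functions $(q,\ell,\mu)\mapsto\langle\mu,t(q)\rangle$ for $t\in\Gamma(F)$. The key claim is that the ideal $I$ generated by these functions equals the ideal of all smooth functions on $E^\svee\oplus F^\svee$ vanishing on the closed submanifold $E^\svee\oplus 0$. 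Granting this claim, restriction to $E^\svee\oplus 0=E^\svee$ identifies the quotient with $\cin(E^\svee)$. The restriction is surjective because $E^\svee$ is a retract via the projection $E^\svee\oplus F^\svee\to E^\svee$. Under this identification the induced arrow $\overline\eta$ sends $s\in\Gamma(E)$ to $(q,\ell)\mapsto\langle \ell,s(q)\rangle$, which is $\eta_E$. Remark~\ref{rmrk:pedantic_arrow} then finishes the proof.

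The main obstacle is the ideal identification. The inclusion of $I$ in the vanishing ideal is obvious. For the reverse inclusion, I would use a fiberwise Hadamard lemma. A function $g$ vanishing on the zero section of $F^\svee$ (pulled back along $E^\svee$) satisfies
\[
g(q,\ell,\mu)=\int_0^1\frac{d}{dt}g(q,\ell,t\mu)\,dt=\langle \mu, h(q,\ell,\mu)\rangle ,
\]
where $h$ is a smooth section of the pullback of $F$ to $E^\svee\oplus F^\svee$. I would then write $h=\sum_j c_j\,\pi^*t_j$ using finitely many global sections $t_1,\dots,t_k$ generating $F$, which exist by finite dimensionality of the bundle via an embedding into a trivial bundle. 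This expresses $g=\sum_j c_j\,\eta(t_j)\in I$.

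One point needs care: the ideal in Lemma~\ref{lem:3.09} is the algebraic ideal generated in the ring, with no closure taken. The finite generation above is exactly what guarantees that $g$ lies in this algebraic ideal. Smoothness of $h$ follows from differentiating under the integral sign in local trivializations, and the local pieces glue because the formula $h=\int_0^1 \partial_\mu g(q,\ell,t\mu)\,dt$ is intrinsic.
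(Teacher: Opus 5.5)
Your proposal is correct and follows essentially the same route as the paper: a complement $F$ with $E\oplus F$ trivial, the trivial case via Lemma~\ref{lem:3.08} and the coproduct $\cin(M)\otimes_\infty\cin(\R^N)=\cin(M\times\R^N)$, then Lemma~\ref{lem:3.09} for the quotient by $\langle\eta(\Gamma(F))\rangle$. The remaining step is likewise the paper's: a fiberwise Hadamard lemma plus finite generation of $\Gamma(F)$ (the paper's Theorem~\ref{thm:4.4} and Lemma~\ref{lem:below}) identifies that ideal with the vanishing ideal of $E^\svee$ in $(E\oplus F)^\svee$. The differences are cosmetic: the paper phrases the Hadamard step through the fiber Legendre transform on the bundle $H^\svee\to E^\svee$, and it invokes the naturality Lemma~\ref{lem:4.02} to transport universal arrows along bundle isomorphisms, which you use only implicitly.
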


\begin{example}[$b$-Tangent and $b$-cotangent bundles]\label{ex:4.3}
 Let $(M,Z)$ be a $b$-manifold, that is, $Z \subset M$ is a codimension-one submanifold (see \cite{VMR}). Let $\scM={}^b\mathfrak X(M)$ be the $\cin(M)$-module of vector fields on $M$ that are tangent to $Z$. Note that ${}^b\mathfrak X(M)$ is a locally free $\cin(M)$-module, generated locally by
\[
x_1 \partial_{x_1}, \partial_{x_2}, \dots, \partial_{x_n}
\]
in a coordinate chart $(x_1,\dots,x_n)$ adapted to $Z=\{x_1=0\}$.

By the Serre--Swan theorem \cite{Jet, Swan} the finitely generated projective module ${}^b\mathfrak X(M)$ is isomorphic to the module of sections of a vector bundle ${}^bTM \to M$, called the $b$-tangent bundle of $(M,Z)$.

By Theorem~\ref{thm:4.01}, the $\cin$-ring $\cF({}^b\mathfrak X(M))$ is isomorphic to the ring of smooth functions $\cin({}^bT^\svee M)$ on the $b$-cotangent bundle ${}^bT^\svee M$, that is, on the dual bundle of ${}^bTM$.
\end{example}

Our proof of Theorem~\ref{thm:4.01} is fairly long.
We start with a check that the maps $\eta_E: \Gamma(E) \to \cin (E^{\svee})$ of Theorem~\ref{thm:4.01} are natural in the vector bundle $E$.

\begin{lemma} \label{lem:4.02}
Let $\tau_i:H_i\to M$, $i=1,2$ be two vector bundles over a manifold $M$ and $f:H_1\to H_2$  a  vector bundle map (covering $\id_M$). Let $f^{\svee}:H_2^{\svee} \to H^{\svee}_1$ be the dual map, $(f^{\svee})^*: \cin(H^{\svee}_1) \to \cin(H_2^{\svee})$ the induced pullback map and 
\[
f_* :\Gamma(H_1) \to \Gamma(H_2), \quad f_* (s):= f\circ s
\] 
the induced map on sections. Then the diagram 
\begin{equation} \label{eq:4.2.02}
\xy
(-30,10)*+{\Gamma(H_1 )}="1";
(15,10)*+{\cin(H_1^{\svee})}="2";
 (-30,-8)*+{\Gamma(H_2)}="3";
(15,-8)*+{\cin(H_2^{\svee})}="4"; 
{\ar@{->}_{f_*} "1";"3"};
{\ar@{->}^{(f^{\svee})^*} "2";"4"};
{\ar@{->}^{\eta_{H_1}} "1";"2"};
{\ar@{->}_{\eta_{H_2}} "3";"4"};
\endxy
\end{equation}
commutes.  The maps $\eta_{H_i}$, $i=1,2$, are given by \eqref{eq:4.2.01}, {\em mutatis mutandis}.
\end{lemma}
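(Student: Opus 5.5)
The plan is to check commutativity of \eqref{eq:4.2.02} pointwise, directly from the definitions. Both composites send a section $s\in\Gamma(H_1)$ to a smooth function on the total space $H_2^{\svee}$, and two such functions agree exactly when they agree at every point $(q,\ell)$ with $q\in M$ and $\ell\in (H_2^{\svee})_q = (H_2)_q^{\svee}$.

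First we record the description of the dual map $f^{\svee}$. Since $f$ covers $\id_M$, for each $q\in M$ it restricts to a linear map $f_q:(H_1)_q\to (H_2)_q$. The dual map $f^{\svee}$ also covers $\id_M$, and on fibers it is the transpose $f^{\svee}(q,\ell) = (q,\ell\circ f_q)$. In terms of the pairing this says
\[
\langle f^{\svee}(q,\ell), v\rangle = \langle \ell, f_q(v)\rangle \quad \textrm{for all } v\in (H_1)_q.
\]

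Next we evaluate both composites at $(q,\ell)$. The top-right route gives
\[
\big((f^{\svee})^*\eta_{H_1}(s)\big)(q,\ell) = \eta_{H_1}(s)\big(f^{\svee}(q,\ell)\big) = \langle \ell\circ f_q, s(q)\rangle = \langle \ell, f_q(s(q))\rangle.
\]
The left-bottom route gives
\[
\eta_{H_2}(f_*s)(q,\ell) = \langle \ell, (f\circ s)(q)\rangle = \langle \ell, f_q(s(q))\rangle.
\]
The two values coincide, so the square commutes.

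There is no serious obstacle here. The only point needing care is the convention that $f^{\svee}$ is the fiberwise transpose covering $\id_M$, together with the fact that $f^{\svee}$ is smooth, so that $(f^{\svee})^*$ really maps $\cin(H_1^{\svee})$ into $\cin(H_2^{\svee})$. Smoothness is standard: in local trivializations $f^{\svee}$ is given by the transpose of a smooth matrix-valued function on $M$. The smoothness of $\eta_{H_i}(s)$ is similarly standard, since locally it is linear in the fiber coordinates with coefficients given by the components of $s$, and in any case it is implicit in the statement.
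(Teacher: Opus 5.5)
Your proposal is correct and is essentially the paper's own proof: both evaluate the two composites at a point $(q,\ell)$ of $H_2^{\svee}$ and use that $f^{\svee}$ acts on each fiber as the transpose $(f_q)^{\svee}\colon (H_2)_q^{\svee}\to (H_1)_q^{\svee}$, which gives $\langle \ell, f_q(s(q))\rangle$ on both sides. Your smoothness remarks are a harmless addition; the paper takes them for granted.
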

\begin{proof}
The proof is a computation. Given a point $q\in M$ we denote the linear map on fibers by $f_q$:  $f_q: (H_1)_q\to (H_2)_q$.  Then the dual of $f_q$ is $(f^{\svee})_q: (H_2)_q^{\svee} \to (H_2)_q^{\svee}$, i.e., $(f_q)^{\svee} = (f^{\svee})_q$.  Now given a section $s\in \Gamma(H_1)$ and a vector $\ell \in (H_2^{\svee})_q$ we have
\[
\begin{split}
((f^{\svee})^* \eta_{H_1}(s))(q, \ell) &= \eta_{H_1}(s)\left((f^{\svee})(q,\ell)\right) = \langle (f^{\svee})_q \ell, s(q)\rangle = \langle (f_q)^{\svee} \ell, s(q)\rangle \\
&= \langle \ell, f_q (s(q))\rangle = \langle \ell, (f_* s)(q)\rangle = \eta_{H_2}(f_*s) (q,\ell).
\end{split}
\]
Hence $(f^{\svee})^* \eta_{H_1}(s) = \eta_{H_2}(f_*s)$ for all sections $s$ of $H_1\to M$ and we are done.    
\end{proof}
Next we prove a special case of Theorem~\ref{thm:4.01}, which may also work as a sanity check.
\begin{lemma}  \label{lem:4.03}
  Let $p:H\to M$ be a {\em trivial} vector bundle over a manifold $M$, $p^{\svee}:H^{\svee}\to M$ the dual bundle.  Then the map
\[
\eta_H: \Gamma(H) \to \cin (H^{\svee}), \qquad \eta_H (s) (q, \ell) = \langle \ell, s(q)\rangle
\] 
(where as in Theorem~\ref{thm:4.01} $q\in M$, $\ell \in H^{\svee}_q$ and $\langle \cdot\,, \cdot \rangle$ the canonical pairing between the fibers of the bundles $H\to M$ and $H^{\svee}\to M$) is the universal arrow from the $\cin(M)$-module $\Gamma(H)$ of sections of $H$ to the functor $U: \cin(M)/\cring \to \cin(M)\textrm{-}\Mod$.  
\end{lemma}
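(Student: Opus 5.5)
We reduce to Lemma~\ref{lem:3.08} together with the computation of coproducts in Remark~\ref{rmrk:3.12b}, and then use Remark~\ref{rmrk:pedantic_arrow} to transport universality along isomorphisms.

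First, we trivialize. Choose a trivialization $H\simeq M\times \R^k$, i.e.\ a global frame $s_1,\ldots,s_k$ of $H$. Then $\Gamma(H)$ is a free $\cin(M)$-module with basis $\{s_i\}$. The frame gives dual coordinates on $H^{\svee}$: the fiberwise linear functions $\xi_i:=\eta_H(s_i)$, $\xi_i(q,\ell)=\langle \ell, s_i(q)\rangle$. These identify $H^{\svee}\simeq M\times \R^k$, with $\xi_i$ becoming the coordinate functions on $\R^k$.

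Second, we compare with the free universal arrow. By Lemma~\ref{lem:3.08}, the universal arrow for the free module $\Gamma(H)$ with basis indexed by $Y=\{1,\ldots,k\}$ is
\[
\eta:\Gamma(H)\to \cin(M)\otimes_\infty \bbF(Y)=\cin(M)\otimes_\infty\cin(\R^k),\qquad \eta\Big(\sum a_i s_i\Big)=\sum a_i\cdot y_i .
\]
By item (2) of Remark~\ref{rmrk:3.12b}, $\cin(M)\otimes_\infty\cin(\R^k)=\cin(M\times\R^k)$, and the coproduct inclusions are pullbacks along the two projections. Hence $\imath_{\cin(M)}$ is $\pr_M^*$ and $\imath(y_i)$ is the $i$-th coordinate function on $\R^k$.

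Third, we apply Remark~\ref{rmrk:pedantic_arrow}. Let $g:\cin(M\times\R^k)\to\cin(H^{\svee})$ be the $\cin$-ring isomorphism given by pullback along the diffeomorphism $H^{\svee}\to M\times\R^k$, $(q,\ell)\mapsto (q,\xi_1(q,\ell),\ldots,\xi_k(q,\ell))$. This diffeomorphism commutes with the projections to $M$, so $g\circ \pr_M^*=(p^{\svee})^*$, and $g$ is therefore an isomorphism of $\cin(M)$-algebras. Moreover
\[
g\Big(\sum a_i y_i\Big)=\sum (p^{\svee})^*a_i\,\xi_i=\eta_H\Big(\sum a_i s_i\Big),
\]
the last equality by fiberwise linearity of the pairing. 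Thus $U(g)\circ\eta=\eta_H$, with $f=\id$. Since $\eta$ is universal, Remark~\ref{rmrk:pedantic_arrow} shows that $\eta_H$ is universal.

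The main point needing care is the identification $\cin(M)\otimes_\infty\cin(\R^k)\simeq\cin(M\times\R^k)$ with the correct structure maps. We take this from Remark~\ref{rmrk:3.12b}(2) (Moerdijk--Reyes), so the remaining steps reduce to bookkeeping.
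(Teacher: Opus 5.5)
Your proof is correct and follows essentially the same route as the paper: reduce to the free-module universal arrow of Lemma~\ref{lem:3.08}, identify $\cin(M)\otimes_\infty\cin(\R^k)$ with $\cin(M\times\R^k)$ via Moerdijk--Reyes, and transport universality with Remark~\ref{rmrk:pedantic_arrow}. The only difference is bookkeeping. The paper moves both sides through a bundle isomorphism $H\cong M\times V$ and the naturality Lemma~\ref{lem:4.02}. You instead keep $\Gamma(H)$ fixed and transport only along the $\cin(M)$-algebra isomorphism induced by the dual frame on $H^{\svee}$, which lets you bypass Lemma~\ref{lem:4.02}.
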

\begin{proof}
Since the bundle $H$ is trivial, there is a finite dimensional vector space $V$ and an isomorphism $f:H\to M\times V$ of vector bundles over $M$. By Lemma~\ref{lem:4.02} the diagram
\[
\xy
(-30,10)*+{\Gamma(H)}="1";
(15,10)*+{\cin(H^{\svee})}="2";
 (-30,-8)*+{\Gamma(M\times V)}="3";
(15,-8)*+{\cin(M \times V^{\svee})}="4"; 
{\ar@{->}_{f_*} "1";"3"};
{\ar@{->}^{(f^{\svee})^*} "2";"4"};
{\ar@{->}^{\eta_{H}} "1";"2"};
{\ar@{->}_{\eta_{M\times V}} "3";"4"};
\endxy
\]

By Remark~\ref{rmrk:pedantic_arrow} it is enough to argue that $\eta_{M\times V}$ is a universal arrow.  Recall that for any two manifolds $N_1$, $N_2$ and the projections $p_i:N_1\times N_2 \to N_i$ the diagram
\[
\cin(N_1) \xrightarrow{p_1^*} \cin(N_1\times N_2) \xleftarrow{p_2^*} \cin(N_2)
\]
is the coproduct diagram (see \cite{MR}).  Thus 
\[
\cin(M) \xrightarrow{p_M^*} \cin(M\times V^{\svee}) \xleftarrow{p_{V^{\svee}}^*} \cin(V^{\svee})
\]
makes $\cin(M\times V^\svee)$ into the coproduct $\cin(M)\otimes_{\infty} \cin(V^{\svee})$ (where $p_M, p_{V^{\svee}}$ are the evident projections).  Choose a basis $\{f_1, \ldots, f_k\}$ of the vector space $V$.   If we view $f_1,\ldots, f_k$ as constant sections of the product vector bundle $M\times V\to V$, then $\{f_1, \ldots, f_k\}$ is a basis of the $\cin(M)$-module of sections $\Gamma(M\times V)$ of the product bundle $M\times V\to V$.  Thus, an arbitrary section $s\in \Gamma(M\times V)$ is of the form 
\[
s = \sum a_i f_i
\]
for some (unique) functions $a_1, \ldots, a_k \in \cin(M)$. 
If we view $f_1,\ldots, f_k$ as real-valued functions on $V^{\svee}$ then the set $\{f_1, \ldots, f_k\}$ freely generates the $\cin$-ring $\cin(V^{\svee})$.  In the notation of Lemma~\ref{lem:3.08}
\[
\cin(V^{\svee}) = \bbF (\{f_1, \ldots, f_k\}).
\]
By Lemma~\ref{lem:3.08} and in particular by \eqref{eq:eta_free},  the map
\[
\begin{split}
\eta_{\Gamma(M\times V)} :\Gamma(M\times V) &\xrightarrow{\quad } \cin(M)\otimes_\infty \bbF(\{f_1, \ldots, f_k\}) = \cin(M)\otimes_\infty \cin(V^{\svee}) = \cin(M\times V^{\svee}),\\ 
\eta_{\Gamma(M\times V)}(\sum a_i f_i) &=  \sum a_i f_i 
\end{split}
\]
is a universal arrow.  On the other hand for any point $(q,\ell) \in M\times V^\svee$ and any section $s = \sum _i a_i f_i$
\[
\begin{split}
\eta_{M\times V} (s)\, (q, \ell) &= \langle \ell, (\sum a_i f_i)\, (q)\rangle = \langle \ell, \sum a_i(q) f_i \rangle \\ &= \sum_i a_i(q) \langle \ell, f_i\rangle = \sum a_i (q, \ell)\cdot f_i (q, \ell) = (\sum a_i f_i) (q, \ell).
\end{split}
\]
Thus 
\[
\eta_{M\times V} (\sum a_i f_i) = \sum a_i f_i = \eta_{\Gamma(M\times V)}(\sum a_i f_i).
\]
Therefore $\eta_{M\times V}$ is a universal arrow, and we are done.
\end{proof}

Next we need a version of Hadamard's lemma.
\begin{theorem} \label{thm:4.4}
Let $H\xrightarrow{\tau} B$ be a vector bundle, $B\xrightarrow{z}H$ the zero section and $I \subset \cin(H)$ the ideal of functions that vanish on the zero section $z(B)$.  For any set $\{\nu_1,\ldots \nu_k\}$ of generators of the $\cin(B)$-module $\Gamma(H^{\svee})$ of sections of the dual bundle $H^{\svee} \xrightarrow{\tau^{\svee}} B$ the set of functions $\{\eta(\nu_1), \ldots, \eta(\nu_k)\}$ generates the ideal $I$. Here 
\[
    \eta:  \Gamma(H^{\svee}) \to \cin(H), \qquad \eta(s) (q,v):= \langle s(q), v\rangle
\]
is the canonical embedding of the set of sections of the dual bundle into smooth functions on the total space of the bundle $H\to B$.
\end{theorem}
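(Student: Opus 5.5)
The plan is to prove the two inclusions separately. The inclusion $\langle \eta(\nu_1),\ldots,\eta(\nu_k)\rangle \subseteq I$ is immediate. Each function $\eta(\nu_j)(q,v) = \langle \nu_j(q), v\rangle$ is linear on the fibers, so it vanishes at $v=0$, i.e.\ on $z(B)$. Since $I$ is an ideal, it contains the ideal these functions generate.

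For the reverse inclusion, take $g\in I$. I would apply the fiberwise Hadamard lemma: writing $g(q,v) = \int_0^1 \frac{d}{dt} g(q,tv)\,dt$ gives
\[
g(q,v) = \Big\langle \int_0^1 d^{\mathrm{vert}}g(q,tv)\,dt,\; v\Big\rangle,
\]
where $d^{\mathrm{vert}}g(q,w)\in H_q^{\svee}$ is the fiber derivative of $g$ at $(q,w)$. This formula is intrinsic, with no trivialization needed. It is only meaningful once one defines the vertical derivative as a smooth section $\sigma$ of the pullback bundle $\tau^*H^{\svee}\to H$, namely
\[
\sigma(q,v) := \int_0^1 d^{\mathrm{vert}}g(q,tv)\,dt .
\]
Smoothness of $\sigma$ follows from differentiation under the integral sign, and it can be checked locally in a trivialization where it is the usual Hadamard computation. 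With this notation, $g(q,v) = \langle \sigma(q,v), v\rangle$.

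Next, since $\nu_1,\ldots,\nu_k$ generate $\Gamma(H^{\svee})$ over $\cin(B)$, their pullbacks $\tau^*\nu_j$ generate $\Gamma(\tau^*H^{\svee})$ as a $\cin(H)$-module. I would prove this as follows. Surjectivity of $\cin(B)^k\to\Gamma(H^{\svee})$ implies that the bundle map $B\times\R^k\to H^{\svee}$, $(q,c)\mapsto\sum c_j\nu_j(q)$, is fiberwise surjective: if it failed to be surjective at some point $q$, no section taking a value outside the image at $q$ could be generated. A fiberwise surjective bundle map has a smooth right inverse, obtained by choosing a fiber metric and splitting via the orthogonal complement of the kernel. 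Pulling this back along $\tau$ gives smooth functions $c_j\in\cin(H)$ with $\sigma = \sum_j c_j\,\tau^*\nu_j$. Substituting,
\[
g(q,v) = \sum_j c_j(q,v)\langle\nu_j(q),v\rangle = \sum_j c_j\,\eta(\nu_j)(q,v),
\]
so $g$ lies in the ideal generated by the $\eta(\nu_j)$.

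The main obstacle is making the fiber derivative and the Hadamard integral globally well defined and smooth as a section of $\tau^*H^{\svee}$, together with the splitting argument that the $\tau^*\nu_j$ generate the pullback sections. Of these, I would handle the splitting first, since it is the less routine step. The fallback, if the global construction becomes unwieldy, is to do everything in trivializing charts, where the claim reduces to the classical Hadamard lemma in coordinates, and then glue with a partition of unity on $B$. This works because the ideal condition is $\cin(H)$-linear, so local expressions $g=\sum c_j^{U}\eta(\nu_j)$ on $\tau^{-1}(U)$ combine via $\tau^*\rho_U$ into a global one.
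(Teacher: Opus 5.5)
Your proposal is correct and follows the paper's proof. The paper writes $f(q,v)=\langle \int_0^1 \cL_f(q,tv)\,dt,\, v\rangle$, where $\cL_f$ is the fiber derivative (it calls this the Legendre transform), treats $\phi_f=\int_0^1\cL_f(\cdot,t\,\cdot)\,dt$ as a section of $H^{\svee}$ along $\tau$, and then applies Lemma~\ref{lem:below} to conclude that the $\nu_i\circ\tau$ generate such sections. The one difference is how that lemma is justified: the paper uses the isomorphism $\cin(N)\otimes_{\cin(B)}\Gamma(E)\cong\Gamma(F^*E)$, while you use fiberwise surjectivity of $B\times\R^k\to H^{\svee}$ together with a metric splitting, and both arguments work.
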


\begin{proof}
Let $f\in I$ be a function in the ideal $I$.  Then for any point $q\in B$, $f(q,0) =0$.  Therefore, for any vector $v\in H_q$, the fiber of $H$ above $q\in B$ we have
\begin{equation}
f(q,v) = f(q,v) -f(q,0) = \int_0^1 \left. \frac{d}{ds}\right|_{s=t} f(q,sv)\, dt.
\end{equation}
Recall next that for any function $f\in \cin(H)$ we have the Legendre transform 
\[
\cL_f:H\to H^{\svee}, \qquad \langle \cL_f(q,v), w\rangle = \left. \frac{d}{dt}\right|_{t=0} f(q,v + tw)
\]
The Legendre transform $\cL_f$ is a smooth map between vector bundles covering the identity map on the base.  It is not, in general, linear on the fibers.
However, if $f = \eta(s)$ for a section $s\in \Gamma(H)$ 
then
\[
\left. \frac{d}{dt}\right|_{t=0} \eta(s)(q,v + tw) = \left. \frac{d}{dt}\right|_{t=0} \langle s(q), v + tw\rangle = \langle s(q), w\rangle.
\]
That is,
\begin{equation}
    \cL_{\eta(s)}  = s\circ \tau,
\end{equation}
where as before $\tau:H\to B$ is the canonical projection.
It follows from the above discussion that for any function $f\in \cin(H)$
\[
\left. \frac{d}{ds}\right|_{s=t} f(q,sv) = \left. \frac{d}{ds}\right|_{s=0} f(q,tv + sv) = \langle \cL_f (q,tv), v\rangle
\]
For any fixed point $q\in B$ the path
\[
t\mapsto \cL_f(q, tv)
\]
is a smooth path in a finite dimensional vector space $H^{\svee}_q$.   Therefore, the integral $\int_0^1 \cL_f (q,tv)\,dt$ makes sense.  Moreover, 
\[
\int_0^1 \langle \cL_f (q,tv),v\rangle \, dt = \langle  \int_0^1 \cL_f (q,tv)\, dt , v\rangle.
\]
We conclude that for any $f\in I$
\begin{equation}
    f(q,v) = \langle \int_0^1 \cL_f (q,tv)\, dt , v\rangle.
\end{equation}
Therefore, given a function $f\in I$ we define $\phi_f:H\to H^{\svee}$ by
\[
\phi_f (q,v) := \int_0^1 \cL_f (q,tv)\, dt .
\]
To finish the proof it is enough to show that there exist functions $h_1,\ldots, h_k \in \cin(H)$ so that 
\begin{equation} \label{eq:D.1.4}
\phi_f (q,v) = \sum_i h_i(q,v ) \nu_i (q).
\end{equation}
For then
\[
f(q,v) = \langle \sum h_i (q,v ) \,\nu_i(q), v\rangle = \sum h_i(q,v)\cdot \langle \nu_i(q),v\rangle
\]
for all $(q,v)\in H$, i.e.
\[
f = \sum h_i \cdot \eta (\nu_i).
\]
\mbox{}\\
The trick to proving \eqref{eq:D.1.4} is to view the set
\[
\{\phi:H\to H^{\svee} \mid \tau^{\svee} \circ \phi = \tau\}
\]
of all (not necessarily linear) maps between the two vector bundles as a module  $\Gamma_\tau (H^{\svee})$ (over $\cin(H)$) of sections of the vector bundle $H^{\svee}\to B$ along the map $\tau:H\to B$.  The desired result then follows from Lemma~\ref{lem:below} below.
\end{proof}

\begin{lemma} \label{lem:below}
    Let $E\to B$ be a vector bundle and $F:N\to B$ a smooth map.  If a set $\{s_1,\ldots, s_k\}$ generates the $\cin(B)$-module $\Gamma(E)$ of sections of $E$, then the set $\{s_1\circ F, \ldots, s_k \circ F\}$ generates the $\cin(N)$-module $\Gamma_F(E)$ of sections of $E$ along $F$.
\end{lemma}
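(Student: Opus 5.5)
The plan is to reduce to the trivial-bundle case with a partition of unity. First I will identify $\Gamma_F(E)$ with the sections of the pullback bundle $F^*E\to N$: a section along $F$ is a smooth map $\sigma:N\to E$ with $\sigma(x)\in E_{F(x)}$. This identification is a bijection of $\cin(N)$-modules. Under it, $s_i\circ F$ corresponds to the pulled-back section $F^*s_i$.

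Step one treats the local problem. Since the $s_i$ generate $\Gamma(E)$, the values $s_1(b),\ldots,s_k(b)$ span the fiber $E_b$ for every $b\in B$. To see this, pick $e\in E_b$, extend it to a global section $s$ using a bump function in a local trivialization, and write $s=\sum a_i s_i$. Hence the bundle map
\[
\Phi: B\times\R^k\to E,\qquad (b,c)\mapsto \sum c_i s_i(b),
\]
is fiberwise surjective. The pullback $F^*\Phi: N\times \R^k\to F^*E$ is then also a fiberwise surjective map of vector bundles over $N$.

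Step two lifts sections through $F^*\Phi$. A surjective vector bundle map has a smooth right inverse. Concretely, $\ker F^*\Phi$ is a subbundle, since the rank is constant. Choose a fiber metric on $N\times\R^k$ and take the orthogonal complement $K^\perp$. Then $F^*\Phi|_{K^\perp}:K^\perp\to F^*E$ is an isomorphism, and its inverse composed with the inclusion gives a splitting $\sigma:F^*E\to N\times\R^k$. Given $\psi\in\Gamma_F(E)$, set $h=\sigma\circ\psi$. This is a map $N\to\R^k$ with components $h_1,\ldots,h_k\in\cin(N)$, and
\[
\psi(x)=\sum_i h_i(x)\,s_i(F(x)),
\]
that is, $\psi=\sum h_i\,(s_i\circ F)$. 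This completes the argument.

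The main obstacle is the fiberwise spanning claim in step one, specifically the extension of a vector to a global section. This is standard, since manifolds admit bump functions. Everything else consists of routine constant-rank and metric arguments. An alternative route, avoiding the splitting, would cover $B$ by trivializing opens, express $\psi$ locally in a frame, express the frame in terms of the $s_i$ (from $\Gamma(E)$ generated by the $s_i$ after multiplying by bump functions), and glue with a partition of unity on $N$ subordinate to the $F^{-1}$ of the cover. I would fall back on this if the constant-rank claim needed more care.
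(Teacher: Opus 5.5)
Your argument is correct, but it takes a different route from the paper's. Both proofs begin by identifying $\Gamma_F(E)$ with $\Gamma(F^*E)$.

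The paper then invokes the standard isomorphism $\cin(N)\otimes_{\cin(B)}\Gamma(E)\xrightarrow{\simeq}\Gamma(F^*E)$, $h\otimes s\mapsto h\cdot F^*s$. The elements $1\otimes s_i$ generate the tensor product over $\cin(N)$, so their images $s_i\circ F$ generate $\Gamma(F^*E)$. Only surjectivity of that map is actually used.

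You prove the statement from scratch via the criterion that a finite family of sections generates the module of sections if and only if it spans every fiber. Generation gives fiberwise spanning by the bump-function extension. Spanning is preserved under pullback, because $(s_i\circ F)(x)=s_i(F(x))$ and $(F^*E)_x=E_{F(x)}$. Fiberwise spanning then gives generation by splitting the fiberwise surjection $F^*\Phi\colon N\times\R^k\to F^*E$.

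The paper's version is a two-line reduction that fits its module-theoretic viewpoint, where $\Gamma(F^*E)$ is the base change of $\Gamma(E)$. However, its analytic content is hidden in the cited tensor-product isomorphism, which is itself usually proved with local trivializations and partitions of unity, or via Serre--Swan. Your proof is self-contained and gives the coefficients by an explicit global formula, $h_i=\pr_i\circ\sigma\circ\psi$. Taking $F=\id_B$, it also shows that generating $\Gamma(E)$ is equivalent to spanning every fiber.

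One small remark: your main argument needs no partition of unity at all, despite your opening sentence. The standard Euclidean inner product on the trivial bundle $N\times\R^k$ already serves as the fiber metric used to form $K^\perp$.
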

\begin{proof}
On one hand the module $\Gamma_F(E)$  is canonically isomorphic to the module $\Gamma(F^* E)$ of the pullback $F^*E\to N$ of the vector bundle $E$ along the map $F$.  On the other hand there is an isomorphism 
\[
 \cin(N)\otimes_{\cin(B)} \Gamma(E)\xrightarrow{\quad \simeq\quad } \Gamma(F^* E), \qquad h\otimes s \mapsto h\cdot F^*s
\] 
of $\cin(N)$-modules. 
Tracing through the two isomorphisms we see that for any $\phi\in \Gamma_F (E)$ there exist $h_1,\ldots, h_k \in \cin(N)$ so that 
\[
\phi = \sum_i h_i \cdot (s_i \circ F) .
\]
\end{proof}

\begin{proof}[Proof of Theorem~\ref{thm:4.01}]  There is a vector bundle $F\to M$ so that the Whitney sum $H:= E\oplus F\to M$ is a trivial bundle.   We then have vector bundle surjections $p_E:H\to E$, $p_F:H\to F$ and inclusions $s_E:E\to H$, $s_F:F\to H$ with 
\[
\ker (p_E) = \im (s_F).
\]
By Lemma~\ref{lem:4.02} the diagram
\begin{equation} \label{eq:4.5.01}
\xy
(-30,10)*+{\Gamma(H )}="1";
(15,10)*+{\cin(H^{\svee})}="2";
 (-30,-8)*+{\Gamma(E)}="3";
(15,-8)*+{\cin(E^{\svee})}="4"; 
{\ar@{->}_{ (p_E)_*} "1";"3"};
{\ar@{->}^{(p_E^{\svee})^*} "2";"4"};
{\ar@{->}^{\eta_{H}} "1";"2"};
{\ar@{->}_{\eta_{E}} "3";"4"};
\endxy
\end{equation}
commutes. By Lemma~\ref{lem:4.03} the map $\eta_H$ is a universal arrow from $\Gamma(H)$ to the functor $U$.  By Lemma~\ref{lem:3.09} the map
\[
\overline{\eta}:\Gamma(E) \to \cin(H^\svee)/\langle \eta_H (\ker ( (p_E)_*)\rangle )
\]
is a universal arrow.  Since $\ker (p_E) = \im (s_F)$,
\[
\ker ( (p_E)_*) = \im ( (s_F)_*).
\]
By Remark~\ref{rmrk:pedantic_arrow}, if 
\begin{equation} \label{eq:4.5.02}
    \ker (( p_E^{\svee})^*)= \langle \eta_H (\ker (p_E)_*) \rangle = \langle \eta_H (\im (s_F)_*) \rangle,
\end{equation}
then $\eta_E: \Gamma(E) \to \cin(E^\svee)$ is a universal arrow.  So to finish our proof of Theorem~\ref{thm:4.01} it is enough to show that \eqref{eq:4.5.02} holds.
By  Lemma~\ref{lem:4.02},
\[
(s_F)_* w  = (s_F^\svee)^*\eta_F(w)
\]
for all sections $w\in \Gamma(F)$. Since the vector bundle $H$ is the Whitney sum of the bundles $E$ and $F$, the dual bundle $H^\svee$ is the Whitney sum of the dual bundles $\tau_{E^\svee} :E^\svee \to M$ and $\tau_{F^\svee} :F^\svee \to M$ with projections $s_E^\svee: H^\svee \to E^\svee$, $s_F^\svee: H^\svee \to F^\svee$ and inclusions $p_{E^\svee}: E^\svee \to H^\svee$ and $p_{F^\svee}: F^\svee \to H^\svee$.  Since $H^\svee$ is a Whitney sum of $E^\svee$ and $F^\svee$ the diagram
\[
\xy
(-15,10)*+{H ^\svee}="1";
(15,10)*+{F^{\svee}}="2";
 (-15,-8)*+{E^\svee}="3";
(15,-8)*+{M}="4"; 
{\ar@{->}_{ s_{E}^\svee} "1";"3"};
{\ar@{->}^{\tau_{F^{\svee}}} "2";"4"};
{\ar@{->}^{s_{F}^\svee} "1";"2"};
{\ar@{->}_{\tau_{E^\svee}} "3";"4"};
\endxy
\]
is a pullback diagram.   In particular $s_{E^\svee}:H^\svee \to E^\svee$ is a vector bundle; it's the pullback of $F^\svee\xrightarrow{\tau_{F^{\svee}}} M$ along $\tau_{E^\svee}$.  The zero section of $H^\svee \xrightarrow{s_{E^\svee}} E^\svee$ is $p_E^\svee$.  It follows that the kernel of the map $(p_E^\svee)^*$ is the ideal 
\[
I = \{f\in \cin(H^\svee)\mid f|_{p_E^\svee (E^\svee)} = 0\}
\]
 of  functions that vanish on the submanifold $p_E^\svee (E^\svee)$ of $H^\svee$.   It remains to prove that 
\[ 
I= \langle \{\eta_F(w) \circ s_F^\svee \mid w\in \Gamma(F)\}\rangle.
\]
Since $p_E\circ s_F = 0$, 
\[
\langle \{\eta_F(w) \circ s_F^\svee \mid w\in \Gamma(F)\}\rangle \subseteq I.
\]
To get the reverse inclusion we apply Theorem~\ref{thm:4.4} to the vector bundle $H^\svee \xrightarrow{s_E^\svee} E^\svee$.  Then for any choice $\{u_1, \ldots, u_k\}$ of generators of the module of sections of the dual bundle $(H^\svee \xrightarrow{s_E^\svee} E^\svee)^\svee$ the set 
$\{\eta(u_1), \ldots, \eta(u_k)\} \subset \cin(H^\svee)$ generates the ideal $I$. Here as in Theorem~\ref{thm:4.4}
\begin{equation} \label{eq:4.5.3}
    \eta : \Gamma ((H^\svee \xrightarrow{s_E^\svee} E^\svee)^\svee) \to \cin(H^\svee)
\end{equation}
is the canonical embedding.  

Since $H^\svee \xrightarrow{s_E^\svee} E^\svee$ is the pullback ${\tau_{E^\svee}}^* (F^\svee \to  M) $, the dual bundle 
${(H^\svee \xrightarrow{s_E^\svee} E^\svee )}^\svee$ 
is the pullback 
${\tau_{E^\svee}}^* (F^{\svee \svee}\to  M) = {\tau_{E^\svee}}^*(F\to M)$.  By Lemma~\ref{lem:below} we have a map 
\[
(\tau_{E^\svee})^*: \Gamma(F)\to \Gamma ({\tau_{E^\svee}}^* F)
\]
which sends a choice of generators of the $\cin(M)$-module $\Gamma(F)$ to a choice of generators of the $\cin(E^\svee)$-module $\Gamma (\tau_{E^\svee}^* F)$.  And then the image of {\em those } generators under the map \eqref{eq:4.5.3} generates the ideal $I$.

We now trace through various identifications, i.e., we compute the composite map
\[
\Gamma(F)\xrightarrow{(\tau_{E^\svee})^*} \Gamma ((H^\svee \to E^\svee)^\svee) \xrightarrow{\eta} \cin(H^\svee).
\]
If we think of total space of the vector bundle $H^\svee \to M$ as 
\[
H^\svee = \{(q, (p_E)^*\ell_1 + (p_F)^* \ell_2 )\mid q\in M, \ell_1\in E_q^\svee, \ell_2\in F_q^\svee\},
\]
then the projection $s_E^\svee :H^\svee \to E^\svee$ is 
\[
s_E^\svee (q, (p_E)^*\ell_1 + (p_F)^* \ell_2 ) = (q, \ell_1)
\]
and consequently the dual bundle $(H^\svee \to E^\svee)^\svee$ is 
\begin{equation} \label{eq:4.5.4}
(H^\svee \to E^\svee)^\svee = \{((q, \ell_1), v) \mid q\in M, \ell_1 \in E_q^\svee, v\in F_q\}. 
\end{equation}
Given \eqref{eq:4.5.4} the map $(\tau_{E^\svee})^*$ is 
\[
(\tau_{E^\svee})^* w) (q, \ell_1) = ((q, \ell_1), w(q))
\]
for all $q\in M$, $\ell_1\in E_q^\svee$.  On the other hand $\eta: (H^\svee \to E^\svee)^\svee) \to \cin(H^\svee)$ is 
\[
\eta(u)( q, (p_E)^*\ell_1 + (p_F)^* \ell_2) = \langle \ell_2, u(q, \ell_1)\rangle.
\]
Therefore 
\[
((\eta\circ \tau_{E^\svee})^*)(w)) (q, (p_E)^*\ell_1 + (p_F)^* \ell_2) = \langle \ell_2, w(q)\rangle = (\eta_F(w) \circ s_F^\svee)  (q, (p_E)^*\ell_1 + (p_F)^* \ell_2)
\]
for all $w\in \Gamma(F)$.  
Thus 
\[
I \subseteq \langle \{\eta_F(w) \circ s_F^\svee \mid w\in \Gamma(F)\}\rangle
\]
and we are done.
\end{proof}

\section{From Poisson $C^\infty$-rings to Lie-Rinehart algebras} \label{sec:5}

Recall that if $P$ is a manifold with a Poisson bivector field $\pi\in \Gamma(\Lambda^2TP)$, then the vector bundle map 
\[
\pi^\#:T^*P\to TP, \quad \pi^\# (q, \alpha) = \imath(\alpha)\pi_q 
\]
given by sending a covector $\alpha\in T^*_q P$ to its contraction with the bivector $\pi_q$ makes the cotangent bundle $T^*P$ of $P$ into a Lie algebroid (see \cite{CFM}, for example).  Similarly, given a commutative algebra $R$ over a field $k$ and a Poisson bracket $\{\cdot\,, \cdot\}: R\times R\to R$ there is an $R$-module map 
\[
\rho: \Omega^1_R \to \Der_k(R)
\]
from the module $\Omega_R^1$ of ordinary K\"ahler differentials to the module of derivations and a $k$-Lie algebra bracket $\lb_{\Omega^1_R}$ on $\Omega^1_R$ that make $((\Omega^1_R, [\cdot\,, \cdot]_{\Omega^1_R}), 
\rho: \Omega^1_R \to \Der_k(R))$ into a Lie-Rinehart algebra. In this section we prove the analogous result for $\cin$-rings: Theorem~\ref{thm:4.1} below.  We then show that the construction is functorial. 

\begin{theorem} \label{thm:4.1}
Let $(\scA, \{\cdot\,, \cdot\})$ be a Poisson $\cin$-ring.  Then the $\scA$-module $\Omega_\scA^1$ of $\cin$-K\"ahler differentials is a real Lie algebra with a bracket $[\cdot\,, \cdot ]_{\Omega^1_\scA}$. The $\scA$-module map 
\[
\rho: \Omega^1_\scA \to \CDer(\scA), \quad \rho (\sum a_i db_i) = \sum a_i \{b_i, \cdot \}
\] 
of Lemma~\ref{lem:2.19}  makes $((\Omega^1_\scA, [\cdot\,, \cdot]_{\Omega^1_\scA}),  \rho: \Omega^1_\scA \to \CDer(\scA))$ into a Lie-Rinehart algebra. 
\end{theorem}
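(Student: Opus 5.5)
Following the classical Poisson-manifold and Huebschmann constructions, I would take the bracket on $\Omega^1_\scA$ to be
\[
[\alpha,\beta]_{\Omega^1_\scA} := \cL(\rho(\alpha))\beta - \cL(\rho(\beta))\alpha - d\big(B(\alpha\wedge\beta)\big),
\]
where $B:\Lambda^2\Omega^1_\scA\to\scA$ is the map of Lemma~\ref{lem:2.19} attached to the Poisson bracket, and $\cL=\iota\circ d+d\circ\iota$ is the Lie derivative of Definition~\ref{def:Lie_der}. Here $\rho(\alpha)\in\CDer(\scA)$, so $\cL(\rho(\alpha))$ makes sense as a degree $0$ derivation of $\Lambda^\bullet\Omega^1_\scA$, and the formula is manifestly $\R$-bilinear and skew-symmetric. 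First I would compute it on exact forms. Since $\rho(da)=\{a,\cdot\}$, we have $\cL(\rho(da))db=d\iota(\rho(da))db=d\{a,b\}$ and $B(da\wedge db)=\{a,b\}$. This gives $[da,db]=d\{a,b\}$.

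Next I would verify the Leibniz rule $[\alpha,c\beta]=(\rho(\alpha)c)\beta+c[\alpha,\beta]$. Expanding, $\cL(\rho(\alpha))(c\beta)=(\rho(\alpha)c)\beta+c\,\cL(\rho(\alpha))\beta$. Using the contraction $\iota$, one gets $\cL(\rho(c\beta))\alpha=c\,\cL(\rho(\beta))\alpha+(\iota(\rho(\beta))\alpha)\,dc$, since $\cL(cX)=c\cL(X)+dc\wedge\iota(X)$ on $1$-forms. One also has $d(cB(\alpha\wedge\beta))=c\,dB+B\,dc$. The $dc$ terms cancel because $\iota(\rho(\beta))\alpha=B(\beta\wedge\alpha)=-B(\alpha\wedge\beta)$. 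This reduces to identities in the Cartan calculus of Theorem~\ref{prop:cartan_calc_level0} together with the fact that $\iota$ is $\scA$-linear, so it should be routine.

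Then I would show that $\rho$ is a Lie algebra map. On exact forms, $\rho([da,db])=\rho(d\{a,b\})=\{\{a,b\},\cdot\}$, and this equals $[\{a,\cdot\},\{b,\cdot\}]$ by the Jacobi identity. For general forms $\alpha=\sum a_i\,db_i$ I would use the Leibniz rule just proved and the $\scA$-linearity of $\rho$. Both sides then satisfy the same Leibniz-type expansion, since $[X,cY]=(Xc)Y+c[X,Y]$ in $\CDer(\scA)$. Hence they agree because $\Omega^1_\scA$ is generated by exact forms.

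The main obstacle is the Jacobi identity for $[\cdot\,,\cdot]_{\Omega^1_\scA}$, since its Jacobiator is not obviously tensorial. I would show it is $\scA$-trilinear. Using Leibniz, skew-symmetry and the fact that $\rho$ is a Lie morphism, one checks $J(\alpha,\beta,c\gamma)=cJ(\alpha,\beta,\gamma)$: the extra terms assemble into $(\rho([\alpha,\beta])c-[\rho(\alpha),\rho(\beta)]c)\gamma=0$ plus cancelling pairs. By tensoriality it then suffices to check exact forms. There $J(da,db,dc)=d\big(\{\{a,b\},c\}+\text{cyclic}\big)=0$ by the Jacobi identity in $\scA$. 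Note that no use of a Poisson Jacobiator on generators (Appendix~\ref{app:B}) is needed, since the Jacobi identity of $\{\cdot\,,\cdot\}$ holds on all of $\scA$.
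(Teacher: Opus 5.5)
Your proposal is correct and follows essentially the same route as the paper. Both use the same Cartan-type formula for the bracket, then prove $[da,db]=d\{a,b\}$, the Leibniz rule, $\scA$-bilinearity of the anchor defect $[\rho(\alpha),\rho(\beta)]-\rho([\alpha,\beta])$ (reducing the morphism property to exact forms), and finally tensoriality of the Jacobiator together with its vanishing on $da,db,dc$. One small gain on your side: your bookkeeping in the Jacobiator step identifies the leftover term as $\bigl(\rho([\alpha,\beta])c-[\rho(\alpha),\rho(\beta)]c\bigr)\gamma$, which makes explicit that this step relies on $\rho$ already being a Lie morphism, something the paper's Claim~5 leaves implicit.
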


Before proving the theorem, we present an example.
\begin{example}
    Let $\phi\in C^\infty(\mathbb R^3)$. Consider the Poisson structure $\Pb_\phi$ on $\mathbb R^3$ given by:
\[
\{x,y\}_\phi=\frac{\partial \phi}{\partial z},\qquad
\{y,z\}_\phi=\frac{\partial \phi}{\partial x},\qquad
\{z,x\}_\phi=\frac{\partial \phi}{\partial y}.
\]
This Poisson structure was studied in \cite{Pi, LGPV}. The Poisson structure $\Pb_\phi$ admits
$\phi$ as a Casimir function. In particular, the ideal $\langle \phi \rangle$ is a Poisson ideal of the Poisson algebra $(C^\infty(\mathbb R^3),\Pb_\phi)$. This implies that $\Pb_\phi$ descends to the quotient algebra $(\scA=C^\infty(\mathbb R^3)/\langle \phi\rangle, \Pb)$. Viewing $C^\infty(\mathbb{R}^3)$ as a $C^\infty$-ring, the pair $(\scA,\Pb)$ defines a $C^\infty$-Poisson structure \cite{L-poisson}. By Theorem~\ref{thm:4.1}, the module of Kähler differentials \[\Omega^1_{\scA}=\Omega^1(\mathbb R^3)/\langle \phi\Omega^1(\mathbb R^3), d\phi\rangle\] is naturally endowed with the structure of a Lie--Rinehart algebra $(\Omega^1_\scA, [\cdot\,, \cdot]_{\Omega^1_\scA}, \rho: \Omega^1_\scA \to \CDer(\scA))$ over the $C^\infty$-ring $\scA$. The latter can be described as follows: 
\begin{itemize}
   
\item the anchor map of the cotangent Lie algebroid associated with $\Pb_\phi$
\[
\pi^\#\colon \Omega^1(\mathbb R^3)\to \CDer(C^\infty(\mathbb R^3))
\]
$$
\pi^\#(dx)=\frac{\partial\phi}{\partial z} \frac{\partial}{\partial y}- \frac{\partial\phi}{\partial y} \frac{\partial}{\partial z},\;\; \pi^\#(dy)=\frac{\partial\phi}{\partial z} \frac{\partial}{\partial x} - \frac{\partial\phi}{\partial x} \frac{\partial}{\partial z},\;\;\pi^\#(dz)=\frac{\partial\phi}{\partial y}  \frac{\partial}{\partial x} - \frac{\partial\phi}{\partial x} \frac{\partial}{\partial y} 
$$ 
descents to  a well-defined map $\rho\colon \Omega^1_\scA\to \CDer(\scA)$ on the quotient.
 \item The  bracket $\lb_{\phi}$ of the cotangent Lie algebroid associated with $\Pb_\phi$ is given on generators by 
 \[
[dx,dy]_{\phi}=d\!\left(\frac{\partial \phi}{\partial z}\right),\qquad
[dy,dz]_{\phi}=d\!\left(\frac{\partial \phi}{\partial x}\right),\qquad
[dz,dx]_{\phi}=d\!\left(\frac{\partial \phi}{\partial y}\right).
\]
Since \(\phi\) is a Casimir of the Poisson structure $\Pb_\phi$, we have 
\(
\{\phi,f\}_\phi=0\) {for all} \(f\in C^\infty(\mathbb{R}^3)\). This implies that
 for every \(f\in C^\infty(\mathbb{R}^3)\),
\[
[d\phi,df]_{\phi}=d\{\phi,f\}_\phi=0.
\]
Thus, the bracket $\lb_\phi$ descends  a Lie bracket $\lb_{\Omega^1_\scA}$ on the quotient \(\Omega^1_{\scA}\).
\end{itemize}
\end{example}

\begin{proof}[Proof of Theorem \ref{thm:4.1}]
We mimic the argument for Poisson manifolds (see \cite{CFM}).   By Lemma~\ref{lem:2.19}, there are maps of $\scA$-modules 
\[
\rho: \Omega_\scA^1\to \CDer(\scA) \simeq \Hom(\Omega^1_\scA, \scA)
\]
and 
\[
B: \Lambda^2 \Omega^1_\scA \to \scA
\]
with 
\begin{eqnarray}
B(\alpha \wedge \beta)  &= & \imath(\rho(\alpha))\beta\qquad \textrm{ for all } \alpha, \beta\in \Omega^1_\scA \quad \textrm{and} \\
B(da\wedge db) &= &\{a,b\}\qquad \quad \textrm{ for all } a,b\in \scA.
\end{eqnarray}
We define the $\R$-bilinear map
\[
[\cdot\,, \cdot]_{\Omega^1_\scA} : \Omega^1_\scA \times \Omega^1_\scA \to \Omega^1_\scA
\]
by 
\begin{equation}
    [\alpha, \beta]_{\Omega^1_\scA} := \cL(\rho(\alpha))\beta - \cL(\rho(\beta)) \alpha - d (B(\alpha \wedge \beta)).
\end{equation}
Here $\cL: \CDer(\scA)\to \Der^{0} (\Lambda^\bullet\Omega^1_\scA)$ is the Lie derivative (see Definition~\ref{def:Lie_der}).\\[2pt]
\noindent
{\bf Claim 1. } $[da, db]_{\Omega^1_\scA} = d\{a,b\}$ for all $a,b\in \scA$. 
\begin{proof} For any derivation $X\in \CDer(\scA)$ and any element $b\in \scA$
\[
\cL(X) db = d X(b).
\]
Hence
\[
\cL(\rho(da))db  = d \left(\rho(da) b \right) = d\{a,b\}.
\]
Similarly,
\[
\cL(\rho(db))da  =   d\{b,a\}.
\]
Since $B(da\wedge db) = d\{a,b\}$,
\[
[da,db]_{\Omega^1_\scA} = d\{a,b\}- d\{b, a\} - d\{a,b\} = d\{a,b\}
\]
which is Claim~1. \end{proof}
\noindent
{\bf Claim 2} For all K\"ahler differentials $\alpha, \beta\in \Omega^1_\scA$ and for all $f\in \scA$
\begin{equation} \label{eq:3.1.4}
    [\alpha, f\beta]_{\Omega^1_\scA} = (\rho(\alpha)f) \beta + f[\alpha, \beta]_{\Omega^1_\scA},
\end{equation}
the Leibniz rule for the map $\rho$.
\begin{proof}
For any $f\in \scA$, any derivation $X\in \CDer(\scA)$ and any 1-form $\gamma\in \Omega^1_\scA$
\[
\begin{split} 
\cL(fX)\gamma &= \imath(fX) d\gamma + d \imath(fX)\gamma = f \imath(X) d\gamma + d( f \imath(X)\gamma)\\
&= f \imath(X) d \gamma + f d(\imath(X)\gamma) + \imath(X) \gamma\, df \\
&= f \cL(X)\gamma + \imath(X) \gamma\, df.
\end{split}
\]
Therefore 
\[
\begin{split} 
[\alpha, f\beta]_{\Omega^1_\scA} &= \cL(\rho(\alpha)) (f\beta) - \cL(\rho(f\beta)) \alpha - d (B(\alpha \wedge f\beta) \\
&=(\rho(\alpha)f) \,\beta + f\,\cL(\rho(\alpha)) \beta - (f \cL(\rho(\beta))\alpha + \imath(\rho(\beta) \alpha \, df) - d (f B(\alpha \wedge \beta))\\
&= (\rho(\alpha)f) \,\beta + f\,\cL(\rho(\alpha)) \beta - f \cL(\rho(\beta))\alpha - B(\alpha \wedge \beta) df \\
&\qquad - B(\alpha \wedge \beta) \, df - f d (B(\alpha \wedge \beta)\qquad \qquad \qquad (\textrm{since } \imath (\rho(\beta))\alpha = B (\beta \wedge \alpha))\\
&= (\rho(\alpha) f) \beta + f[\alpha , \beta]_{\Omega^1_\scA}.
\end{split}
\] 
\end{proof}
\noindent
{\bf Claim 3. }   The map 
\[
U: \Omega^1 _\scA \times \Omega^1_\scA \to \CDer(\scA),\qquad U(\alpha, \beta) = [\rho(\alpha), \rho(\beta)] - \rho ([\alpha, \beta]_{\Omega^1_\scA})
\]
is $\scA$-linear in each slot.

\begin{proof}
This is a computation that uses Claim~2.  Since $U$ is skew-symmetric, it is enough to check that 
\[
U(\alpha, f\beta ) = f\, U(\alpha, \beta)
\]
for all $f\in \scA$, all $\alpha, \beta\in \Omega^1_\scA$.  We compute

\[
\begin{split} 
U(\alpha, f\beta ) &= [\rho(\alpha), \rho(f\beta)] - \rho ([\alpha, f\beta]_{\Omega^1_\scA})\\
&= [\rho(\alpha), f\rho(\beta)] - \rho( (\rho(\alpha)f)\beta + f \,[\alpha, \beta]_{\Omega^1_\scA})\\
&= (\rho(\alpha) f)\, \rho(\beta )+ f\, [\rho(\alpha), \rho(\beta)] -  \rho(\alpha)(f)\, \rho(\beta) - f \rho([\alpha, \beta]_{\Omega^1_\scA})\\
&= f U(\alpha, \beta).
\end{split}
\]
\end{proof}
\noindent{\bf Claim 4. } The map $\rho: \Omega^1_\scA \to \CDer(\scA)$ preserves brackets.
\begin{proof}
By Claim~3 and the fact that the $\scA$-module $\Omega^1_\scA$ of K\"ahler differentials is generated by the set $\{da\}_{a\in\scA}$ it is enough to check that  $U(da, db) = 0$ for all $a,b\in \scA$, that is, 
\[
\rho([da, db]_{\Omega^1_\scA}) = [\rho(da), \rho(db)]
\]  
for all $a,b\in \scA$.  Recall that for all $a,b, e\in \scA$, 
\[
\rho(da) e = \{a, e\} \quad \textrm{and } \rho([da, db]_{\Omega^1_\scA}) e = \{\{a,b\}, e\}.
\]
Since the Poisson bracket $\{\cdot\,, \cdot\}$ satisfies the Jacobi identity,
\[
\begin{split} 
[\rho(da), \rho(db)] e - \rho ([da, db]_{\Omega^1_\scA})e &= \rho(da) (\rho (db) e) - \rho(db)(\rho(da) e) - \{\{a,b\}, e\}\\
& = \{a, \{b, c\}\} - \{b, \{a, e\}\} - \{\{a,b\}, e\} = 0.
\end{split}
\]
\end{proof}
\noindent{ \bf Claim 5.}  The Jacobiator map 
\[
J: \Omega^1_\scA \times \Omega^1_\scA \times \Omega^1_\scA \to \Omega^1_\scA,\]
\[J(\alpha, \beta, \gamma)  := [\alpha, [\beta, \gamma]_{\Omega^1_\scA}]_{\Omega^1_\scA} - [[\alpha, \beta]_{\Omega^1_\scA}, \gamma]_{\Omega^1_\scA} - [\beta, [\alpha, \gamma]_{\Omega^1_\scA}]_{\Omega^1_\scA}
\]
is $\scA$-linear in each slot.
\begin{proof}
    This is a computation using \eqref{eq:3.1.4}, the Leibniz rule for $\rho$.
\end{proof}
\noindent {\bf Claim 6. } For all $a,b, c\in \scA$ the Jacobiator $J(da, db, dc)$ is 0.
\begin{proof}
Recall that for all $f,g\in \scA$
\[
[df, dg]_{\Omega^1_\scA} = d\{f, g\}.
\]
Therefore,
\[
\begin{split}
  J(da, db, dc)&= [da, d\{b,c\}]_{\Omega^1_\scA} -[d\{a, b\}, dc]_{\Omega^1_\scA} - [db, d\{a, c\}]_{\Omega^1_\scA}\\
 &= d (\{a, \{b, c\} \}- \{\{a,b\}, c\} - \{ b, \{a, c\}\}) = d (0) = 0.
\end{split}
\]
\end{proof}
The fact that the exact differentials generate $\Omega^1_\scA$ together with the Claims 5 and 6 imply that the Jacobiator $J$ vanishes on all K\"ahler differentials.  Thus, the bracket $[\cdot\,, \cdot]_{\Omega^1_\scA}$ on $\Omega^1_\scA$ is a Lie bracket.  Since the anchor map $\rho$ preserves this bracket, satisfies the Leibniz rule and is $\scA$-linear, as we showed above, we are done.
\end{proof}
In Theorem~\ref{thm:4.1}  we constructed a Lie-Rinehart algebra out of a Poisson $\cin$-ring.  It is natural to wonder if this construction is functorial.  
Recall that by Remark~\ref{rmrk:Omega_functor} given a map $\varphi:\scA\to \scB$ of $\cin$-rings we get a map $\Omega^1_\varphi:\Omega^1_\scA \to \Omega^1_\scB$ of modules so that \eqref{eq:2.12.1} commutes.  In particular 
\[
\Omega^1_\varphi (a_1 da_2) =\varphi(a_1) d \varphi(a_2)
\]
for all $a_1, a_2\in \scA$. It is straightforward to verify that if $\varphi:\scA\to \scB$ and $\psi:\scB\to \scC$ are morphisms of $\cin$-rings, then
\[
\Omega^1_{\psi \circ \varphi} = \Omega^1_\psi \circ \Omega^1_\varphi\quad\text{and}\quad \Omega^1_{\mathrm{Id}_\scA}=\mathrm{Id}_{\Omega^1_\scA}.
\]
In other words $\Omega^1$ is a functor from $\cin$-rings to modules.
\begin{lemma} \label{lem:5.02}
    If $\varphi:\scA\to \scB$ is a map of {Poisson} $\cin$-rings (Definition~\ref{def:Poisson_map}), then $(\varphi, \Phi:= \Omega^1_\varphi): (\Omega^1_\scA \xrightarrow{\rho_\scA}\cin\mathrm{Der}(\scA)) \to (\Omega^1_\scB \xrightarrow{\rho_\scB}\cin\mathrm{Der}(\scB))$ is a map of Lie-Rinehart algebras (Definition~\ref{def:mor_LR}).
\end{lemma}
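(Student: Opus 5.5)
We have to check the conditions of Definition~\ref{def:mor_LR}. The map $\Phi=\Omega^1_\varphi$ must be a map of $\scA$-modules and of real Lie algebras, and for each $\alpha\in\Omega^1_\scA$ the derivations $\rho_\scB(\Phi(\alpha))$ and $\rho_\scA(\alpha)$ must be $\varphi$-related. Module linearity is immediate. Here $\Omega^1_\scB$ is an $\scA$-module via $\varphi$, and $\Omega^1_\varphi$ is the $\scA$-linear map given by the universal property (Remark~\ref{rmrk:Omega_functor}), so $\Phi(a\alpha)=\varphi(a)\Phi(\alpha)$. Throughout we use that $\Omega^1_\scA$ is generated as an $\scA$-module by the exact differentials $da$, together with $\Phi(a_1\,da_2)=\varphi(a_1)\,d\varphi(a_2)$.

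\emph{Relatedness.} Both sides of $\rho_\scB(\Phi(\alpha))\circ\varphi=\varphi\circ\rho_\scA(\alpha)$ are additive in $\alpha$. Replacing $\alpha$ by $a\alpha$ multiplies both sides by $\varphi(a)$, because $\rho_\scB(\varphi(a)\Phi(\alpha))=\varphi(a)\rho_\scB(\Phi(\alpha))$ and $\varphi$ is multiplicative. It therefore suffices to take $\alpha=da$. For $e\in\scA$ we have $\rho_\scB(d\varphi(a))\varphi(e)=\{\varphi(a),\varphi(e)\}_\scB=\varphi(\{a,e\}_\scA)=\varphi(\rho_\scA(da)e)$, because $\varphi$ is a Poisson map.

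\emph{Brackets.} This is the main step. Consider
\[
D(\alpha,\beta):=\Phi([\alpha,\beta]_{\Omega^1_\scA})-[\Phi\alpha,\Phi\beta]_{\Omega^1_\scB}.
\]
On exact forms, Claim~1 in the proof of Theorem~\ref{thm:4.1} gives $\Phi([da,db])=\Phi(d\{a,b\})=d\varphi(\{a,b\})=d\{\varphi a,\varphi b\}=[d\varphi a,d\varphi b]=[\Phi da,\Phi db]$, so $D(da,db)=0$. Next we show that $D$ is $\scA$-bilinear, in the sense $D(\alpha,f\beta)=\varphi(f)D(\alpha,\beta)$; skew-symmetry then gives the same in the first slot. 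By Claim~2 (the Leibniz rule in both $\Omega^1_\scA$ and $\Omega^1_\scB$),
\[
\Phi([\alpha,f\beta])=\varphi(\rho_\scA(\alpha)f)\,\Phi\beta+\varphi(f)\Phi[\alpha,\beta],
\]
\[
[\Phi\alpha,\varphi(f)\Phi\beta]=(\rho_\scB(\Phi\alpha)\varphi(f))\,\Phi\beta+\varphi(f)[\Phi\alpha,\Phi\beta].
\]
The first terms on the right agree by the relatedness just proved, so $D(\alpha,f\beta)=\varphi(f)D(\alpha,\beta)$. Since $D$ is bi-additive, $\scA$-bilinear, and vanishes on the generators $da\otimes db$, it vanishes identically. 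Hence $\Phi$ is a map of real Lie algebras.

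The only delicate point is the order of the argument: relatedness must be proved before bracket preservation, because the tensoriality of $D$ depends on it. Apart from that ordering, the checks are routine.
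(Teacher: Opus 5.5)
Your proof is correct. It uses the same ingredients as the paper's proof: generation of $\Omega^1_\scA$ by exact differentials, Claims~1 and~2 from the proof of Theorem~\ref{thm:4.1}, and the fact that $\varphi$ preserves Poisson brackets. The bracket step, however, is organized differently.

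The paper asserts that it suffices to check $\Phi([da_1, a_2\,da_3]_{\Omega^1_\scA})=[\Phi(da_1),\Phi(a_2\,da_3)]_{\Omega^1_\scB}$. It expands $[da_1,a_2\,da_3]_{\Omega^1_\scA}=\{a_1,a_2\}\,da_3+a_2\,d\{a_1,a_3\}$, pushes this through $\Phi$ directly, and proves relatedness only afterwards.

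You instead form the defect $D(\alpha,\beta)$, show it is $\scA$-bilinear through $\varphi$, and reduce to exact pairs $(da,db)$. The advantage of your route is that the reduction is actually justified. The bracket is not $\scA$-bilinear, so passing from an exact first argument $da_1$ to a general $a_0\,da_1$ needs the Leibniz rule in the first slot. The resulting extra terms are $\varphi(\rho_\scA(\beta)a_0)\,d\varphi(a_1)$ on one side and $(\rho_\scB(\Phi\beta)\varphi(a_0))\,d\varphi(a_1)$ on the other. Matching them is exactly the relatedness statement, so the paper's ``it is enough to check'' silently depends on part~(2), which it proves later. Your ordering, relatedness first and then tensoriality of $D$, makes that dependence explicit. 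It also mirrors the tensoriality pattern of Claims~3--6 in the proof of Theorem~\ref{thm:4.1}.

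The paper's version is a shorter direct computation. Its reduction step is only complete once one notices that part~(2) is needed.
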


\begin{proof}
It is enough to check that (1) $\Phi: \Omega^1_\scA\to \Omega^1_\scB$ 
preserves Lie brackets and that (2) for any K\"ahler differential  $\alpha\in \Omega^1_\scA$ the derivations $\rho_\scB (\Phi(\alpha))$ and $\rho_\scA(\alpha)$ are $\varphi$-related.

Since the module $\Omega^1_\scA$ is generated by the set $\{da \}_{a\in \scA}$ of exact differentials (we dropped the subscript $_\scA$ on the universal derivations $d$), to prove (1) it is enough to check that 
\[
\Phi([da_1, a_2da_3]_{\Omega^1_\scA}) = [\Phi(da_1), \Phi(a_2da_3)]_{\Omega^1_\scB}
\]
for all $a_1, a_2, a_3 \in \scA$.   Note that 
\[
\begin{split}
    \Phi(da_1) &= d \varphi(a_1)\\
    \Phi(a_2 da_3) &= \varphi(a_2) d \varphi(a_3)\qquad \textrm{ and }\\
    [da_1, a_2 da_3]_{\Omega^1_\scA}&= \{a_1, a_2\} da_3 + a_2 d\{a_1, a_3\}.
\end{split}
\]
Therefore
\[
\begin{split}
\Phi([da_1, a_2da_3]_{\Omega^1_\scA}) &= \varphi(\{a_1, a_2\} ) d\varphi(a_3) + \varphi(a_2) d\varphi(\{a_1, a_3\})\\
&=\{\varphi(a_1), \varphi(a_2)\} d\varphi(a_3)  + \varphi(a_2) d\{\varphi(a_1), \varphi(a_3)\}\\
&= [\Phi(da_1), \Phi(a_2da_3)]_{\Omega^1_\scB}), 
\end{split}
\]
which proves (1).  

Similarly to prove (2) we may assume that $\alpha =a_1 da_2$.    Then for any $a_3 \in \scA$
\[
\begin{split}
\varphi (\rho_\scA (\alpha) a_3) &= \varphi(a_1 \{a_2, a_3\})  = \varphi(a_1) \{\varphi(a_2), \varphi(a_3)\}\\
&= \rho_\scB (\varphi(a_1) d\varphi(a_2))(\varphi (a_3))\\
&= \left( \rho_\scB (\Phi(\alpha) ) \circ \varphi \right) (a_3).
\end{split}
\]
Hence 
\[
\rho_\scB (\Phi(\alpha) ) \circ \varphi  = \varphi \circ \rho_\scA (\alpha),
\]
which proves (2).
\end{proof}
\begin{remark}
It follows from Lemma~\ref{lem:5.02} that the map 
\[
(\scA, \{\cdot\,, \cdot\}) \mapsto (\Omega^1_\scA \xrightarrow{\rho_\scA} \CDer(\scA))
\]
extends to a functor from the category $\pcring $ of Poisson $\cin$-rings (Definition~\ref{def:pscring})  to the category $\clr$ of Lie-Rinehart algebras (Remark~\ref{rmrk:CLR}).
\end{remark}

\section{Lie-Rinehart algebras $\simeq$ ``linear" Poisson $C^\infty$-rings} \label{sec:6}

In this section we generalize a theorem of Ted Courant \cite[Theorem 2.1.4]{courant} who proved that for any Lie algebroid $\rho: E\to M$ over a manifold $M$ there is a linear Poisson bracket on the total space of the dual vector bundle $E^{\svee}\to M$.  The analogous result for a Lie-Rinehart algebra $((\scM, [\cdot\,, \cdot]_\scM), \rho:\scM\to \Der (A))$ with $A$ a commutative ring is that the symmetric algebra $S^\bullet(\scM)$ of the $A$-module $\scM$ is a Poisson algebra \cite[Example 3.16]{Hueb}. 

We start with a definition of a linear Poisson bracket on the free (``symmetric") $\scA$-algebra $\cF(\scM)$ of an $\scA$-module $\scM$ (cf.\ Definition~\ref{def:sym-alg}).   
While there are various analogues of vector bundles in fairly abstract category theoretic settings (for example they occur in the theory of tangent categories of Rosicky \cite{Ros}), the algebra $\cF(\scM)$ does not appear to have such a structure.  So the word ``linear" in Definition~\ref{def:lin_Poisson} should be taken with a grain of salt.

\begin{definition} \label{def:lin_Poisson}
    Let $\scA$ be a $\cin$-ring, $\scM$ an $\scA$-module and $\cF(\scM)$ the corresponding free $\scA$-algebra (cf.\ Definition~\ref{def:sym-alg}) with the structure maps  $\varphi_\scM:\scA\to \cF(\scM)$ and $\eta_\scM:\scM \to U(\cF(\scM))$ (Notation~\ref{nota:phi_eta}).  A Poisson bracket $\{\cdot\,, \cdot\}$ on the $\cin$-ring $\cF(\scM)$ is {\sf linear} if the following three conditions hold:
\[
\{\eta_\scM(m_1), \eta_\scM(m_2 )\}
\in  \eta_\scM(\scM)
\]
for all $m_1, m_2\in \scM$,
\[
\{\eta(m), \varphi(a)\}\in \varphi(\scA)
\]
for all $m\in \scM$ and $a\in \scA$ and
\[
\{\varphi_\scM(a), \varphi_\scM(b)\}=0
\]
for all $a, b\in \scA$.

\end{definition}
\begin{theorem}\label{thm:main}
Let $\scA$ be a $C^\infty$-ring and let $\scM$ be an $\scA$-module. 
The following statements are equivalent:

\begin{enumerate}
\item \label{thm:main1} The module $\scM$ admits a Lie-Rinehart algebra structure 
$((\scM, [\cdot\,,\cdot]_{\scM}), \rho:\scM \to \CDer(\scA))$.

\item \label{thm:main2} The $\cin$-ring $\mathcal F(\scM)$ carries a  linear Poisson  $C^\infty$-ring structure $\left(\mathcal F(\scM),  \Pb_{\mathcal F(\scM)}\right)$ (Definition~\ref{def:lin_Poisson}).
\end{enumerate}
Furthermore, the Lie–Rinehart structure on the $\scA$-module $\scM$ and the linear Poisson structure on the $\cin$-ring $\mathcal F(\scM)$ (Definition~\ref{def:lin_Poisson})  uniquely determine each other by way of the following equations:

\begin{align}\label{eq:Rinehart-Poisson1}
    \left\{\eta_{\scM}(m_1), \eta_{\scM}(m_2)\right\}_{\cF(\scM)}
&=
\eta_{\scM}\left([m_1, m_2]_{\scM}\right)\\
\nonumber&\\\label{eq:Rinehart-Poisson2}\{\eta_\scM(m), \varphi_\scM(a)\}_{\mathcal{F}(\scM)}&=\varphi_\scM(\rho(m)a)\\\nonumber&\\
\label{eq:Rinehart-Poisson3}\{\varphi_\scM(a), \varphi_\scM(b)\}_{\mathcal{F}(\scM)}&=0
\end{align}
for all $m_1, m_2,m \in \scM$ and $a, b\in \scA$, where $\scA\xrightarrow{\varphi_\scM}\cF(\scM)$ and  $\eta_\scM\colon \scM \to \mathcal{F}(\scM)$ are the structure maps (Notation~\ref{nota:phi_eta}). 
\end{theorem}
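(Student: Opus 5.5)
The plan is to work throughout with the explicit model of $\cF(\scM)$ from Corollary~\ref{cor:3.150}, together with the fact that $\cF(\scM)$ is generated as a $\cin$-ring by $\varphi_\scM(\scA)\cup\eta_\scM(\scM)$. Concretely, choose a surjection $\Pi:\bbF(X)\to\scA$ and a free presentation $\tau:\scA^{\oplus Y}\to\scM$ with kernel $K$, and write $m_y:=\tau(e_y)$. By Remark~\ref{rmrk:3.12b}\eqref{item:3.013.3} and Lemma~\ref{lem:3.09},
\[
\cF(\scM)\simeq \bbF(X\sqcup Y)/\mathcal I,\qquad \mathcal I=\bigl\langle \imath(\ker\Pi)\cup\eta(K)\bigr\rangle .
\]
\emph{Uniqueness.} A bracket (skew $\cin$-biderivation) on $\cF(\scM)$ is determined by its values on generators. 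This follows by lifting to $\bbF(X\sqcup Y)$ and applying Theorem~\ref{thm:bider_gen}, or directly from the $\cin$-derivation rule. Hence \eqref{eq:Rinehart-Poisson1}--\eqref{eq:Rinehart-Poisson3} determine the Poisson bracket from the Lie-Rinehart data. Conversely, injectivity of $\varphi_\scM$ and $\eta_\scM$ (Lemma~\ref{lem:3.06}) means these equations determine $[\cdot\,,\cdot]_\scM$ and $\rho$ from the bracket.

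\emph{(2)$\Rightarrow$(1).} Using Lemma~\ref{lem:3.06} and linearity, define $[m_1,m_2]_\scM$ and $\rho(m)a$ by \eqref{eq:Rinehart-Poisson1} and \eqref{eq:Rinehart-Poisson2}. The required properties then follow in order:
\begin{enumerate}
\item Skew-symmetry and the Jacobi identity for $[\cdot\,,\cdot]_\scM$ are inherited from $\cF(\scM)$.
\item $\rho(m)$ is a $\cin$-derivation. Since $\varphi_\scM$ is a $\cin$-map and $\{\eta(m),\cdot\}$ is a $\cin$-derivation, we get $\varphi(\rho(m)f_\scA(\vec a))=\sum(\partial_if)_{\cF}(\varphi\vec a)\,\varphi(\rho(m)a_i)$, and injectivity of $\varphi$ gives the rule.
\item $\rho$ is $\scA$-linear. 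Since $\eta(am)=\varphi(a)\eta(m)$, the Leibniz rule in the first slot together with $\{\varphi a,\varphi b\}=0$ gives $\rho(am)=a\rho(m)$.
\item The anchor Leibniz rule follows from $\{\eta m_1,\varphi(a)\eta(m_2)\}$.
\item $\rho$ is a Lie homomorphism by the Jacobi identity applied to the triple $(\eta m_1,\eta m_2,\varphi a)$.
\end{enumerate}

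\emph{(1)$\Rightarrow$(2).} Pick lifts and use Theorem~\ref{thm:bider_gen} to define a skew biderivation on $\bbF(X\sqcup Y)$ by its values on generators:
\[
\lbr x,x'\rbr=0,\qquad \lbr y,x\rbr = \text{a lift to }\bbF(X)\text{ of }\rho(m_y)\Pi(x),\qquad \lbr y,y'\rbr = \text{a lift of }\eta([m_y,m_{y'}]_\scM).
\]
The main obstacle is showing that $\lbr g,\mathcal I\rbr\subseteq\mathcal I$ for every generator $g$, so that the biderivation descends to $\cF(\scM)$. I would handle the two kinds of generators of $\mathcal I$ separately.
\begin{itemize}
\item For $k\in\ker\Pi$: the maps $\lbr y,\cdot\rbr$ and $\lbr x,\cdot\rbr$ restricted to $\bbF(X)$, followed by the quotient map, are $\cin$-derivations along $\Pi$. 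They agree on the generators $X$ with $\varphi\circ\rho(m_y)\circ\Pi$ and with $0$ respectively. They therefore agree everywhere, since $\rho(m_y)$ is a $\cin$-derivation, and so they kill $\ker\Pi$.
\item For $k=\sum a_y\,y$ with $\sum a_y m_y=0$: modulo $\mathcal I$, $\lbr y',k\rbr$ becomes $\eta\bigl(\sum (\rho(m_{y'})a_y)m_y+a_y[m_{y'},m_y]\bigr)=\eta([m_{y'},\sum a_ym_y])=0$ by the anchor Leibniz rule. Likewise $\lbr x,k\rbr$ becomes $-\varphi\bigl(\rho(\sum a_ym_y)\Pi x\bigr)=0$ by $\scA$-linearity of $\rho$.
\end{itemize}

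Once the bracket has descended, it satisfies \eqref{eq:Rinehart-Poisson1}--\eqref{eq:Rinehart-Poisson3} on generators, and hence is linear. For the Jacobi identity I would invoke Appendix~\ref{app:B}, which reduces it to generators, and check the four types of triples:
\begin{itemize}
\item $(\varphi,\varphi,\varphi)$: every term vanishes.
\item $(\eta,\varphi,\varphi)$: every term is a bracket of two elements of $\varphi(\scA)$, hence vanishes.
\item $(\eta,\eta,\varphi)$: this is exactly the statement that $\rho$ preserves brackets.
\item $(\eta,\eta,\eta)$: this is the Jacobi identity in $\scM$.
\end{itemize}
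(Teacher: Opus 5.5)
Your proposal is correct. For (1)$\Rightarrow$(2) it uses the same mechanism as the paper:
\begin{enumerate}
\item realize $\cF(\scM)$ as a quotient of the free $\cin$-ring $\bbF(X\sqcup Y)$;
\item prescribe a skew biderivation on the free generators via Theorem~\ref{thm:bider_gen};
\item show that the kernel is a bracket ideal;
\item descend, and check the Jacobi identity on generators via Theorem~\ref{thm:Jacobiator}.
\end{enumerate}
In step 3, your observation that two $\cin$-derivations along $\varphi_\scM\circ\Pi$ which agree on $X$ agree everywhere is exactly the content of the paper's Step~3.

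The differences are organizational, but worth noting. You descend once, through the combined ideal $\mathcal I$, lifting $\eta_\scM([m_y,m_{y'}]_\scM)$ directly. The paper descends in stages:
\begin{itemize}
\item it first builds an auxiliary almost Lie--Rinehart bracket $\llb\cdot\,,\cdot\rrb$ on the free module $\scA^{\oplus I}$, using chosen structure constants $c^\ell_{ij}$;
\item it descends to an almost-Poisson bracket on $\cF(\scA^{\oplus I})=\scA\otimes_\infty\bbF(I)$;
\item only then does it pass to $\cF(\scM)$, locating the failure of the Jacobi identity upstairs in $\eta(\ker\tau)$.
\end{itemize}
Your version is shorter and avoids the structure constants. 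The paper's version has the advantage of exhibiting the intermediate almost-Poisson structure.

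For (2)$\Rightarrow$(1), you define $[\cdot\,,\cdot]_\scM$ and $\rho$ on all of $\scM$ at once, using injectivity of $\eta_\scM$ and $\varphi_\scM$ (Lemma~\ref{lem:3.06}), and verify each axiom in a line. The paper instead defines them on $\scA^{\oplus I}$ via generators and then descends along $\tau$, leaving that descent as a ``direct computation.'' Your route is more transparent. You also make the uniqueness clause explicit, which the paper leaves implicit.

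Two details should be stated explicitly.
\begin{itemize}
\item Choose the lifts defining $\lbr y,y'\rbr$ skew-symmetrically, so that Theorem~\ref{thm:bider_gen} applies.
\item After descent, the three defining equations hold a priori only on generators. Your $(\eta,\eta,\eta)$ and $(\eta,\eta,\varphi)$ Jacobi checks, however, use them for elements such as $[m_y,m_{y'}]_\scM$. The extension to all of $\scM$ and $\scA$ is routine: it follows from the Leibniz rules, the $\scA$-linearity of $\rho$, and the fact that $(a,b)\mapsto\{\varphi_\scM(a),\varphi_\scM(b)\}$ is a $\cin$-biderivation vanishing on generators of $\scA$. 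It should nonetheless be said.
\end{itemize}
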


\begin{remark}
    Although the bracket $\Pb_{\mathcal F(\scM)}$ is completely well-defined on the $\scA$-modules $\eta_\scM(\scM)$ and $ \varphi_\scM(\scA)\subset \mathcal F(\scM)$, this does not guarantee that it extends to a well-defined bracket on $\mathcal F(\scM)$, even though $\eta_\scM(\scM)$ and $\varphi_\scM(\scA)$ generate $\mathcal F(\scM)$. The issue is that, for example,  an element $u\in \mathcal F(\scM)$ may admit different realizations of the form
\[
u = f_{\mathcal F(\scM)}(\eta_{\scM}(m_1), \ldots, \eta_{\scM}(m_n))
   = g_{\mathcal F(\scM)}(\eta_{\scM}(m'_1), \ldots, \eta_{\scM}(m'_d)),
\]
for different smooth functions $f\colon \mathbb R^n\to \mathbb R$, $g\colon \mathbb R^d\to \mathbb R$, and elements $m_i,m'_j\in \scM$. One must therefore verify that the extension is compatible with these relations.
\end{remark}
\begin{remark}[Lie-Poisson structure for an arbitrary Lie algebra $\fg$]
Let $\scA$ be a $\cin$-ring and $((\scM,[\cdot\,,\cdot]_{\scM}),\rho:\scM\to \CDer_{\mathbb R}(\scA))$  be a Lie--Rinehart algebra. If $\rho=0$, then $\scM$ reduces to a Lie $\mathbb R$-algebra whose bracket
\(
[\cdot\,,\cdot]_{\scM}:\scM\times\scM\to\scM
\)
is $\scA$-linear in both arguments. Consequently, the associated Poisson bracket on $\mathcal{F}(\scM)$ is likewise $\scA$-bilinear. 

In particular, when $\scA=\mathbb R$ (cf.\ Example~\ref{ex:smooth-functions}) then $\CDer(\scA) = \CDer(\R) = 0$, the anchor map $\rho$ is forced to be 0, 
and the Lie-Rinehart algebra $((\scM,[\cdot\,,\cdot]_{\scM}),\rho:\scM\to \CDer(\R))$ is simply a (possibly infinite-dimensional) Lie $\mathbb R$-algebra $(\scM,[\cdot\,,\cdot]_{\scM})$. 

If the Lie algebra $\scM$ is finite-dimensional, one recovers the classical Lie--Poisson structure on the algebra of smooth functions on the dual space $\scM^\svee$. More generally, by Corollary~\ref{cor:3pi}, 
for any real Lie algebra $\scM$, finite dimensional or not, the free (``symmetric") $\R$-algebra $\cF(\scM)$ is the algebra $\cin(\scM^\svee)$ of smooth functions on the dual. Thus $\cin(\scM^\svee)$ is a Poisson $\cin$-ring, and we obtain a Lie-Poisson structure on the algebra of smooth functions on the dual space $\scM^\svee$. 

In other words
the linear Poisson $\cin$-ring structure on $\cF(\scM) = \cin(\scM^\svee)$ remains well-defined without any additional assumptions, even when the Lie algebra $\scM$ is infinite-dimensional.  Note that the use of $\cin$-rings obviates the need to define topologies on the module $\scM$ and on its dual $\scM^\svee$.
\end{remark}

Before proving Theorem~\ref{thm:main} we make an observation:
in the special case where the $\scA$-module $\scM$ is faithfully generated in the sense of Definition~\ref{def:tor-free}, equation \eqref{eq:Rinehart-Poisson1} implies \eqref{eq:Rinehart-Poisson2} and, if we assume that  $\{\varphi_\scM(a), \varphi_\scM(b)\} \in \varphi(\scA)$, implies \eqref{eq:Rinehart-Poisson3} as well --- see Lemma~\ref{lem:6.05} below.
\begin{definition}\label{def:tor-free}
Let $\scM$ be a module over a $\cin$-ring $\scA$. The $\scA$-module $\scM$ is {\sf faithfully generated} if it admits a set of generators $\{m_j\}_{j \in J}$ such that, for every $a \in \scA$, if $a m_j = 0$ for all $j \in J$, then $a = 0$. 
\end{definition}
\begin{remark}%
There are many examples of faithfully generated modules. For instance
any free  module is faithfully generated. 
Since the module of sections of a vector bundle is locally free it is faithfully generated. Torsion free modules over an integral domain are also faithfully generated. 

On the other hand, for $\scA=C^\infty(\mathbb R)$, the $\scA$-module $\scM=C^\infty(\mathbb R)/\langle x \rangle \simeq \mathbb R$ is not faithfully generated.
\end{remark}
\begin{remark}
Since the underlying $\mathbb R$-algebra of a $C^\infty$-ring  $\scA$ typically contains zero-divisors (for instance, in $C^\infty(\mathbb R)$ any function that vanish on an nonempty open set is a zero divisor), the fact that a module $\scM$ is torsion-free (meaning  there is no non zero-divisor $a \in\scA$ such that $am=0$) does not, in general, imply that it is  faithfully generated in the sense of Definition \ref{def:tor-free}.
\end{remark}

\begin{lemma} \label{lem:6.05}
Let $((\scM, [\cdot\,, \cdot]_\scM),  \rho: \scM\to \CDer(\scA))$ be a Lie-Rinehart algebra and 
\[
 \scM \xrightarrow{\eta_\scM} \cF(\scM) \xleftarrow{\varphi_\scM} \scA
\]
the corresponding free $\scA$-algebra with a $\cin$-ring Poisson bracket $\Pb$.  Suppose 
\begin{equation} \label{eq:6.8.1}
 \left\{\eta_{\scM}(m_1), \eta_{\scM}(m_2)\right\}
=\eta_{\scM}\left([m_1, m_2]_{\scM}\right)\
\end{equation}
for all $m_1, m_2\in \scM$ and
\begin{equation}\label{eq:6.8.2}
\{\varphi_\scM(a), \varphi_\scM(b)\} \in \varphi(\scA)
\    
\end{equation}
for all $a,b\in \scA$.
Then
\begin{align}
  \label{eq:6.8.3}  \{\eta_\scM(m), \varphi_\scM(a)\}&=\varphi_\scM(\rho(m)a) \\\nonumber \textrm{and}\\
  \label{eq:6.8.4}    \{\varphi_\scM(a), \varphi_\scM(b)\} &= 0
\end{align}
for all $m\in\scM$ and $a,b\in \scA$.
\end{lemma}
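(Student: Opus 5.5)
The plan is to compare two Leibniz expansions of the same bracket. The Lie--Rinehart axiom (2) gives one expansion inside $\scM$, which we push into $\cF(\scM)$ with the $\scA$-linear map $\eta_\scM$. The Leibniz rule for the Poisson bracket gives the other expansion directly in $\cF(\scM)$. Matching the two yields identities of the form $X\cdot\eta_\scM(m)=0$ for all $m\in\scM$. Throughout I assume, as in the sentence preceding the lemma, that $\scM$ is faithfully generated (Definition~\ref{def:tor-free}); this is what lets me cancel $\eta_\scM(m)$. The tools are injectivity of $\varphi_\scM$ and $\eta_\scM$ (Lemma~\ref{lem:3.06}) and the $\scA$-algebra maps $\hpsi$ out of $\cF(\scM)$ constructed in its proof.

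\textbf{Step 1 (towards \eqref{eq:6.8.3}).} Fix $m_1,m_2\in\scM$ and $a\in\scA$. Since $\eta_\scM(am_2)=\varphi_\scM(a)\eta_\scM(m_2)$, the Leibniz rule gives
$\{\eta_\scM(m_1),\eta_\scM(am_2)\}=\{\eta_\scM(m_1),\varphi_\scM(a)\}\eta_\scM(m_2)+\varphi_\scM(a)\{\eta_\scM(m_1),\eta_\scM(m_2)\}$.
By \eqref{eq:6.8.1} and Lie--Rinehart axiom (2), the left side also equals $\varphi_\scM(\rho(m_1)a)\eta_\scM(m_2)+\varphi_\scM(a)\eta_\scM([m_1,m_2]_\scM)$. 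Subtracting, with $X:=\{\eta_\scM(m_1),\varphi_\scM(a)\}-\varphi_\scM(\rho(m_1)a)$, we get $X\,\eta_\scM(m_2)=0$ for all $m_2$.

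\textbf{Step 2 (the main obstacle).} Deducing $X=0$ from $X\eta_\scM(m_2)=0$ is the hard part: the hypotheses only place $\{\varphi_\scM(a),\varphi_\scM(b)\}$ in $\varphi_\scM(\scA)$, not $\{\eta_\scM(m),\varphi_\scM(a)\}$.

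First I will extract the $\scA$-component of $X$. Apply the $\scA$-algebra map $\hpsi:\cF(\scM)\to\scA\ltimes\scM$ with $\eta_\scM(m)\mapsto(0,m)$, and write $x\in\scA$ for the first component of $\hpsi(X)$. Then $\hpsi(X\eta_\scM(m_2))=(0,x m_2)$, so $x m_2=0$ for every $m_2$. Faithful generation then gives $x=0$.

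It remains to show $X\in\varphi_\scM(\scA)$, so that $X$ equals $\varphi_\scM$ of its $\scA$-component and hence vanishes. My first attempt is the following. The map $D:=\{\eta_\scM(m_1),\cdot\}\circ\varphi_\scM$ is a $\cin$-derivation $\scA\to\cF(\scM)$. The retraction $\hpsi_0:\cF(\scM)\to\scA$ of Lemma~\ref{lem:3.06} composes with it to give $\hpsi_0\circ D=\rho(m_1)$. I will then try to show that $D-\varphi_\scM\circ\rho(m_1)$ vanishes. The idea is to test it against $\scA$-algebra maps $\cF(\scM)\to\scB$ coming from module maps $\scM\to\scB$ with $\scB$ torsion-free enough, using the Jacobi identity with $\{\varphi_\scM(a),\varphi_\scM(b)\}\in\varphi_\scM(\scA)$ to control the fibre-direction part. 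If this fails in general, the fallback is to add the hypothesis $\{\eta_\scM(m),\varphi_\scM(a)\}\in\varphi_\scM(\scA)$, which is exactly the second clause of Definition~\ref{def:lin_Poisson}. With it, Step 2 closes immediately by the first part of the argument above.

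\textbf{Step 3 (\eqref{eq:6.8.4}).} Write $\{\varphi_\scM(a),\varphi_\scM(b)\}=\varphi_\scM(c)$, which is possible by \eqref{eq:6.8.2}. Expand $\{\varphi_\scM(a),\eta_\scM(bm)\}=\{\varphi_\scM(a),\varphi_\scM(b)\eta_\scM(m)\}$ by the Leibniz rule. Then evaluate both $\{\varphi_\scM(a),\eta_\scM(bm)\}$ and $\{\varphi_\scM(a),\eta_\scM(m)\}$ by \eqref{eq:6.8.3}, using skew-symmetry and $\rho(bm)=b\rho(m)$. The terms cancel and leave $\varphi_\scM(c)\eta_\scM(m)=\eta_\scM(cm)=0$ for all $m$. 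Injectivity of $\eta_\scM$ gives $cm=0$ for every $m$, so $c=0$ by faithful generation, which is \eqref{eq:6.8.4}.
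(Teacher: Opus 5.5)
Your Steps 1 and 3 coincide with the paper's proof, and your diagnosis in Step 2 is correct. As written, your proposal does not prove the statement, but no argument can. The paper's proof goes from $\bigl(\{\eta_\scM(m),\varphi_\scM(b)\}-\varphi_\scM(\rho(m)b)\bigr)\,\eta_\scM(m')=0$ to \eqref{eq:6.8.3} by citing faithful generation of $\eta_\scM(\scM)$. That only lets one cancel $\eta_\scM(m')$ against coefficients lying in $\varphi_\scM(\scA)$, and nothing in the hypotheses puts this element there. The paper also silently uses faithful generation, which the printed statement omits. Without it the lemma already fails for $\scM=0$: then $\cF(0)=\scA$, and any nonzero Poisson bracket on $\scA$ violates \eqref{eq:6.8.4}.

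Your ``first attempt'' cannot be completed, because the statement is false even for faithfully generated $\scM$.
\begin{itemize}
\item Take $\scA=\cin(\R^2)/\langle x_1,x_2\rangle^2=\R[\epsilon_1,\epsilon_2]/(\epsilon_1,\epsilon_2)^2$.
\item Let $\scM$ have $\R$-basis $u,v,w$ with $\epsilon_1u=\epsilon_2v=w$ and $\epsilon_2u=\epsilon_1v=\epsilon_iw=0$. It is faithfully generated by $\{u,v\}$, and we give it zero bracket and zero anchor.
\item By Lemma~\ref{lem:3.09}, $\cF(\scM)=\cin(\R^4)/J$ with $J=\langle x_1^2,x_1x_2,x_2^2,x_2\xi,x_1\zeta,x_1\xi-x_2\zeta\rangle$. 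The structure map gives $\eta_\scM(u)=\xi$, $\eta_\scM(v)=\zeta$ and $\eta_\scM(w)=x_1\xi\neq0$.
\item The bivector $x_1\xi\,\partial_\xi\wedge\partial_{x_1}$ is Poisson. $J$ is a Poisson ideal for it: check $\{x_1,\cdot\}=-x_1\xi\,\partial_\xi$ and $\{\xi,\cdot\}=x_1\xi\,\partial_{x_1}$ on the six generators. So $\cF(\scM)$ becomes a Poisson $\cin$-ring.
\item Since $x_1\xi^2,\,x_1\xi\zeta\in J$, all brackets within $\eta_\scM(\scM)=\R\xi\oplus\R\zeta\oplus\R x_1\xi$ vanish, so \eqref{eq:6.8.1} holds. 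Also $\{\varphi_\scM(\scA),\varphi_\scM(\scA)\}=0$.
\item Yet $\{\eta_\scM(u),\varphi_\scM(\epsilon_1)\}=x_1\xi=\eta_\scM(w)\neq 0=\varphi_\scM(\rho(u)\epsilon_1)$.
\end{itemize}
The mechanism is that $\eta_\scM(w)$ is a nonzero element annihilating all of $\eta_\scM(\scM)$. That is exactly the kind of $X$ your Step 2 could not rule out.

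So your fallback is not optional; it is the right repair. Assume faithful generation together with $\{\eta_\scM(m),\varphi_\scM(a)\}\in\varphi_\scM(\scA)$, the second clause of Definition~\ref{def:lin_Poisson}. Then your Steps 1--3 form a complete proof. For Step 2, $X=\varphi_\scM(x)$ gives $\eta_\scM(xm')=0$ for all $m'$, hence $x=0$.

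Your $\hpsi$ computation into $\scA\ltimes\scM$ is worth keeping. It shows what does survive without the extra hypothesis: $\hpsi_0(\{\eta_\scM(m),\varphi_\scM(a)\})=\rho(m)a$ for the retraction $\hpsi_0:\cF(\scM)\to\scA$.

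An alternative repair is to assume that the annihilator of $\eta_\scM(\scM)$ in $\cF(\scM)$ is zero. This holds for $\cin(E^\svee)$ with $E$ of positive rank. It makes the paper's cancellation legitimate, and it covers the lemma's later use for $\Gamma(E)$.
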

\begin{proof} This is a consequence of the Leibniz rule for the Poisson bracket $\Pb$, the Leibniz rule  the Lie bracket $\lb_\scM$, and of injectivity of  the structure maps $\eta_\scM: \scM\to \cF(\scM)$ and $\varphi_\scM: \scA\to \cF(\scM)$.
We now drop the subscript $_\scM$ to reduce the clutter and provide the details.

We can think of the $\scA$-algebra $\cF(\scM)$ as an $\scA$-module. Then $\eta(\scM)$ is an $\scA$-submodule of $\cF(\scM)$.  The injectivity of $\eta$ implies that if $\scM$ is faithfully generated, then $\eta(\scM)$ is also faithfully generated.

Since $\eta$ is a map of $\scA$-modules,  
\[
\eta (bm') = \varphi(b)\, \eta(m')
\]
for any $b\in \scA$ and any $m'\in \scM$. By \eqref{eq:6.8.1}  
\begin{equation} \label{eq:6.8.5}
 \left\{\eta(m), \eta(bm')\right\}
{=}
\eta\left([m, bm']\right).
\end{equation}
Since $\scM\xrightarrow{\rho} \CDer(\scA) $ is a Lie-Rinehart algebra, $[m, bm'] = (\rho(m)b)m' + b [m,m']$.  Therefore \eqref{eq:6.8.5} becomes
\begin{equation} \label{eq:6.8.6}
   \left\{\eta(m), \varphi(b)\eta(m')\right\} = \eta( (\rho(m)b)m' + b [m,m']).
\end{equation}
Since $\{\cdot \,,\cdot\}$ is a biderivation and since \eqref{eq:6.8.1} is assume to hold, \eqref{eq:6.8.6} becomes
\[
\left\{\eta(m), \varphi(b)\right\}\eta(m')+ {\varphi(b){\left\{\eta(m), \eta(m')\right\}}}=
\varphi(\rho(m)b)\eta(m') + \varphi(b)\{\eta(m), \eta(m') \}.
\]
Hence 
\begin{equation*}\label{eq:zero-div}
    \left(\left\{\eta(m), \varphi(b)\right\}-\varphi(\rho(m)b)\right)\eta(m')=0.
\end{equation*}
Since the module $\eta(\scM)$ is faithfully generated and since 
and  $m'$ is arbitrary, \eqref{eq:6.8.3} holds.

Finally, for any $m \in \scM$ and any $a,b \in \scA$
\[
\begin{split}
    \varphi(b) \{ \eta(m), \varphi(a)\} &\stackrel{\eqref{eq:6.8.3}}{=} \varphi(b) \, \varphi(\rho(m)a) = \varphi(b \rho(m)a) = \varphi(\rho(bm)a) \\
    &\stackrel{\eqref{eq:6.8.3}}{=} \{\varphi(b)\eta(m), \varphi(a)\} = 
    \{\varphi(b), \varphi(a)\} \eta(m) + {\varphi(b) \{\eta(m), \varphi(a)\}}.
\end{split}
\]
Hence 
\[
\{\varphi(b), \varphi(a)\}\, \eta(m) = 0.
\]
Since $\{\varphi(b), \varphi(a)\}\in \varphi(\scA)$, since the $\scA$-module $\eta(\scM)$ is faithfully generated 
and since $m$ is arbitrary, this forces  $\{\varphi(b), \varphi(a)\} =0$ which is 
 \eqref{eq:6.8.4}.
\end{proof}

\begin{proposition} \label{prop:6.7}
If 
\[
(\id, \Phi): (\scM_1 \xrightarrow{\rho_1}\cin\mathrm{Der}(\scA)) \to (\scM_2 \xrightarrow{\rho_1}\cin\mathrm{Der}(\scA))
\]
is a map of Lie-Rinehart algebras (Definition~\ref{def:mor_LR}), then  
\[
\mathcal{F}(\Phi)\colon \mathcal   F(\scM_1)\to \mathcal F(\scM_2)
\]    
is a map of Poisson $\cin$-rings. 
\end{proposition}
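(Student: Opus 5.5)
The plan is to compare two brackets on $\cF(\scM_1)$ with values in $\cF(\scM_2)$ and show that they agree on a generating set. Write $\Psi:=\cF(\Phi)\colon \cF(\scM_1)\to\cF(\scM_2)$. By the universal property (Definition~\ref{def:universal_arrow} applied to $\psi=\eta_{\scM_2}\circ\Phi$), $\Psi$ is the unique map of $\scA$-algebras with
\[
\Psi\circ\eta_{\scM_1}=\eta_{\scM_2}\circ\Phi,\qquad \Psi\circ\varphi_{\scM_1}=\varphi_{\scM_2}.
\]
For $x,y\in\cF(\scM_1)$ consider
\[
P(x,y):=\{\Psi x,\Psi y\}_{\cF(\scM_2)},\qquad Q(x,y):=\Psi\bigl(\{x,y\}_{\cF(\scM_1)}\bigr).
\]
Both are skew-symmetric maps $\cF(\scM_1)\times\cF(\scM_1)\to\cF(\scM_2)$. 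Regard $\cF(\scM_2)$ as a $\cF(\scM_1)$-module via $\Psi$. Then in each slot both $P$ and $Q$ are $\cin$-ring derivations with values in this module. For $P$ this holds because $\Psi$ is a map of $\cin$-rings, so $\Psi(f(x_1,\dots,x_n))=f(\Psi x_1,\dots,\Psi x_n)$, and $\{\cdot\,,\cdot\}_{\cF(\scM_2)}$ is a $\cin$-biderivation. For $Q$ it holds because $\Psi$ preserves the $\cin$-operations, in particular the $(\partial_i f)$ appearing in the Leibniz rule. The claim of the proposition is exactly $P=Q$.

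First I check $P=Q$ on the generators $\eta_{\scM_1}(\scM_1)\cup\varphi_{\scM_1}(\scA)$, using \eqref{eq:Rinehart-Poisson1}--\eqref{eq:Rinehart-Poisson3}. On pairs $\eta(m),\eta(m')$ we get
\[
P=\eta_{\scM_2}([\Phi m,\Phi m']_{\scM_2})=\eta_{\scM_2}(\Phi[m,m']_{\scM_1})=Q,
\]
since $\Phi$ is a map of Lie algebras. On pairs $\eta(m),\varphi(a)$ we get $P=\varphi_{\scM_2}(\rho_2(\Phi m)a)$ and $Q=\varphi_{\scM_2}(\rho_1(m)a)$. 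These agree because $\rho_2(\Phi m)$ and $\rho_1(m)$ are $\id$-related, i.e.\ equal. On pairs $\varphi(a),\varphi(b)$ both sides vanish. Skew-symmetry covers the remaining orderings.

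Next I extend from generators to everything in two steps. First fix a generator $g$; then $x\mapsto P(x,g)-Q(x,g)$ is a $\cin$-derivation $\cF(\scM_1)\to\cF(\scM_2)$ vanishing on the generators, hence zero. Then fix an arbitrary $x$; now $y\mapsto P(x,y)-Q(x,y)$ is a $\cin$-derivation vanishing on generators (by the first step and skew-symmetry), hence zero.

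The main obstacle is the lemma that a $\cin$-derivation $D\colon\scC\to\scN$ vanishing on a set $S$ that generates $\scC$ as a $\cin$-ring must vanish identically. I would prove it as follows. By Corollary~\ref{cor:3.150} the ring $\cF(\scM_1)$ is a quotient of $\scA\otimes_\infty\bbF(Y)$. Choose a surjection $\Pi\colon\bbF(X)\to\scA$; then $\bbF(X\sqcup Y)$ surjects onto $\cF(\scM_1)$, with generators mapping into $\varphi_{\scM_1}(\scA)\cup\eta_{\scM_1}(\scM_1)$ (the elements $\imath(y)=\eta(e_y)$). The composite $D\circ(\text{surjection})$ is a derivation on $\bbF(X\sqcup Y)$ vanishing on the free generators. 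Hence it is zero, by the appendix fact that $\Omega^1_{\bbF(X\sqcup Y)}$ is free on the $dx$ (Remark~\ref{rmrk:free}, the uniqueness part behind Theorem~\ref{thm:bider_gen}). Since the map is surjective, $D=0$. Alternatively, one can argue directly: every element has the form $f(s_1,\dots,s_n)$ with $s_i\in S$, and the chain rule gives $D=0$. I expect this to be the cleanest route, and I would carry out the generation argument explicitly for $\cF(\scM_1)$.
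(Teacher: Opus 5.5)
Your proof is correct and takes the paper's route: check compatibility of $\cF(\Phi)$ with the brackets on the generators $\eta_{\scM_1}(\scM_1)\cup\varphi_{\scM_1}(\scA)$ using \eqref{eq:Rinehart-Poisson1}--\eqref{eq:Rinehart-Poisson3}, together with the facts that $\Phi$ preserves brackets and $\rho_2\circ\Phi=\rho_1$. The paper simply asserts that checking on generators suffices, whereas you justify it by treating $\{\cF(\Phi)\,\cdot,\cF(\Phi)\,\cdot\}_2$ and $\cF(\Phi)\{\cdot,\cdot\}_1$ as $\cin$-biderivations into $\cF(\scM_2)$ and noting that a $\cin$-derivation vanishing on $\cin$-ring generators vanishes; your direct chain-rule argument via Remark~\ref{rmrk:A9} is the cleanest way to see this, and the appendix fact you wanted to cite is Lemma~\ref{lem:Kahler_free}, not Remark~\ref{rmrk:free}.
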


\begin{proof}
For the sake of simplicity of notation, we denote the $C^\infty$-Poisson brackets on the $\cin$-rings $\mathcal
F(\scM_1)$ and $\mathcal F(\scM_2)$ by $\Pb_1$ and $\Pb_2$, respectively.
    
Since $\mathcal F$ is a functor, it preserves composition of morphisms. In addition,  the following diagrams 
\begin{align}\label{diag:FunctorLRPoiss}
        \xymatrix{
\scM_1 \ar[r]^{\eta_{\scM_1}} \ar[d]_\Phi & \cF(\scM_1) \ar[d]^{\mathcal{F}(\Phi)} \\
\scM_2 \ar[r]_{\eta_{\scM_2}} & \cF(\scM_2),
}
\qquad
\xymatrix{
\scA \ar[r]^{\varphi_{\scM_1}} \ar@{=}[d]_{\mathrm{id}} & \cF(\scM_1) \ar[d]^{\mathcal{F}(\Phi)} \\
\scA \ar[r]_{\varphi_{\scM_2}} & \cF(\scM_2)
}
    \end{align} 
commute by construction of $\cF(\Phi)$. We temporarily denote $\eta_{\scM_1}$ and $\eta_{\scM_2}$ by $\eta_1$ and $\eta_2$. Likewise, we denote $\varphi_{\scM_1}$ and $\varphi_{\scM_2}$ by  $\varphi_{1}$ and $\varphi_{2}$. We need to show that
\[
\cF(\Phi)(\{u,v\}_1)=\{\cF(\Phi)(u),\cF(\Phi)(v)\}_2
\qquad \forall\, u,v\in \cF(\scM_1).
\]
It is enough to verify this on the generators $\eta_1(\scM_1)$ and $\varphi_{1}(\scA)$. For $m_1,m_2\in \scM_1$, using \eqref{diag:FunctorLRPoiss} and the fact that $\Phi$ is a morphism of Lie algebras,
we get\begin{align*}
\cF(\Phi)\bigl(\{\eta_1(m_1),\eta_1(m_2)\}_1\bigr)
&=\cF(\Phi)\bigl(\eta_1([m_1,m_2]_{{\scM_1}})\bigr) \\
&=\eta_2\bigl(\Phi([m_1,m_2]_{{\scM_1}})\bigr) \\
&=\eta_2\bigl([\Phi(m_1),\Phi(m_2)]_{{\scM_2}}\bigr) \\
&=\{\eta_2(\Phi(m_1)),\eta_2(\Phi(m_2))\}_2 \\
&=\left\{\cF(\Phi)(\eta_1(m_1)),\cF(\Phi)(\eta_1(m_2))\right\}_2.
\end{align*}
For $m\in \scM_1$ and $a\in \scA$, using \eqref{diag:FunctorLRPoiss} and the fact that $\rho_2(\Phi(m))=\rho_1(m)$, we obtain
\begin{align*}
\cF(\Phi)\bigl(\{\eta_1(m),\varphi_{1}(a)\}_1\bigr)
&=\cF(\Phi)\bigl(\varphi_{1}(\rho_1(m)a)\bigr) \\
&=\varphi_{2}\bigl(\rho_1(m)a\bigr) \\
&=\varphi_{2}\bigl(\rho_2(\Phi(m))(a)\bigr) \\
&=\{\eta_2(\Phi(m)),\varphi_{2}(a)\}_2 \\
&=\{\cF(\Phi)(\eta_1(m)),\cF(\Phi)(\varphi_{1}(a))\}_2.
\end{align*}
Finally, for $a,b\in \scA$, by \eqref{diag:FunctorLRPoiss},
\begin{align*}
\cF(\Phi)\bigl(\{\phi_{1}(a),\varphi_{1}(b)\}_1\bigr)
&=\cF(\Phi)(0)=0 \\
&=\{\varphi_{2}(a),\varphi_{2}(b)\}_2 \\
&=\{\cF(\Phi)(\varphi_{1}(a)),\cF(\Phi)(\varphi_{1}(b))\}_2 .
\end{align*}
\end{proof}

\begin{example}
    Let $E\to M$ be a vector bundle over a manifold $M$ and  $\Gamma(E)$ the space of sections of $E$ viewed as a module over the $\cin$-ring $\cin(M)$ of smooth functions on $M$. By Theorem~\ref{thm:4.01}, one has
\[
\cF(\Gamma(E)) \simeq C^\infty(E^\svee)
\]
with the map $\eta_E: \Gamma(E) \to \cin (E^{\svee})$  given by \eqref{eq:4.2.01}.
This recovers the classical correspondence between Lie algebroid structures on \(E\) and linear Poisson structures \(\{\cdot\,,\cdot\}\) on the dual bundle \(E^\svee\). Here, ``linear'' means that
\[
\{C^\infty_{\mathrm{lin}}(E^\svee), C^\infty_{\mathrm{lin}}(E^\svee)\} \subset C^\infty_{\mathrm{lin}}(E^\svee),
\]
where \(C^\infty_{\mathrm{lin}}(E^\svee)\) denotes the space of smooth functions on \(E^\svee\) whose restriction to each fiber is linear. For $\alpha\in \Gamma(E)$,  \(\eta_E(\alpha)\) is the fiberwise linear function on \(E^\svee\) associated to \(\alpha\).

The Poisson bracket \(\{\cdot\,,\cdot\}\) on \(\cF(\Gamma(E))\) coincides with the well-known linear Poisson structure on \(E^\svee\). Since $\Gamma(E)$ is faithfully generated, Lemma~\ref{lem:6.05} implies that the bracket $\Pb$ is determined by the relation
\[
\{\eta_E(\alpha), \eta_E(\beta)\} = \eta_E({[\alpha,\beta]_E}),
\]
for all \(\alpha,\beta \in \Gamma(E)\), together with the fact that, for any $f,g\in \cin(M)$ the brackets $\{\pi^*f, \pi^*g\}$ belongs to $\pi^*\cin(M)$, where $\pi:E^\svee\to M$.
\end{example}
\begin{example}
Let $E\stackrel{\rho}{\longrightarrow} TM$ be a Lie algebroid, and $\scM\hookrightarrow \mathfrak X(M)$ its basic Lie-Rinehart algebra (Definition~\ref{rmrk:basic}). By Proposition \ref{prop:6.7}, there exist Poisson morphisms: 
\begin{align*}
       \phi: C^\infty(E^\svee) \to \cF(\scM) \quad\text{and}\quad \cF(\scM)\to C^\infty(T^\svee M).
\end{align*}
Moreover,  $\mathcal{F(\scM)}=\mathcal{F}(\Gamma(E))/\langle \eta_E(\ker\rho )\rangle=C^\infty(E^\svee)/\langle \eta_E(\ker\rho )\rangle$ (cf. Lemma \ref{lem:3.09}). The latter formula remains valid when \(E \to TM\) is merely an anchored vector bundle over a Lie-Rinehart algebra \(\scM\), that is, when \(\rho(\Gamma(E))=\scM\), without assuming that \(E\) carries a Lie algebroid structure\footnote{Such an anchored bundle exists for any finitely generated singular foliation \cite{LLS, LLL1}. Similar assertion holds for singular Lie subalgebroids.}. In particular, \(\mathcal F(\scM)=C^\infty(E^\svee)/\langle \eta_E(\ker\rho )\rangle\)  carries a linear Poisson structure.

The surjective map $ \phi: C^\infty(E^\svee) \to \cF(\scM)$ has a geometric interpretation.   First of all recall that the global sections functor
\[
\Gamma: \LCRS \to \op{\cring}
\]
from the category $\LCRS$ of local $\cin$-ringed spaces to the category  (opposite to)  $\cin$-rings has a left adjoint $\Spec$ (see \cite{Dubuc, Joy}), Dubuc's spectrum functor. By definition $\Spec(\scA)$ is an affine scheme for any $\cin$-ring $\scA$.   Note that if $M$ is a manifold then $\Spec(\cin(M)) = (M,\cin_M)$, where $\cin_M$ is the sheaf of smooth functions on the manifold $M$.  Since $\Spec$ is a functor, for any map $\varphi: \scA\to \scB$ of $\cin$-rings we have a map $\Spec(\varphi): \Spec(\scB) \to \Spec(\scA)$ of affine $\cin$-schemes.  Furthermore, if the map $\varphi$ is surjective then by \cite[Lemma~5.5]{L-poisson} the map $\Spec(\varphi):  \Spec(\scB) \to \Spec(\scA)$ is a closed embedding.  Therefore,
\[
\Spec(\phi): \Spec( \cF(\scM)) \to \Spec(C^\infty(E^\svee) ) = (E^\svee, \cin_{E^\svee})
\]
is a closed embedding.
\end{example}
\begin{example}
Let $\scA = C^{\infty}(\mathbb{R})$ and $\scM = \mathrm{Span}_{C^{\infty}(\mathbb{R})}(x\partial_x)\hookrightarrow \CDer(\cin(\R))$ be the Lie-Rinehart algebra generated by the vector field $x\partial_x$. Since $\scM$ is a free $\cin(\R)$-module of rank 1, the corresponding free $\cin(\R)$ algebra is $\cF(\scM) = \cin(\R) \otimes _\infty \cin(\R)\cong C^{\infty}(\mathbb{R}^2)$. The Poisson bracket is determined by $\{x, y\}=x$, where $x$ and $y$ are the standard coordinates on $\R^2$. 

{Note that for $\scM=\operatorname{Span}_{C^{\infty}(\mathbb{R})}(x^2\partial_x)\hookrightarrow \CDer(\cin(\R))$, the associated $C^\infty$-ring $\cF(\scM)$ is again $C^\infty(\mathbb{R}^2)$. However, the corresponding Poisson structure is different and is determined by
\(
\{x,y\}=x^2.
\)}
\end{example}
\begin{example}We return to Example~\ref{ex:dual_numbers}. Consider the $\cin$-ring $\scA := \cin(\R)/\langle x^2 \rangle $.  We compute 
the Poisson ring $\cF(\CDer(\scA))$ corresponding to the trivial/tautologial Lie-Rinehart algebras 
$\id:\CDer(\scA)\to \CDer(\scA)$. We have
\[
\cF (\CDer(\scA) )\simeq \cin(\R^2))/\langle x^2, xy \rangle 
.\]
The Poisson bracket is given on representatives $F, G\in C^\infty(\mathbb R^2)$ by
\[
\{F,G\}
=
x\left(
\frac{\partial F}{\partial y}\frac{\partial G}{\partial x}
-
\frac{\partial F}{\partial x}\frac{\partial G}{\partial y}
\right),
\]
and this descends to the quotient \( C^\infty(\mathbb{R}^2)/\langle x^2, xy \rangle  \), since the ideal \(\langle x^2,xy \rangle \) is stable under the bracket.
 
\end{example}
\begin{example}We return to Example \ref{ex:the_cross}.
    Consider the ideal $\langle xy \rangle $ in the $\cin$-ring $\cin(\R^2)$ generated by the product of the two standard coordinate functions $x$ and $y$.   Let $\scA = \cin(\R^2)/\langle xy\rangle$ be the quotient $\cin$-ring.  Then the identity map $\id: \CDer(\scA)\to \CDer(\scA)$ makes $((\CDer(\scA), [\cdot, \cdot ]), \id:\CDer(\scA)\to \CDer(\scA))$ into a Lie-Rinehart algebra over the $\cin$-ring $\scA$.  The corresponding Poisson $\cin$-ring is \[\cF(\CDer(\scA))=C^{\infty}(\mathbb R^4)/\langle xy,  y\xi, x\nu \rangle.\]The Poisson bracket of $\cF(\CDer(\scA))$ is determined on the ambient algebra \( C^\infty(\mathbb{R}^4) \) by
\[
\{\xi,x\} = x, \qquad
\{\nu,y\} = y, \qquad
\{\xi,y\} = 0, \qquad\{\nu,x\} = \{\xi,\nu\} = \{x, y\}=0.
\]
 This bracket is Poisson, and it is given by the bivector field
\[
\pi = x\,\partial_\xi \wedge \partial_x \;+\; y\,\partial_\nu \wedge \partial_y.
\]
That is, for \(F,G \in C^\infty(\mathbb{R}^4)\),
\[
\{F,G\}
=
x\left(
\frac{\partial F}{\partial \xi}\frac{\partial G}{\partial x}
-
\frac{\partial F}{\partial x}\frac{\partial G}{\partial \xi}
\right)
+
y\left(
\frac{\partial F}{\partial \nu}\frac{\partial G}{\partial y}
-
\frac{\partial F}{\partial y}\frac{\partial G}{\partial \nu}
\right).
\]
The ideal
\[
\langle xy,\; y\xi,\; x\nu \rangle
\]
is stable under this bracket, and therefore the Poisson structure descends to the quotient.
\end{example}
\begin{example}\label{ex:single_vf2}
    We now reconsider Example~\ref{ex:single_vf}, which describes the  (non-projective) Lie-Rinehart algebra generated by the vector field \( X = f\,\partial_x \), where \( f \in C^\infty(\mathbb{R}) \) vanishes exactly on $(-\infty,0]$. We have \[\cF(\scM)\simeq C^\infty(\mathbb R^2)/\langle a(x)\xi, a|_{(0, \infty)}\equiv 0 \rangle.\] We now describe the associated Poisson structure. The Poisson bracket is determined by
\[
\{\xi, x\} = f(x).
\]
More generally, for smooth functions \(F(x,\xi), G(x,\xi) \in C^\infty(\mathbb{R}^2)\), one has
\[
\{F,G\}
=
f(x)\left(
\frac{\partial F}{\partial \xi}\frac{\partial G}{\partial x}
-
\frac{\partial F}{\partial x}\frac{\partial G}{\partial \xi}
\right).
\]
This bracket descends to the quotient \( \cF(\scM) \), since the ideal generated by elements of the form \(a(x)\xi\), with \( a|_{(0, \infty)}\equiv 0 \), is stable under $\Pb$.

\end{example}

\begin{example}\label{ex:order2_vanishing2} We return to Example~\ref{ex:order2_vanishing}, in which $\scM\subset \mathfrak X(\mathbb R^2)$ is the Lie-Rinehart algebra over $\scA=C^\infty(\mathbb R^2)$ generated by vector fields  vanishing to order \(2\). In that case,  \[\cF(\scM)\simeq C^\infty(\mathbb R^2\times \mathbb R^6)/\langle\, x \xi_2-y \xi_1,\; x \xi_3-y \xi_2,\; x \xi_5-y \xi_4 ,\; x \xi_6-y \xi_5 \,\rangle,\]
the variables \(x, y, \xi_1, \ldots, \xi_6\) being the standard coordinates on \(\mathbb{R}^2 \times \mathbb{R}^6\). We now describe the corresponding Poisson structure.

The Poisson bracket is defined as follows. We continue to denote by $x, y, \xi_1, \ldots, \xi_6$ the generators of $\cF(\scM)$.\\

\noindent
\textbf{Brackets with base coordinates}:

\[
\{\xi_1,x\}=\{\xi_4,y\}=x^2,\quad \{\xi_4,x\}=\{\xi_1,y\}=0,
\]
\[
\{\xi_2,x\}=\{\xi_5,y\}=xy,\quad \{\xi_5,x\}=\{\xi_2,y\}=0,
\]
\[
\{\xi_3,x\}=\{\xi_6,y\}=y^2,\quad \{\xi_6,x\}=\{\xi_3,y\}=0.\]

\vspace{0.5cm}
\noindent
\textbf{Brackets among the  generators $(\xi_i)_{i=1}^6$}:

\[
\{\xi_1,\xi_2\}=-x\,\xi_2,\quad
\{\xi_1,\xi_3\}=-2x\,\xi_3,\quad
\{\xi_2,\xi_3\}=-y\,\xi_3,
\]

\[
\{\xi_4,\xi_5\}=x\,\xi_4,\quad
\{\xi_4,\xi_6\}=2y\,\xi_4,\quad
\{\xi_5,\xi_6\}=x\,\xi_6,
\]

\[
\{\xi_1,\xi_4\}=2x\,\xi_4,\quad
\{\xi_1,\xi_5\}=y\,\xi_4=x\,\xi_5,\quad
\{\xi_1,\xi_6\}=0,
\]

\[
\{\xi_2,\xi_4\}=2x\,\xi_5-y\,\xi_1,\quad
\{\xi_2,\xi_5\}=x\,\xi_6-y\,\xi_1,\quad
\{\xi_2,\xi_6\}=-x\,\xi_3,
\]

\[
\{\xi_3,\xi_4\}=2x\,\xi_6-2y\,\xi_1,\quad
\{\xi_3,\xi_5\}=y\,\xi_6-2x\,\xi_3,\quad
\{\xi_3,\xi_6\}=-2y\,\xi_3.
\]
\end{example}
\begin{remark} In Example \ref{ex:single_vf2}, the Poisson structure on $\cF(\scM)$ admits a linear Poisson extension to the ambient algebra \( C^\infty(\mathbb{R}^2) \), since every bivector field on a $2$-dimensional manifold automatically defines a Poisson structure for dimensional reasons. However, in Example \ref{ex:order2_vanishing2}, it is an open question  whether the linear Poisson structure on
\[
\cF(\scM)\simeq C^\infty(\mathbb R^2\times \mathbb R^6)\big/\langle\, x \xi_2-y \xi_1,\; x \xi_3-y \xi_2,\; x \xi_5-y \xi_4 ,\; x \xi_6-y \xi_5 \,\rangle
\]
extends to a linear Poisson structure on \(C^\infty(\mathbb R^2\times \mathbb R^6)\). Equivalently, this amounts to asking whether there exists a Lie algebroid structure on a vector bundle \(E \to \mathbb R^2\), with \(\operatorname{rank}(E)=6\), whose image of the anchor map is \(\mathscr{M}\) \cite{LLS}.
\end{remark}

Here is an example for which the module $\scM$ is not  finitely generated \cite{LLL2}.
\begin{example}\label{ex:infinte}
Let \(\scA=C^\infty(\mathbb{R})\), and let \(\left((\scM=\oplus_{i\in \mathbb N}\scA, \lb_\scM), \rho:\scM \to \CDer(\scA)\right)\) be the Lie-Rinehart algebra over $\scA$ so that
\begin{itemize}
        \item anchor map is \(\rho(e_i)=f_i\frac{\partial}{\partial x}\), for $i\in \mathbb N$;
        \item Lie bracket is \[
[e_i,e_j]_\scM = (i-j)f_0\,e_{i+j+1}.
\]
\end{itemize} 
where for all $i\in \mathbb N$ 
\[
f_i(x)=\begin{cases}
    \frac{1}{x^{i}} e^{-\frac{1}{x^{2}}}, &\; x\neq 0,\\\\0,&\; x=0
\end{cases}
\]
and $(e_i)_{i\in \mathbb N}$ is the standard basis for $\scM$. Let $(\xi_i)_{i\in \mathbb N}$ be the standard coordinates on $\mathbb R^\mathbb N$. We have   
\[
\cF(\scM)\simeq \scA \otimes_\infty C^\infty(\mathbb{R}^{\mathbb{N}})\simeq C^\infty(\mathbb R\times \mathbb R^\mathbb N),
\]
with  
\[
\eta_\scM\left(\sum_{i=1}^n a_i(x)e_i\right)=\sum_{i=1}^n a_i(x)\xi_i\qquad\text{and}\qquad \varphi_\scM(a)= \pr_1^*a 
\quad \text{for}\; i\in\mathbb N,\; a\in \scA =\cin(\R),
\]
where $\pr_1:\R\times \R^{\mathbb N} \to \R$ is the projection. The Poisson bracket is defined on generators by 
\[
\{x,x\}=0,\qquad
\{\xi_i,x\}=f_i(x),\qquad
\{\xi_i,\xi_j\}=(i-j)f_0(x)\,\xi_{i+j+1},
\]
and then extended to the whole $\cin$-ring $C^\infty(\mathbb R\times \mathbb R^\mathbb N)$ as a \(C^\infty\)-biderivation.  Here $\cin(\R^\N)$ is the free $\cin$-ring generated by the set $\N$, see Appendix~\ref{app:free}.
\end{example}

\subsection{Proof of Theorem~\ref{thm:main}} 
\mbox{}

\begin{proof}[Proof of Theorem~\ref{thm:main}] 
We first prove that if $((\scM, [\cdot\,,\cdot]_{\scM}), \scM\xrightarrow{\rho} \CDer(\scA))$ is  a  Lie-Rinehart algebra then the $\scA$-algebra $\cF(\scM)$ freely generated by the module $\scM$ carries a linear Poisson structure in the sense of Definition~\ref{def:lin_Poisson}. 
Choose a set of generators $\{v_i\}_{i\in I}$ of the module $\scM$.  Consider the free $\scA$-module $\scA^{\oplus I}$ generated by the set $I$.   Let $\{e_i\}_{i\in I}$ denote its standard basis.  There is a surjective map of modules
\[
\tau: \scA^{\oplus I} \to \scM
\]
uniquely determined by the condition that 
\[
\tau(e_i) = v_i
\]
for all $i\in I$.  By Lemma~\ref{lem:3.08}
\[
\cF(\scA^{\oplus I}) = \scA \otimes_\infty \bbF(I)
\]
with the $\scA$-algebra structure given by 
\[
\imath_\scA :\scA \hookrightarrow \scA \otimes_\infty \bbF(I).
\]
The structure map $\eta_{\scA^{\oplus I}}: \scA^{\oplus I} \to \cF(\scA^{\oplus I})$ is given by \eqref{eq:eta_free}:
\[
\eta_{\scA^{\oplus I}} (\sum_{i\in I} a_i e_i) = \sum a_i \cdot i. 
\]
By Lemma~\ref{lem:3.09}
\[
\cF(\scM) = \scA \otimes_\infty \bbF(I)/ \langle \eta_{\scA^{\oplus I}} (\ker \tau )\rangle
\]
and the structure map $\eta_\scM: \scM \to \scA \otimes_\infty \bbF(I)/ \langle \eta_{\scA^{\oplus I}} (\ker \tau )\rangle$ is given by
\[
\eta_\scM (\sum a_i v_i ) = \sum a_i \cdot  i + \langle \eta_{\scA^{\oplus I}} (\ker \tau )\rangle.
\]
We now proceed to construct a linear Poisson bracket on the $\cin$-ring $\cF(\scM)$.   Since neither $\cF(\scM)$ nor $\cF(\scA^{\oplus I}) = \scA \otimes_\infty \bbF(I)$ are free as $\cin$-rings, we cannot simply define the Poisson bracket on generators.  For this reason  we will first construct a bracket (in the sense of Definition~\ref{def:1}, i.e., a skew-symmetric biderivation) on an appropriately chosen free $\cin$-ring and then show that it descends to a well-defined Poisson bracket on \(\cF(\scM)\). The construction proceeds in several steps.

\vspace{0.2cm}

\noindent
\textbf{Step 1. A bracket $\llb\cdot\,, \cdot \rrb$ on $\scA^{\oplus I}$.}\quad 
Choose a linear order $<$ on the indexing set $I$. For  each pair of indices $i< j$ in $I$ there exist   elements $\{c^\ell_{ij}\}_{\ell\in I} \subset \scA$ with $c^\ell_{ij}= 0$ for all but finitely many $\ell$ so that 
\[
[v_i, v_j]_\scM = \sum _{\ell\in I} c^\ell_{ij} v_\ell.
\]
Since the set $\{v_i\}_{i\in I}$ of generators need not be a basis of the module $\scM$, the elements $c^\ell_{ij}$ are not necessarily unique.  We fix our choice of $c^\ell_{ij}$'s and then set 
\[
c^\ell_{ii} = 0 \qquad \textrm{ for all } i,\ell \in I
\]
and 
\[
c^\ell_{ji} := -c^\ell_{ij} \qquad \textrm{ for all } i<j \textrm{ and all }\ell.
\]
We also have a map
\[
\tilde{\rho}:= \rho\circ \tau: \scA^{\oplus I} \to \CDer(\scA)
\]
of $\scA$-modules
with
\[
\tilde{\rho} (e_i) = \rho(v_i)\qquad \textrm{ for all } i\in I.
\]
Since $\scA^{\oplus I}$ is a free $\scA$-module, there is a unique skew-symmetric $\R$-bilinear  $\scA$-biderivation (a bracket)
\[
\llb\cdot\,,\cdot\rrb\colon \scA^{\oplus I}\times \scA^{\oplus I}\to \scA^{\oplus I}
\]
so that 
\[
\llb e_i,e_j\rrb:= \sum_{\ell\in I} c_{ij}^\ell\, e_\ell,
\quad \text{for all}\; i,j\in I,
\]
and 
\[
\llb e_i, ae_j\rrb =(\widetilde{\rho}(e_i)a)e_j+a\llb e_i, e_j\rrb.
\]
for all $i, j\in I$ and $a\in \scA$.   By construction
\[
\tau (\llb e_i,e_j\rrb ) = [v_i, v_j]_\scM
\]
for all $i, j \in I$. It follows that 
\[
\tau(\llb\alpha, \beta\rrb) =[\tau(\alpha), \tau(\beta)]_\scM
\]
for all $\alpha, \beta\in \scA^{\oplus I}$.
In general there is no reason for the bracket  \(\llb\cdot\,,\cdot\rrb \) to satisfy the Jacobi identity.    So $( (\scA^{\oplus I}, \llb \cdot, \cdot \rrb), \scA^{\oplus I} \xrightarrow{\tilde{\rho}} \CDer(\scA))$ is an almost Lie-Rinehart algebra (Definition~\ref{def:cin-LR}).

However, since $\tau$ preserves brackets and since $[\cdot\,, \cdot]_\scM$ does satisfy the Jacobi identity, $\tau$ sends the Jacobiator $\operatorname{Jac}_{\llb\cdot\,,\cdot\rrb }(\alpha,\beta,\gamma)$ to zero for all $\alpha, \beta, \gamma \in \scA^{\oplus I}$.
It follows that $\operatorname{Jac}_{\llb\cdot\,,\cdot\rrb }$ is a $\ker\tau$-valued $\scA$-trilinear map on $\scA^{\oplus I}$ and  \(K:=\ker \tau\) is an almost Lie-Rinehart algebra ideal in a natural sense: for any $k\in K$ and for any $\alpha \in \scA^{\oplus I}$
\[
\llb \alpha, k\rrb \in K.
\]
\mbox{}\\[0pt]
\noindent
\textbf{Step 2. A well-defined bracket   $\lbr \cdot\,, \cdot\rbr$ on \(\cF(\scA^{\oplus I}) = \scA\otimes_\infty \bbF(I)\)}.   Since the $\cin$-ring $\scA\otimes_\infty \bbF(I)$ is not necessarily free, we first choose a surjective map $\Pi:\bbF(Y) \twoheadrightarrow \scA$ of $\cin$-rings.  That is, we choose a set $Y$ of generators of $\scA$.   Note that, as was mentioned earlier in the paper (see \ref{item:3.13.1}), 
\[
\bbF(Y)\otimes_\infty \bbF(I)  = \bbF(Y\sqcup I).
\]  
Note also that the diagram
\[
\xymatrix{
\bbF(Y) \ar@{^{(}->}[r]^{\imath_{\bbF(Y)} } \ar@{->>}[d]_{\Pi} &
\bbF (Y\sqcup I) \ar@{->}[d]^{\Pi\otimes_\infty \mathrm{id}} \\
\scA \ar[r]_<<<<<<<{\imath_\scA} &
\scA\otimes_\infty \bbF (I)
}
\]
commutes and that the map $\Pi\otimes_\infty \mathrm{id}: \bbF(Y\sqcup I) \to \scA\otimes_\infty \bbF (I)$ is surjective with 
\[
\ker (\Pi\otimes_\infty \id) = \langle \imath_{\bbF(Y)} (\ker \Pi ) \rangle.
\]
(cf.\ \ref{item:3.013.3}). Since the map $\imath_{\bbF(Y)}: \bbF(Y) \to \bbF(Y\sqcup I)$ is injective, we may suppres it and view $\bbF(Y)$ as a $\cin$-subring of $\bbF(Y\sqcup I)$. 

We define a skew-symmetric biderivation $B$ on the free $\cin$-ring $\bbF (Y\sqcup I)$ by specifying its values on the generators, which we are free to do by Theorem~\ref{thm:bider_gen}.
Since $\Pi: \bbF(Y)\to \scA$ is surjective, for every $y\in Y$ and $i\in I$ there is $b_{iy}\in \bbF(Y)$ so that 
\[
 \Pi(b_{iy})=\widetilde{\rho}(e_i)\Pi(y).
\]
Since the map $\Pi\otimes_\infty \mathrm{id}$  is surjective, for  all $i,j\in I$ there is $b_{ij}\in \bbF(Y\sqcup I) $ so that 
\[
 (\Pi\otimes_\infty \mathrm{id})\, (b_{ij})=\eta_{\scA^{\oplus I}}(\llb e_i, e_j\rrb ).
\]
Note that $b_{ji} = - b_{ij}$ for all $i,j\in I$.
Since the map $\Pi$ is surjective, for all $y\in Y$, $i\in I$ there is $b_{iy}\in \bbF(Y) \subset \bbF(Y\sqcup I)$ so that 
\[
 \Pi(b_{iy})=\widetilde{\rho}(e_i)\Pi(y).
\]
We set 
\[
b_{yi}:= - b_{iy}.
\]
By Theorem~\ref{thm:bider_gen} there exists a unique skew-symmetric biderivation
\[
B: \bbF(Y\sqcup I) \times  \bbF(Y\sqcup I) \to  \bbF(Y\sqcup I)
\]
with 
\[
B(i,j) = b_{ij} \qquad B(i,y) = b_{iy} \qquad \textrm{ and } \qquad B(x,y) = 0
\]
for all $x,y\in Y$ and all $i,j\in I$.\\

\noindent
\textbf{Step 3.} $K:= \ker (\Pi\otimes_\infty \id) $ is a $B$-ideal in $\bbF(Y\sqcup I)$. 

We argue that 
\begin{equation}
    B(f, h) \in K
\end{equation}
for any $f\in \bbF(Y\sqcup I)$ and any $h\in K$.   
Since $B$ is a biderivation, we may assume that $f\in Y\sqcup I$.  Since $K$ is generated by 
\[
\imath_{\bbF(Y)}(\ker \Pi) = \ker \Pi
\]
we may assume $h\in \ker \Pi$.  Since $\bbF(Y)$ is freely generated, as a $\cin$-ring, by the set $Y$, Remark~\ref{rmrk:A9} tells us that there is  $d\geq 0$, $g\in \cin(\R^d)$ and $y_1,\ldots, y_d\in Y\subset \bbF(Y)$ so that 
\[
h = g_{\bbF(Y)}(y_1,\ldots, y_d).
\]
Since $B$ is a $\cin$-ring biderivation,
\begin{equation}
B(f,h) = B(f, g_{\bbF(Y)}(y_1,\ldots, y_d) = \sum_s (\partial_s g)_{\bbF} (y_1, \ldots, y_d)\cdot B(f, y_s).
\end{equation}

If $f\in Y$, then $B(f, y_s) = 0$ for all $s$. Hence $B(f,h) = 0 \in \ker \Pi$.
If $f\in I$ then 
\[
(\Pi\otimes_\infty \id) (B(f,y_s)) = \Pi(B(f, y_s)) = \widetilde{\rho}(e_f) (\Pi (y_s)).
\]
Hence 
\[
\begin{split}
(\Pi\otimes_\infty \id) (B(f, h)) &= \sum_s \Pi ((\partial_s g)_{\bbF} (y_1, \ldots, y_d))\cdot \widetilde{\rho}(e_f) (\Pi (y_s))\\ 
&= \sum_s \Pi ((\partial_s g)_{\scA} (\Pi(y_1), \ldots, \Pi(y_d)))\cdot \widetilde{\rho}(e_f) (\Pi (y_s)) \qquad \textrm{ (since $\Pi$ is a homomorphism)}.
\end{split}
\]
Since $\widetilde{\rho}(e_f)$ is a $\cin$-ring derivation
\[
\begin{split}
\sum_s \Pi ((\partial_s g)_{\scA} (\Pi(y_1), \ldots, \Pi(y_d)))&\cdot \widetilde{\rho}(e_f) (\Pi (y_s)) = \widetilde{\rho}(e_f) (g_\scA (\Pi(y_1), \ldots \Pi(y_d)) \\
&= \widetilde{\rho}(e_f)  \Pi (g_{\bbF(Y)} (y_1,\ldots, y_d)) \quad \textrm{(since $\Pi$ is a homomorphism)}\\
&= \widetilde{\rho}(e_f) (\Pi(h)) \stackrel{\, \textrm{$\Pi(h) = 0$ by assumption\,}}{=} \widetilde{\rho}(e_f) (0) =0,
\end{split}
\]
since $\Pi(h) = 0$ by assumption.  Hence, $B(f,h) \in \ker \Pi$ in this case as well.

Since $\langle \ker \Pi \rangle$ is a $B$-ideal, the biderivation $B$ descends to a well-defined biderivation $\{\cdot\,,\cdot\}$ on 
\[ 
\scA\otimes_\infty \bbF(I) = 
\bbF (Y\sqcup I)/\langle \ker \Pi \rangle 
\]
In addition, for $a,b\in \scA$ and $i\in I$, the bracket $\lbr \cdot\,, \cdot\rbr$ satisfies the following relations:
\begin{align}\label{eq1:{}}
   \lbr\eta_{\scA^{\oplus I}}(e_i), \eta_{\scA^{\oplus I}}(e_j)\rbr &=\eta_{\scA^{\oplus I}}(\llb e_i, e_j\rrb )\\
\label{eq2:{}}\lbr\eta_{\scA^{\oplus I}}(e_i), \iota_\scA(a)\rbr&=\iota_\scA(\widetilde{\rho}(e_i)a)\\\label{eq3:{}}\lbr\iota_\scA(a), \iota_\scA(b)\rbr&=0\end{align}
\\[12pt]
\noindent
\textbf{Step 4.} A Poisson bracket $\Pb_{\mathcal{F}(\scM)}$ on $\cF(\scM)$.

By Step~3 the biderivation $B$ descends to a well-defined biderivation $\{\cdot\,,\cdot\}$ on 
\[
\scA\otimes_\infty \bbF(I) = \bbF(Y\sqcup I)/\langle \ker \Pi \rangle. 
\]
We argue that  $\{\cdot\,,\cdot\}$ descends to a well-defined biderivation $\Pb_{\mathcal{F}(\scM)}$, on 
\[
\mathcal{F}(\scM)=\left(\scA\otimes_\infty \bbF(I)\right)/\langle \eta_{\scA^{\oplus I}}(K)\rangle. 
\]
To prove this, it suffices to show that $\langle \eta_{\scA^{\oplus I}}(K)\rangle$ is a $\{\cdot\, ,\cdot\}$-ideal in $\scA\otimes_\infty \bbF(I)$.  We then argue that $\Pb_{\mathcal{F}(\scM)}$ satisfies the Jacobi identity, and hence $\Pb_{\mathcal{F}(\scM)}$ is a Poisson bracket.

Using the Leibniz rule for the brackets  $\Pb$,  $\lb_\scM$, together with Equations \eqref{eq1:{}}, \eqref{eq2:{}} and \eqref{eq3:{}}, we obtain
\begin{align*}
    \{\eta_{\scA^{\oplus I}}(e_i), \eta_{\scA^{\oplus I}}(\alpha)\}&=\eta_{\scA^{\oplus I}}(\llb e_i, \alpha\rrb )\\
    \{\eta_{\scA^{\oplus I}}(\alpha), \iota_\scA(a)\}&=\widetilde{\rho}(\alpha)a
\end{align*}for all $\alpha\in \scA^{\oplus I}$ and $a\in \scA$.
Consequently, if $\alpha\in \ker \tau$ and $a\in \scA$, then 
\[
\{\eta_{\scA^{\oplus I}}(\alpha), \iota_\scA(a)\}=\widetilde{\rho}(\alpha)a={\rho}(\tau(\alpha))a=0. 
\]
Moreover, since \[\tau(\llb e_i, \alpha\rrb )=[v_i, \tau(\alpha)]_\scM=0,\] it follows that\begin{align*}
    \{\eta_{\scA^{\oplus I}}(e_i), \eta_{\scA^{\oplus I}}(\alpha)\}&=\eta_{\scA^{\oplus I}}(\llb e_i, \alpha\rrb )\in \eta_{\scA^{\oplus I}}(\ker \tau).
\end{align*}
Therefore,  $\langle \eta_{\scA^{\oplus I}}(K)\rangle\subset \scA\otimes_\infty C^\infty(\mathbb R^I)$ is a $\{\cdot\,,\cdot\}$-ideal. Hence, the bracket descends to a well-defined biderivation on $\cF(\scM)=\scA\otimes_\infty C^\infty(\mathbb R^I)/\langle \eta_{\scA^{\oplus I}}(K)\rangle$. By construction, the bracket $\Pb_\scM$ satisfies Equations \eqref{eq:Rinehart-Poisson1}, \eqref{eq:Rinehart-Poisson2} and \eqref{eq:Rinehart-Poisson3}.
We now argue that $\Pb_{\mathcal{F}(\scM)}$ satisfies the Jacobi identity, and hence $\Pb_{\mathcal{F}(\scM)}$ is a Poisson bracket. By Theorem \ref{thm:Jacobiator}, it suffices to verify the Jacobi identity on generators. 
To simplify notation, for every $i\in I$ and every $a\in \mathscr A$, set
\[
\eta_\scA^{\oplus I}(e_i)=x_i
\quad \text{and} \quad
\iota_{\mathscr A}(a)=a.
\]For $i,j,k\in I$ and $a, b, c\in \scA$, it is immediate that $\operatorname{Jac}_{\{\cdot\,,\,\cdot\}}(a, b, c)=0$. We compute (as usual $\circlearrowleft (x_i, x_j, x_k)$ stands for cyclic permutations): 
\begin{align*}
    \operatorname{Jac}_{\{\cdot\,,\,\cdot\}}(x_i, x_j, x_j)&=\{\eta_{\scA^{\oplus I}}(\llb e_i, e_j\rrb ), x_k\} +\circlearrowleft (x_i, x_j, x_k)\\
    &=\left\{\sum_{r\in I}c_{ij}^rx_r, x_k\right\} +\circlearrowleft (x_i, x_j, x_k)\\
    &= \sum_{r\in I}\left(\left\{c_{ij}^r, x_k\right\}x_r + c_{ij}^r\left\{x_r, x_k\right\} + \circlearrowleft (x_i, x_j, x_k)\right)\\
    &= \sum_{r\in I}\left(-\widetilde{\rho}(e_k)(c_{ij}^r)x_r + c_{ij}^r\llb e_r, e_k\rrb  + \circlearrowleft (x_i, x_j, x_k)\right)\\
    &=\eta_{\scA^{\oplus I}}\left(\left[\!\!\left[\sum_{r\in I}c_{ij}^re_r, e_k\right]\!\!\right] +\circlearrowleft (x_i, x_j, x_k)\right)\\&=\eta_{\scA^{\oplus I}}\left(\llb \llb e_i, e_j\rrb , e_k\rrb  +\circlearrowleft (x_i, x_j, x_k)\right)\\&=\eta_{\scA^{\oplus I}}\left(\underbrace{\operatorname{Jac}_{\llb \cdot\,,\cdot\rrb }(e_i, e_j, e_k)}_{\in \ker \tau}\right)\in \langle \eta_{\scA^{\oplus I}}(K)\rangle.
\end{align*}
Next, we have 
\begin{align*}
    \operatorname{Jac}_{\{\cdot\,,\,\cdot\}}(x_i, x_j, a)&=\{\{x_i, x_j\}, a\}+ \{\{x_i, x_j\}, a\}+\{\{x_i, x_j\}, a\}\\&=\{\eta_{\scA^{\oplus I}}(\llb e_i, e_j\rrb ), a\}-\{\widetilde{\rho}(e_i)a, x_j\}+\{\widetilde{\rho}(e_j)a,x_i\}&\\&=\sum_{r\in I}\{c_{ij}^rx_r,a\}+\widetilde{\rho}(e_j)\circ\widetilde{\rho}(e_i)a -\widetilde{\rho}(e_i)\circ\widetilde{\rho}(e_j)a\\&= \sum_{r\in I}\left(\underbrace{\{c_{ij}^r,a\}}_{0}x_r+c_{ij}^r\underbrace{\{x_r,a\}}_{\widetilde{\rho}(e_r)a}\right) -[\widetilde{\rho}(e_i),\widetilde{\rho}(e_j)]a\\&=\widetilde{\rho}(\llb e_i, e_j\rrb )a- [\widetilde{\rho}(e_i),\widetilde{\rho}(e_j)]a=0
\end{align*}
Finally, 
\begin{align*}
    \operatorname{Jac}_{\{\cdot\,,\,\cdot\}}(x_i, a, b)&=\{\{x_i, a\}, b\}+ \{\{b, x_i\}, a\} + \underbrace{\{\{a, b\}, x_i\}}_{=0}\\& = \underbrace{\{\{\widetilde{\rho}(e_i)a, b\}}_{=0}-\underbrace{\{\{\widetilde{\rho}(e_i)b, a\}}_{=0}=0.
\end{align*}
Therefore $\Pb_{\mathcal{F}(\scM)}$ is a Poisson bracket on $\cF(\scM)=\scA\otimes_\infty C^\infty(\mathbb R^I)/\langle \eta_{\scA^{\oplus I}}(K)\rangle$.\\

\noindent
We now prove the converse: given a linear Poisson  $C^\infty$-ring structure $\left(\mathcal F(\scM),  \Pb_{\mathcal F(\scM)}\right)$, we construct a Lie-Rinehart algebra $((\scM, [\cdot\,,\cdot]_{\scM}),  \rho:\scM\to \CDer(\scA))$.

Since the  Poisson  $C^\infty$-ring bracket $\Pb_{\mathcal F(\scM)}$ is linear, it follows from Definition~\ref{def:lin_Poisson} that 
\begin{align*}\left\{\eta_{\scM}(\scM), \eta_{\scM}(\scM)\right\}_{\cF(\scM)}&\subset \eta_\scM(\scM)
    \\\{\eta_\scM(\scM), \varphi_\scM(\scA)\}_{\mathcal{F}(\scM)}&\subset \varphi_\scA(\scA)\nonumber&\\\{\varphi_\scM(\scA), \varphi_\scM(\scA)\}_{\mathcal{F}(\scM)}&=0
\end{align*}

\noindent
Let $\{v_i\}_{i\in I}$ be a set of generators of $\scM$. Then, as we have seen before, there exist elements \(c_{ij}^\ell \in \scA\) satisfying the skew-symmetry condition
\(
c_{ij}^\ell = -c_{ji}^\ell
\)
for all $i,j,\ell\in I$
together with
\begin{align*}
    \{\eta_\scM(v_i),\eta_\scM(v_j)\}_{\mathcal{F}(\scM)} &= \sum_{\ell\in I} c_{ij}^\ell\, \eta_\scM(v_\ell)
\quad \textrm{ for all \, }  i,j \in I,
\end{align*}
where all but finitely many of the coefficients $c_{ij}^\ell$ vanish. Moreover, since the structure map $\varphi: \scA \to \cF(\scM)$ is injective (Lemma~\ref{lem:3.06}), for every $a\in \scA$, there exists a unique element $X_i(a)\in \scA$ such that 
\[
\{\eta_{\scM}(v_i), \varphi_\scA(a)\}_{\mathcal{F}(\scM)}=\varphi_\scA(X_i(a)).
\] 
Consider the surjective $\scA$-linear map 
\[
\tau\colon \scA^{\oplus I}
\twoheadrightarrow \scM,
\]
that takes the standard basis  $(e_i)_{i\in I}$ of the  $\scA$-module $\scA^{\oplus I}$ to the generators $\{v_i\}_{i\in I}$ of $\scM$: $\tau(e_i) = v_i$ for all $i\in I$.\\

\noindent
\textbf{Claim 1.}\\
$(i)$ For all $i\in I$, the assignment $\scA\ni a\mapsto X_i(a)\in \scA$ is a $C^\infty$-ring derivation of $\scA$;\\
$(ii)$ There is an almost Lie algebroid structure \(((\scA^{\oplus I}, \llb \cdot\,,\cdot\rrb ), \widetilde{\rho}: \scA^{\oplus I} \to \CDer(\scA))\) such that 
\begin{itemize}
\item the anchor map $\widetilde{\rho}\colon \scA^{\oplus I}\to C^\infty\mathrm{Der}(\scA)$ is given by
\[
\widetilde{\rho}(e_i):= X_i,
\qquad \text{for all } i\in I,
\]

\item the skew-symmetric bracket \(\llb \cdot\,,\cdot\rrb \colon \scA^{\oplus I}\times \scA^{\oplus I}\to \scA^{\oplus I}\) is defined by

\[
\llb e_i,e_j\rrb 
:=
\sum_{\ell\in I} c_{ij}^\ell\, e_\ell,
\quad \text{for all}\; i,j\in I.
\]
\end{itemize}
\begin{proof}[Proof of Claim 1]
$(i)$ For any $n$, any $f\in
\cin(\R^n)$ and any $a_1,\ldots, a_n\in \scA$, one has
\begin{align*}
    \varphi_\scA \Bigr(X_i\left(f_\scA(a_1,\ldots,a_n)\right)\Bigr)&= \left\{\eta_{\scM}(v_i), \varphi_\scA(f_\scA(a_1,\ldots,a_n))\right\}_{\mathcal{F}(\scM)}\\&=\sum_{j=1}^n\varphi_\scA\Bigr((\partial_j
  f)_\scA(a_1,\ldots, a_n)\Bigr)\cdot \varphi_\scA(X_i(a_j))\\&=\varphi_\scA\left(\sum_{j=1}^n(\partial_j
  f)_\scA(a_1,\ldots, a_n)\cdot X_i(a_j)\right).
\end{align*}
The conclusion in $(i)$ follows from the injectivity of $\varphi_\scA$.\\

\noindent
$(ii)$ Let $i, j\in I$, and $a\in \scA$. Application of the Jacobi identity for $\{\cdot\,,\cdot\}_{\cF(\scM)}$ to the elements $\eta_\scM(e_i),  \eta_\scM(e_j)$ and $ \varphi_\scA(a)$, yields  
\begin{align*}
    0=\operatorname{Jac}_{\{\cdot\,,\,\cdot\}_\cF(\scM)}(\eta_\scM(e_i), \eta_\scM(e_j), \varphi_\scA(a))&=\widetilde{\rho}(\llb e_i, e_j\rrb )a- [\widetilde{\rho}(e_i),\widetilde{\rho}(e_j)]a.
\end{align*}
This follows from a computation similar to that in the previous direction.
\end{proof}\mbox{}\\
\noindent
\textbf{Claim 2.} The almost Lie algebroid structure \(((\scA^{\oplus I}, \llb \cdot\,,\cdot\rrb ), \widetilde{\rho}: \scA^{\oplus I} \to \CDer(\scA))\) satisfies
\begin{align}
 \{\eta_\scM(\tau(\alpha)),\eta_\scM(\tau(\beta))\}_{\mathcal{F}(\scM)} &= \eta_\scM\left(\tau(\llb \alpha,\beta\rrb )\right),
\quad \forall \alpha,\beta \in \scA^{\oplus I}.\\\{\eta_{\scM}(\tau(\alpha)), \varphi_\scA(a)\}_{\mathcal{F}(\scM)}&=\varphi_\scA(\widetilde\rho(\alpha)a), \quad \forall \alpha \in \scA^{\oplus I}, \; a\in \scA.
\end{align}
In particular, one has $\ker\tau\subset \ker\widetilde{\rho}$, and for all \(\alpha,\beta,\gamma \in \scA^{\oplus I}\), 
\[
\operatorname{Jac}_{\llb \cdot\,,\,\cdot\rrb }(\alpha,\beta,\gamma)\subset \ker\tau.\]
\begin{proof}[Proof of Claim 2] This follows from a direct computation and from the fact that the structure maps $\varphi_{\scA}$ and $\eta_{\scM}$ are injective (Lemma~\ref{lem:3.06}).
\end{proof}

Therefore, the bracket $\llb \cdot\,,\cdot\rrb $ and the map $\widetilde{\rho}$ descend to the quotient $\scA^{\oplus I}/\ker\tau$ and induce a well-defined Lie--Rinehart algebra $\left((\scA^{\oplus I}/\ker\tau \simeq \scM, \lb_\scM), \rho:\scM\to \CDer(\scA)\right)$  satisfying the required relations.
\end{proof}

\section{Cotangent bundles of orbit spaces} \label{sec:7}

In this short section we touch upon cotangent bundles of quotients of manifolds by group actions.   The reader may wish to compare our approach with the approach in \cite{HPR}.  Given an action of a compact Lie group $G$ on a manifold $M$ we describe three, in generally different,  Poisson $\cin$-rings that may reasonably  be called $\cin(T^\svee (M/G))$, the algebra of ``functions" on the ``cotangent bundle" of the orbit space $M/G$.

If the action of the Lie group $G$ on the manifold $M$ is free, then the orbit space
\[
B = M/G
\]
is a manifold.  Moreover, the cotangent bundle $T^\svee B$ of $B$ is the symplectic quotient  at 0 of the cotangent bundle of $M$ by the Hamiltonian lifted action of $G$:
\begin{equation} \label{eq:701}
T^\svee B = T^\svee M/\!/_0 G.
\end{equation}
In general the quotient $B$ is a singular object and there are many ways to define its cotangent bundle.

One approach is to {\em define} $T^\svee B$ to be the symplectic quotient at 0, that is, to take \eqref{eq:701} as a definition of $T^\svee B$.  Then since the right-hand side of \eqref{eq:701} is a symplectic stratified space in the sense of \cite{SL} there is  $\cin$-ring $\cin (T^\svee M/\!/_0 G)$ of ``smooth" functions on the symplectic quotient $T^\svee M/\!/_0 G$, which is a Poisson $\cin$-ring (see  \cite{L-poisson}).  

Thanks to the results proved in the present paper we have two more ways to define $\cin(T^\svee (M/G))$ and give it the structure of a Poisson $\cin$-ring.  First of all recall that given a $\cin$-ring $\scA$ we have the tautological Lie-Rinehart algebra
\[
\CDer{\scA} \xrightarrow{\, \id\, } \CDer{\scA}.
\]
By Theorem~\ref{thm:main} the $\scA$-algebra $\cF(\CDer{\scA})$ is a Poisson $\cin$-ring.   Therefore, since one usually defines $\cin(M/G)$ to be the $\cin$-ring of $G$-invariant functions on the manifold $M$, we can define $\cin(T^\svee(M/G))$ to be the $\cin$-ring $\cF (\CDer (\cin(M)^G))$:
\[
\cin(T^\svee(M/G)) := \cF (\CDer (\cin(M)^G)). 
\]
We will see in Example~\ref{ex:7.1} that in general the  $\cin$-rings $\cin (T^\svee M/\!/_0 G)$ and $\cF (\CDer (\cin(M)^G))$ are not isomorphic.
Note that applying the Dubuc spectrum functor $\Spec$ to any Poisson $\cin$-ring $\scA$ produces an affine  Poisson $\cin$-scheme (see \cite{L-poisson}).   So $\Spec( \cF (\CDer (\cin(M)^G)))$ is an affine  Poisson $\cin$-scheme.

There is  another Lie-Rinehart algebra that we may consider:
\[
(\CDer (\cin(M)))^G \xrightarrow{\,\, \rho\,\, } \CDer(\cin(M)^G),
\]
where $(\CDer (\cin(M)))^G$ is the $\cin(M)^G$-module of $G$-invariant vector fields/derivations of the $\cin$-ring $\cin(M)$ of smooth functions on the manifold $M$.  The map $\rho $ is, essentially, a restriction: for any $G$-invariant vector field $v$ on $M$ and any $G$-invariant function $f$ the function $v(f)$ is again $G$-invariant.  Thus, an invariant vector field $v\in (\CDer (\cin(M)))^G$ induces a derivation
\[
\rho(v): \cin(M)^G\to \cin(M)^G,\qquad \rho(v)(f): = v(f).
\]
By Theorem~\ref{thm:main} the $\cin(M)^G$-algebra $\cF\left((\CDer (\cin(M)))^G  \right)$ is a Poisson $\cin$-ring. Again, in general, the three $\cin$-rings $\cin (T^\svee M/\!/_0 G)$ with the Poisson $\cin$-ring structure coming from  $\cF (\CDer (\cin(M)^G))$ and $\cF\left((\CDer (\cin(M)))^G  \right)$ are different from each other as one can see in Example~\ref{ex:7.1}.

\begin{example} \label{ex:7.1}
Consider the action of the group $G= \{\pm 1\}$ on the real line $\R$:
\[
(-1)\cdot x = -x
\]
for all $x\in \R$.    This action lifts to the Hamiltonian action of $G$ on the cotangent bundle $T^\svee \R$ with zero moment map.  
We now compute the three associated Poisson $\cin$-rings: $\cin(T^\svee \R/\!/_0 G)$, $\cF (\CDer (\cin(\R)^G))$ and $\cF\left((\CDer (\cin(\R)))^G  \right)$.

Since the moment map is identically zero,  the  symplectic quotient $T^\svee \R/\!/_0 G$ is the symplectic orbifold $(T^\svee \R)/G$.  Thus,
\[
\cin(T^\svee \R/\!/_0 G) = \cin(T^\svee \R)^G.
\]
By a theorem of G.\ Schwarz \cite{Sch} the $\cin$-ring $\cin(T^\svee \R)^G$ is generated by the set 
\[
\{u:=x^2, v:= xy, w:= y^2\} \subset \cin(T^\svee \R)^G.  
\]These generators are subject to relations
\[
uw = v^2 \quad\textrm{and}\quad  u,w\geq 0.
\]
It follows that the $\cin$-ring $\cin(T^\svee \R)^G$ is isomorphic to the ring of Whitney smooth functions on the semi-algebraic set
\begin{equation} \label{eq:7.0.1}
C= \{(u,v,w)\in \R^3 \mid uw = v^2, u,w\geq 0\}.
\end{equation}
That is,
\[
\cin(T^\svee \R/\!/_0 G) = \cin(T^\svee \R)^G \simeq \{f\in \cin(\R^3) \mid f|_C =0\}.
\]
The ring $\cin(T^\svee \R)^G$ inherits a Poisson bracket from $\cin(T^\svee \R)$. 
For example since $\{x^2, xy\} = 2x^2$, $\{u, v\} = 2u $ and so on.

By Schwarz's theorem 
\[
\cin(\R)^G = \{f\in \cin(\R) \mid f(x) = f(-x)\} = \{f \mid f(x) = g(x^2), g\in \cin([0, \infty) \} \simeq \cin([0, \infty)).
\]
The module $\CDer(\cin([0,\infty))$ is the module of vector fields on the manifold with boundary $[0,\infty)$.  This module is freely generated by the vector field $\frac{d}{dx}$, That is, the map 
\[
\cin([0,\infty) \to \CDer(\cin([0,\infty)), \qquad f\mapsto f\frac{d}{dx}
\]
is an isomorphism of $\cin([0,\infty))$-modules.  Recall from the proof of Lemma~\ref{lem:3.08} that for any $\cin$-ring $\scA$ and for any set $Y$
\[
\cF(\scA^{\oplus Y}) = \scA\otimes_\infty \bbF(Y).
\]
Hence
\[
\begin{split}
\cF (\CDer (\cin(\R)^G)) &\simeq  \cF(\CDer (\cin[0,\infty))) \simeq \cF(\cin([0,\infty)) \\&\simeq \cin([0,\infty))\otimes_\infty \bbF({1}) = \cin([0,\infty))\otimes_\infty \cin(\R).
\end{split}
\]
To compute the coproduct $\cin([0,\infty))\otimes_\infty \cin(\R)$ we first observe that 
\[
\cin([0,\infty)) = \{f\in \cin(\R)\mid f|_{[0,\infty)} = 0\} = \cin(\R)/I
\]
where $I$ is the ideal of functions in $\cin(\R)$ that vanish identically on the ray $[0,\infty)$.
Recall that if $I$ is an ideal in a $\cin$-ring $\scA$ and $\scB$ is another $\cin$-ring then
\begin{equation} \label{eq:7.1.2}
    (\scA/I)\otimes_\infty \scB = (\scA \otimes_\infty \scB)/\langle \iota_1(I)\rangle
\end{equation}
where $\iota_1:\scA\to \scA \otimes_\infty \scB$ is one of the two structure maps and $\langle \iota_1(I)\rangle\subset \scA \otimes_\infty \scB $ is the ideal generated by the image of $I$ under the map $\iota_1$.
It follows that 
\[
\begin{split}
\cin([0,\infty))\otimes_\infty \cin(\R) &= \cin(\R)/I\otimes_\infty \cin(\R) \stackrel{\eqref{eq:7.1.2}}{=} (\cin(\R)\otimes _\infty \cin(\R))/ 
\langle \iota_1 (I)\rangle\\& \simeq \cin(\R^2) / \langle (\pr_1)^* I\rangle,
\end{split}
\]
where $\pr_1 :\R^2\to \R$ is the projection on the first factor.  One can show with a bit of effort that the ideal $ \langle (\pr_1)^* I\rangle$ is the ideal of functions in $\cin(\R^2)$ that vanish on the halfspace $[0,\infty) \times \R$:
\[
\langle (\pr_1)^* I\rangle = \{f\in \cin(\R^2) \mid f|_{[0,\infty) \times \R} =0\}.
\]
We conclude that 
\[
\cF (\CDer (\cin(\R)^G))  \simeq \cin([0,\infty) \times \R).
\]
Denote the standard coordinates on $[0,\infty) \times \R\subset \R^2$ by $x$ and $y$. Tracing throught the identificaitons we see that the map 
\[
\eta: \CDer (\cin(\R)^G) \to \cF (\CDer (\cin(\R)^G))  \simeq \cin([0,\infty) \times \R)
\]
is determined by 
\[
\eta( \frac{d}{dx}) = y.
\]
Consequently the Poisson bracket on $ \cF (\CDer (\cin(\R)^G))  \simeq \cin([0,\infty) \times \R)$ is given on the generators $x,y$ by 
\[
\{y, x\} = \{ \eta(\frac{d}{dx}), x\} = \frac{d}{dx} (x) = 1.
\]

The computation of the module $(\CDer (\cin(\R))^G = \mathfrak X(\R)^G$ of invariant vector fields is a bit more involved.  Define $\tau:\R\to \R$ by
\[ 
\tau(x) = -x.
\]
Then $v\in \mathfrak X(\R)^G$  if and only if 
\[
T\tau \circ v = v\circ \tau.
\]
Thus $v = f(x)\frac{d}{dx} \in \mathfrak X(\R)^G$ if and only if
\[
(-1) \left( f(x)\frac{d}{dx}\right) = f(-x)\frac{d}{dx}.
\]
That is,
\begin{equation}\label{eq:7.2.3}
    f(-x) = - f(x).
\end{equation}
Equation \eqref{eq:7.2.3} implies that $f(0) = 0$. Hence by Hadamard's lemma, there is a  function $h\in \cin(\R)$ so that 
\[
f(x) = x h(x).
\]
Equation \eqref{eq:7.2.3} then implies that 
\[
(-x)h(-x) = - (xh(x)).
\]
Hence 
\[
h(x) = h(-x),
\]
i.e., the function $h$ is even.  It follows from  a theorem of Whitney (which is a special case of Schwarz's theorem) that there is a smooth function $g\in \cin([0,\infty))$ so that 
\[
h(x) = g(x^2).
\]
Thus 
\[
(\CDer (\cin(\R))^G = \left\{ g(x^2) \cdot x \frac{d}{dx} \,\, \bigg| \,\, g \in \cin([0, \infty))\right\}.
\]
So again we have an isomorphism of $\cin(\R)^G = \cin([0,\infty))$-modules
\[
\cin([0,\infty))\to (\CDer (\cin(\R))^G \qquad g \mapsto x g(x^2) \frac{d}{dx}.
\]
We conclude that 
\[
\cF((\CDer (\cin(\R)))^G ) \simeq \cin([0,\infty) \times \R)
\]
with $\eta(x  \frac{d}{dx}) = y$. It remains to compute the Poisson bracket on the generators $x,y \in \cin([0,\infty) \times \R)$.  By Theorem~\ref{thm:main},
\[
\{y, x\} = \{ \eta(x  \frac{d}{dx}), x\} = \rho(x\frac{d}{dx}) x,
\]
where as before 
\[
\rho : \CDer (\cin(\R)))^G \to \CDer (\cin(\R)^G) \simeq \CDer(\cin([0,\infty))
\]
is the anchor map.  Recall that the identification of $\cin([0,\infty))$ with $\cin(\R)^G$ is
\[
h \mapsto h(u^2).
\]
Since 
\[
(u\frac{d}{du}) \,h(u^2) = 2 u^2 h'(u^2),
\]
and since in particular
\[
(u\frac{d}{du}) \,(u^2) = 2 u^2,
\]
the map $\rho:\CDer (\cin(\R)))^G \to \CDer(\cin([0,\infty))$ is determined by 
\[
\rho(x\frac{d}{dx}) x = 2x.
\]
It follow that
\[
\{y, x\} = \{ \eta(x  \frac{d}{dx}), x\} = 2x.
\]
There are many reasons why the $\cin$-rings $\cin([0,\infty)\times \R)= \cF\left((\CDer (\cin(\R)))^G  \right)$ and $(\cin(T^\svee \R))^G$ could not possibly be isomorphic, and we invite the reader to pick their favorite.   For one thing the semi-algebraic $C$ given by \eqref{eq:7.0.1}  has a singularity at the origin and the ring of Whitney smooth functions detects the singularity.
The $\cin$-rings $\cF(\CDer (\cin(\R)^G))$ and $\cF\left((\CDer (\cin(\R)))^G  \right)$ are isomorphic, but their Poisson brackets are different since in one case we have $\{y,x\} = 1$ and in the other $\{y,x\} = 2x$. 

Note that the first bracket corresponds to the standard symplectic form $dy\wedge dx$ on the cotangent bunlde $T^\svee [0,\infty)$, while the second corresponds to $\frac{1}{2x}dy\wedge dx$, which is a $b$-symplectic form.
\end{example}

\mbox{}

\appendix 

\section{Free $\cin$-rings and their biderivations}  \label{app:free}

This appendix contains  a number of results regarding the properties of free $\cin$-rings, their derivation and biderivations.
\begin{itemize}
\item We  review a well-known construction of the free $\cin$-ring functor $\bbF: \Set \to \cring$ (the left adjoint to the forgetful functor $\bbU: \cring \to \Set$) and show that the universal arrow $\delta_X:X\to \bbU(\bbF(X))$ is injective for every set $X$.  Thus, we can suppress $\delta_X$ (and the forgetful functor $\bbU$) and view the set $X$ as a subset of the corresponding free $\cin$-ring $\bbF(X)$.
\item 
We  check that a free $\cin$-ring $\bbF(X)$ is indeed generated by the set $X$ in the sense that any element of $\bbF(X)$ can be obtained uniquely from the set $X$ by applying the $\cin$-ring operations. 
\item We show  that if $i:X\hookrightarrow Y$ is an injective map of sets then $\bbF(i): \bbF(X) \to \bbF(Y)$ in an injective map of $\cin$-rings.

\item We show that the $\bbF(X)$-module $\Omega^1_{\bbF(X)}$ of K\"ahler differentials of a free $\cin$-ring $\bbF(X)$ is freely generated (as a module) by the set of symbols $\{dx\}_{x\in X}$ (see Remark~\ref{rmrk:free}): 
\[
\Omega^1_{\bbF(X)} = \Free_{\bbF(X)} (\{dx\}_{x\in X}).
\]
See Lemma~\ref{lem:Kahler_free}.
\item We prove that a skew-symmetric biderivation on a  free $\cin$-ring $\bbF(X)$ (cf.\ Remark~\ref{rmrk:3.12a}) can be defined and is uniquely determined by this choice by specifying its value on generators\footnote{This assertion is not immediate, since the underlying $\mathbb{R}$-algebra of a free $C^\infty$-ring is larger than the polynomial algebra. See Remark~\ref{rmk:free-generators}.}.  See Theorem~\ref{thm:bider_gen}.
\end{itemize}

The standard construction of the $\cin$-ring $\bbF(Y)$ freely generated by a set $Y$ realizes it as the $\cin$-ring $\cin(\R^Y)$ of functions on the vector space $\R^Y$ (the vector space of all maps from the set $Y$ to the reals) that ``depend smoothly on finitely many variables" (cf.\ \cite[Example~4.31]{Joy}). Since we want to make sure that a number of things are true about the functor $\bbF$, we carry out a construction of the functor in detail.
We start by considering the case of finite sets.
\begin{lemma} \label{lem:A1}
 The $\cin$-ring $\bbF(X)$  freely generated  by a {\em finite} set $X$  is the $\cin$-ring of smooth functions $\cin(\R^X)$ on the finite dimensional real vector space $\R^X$ and the map 
 \begin{equation} \label{eq:A1}
 \delta_X: X\to \cin(\R^X), \qquad \delta_X(x) \, (p): = p(x) \quad \textrm{ for all }p\in \R^X
 \end{equation}
 is a universal arrow from the set $X$ to the forgetful functor $\bbU:\cring \to \Set$.

 Moreover, if we chose an order $\{x_1,\ldots, x_n\}$ of elements of $X$, then for any $a\in \cin(\R^X)$ there is a unique function $f\in \cin(\R^n)$ so that
 \[
 a = f_{\cin(\R^X)}(\delta_X(x_1), \ldots, \delta_X(x_n)).
 \]
\end{lemma}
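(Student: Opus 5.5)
The plan is to reduce everything to the defining property of the $\cin$-ring $\cin(\R^n)$, namely that its operations are composition with smooth functions, and to the identification $\R^X\simeq\R^n$ given by an ordering of $X$.

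First, fix an order $x_1,\ldots,x_n$ of $X$. This gives a linear isomorphism $\R^X\to\R^n$, $p\mapsto (p(x_1),\ldots,p(x_n))$. Under the induced isomorphism $\cin(\R^X)\simeq\cin(\R^n)$ of $\cin$-rings, $\delta_X(x_i)$ corresponds to the coordinate function $y_i=\pr_i$. So it suffices to work with $\cin(\R^n)$ and the coordinates $y_1,\ldots,y_n$.

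Next comes the ``moreover'' statement, which is the representation claim. For $a\in\cin(\R^n)$, Example~\ref{ex:smooth-functions} gives
\[
f_{\cin(\R^n)}(y_1,\ldots,y_n)=f\circ(y_1,\ldots,y_n)=f\circ\id_{\R^n}=f .
\]
Hence $f:=a$ works. It is also unique, because any such $f$ must equal $f_{\cin(\R^n)}(y_1,\ldots,y_n)=a$.

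Now the universal property. Let $\scB$ be a $\cin$-ring and $\psi:X\to\bbU(\scB)$ a map of sets. Define
\[
\widetilde\psi(a):=a_\scB(\psi(x_1),\ldots,\psi(x_n)),
\]
using the unique representation $a=a_{\cin(\R^n)}(y_1,\ldots,y_n)$ established above.
\begin{itemize}
\item $\widetilde\psi(\delta_X(x_i))=(\pr_i)_\scB(\psi(x_1),\ldots,\psi(x_n))=\psi(x_i)$ by \eqref{eq:201}.
\item $\widetilde\psi$ is a morphism of $\cin$-rings. For $g\in\cin(\R^m)$ and $a_1,\ldots,a_m\in\cin(\R^n)$ we have $g_{\cin(\R^n)}(a_1,\ldots,a_m)=g\circ(a_1,\ldots,a_m)$. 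Then
\[
\widetilde\psi\big(g\circ(a_1,\ldots,a_m)\big)=\big(g\circ(a_1,\ldots,a_m)\big)_\scB(\vec\psi)=g_\scB\big((a_1)_\scB(\vec\psi),\ldots,(a_m)_\scB(\vec\psi)\big),
\]
where $\vec\psi=(\psi(x_1),\ldots,\psi(x_n))$. The second equality is exactly the associativity (functoriality) axiom of the $\cin$-ring $\scB$, and the right-hand side is $g_\scB(\widetilde\psi(a_1),\ldots,\widetilde\psi(a_m))$.
\item Uniqueness. Any morphism $\chi$ with $\chi\circ\delta_X=\psi$ satisfies
\[
\chi(a)=\chi\big(a_{\cin(\R^n)}(y_1,\ldots,y_n)\big)=a_\scB(\chi(y_1),\ldots,\chi(y_n))=\widetilde\psi(a).
\]
\end{itemize}

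The only delicate point is making the associativity axiom explicit in the functor language: a $\cin$-ring is a product-preserving functor, so composites of smooth maps go to composites of operations. Everything else is bookkeeping. Finally, $\delta_X$ is injective, since distinct coordinate functions are distinct.
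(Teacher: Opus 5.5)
Your proof is correct and follows essentially the same route as the paper: use the ordering to identify $\cin(\R^X)$ with $\cin(\R^n)$, read off the unique representation $a = f_{\cin(\R^X)}(\delta_X(x_1),\ldots,\delta_X(x_n))$, define the extension by $a\mapsto f_{\scB}(\psi(x_1),\ldots,\psi(x_n))$, and get uniqueness because morphisms preserve operations. Your explicit use of the associativity (functoriality) axiom to show that $\widetilde\psi$ is a $\cin$-ring map fills in the step the paper leaves as ``not hard to check.''
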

\begin{proof}
Since the set $X$ if finite, $\R^X$ is a finite dimensional real vector space and therefore a $\cin$-manifold.  It follows that it makes sense to talk about the $\cin$-ring $\cin(\R^X)$ of smooth functions on $\R^X$. Recall that for manifolds the $\cin$-ring operations are given by composition: for any $k\geq 0$, any $a_1,\ldots, a_k\in \cin(\R^X) $ and any $f\in \cin(\R^k)$
\[
f_{\cin(\R^X)}(a_1,\ldots, a_k) = f\circ (a_1, \ldots, a_k).
\]

Since for any point $x\in X$ the function $
\delta_X(x):\R^X \to \R$ defined by \eqref{eq:A1} is linear, it is $\cin$.  So the image of $\delta_X$ does land in $ \cin(\R^X)$.

Given $x\in X$ there is a function $\Delta_x: X\to \R$, the Kronecker delta,  so that
\[
\Delta_x(x') = 
\begin{cases}
    1 & x' = x\\
    0 & x'\not = x 
\end{cases} \quad .
\]
The existence of such a function implies that the map $\delta_X$ is injective.  From now on we identify $x\in X$ with the smooth function $\delta_X(x) \in \cin(\R^X)$:
\[
x\equiv \delta_X(x).
\]
And then  $X\subset \cin(\R^X)$.

An ordering $\{x_1,\ldots, x_n\}$ of elements of $X$ gives rise to a linear isomorphism
\[
\tau: \R^X \to \R^n, \qquad \tau(p) = (x_1(p), \ldots, x_n(p)).
\]
The map $\tau$ is also a diffemorphism and the pullback map
\[
\tau^*: \cin(\R^n) \to \cin(\R^X), \quad \tau^*f := f\circ \tau = f\circ (x_1,\ldots, x_n)
\]
is an isomorphism of $\cin$-rings.  It follows that for any $a\in \cin(\R^X)$ there exists a unique smooth function $f\in \cin(\R^n)$ so that
\[
a = f\circ (x_1,\ldots, x_n) = f_{\cin(\R^X)} (x_1,\ldots, x_n).
\]
In particular if $a = \delta_X(x_i)\equiv x_i$ then the corresponding function $f:\R^n\to \R$ is the projection $\pr_i$ on the $i$th factor: $\pr_i (r_1,\ldots, r_n) = r_i$.  Thus,
\[
\delta_X(x_i) = (\pr_i)_{\cin(\R^X)} (x_1,\ldots, x_n).
\]
We now check that $\delta_X$ is a universal arrow.  Suppose $\scB$ is a $\cin$-ring  and 
\[
\mu: \{x_1,\ldots, x_n\} \to \scB
\]
is a map of sets. We  define a map 
\[
\widetilde{\mu} :\cin(\R^X) \to \scB
\]
by 
\begin{equation} \label{eq:A.1.2}
\widetilde{\mu} (f_{\cin(\R^X)} (x_1,\ldots, x_n) ):= f_\scB (\mu(x_1), \ldots, \mu(x_n)).
\end{equation}
It is not hard to check that $\widetilde{\mu}$ {\em is} a map of $\cin$-rings.  Note that if $f= \pr_i$ then \eqref{eq:A.1.2} says that
\[
\widetilde{\mu} (\delta_X(x_i)) = (\pr_i)_\scB (\mu(x_1), \ldots, \mu(x_n)) \stackrel{\, \eqref{eq:201}\,}{=} \mu(x_i)
\]
It follows that 
\[
\widetilde{\mu} \circ \delta_X = \mu.
\]
Finally, if $\mu': \cin(\R^X) \to \scB$ is another map of $\cin$-rings with $\mu'\circ \delta_X = \mu$ then
\[
\mu' (f_{\cin(\R^X)} (x_1,\ldots, x_n)) = f_\scB (\mu(x_1), \ldots, \mu(x_n)) {=} \widetilde{\mu} (f_{\cin(\R^X)} (x_1,\ldots, x_n)).
\]    
Thus $\widetilde{\mu}$ is a unique map of $\cin$-rings with the  property that $\widetilde{\mu} \circ \delta_X = \mu$.  We conclude that $\delta_X$ is a universal arrow.
\end{proof}
\begin{remark}
    Note that the map $\widetilde{\mu}$ constructed in the course of the proof of Lemma~\ref{lem:A1} {\em does not} depend on the choice of ordering of elements of the set $X$.
\end{remark}
\begin{corollary}  \label{cor:A}
We have a functor $\bbF:\FinSet \to \cring$, where $\FinSet$ is the category of finite sets, so that 
\begin{enumerate}
    \item $\bbF(X) = \cin(\R^X)$ for any finite set $X$ and
    \item for any map $f:X\to Z$ between finite sets the diagram
\begin{equation} \label{eq:A.3.1}
\xy
(-10,10)*+{X}="1";
(15,10)*+{\cin(\R^X)}="2";
 (-10,-8)*+{Z}="3";
(15,-8)*+{\cin(\R^Z)}="4"; 
{\ar@{->}_{f} "1";"3"};
{\ar@{->}^{\bbF(f)} "2";"4"};
{\ar@{->}^{\delta_X} "1";"2"};
{\ar@{->}_{\delta_Z} "3";"4"};
\endxy
\end{equation}   
commutes.  Here as be before $\delta_X, \delta_Z$ are the universal arrow.
\end{enumerate}
Moreover, for any $a\in \cin(\R^X)$
\[
\bbF(f)\, (a) = (f^*)^* \,a
\]
where $f^*: \R^Z\to \R^X$ is $f^*(p) = p\circ f$ and 
\[
(f^*)^* \,a = a \circ f^*.
\]
\end{corollary}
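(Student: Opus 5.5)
The plan is to obtain $\bbF$ on $\FinSet$ by the standard construction of a functor from a family of universal arrows, and then to identify $\bbF(f)$ concretely by testing both candidate maps on generators. By Lemma~\ref{lem:A1}, for every finite set $X$ the map $\delta_X:X\to \bbU(\cin(\R^X))$ is a universal arrow from $X$ to $\bbU$ (restricted to finite sets). For a map of finite sets $f:X\to Z$, I will define $\bbF(f):\cin(\R^X)\to\cin(\R^Z)$ to be the unique map of $\cin$-rings with $\bbF(f)\circ\delta_X=\delta_Z\circ f$. This is the map $\widetilde{\mu}$ of Lemma~\ref{lem:A1} applied to $\mu=\delta_Z\circ f$, so the square \eqref{eq:A.3.1} commutes by definition.

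Functoriality will follow from the uniqueness clause of universal arrows. First, $\id_{\cin(\R^X)}$ satisfies the defining property for $\id_X$, so $\bbF(\id_X)=\id$. Second, for composable maps $X\xrightarrow{f}Z\xrightarrow{g}W$, both $\bbF(g)\circ\bbF(f)$ and $\bbF(g\circ f)$ are $\cin$-ring maps that send $\delta_X(x)$ to $\delta_W(g(f(x)))$, so uniqueness forces them to be equal.

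For the explicit formula, I will check that $(f^*)^*:\cin(\R^X)\to\cin(\R^Z)$, $a\mapsto a\circ f^*$, has the defining property. The map $f^*:\R^Z\to\R^X$, $p\mapsto p\circ f$, is linear between finite dimensional spaces, hence smooth. Pullback by a smooth map is a map of $\cin$-rings, since the operations are given by composition:
\[
h\circ(a_1\circ f^*,\ldots,a_k\circ f^*)=(h\circ(a_1,\ldots,a_k))\circ f^*.
\]
On generators, for $x\in X$ and $p\in\R^Z$,
\[
((f^*)^*\delta_X(x))(p)=\delta_X(x)(p\circ f)=p(f(x))=\delta_Z(f(x))(p).
\]
So $(f^*)^*\circ\delta_X=\delta_Z\circ f$, and uniqueness in Lemma~\ref{lem:A1} gives $\bbF(f)=(f^*)^*$.

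No step is a genuine obstacle. The only point needing care is confirming that pullback along $f^*$ is a $\cin$-ring morphism and not merely an $\R$-algebra map, and this is immediate from the displayed composition identity.
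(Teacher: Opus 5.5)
Your proposal is correct and follows the paper's route. The paper says only that the corollary follows from $\delta_X$ being a universal arrow, which amounts to your definition of $\bbF(f)$ by uniqueness, giving functoriality and commutativity of \eqref{eq:A.3.1}. It identifies $\bbF(f)=(f^*)^*$ exactly as you do, by checking $(f^*)^*(\delta_X(x))=\delta_Z(f(x))$ on generators; that computation appears in the paper's subsequent lemma for arbitrary sets. Your check that pullback along the linear map $f^*$ is a $\cin$-ring morphism fills in a detail the paper leaves implicit.
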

\begin{proof}
The corollary follows quickly from the fact that $\delta_X$ is a universal arrow.
\end{proof}
\begin{notation} \label{nota:A4}
Given an inclusion $X\stackrel{\imath}{\hooklongrightarrow}Z$ of sets we denote the induced map $\imath^*: \R^Z \to \R^X$ by $\pi^Z_X$:
\[
 \pi^Z_X:= \imath^*: \R^Z\to \R^X.   
\]
The map $\pi^Z_X$ is a surjective linear map.
Note that for $p\in \R^Z$, 
\[
\pi^Z_X (p) = p|_X,
\]
so we can also think of $\pi^Z_X$ as a restriction map.
If the set $Z$ is understood, we may simply write $\pi_X$ for $\pi^Z_X$.  With this notation and with $X$, $Z$ finite, 
\[
\bbF(X\stackrel{\imath}{\hooklongrightarrow}Z ) = (\pi^Z_X)^*.
\]
\end{notation}

\begin{remark} \label{rmrk:1.2}
Given an inclusion $X\subseteq Z$ of finite sets,  a $\cin$-ring $\scA$ and a function $\nu:Z\to \scA$ the universal property of $\delta_X:X\to \cin(\R^X)$ implies that the diagram
\[
 \xy
(-14,10)*+{\cin(\R^X) }="1";
(14,10)*+{\cin(\R^Z)}="2";
(0, -6)*+{\scA}="3";
{\ar@{->} ^{(\pi^Z_X)^*} "1";"2"};
{\ar@{->}_{(\nu|_X)^\sim} "1";"3"};
{\ar@{->}^{\widetilde{\nu}} "2";"3"};
\endxy
 \] 
commutes (cf.\ Notation~\ref{nota:A4}).
\end{remark}

\begin{definition} \label{def:A6}
Let $Y$ be an arbitrary set.     Define the {\sf set of smooth function $\cin(\R^Y)$} on the vector space $\R^Y$ to be 
\[
 \begin{split} 
\cin(\R^Y)&:= \\&\left\{f:\R^Y\to \R  \mid \textrm{ there exists a finite
  set }
X \subset Y \textrm{ and }\tilde{f}\in \cin(\R^X) \textrm{ so that } f
= \tilde{f}\circ \pi_X\right\},
\end{split}
\]  
where the maps $\pi_X: \R^Y\to \R^X$ are restriction/projection maps (see Notation~\ref{nota:A4}).
\end{definition}

\begin{lemma} \label{lem:A.7}
For any set $Y$ the set $\cin(\R^Y)$ (Definition~\ref{def:A6}) is a $\cin$-subring of the
$\cin$-ring $\R^{(\R^Y)}$ of {\em all } $\R$-valued functions on $\R^Y$.  (The $\cin$-ring operations on $\R^{(\R^Y)}$ are given by composition: $f_{\R^{(\R^Y)}}(a_1,\ldots, a_n) := f\circ (a_1,\ldots, a_n)$.)
\end{lemma}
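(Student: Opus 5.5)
To prove Lemma~\ref{lem:A.7} I would first recall that the set $\R^{(\R^Y)}$ of all real-valued functions on $\R^Y$ is a $\cin$-ring under pointwise composition. This works exactly as in Example~\ref{ex:smooth-functions}: the operation $f\circ(a_1,\ldots,a_n)$ is computed pointwise. Associativity of the operations and the projection axiom \eqref{eq:201} then hold because they hold pointwise in the $\cin$-ring $\R$. Consequently, to prove the lemma it suffices to show that the subset $\cin(\R^Y)$ is closed under all the operations $f_{\R^{(\R^Y)}}$ with $f\in\cin(\R^n)$ and $n\geq 0$. A subset closed under all operations automatically inherits associativity and the projection axiom, so it is a $\cin$-subring.

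The first step is a stability observation: if $g=\tilde g\circ\pi_X$ with $X\subset Y$ finite, and $X\subseteq X'$ with $X'\subseteq Y$ finite, then $g=(\tilde g\circ\pi^{X'}_X)\circ\pi_{X'}$. This holds because $\pi_X=\pi^{X'}_X\circ\pi_{X'}$, both sides being restriction maps. Moreover $\tilde g\circ\pi^{X'}_X\in\cin(\R^{X'})$, since $\pi^{X'}_X$ is linear and hence smooth; equivalently, this function is $\bbF(X\hookrightarrow X')(\tilde g)$ by Corollary~\ref{cor:A} and Notation~\ref{nota:A4}. So any element of $\cin(\R^Y)$ that factors through some finite $X$ also factors smoothly through every larger finite set.

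Next I would prove closure. Take $a_1,\ldots,a_n\in\cin(\R^Y)$ with $a_k=\tilde a_k\circ\pi_{X_k}$, and set $X:=X_1\cup\cdots\cup X_n$, which is finite. By the stability step, $a_k=\hat a_k\circ\pi_X$ with $\hat a_k\in\cin(\R^X)$. Then
\[
f\circ(a_1,\ldots,a_n)=\bigl(f\circ(\hat a_1,\ldots,\hat a_n)\bigr)\circ\pi_X,
\]
and $f\circ(\hat a_1,\ldots,\hat a_n)=f_{\cin(\R^X)}(\hat a_1,\ldots,\hat a_n)\in\cin(\R^X)$, because $\cin(\R^X)$ is a $\cin$-ring (Lemma~\ref{lem:A1}). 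Hence the result lies in $\cin(\R^Y)$. For the nullary case $n=0$, a constant function factors through $\pi_\emptyset:\R^Y\to\R^\emptyset=\{*\}$, so constants lie in $\cin(\R^Y)$ as well.

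The only mildly delicate point is this common-refinement step: representations of an element are not unique, and one has to be able to pass to a common finite $X$. The identity $\pi_X=\pi^{X'}_X\circ\pi_{X'}$ handles it, so I do not anticipate a real obstacle.
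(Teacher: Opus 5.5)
Your proposal is correct and takes the same approach as the paper's proof: pass to the finite union $X=X_1\cup\cdots\cup X_n$, rewrite each $a_k$ as a smooth function on $\R^X$ via the restriction $\pi^X_{X_k}$, and observe that $f\circ(a_1,\ldots,a_n)$ factors through $\pi_X$ by an element of $\cin(\R^X)$. Your write-up is a little more careful than the paper's, which writes $f_i\circ\varpi_i$ where $\tilde f_i\circ\varpi_i$ is meant; your explicit treatment of the nullary case is a harmless addition.
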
  

\begin{proof}
Given $n$, $f_1,\ldots, f_n\in \cin(\R^Y)$ and $h\in \cin(\R^n)$, we
need to check that
\begin{equation}\label{eq:A71}
  h\circ (f_1, \ldots, f_n) = g\circ \pi_X
\end{equation}  for some
finite set $X \subset Y$ and some $g\in \cin(\R^X)$.
By definition of $\cin(\R^Y)$ for each $f_i\in \cin(\R^Y)$  there exists a finite
set $X_i \subset Y$ and $\tilde{f_i} \in \cin(\R^{X_i})$ so that $f_i
= \tilde{f_i}\circ \pi_{X_i}$.  Now let $X = \bigcup_{i=1}^n X_i$; let
$\varpi_i: \cin(\R^X) \to \cin(\R^{X_i})$ denote the corresponding
restrictions.  Define
\[
g = h\circ (f_1\circ \varpi_1, \ldots, f_n \circ \varpi_n).
\]  
Then $f_i\circ \varpi_i\in \cin(\R^X)$ for each $i$ and consequently
$g\in \cin (\R^X)$.   Associativity of compositions now implies that
\eqref{eq:A71} holds.
\end{proof}

\begin{remark} \label{rmrk:A8}
Just as in the case of finite sets for any set $Y$ we have an injective map 
 \[
 \delta_Y: X\to \R^{(\R^Y)}, \qquad \delta_Y(y) \, (p): = p(y) \quad \textrm{ for all }p\in \R^Y, 
 \]
 and the image of $\delta_Y$ lands in $\cin(\R^Y)$.  So we have 
 \begin{equation}
   \delta_Y: X\to \cin(\R^Y), \qquad \delta_Y(y) \, (p): = p(y) \quad \textrm{ for all }p\in \R^Y.  
 \end{equation}
Since the map $\delta_Y$ is injective, we will often suppress it and view the set $Y$ as a subset of $\cin(\R^Y)$ of all smooth functions on $\R^Y$.  We think of 
\[
y\equiv \delta_Y(y): \R^Y \to \R
\]
as a ``coordinate function."
\end{remark}

\begin{remark} \label{rmrk:A9}
It follows from Definition~\ref{def:A6}, Lemmas~\ref{lem:A.7} and \ref{lem:A1} that for any set $Y$ for any element $a\in \cin(\R^Y)$ there is $n\geq 0$, $f\in \cin(\R^n)$ and $y_1,\ldots, y_n \in Y \subset \cin(\R^Y)$ so that 
\[
a = f_{\cin(\R^Y)} (y_1, \ldots, y_n).
\]    
Thus the $\cin$-ring $\cin(\R^Y)$ is generated by $Y\subset \cin(\R^Y)$ in this sense: we can obtain any element in $\cin(\R^Y)$ by applying an appropriate operation to appropriate finitely many elements of $Y$.
\end{remark}
\begin{lemma} \label{lem:A.9}
The map
\[
\delta _Y:Y\to \cin(\R^Y)
\]  
of Remark~\ref{rmrk:A8} is a universal arrow from the set $Y$ to the forgetful functor $\bbU: \cring\to \Set$: given a $\cin$-ring
$\scA$ and a map of sets $\mu:Y\to \scA$ 
there is a unique map $\tilde{\mu}:\cin(\R^Y)\to \scA$ of $\cin$-rings
with
\[
\tilde{\mu}(\delta_Y(y) )  = \mu(y)
\]
for all $y\in Y$.
\end{lemma}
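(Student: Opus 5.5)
The plan is to reduce to the finite case, Lemma~\ref{lem:A1}, using the fact that $\cin(\R^Y)$ is the union over finite subsets $X\subset Y$ of the images of the pullbacks $(\pi_X)^*:\cin(\R^X)\to \cin(\R^Y)$ (Definition~\ref{def:A6}). Note first that each $(\pi_X)^*$ is injective, since $\pi_X$ is surjective, and that it is a map of $\cin$-rings, because the operations on $\cin(\R^Y)\subset \R^{(\R^Y)}$ are given by composition (Lemma~\ref{lem:A.7}). Moreover $(\pi_X)^*(\delta_X(x))=\delta_Y(x)$ for all $x\in X$. Given $\mu:Y\to\scA$, I will define $\tilde\mu$ on an element $a=\tilde f\circ\pi_X$ by
\[
\tilde\mu(a):=(\mu|_X)^\sim(\tilde f),
\]
where $(\mu|_X)^\sim:\cin(\R^X)\to\scA$ is the map supplied by Lemma~\ref{lem:A1}.

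\textbf{Well-definedness.} The main obstacle is showing that $\tilde\mu(a)$ is independent of the chosen finite set $X$ and of $\tilde f$. Suppose $a=\tilde f\circ\pi_X=\tilde g\circ \pi_{X'}$, and put $Z=X\cup X'$. Since $\pi_X=\pi^Z_X\circ\pi_Z$ and $\pi_Z$ is surjective, we get
\[
(\pi^Z_X)^*\tilde f=(\pi^Z_{X'})^*\tilde g\in \cin(\R^Z).
\]
By Remark~\ref{rmrk:1.2}, $(\mu|_X)^\sim=(\mu|_Z)^\sim\circ(\pi^Z_X)^*$, and likewise for $X'$. Hence both candidate values equal $(\mu|_Z)^\sim$ applied to this common element, so $\tilde\mu$ is well defined.

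\textbf{Homomorphism property.} Given $a_1,\dots,a_n$ and $h\in\cin(\R^n)$, choose one finite $Z$ through which all the $a_i$ factor, as in the proof of Lemma~\ref{lem:A.7}. On the image of $(\pi_Z)^*$ the map $\tilde\mu$ coincides with $(\mu|_Z)^\sim\circ((\pi_Z)^*)^{-1}$, and both maps in this composite preserve operations. Therefore
\[
\tilde\mu\bigl(h(a_1,\dots,a_n)\bigr)=h_\scA\bigl(\tilde\mu(a_1),\dots,\tilde\mu(a_n)\bigr).
\]
Taking $X=\{y\}$ gives $\tilde\mu(\delta_Y(y))=(\mu|_{\{y\}})^\sim(\delta_{\{y\}}(y))=\mu(y)$.

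\textbf{Uniqueness.} By Remark~\ref{rmrk:A9}, every $a\in\cin(\R^Y)$ can be written as $f_{\cin(\R^Y)}(y_1,\dots,y_n)$ for some $y_1,\dots,y_n\in Y$. Any $\cin$-ring map $\mu'$ with $\mu'\circ\delta_Y=\mu$ must therefore send $a$ to $f_\scA(\mu(y_1),\dots,\mu(y_n))$. By the computation above, $\tilde\mu$ takes the same value on $a$, so $\mu'=\tilde\mu$.
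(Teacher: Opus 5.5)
Your proposal is correct and follows essentially the same route as the paper: you define $\tilde\mu$ through $(\mu|_X)^\sim$ on a finite set of variables, and you check that the value does not depend on the choice via Remark~\ref{rmrk:1.2} after passing to a common finite set. Beyond the paper, you also supply the homomorphism check that the paper leaves to the reader, and you make the paper's ``without loss of generality $X\subset Z$'' explicit by working with $X\cup X'$.
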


\begin{proof}
Given $f\in \cin(\R^Y)$ there is a finite set
$X\subset Y$ (which is not unique) and a function $f_X \in \cin(\R^X)$
(which is unique once $X$ is chosen) with $f = f_X\circ
\pi_X$.   The universal property of $\delta_X: X\to \cin(\R^X)$ implies
that there is a unique map $\tilde{\mu}_X: \cin(\R^X)\to \scA$ of $\cin$-rings with
$(\mu_X)^\sim \circ \eta_X  = \mu|_X$.  Set
\[
\widetilde{\mu}(f):= (\mu|_X)^\sim (f_X)
\]   
We check that the map $\widetilde{\mu}$ is well-defined. Suppose
$Z\subset Y$ is another finite set and $f_Z\in \cin(\R^Z)$ is another
function with $f = f_Z\circ \pi_Z$.   It is no loss of generality to
assume that $X\subset Z$.   Then $X\subset Z \subset Y$, hence
$\pi_X:\R^Y\to \R^X$ factors through $\R^Z$:
\[
\pi_X  = \pi^Z_X \circ \pi_Z.
\]  
And then
\[
f_Z\circ \pi_Z = f = f_X \circ \pi_X = f_X \circ \pi^Z_X \circ \pi_Z.
\]
Since $\pi_Z:\R^Y\to \R^Z$ is surjective,  $f_Z = f_X \circ \pi^Z_X$.  Note that $\mu|_X = (\mu|_Z)|_X$.
Hence 
\[
(\mu|_Z)^\sim (f_Z) = (\mu|_Z)^\sim ( f_X \circ \pi^Z_X) \stackrel{\mathrm{  Remark~\ref{rmrk:1.2} }}{=}
    (\mu|_X)^\sim (f_X).
  \]
Thus  the map $\tilde{\mu}$ is well-defined.   We leave to the reader to check
that
$\widetilde{\mu}$ is a map of $\cin$-rings.

The fact that $\widetilde{\mu}$ is unique and that $\widetilde{\mu}\circ \delta_Y = \mu$ follows from the analogous statements for $\delta_X$ when the set $X$ is finite; see Lemma~\ref{lem:A1}.
\end{proof}

\begin{remark}
Lemma~\ref{lem:A.9} (together with Lemma~\ref{lem:A.7}) imply the existence of the left adjoint $\bbF: \Set\to \cring$ to the forgetful functor $\bbU: \cring\to \Set$.  

Moreover, for any map of sets $f:X\to Y$ the diagram
\begin{equation} \label{eq:A.12.10}
\xy
(-10,10)*+{X}="1";
(15,10)*+{\cin(\R^X)}="2";
 (-10,-8)*+{Y}="3";
(15,-8)*+{\cin(\R^Y)}="4"; 
{\ar@{->}_{f} "1";"3"};
{\ar@{->}^{\bbF(f)} "2";"4"};
{\ar@{->}^{\delta_X} "1";"2"};
{\ar@{->}_{\delta_Y} "3";"4"};
\endxy
\end{equation}   
commutes.  Here as be before $\delta_X, \delta_Y$ are universal arrow.  This generalizes Corollary~\ref{cor:A}: we removed the assumption that the sets in question are finite.   
\end{remark}  

\begin{lemma}
    Let $X\xrightarrow{f} Y$ be a map of sets.  Then the $\cin$-ring map  $\bbF(f): \bbF(X) = \cin(\R^X) \to \cin(\R^Y) = \bbF(Y)$ is given by 
    \[
\bbF(f)\, (a) = (f^*)^* \,a
\]
where $f^*: \R^Y\to \R^X$ is $f^*(p) = p\circ f$ and 
\[
(f^*)^* \,a = a \circ f^*.
\]
Hence if $f:X\to Y$ is injective, then $f^*:\R^Y\to \R^X$ is surjective and $\bbF(f) = (f^*)^* $ is injective.
\end{lemma}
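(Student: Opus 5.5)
Since $\bbF(X)=\cin(\R^X)$ and $\bbF(Y)=\cin(\R^Y)$ are subrings of the rings of all real-valued functions on $\R^X$ and $\R^Y$ (Lemma~\ref{lem:A.7}), I will identify $\bbF(f)$ by showing that $a\mapsto a\circ f^*$ has the two properties that characterize it: it is a map of $\cin$-rings $\cin(\R^X)\to\cin(\R^Y)$, and it makes diagram \eqref{eq:A.12.10} commute. The uniqueness clause of Lemma~\ref{lem:A.9} then forces $\bbF(f)=(f^*)^*$.

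\textbf{Step 1: $(f^*)^*$ lands in $\cin(\R^Y)$.} Take $a\in\cin(\R^X)$. By Definition~\ref{def:A6} there is a finite $X_0\subset X$ and $\tilde a\in\cin(\R^{X_0})$ with $a=\tilde a\circ\pi^X_{X_0}$. Put $Y_0:=f(X_0)$, which is finite, and let $f_0:X_0\to Y_0$ be the restriction of $f$. For $p\in\R^Y$ we have $\pi^X_{X_0}(f^*p)=p\circ f|_{X_0}=f_0^*(\pi^Y_{Y_0}p)$. Hence
\[
a\circ f^*=(\tilde a\circ f_0^*)\circ\pi^Y_{Y_0}.
\]
The map $f_0^*$ is linear between finite-dimensional spaces, so $\tilde a\circ f_0^*\in\cin(\R^{Y_0})$. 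Therefore $a\circ f^*\in\cin(\R^Y)$.

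\textbf{Step 2: $(f^*)^*$ is a $\cin$-ring map.} The operations on both rings are given by composition, so
\[
h\circ(a_1,\dots,a_n)\circ f^*=h\circ(a_1\circ f^*,\dots,a_n\circ f^*)
\]
for every $h\in\cin(\R^n)$. This is immediate.

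\textbf{Step 3: the diagram commutes.} For $x\in X$ and $p\in\R^Y$,
\[
\big((f^*)^*\delta_X(x)\big)(p)=\delta_X(x)(p\circ f)=p(f(x))=\delta_Y(f(x))(p).
\]
So $(f^*)^*\circ\delta_X=\delta_Y\circ f$, and uniqueness in Lemma~\ref{lem:A.9} gives $\bbF(f)=(f^*)^*$.

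\textbf{Step 4: injectivity.} Suppose $f$ is injective. Given $q\in\R^X$, define $p\in\R^Y$ by $p(f(x))=q(x)$ and $p=0$ off $f(X)$; this is well defined by injectivity, and $f^*p=q$. So $f^*$ is surjective. Then $a\circ f^*=b\circ f^*$ implies $a=b$, i.e.\ $(f^*)^*$ is injective.

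The only step that needs any care is Step 1, the finite-support bookkeeping, in particular checking the identity $\pi_{X_0}\circ f^*=f_0^*\circ\pi_{Y_0}$; the remaining steps are direct.
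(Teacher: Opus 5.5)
Your proof is correct and follows the paper's own argument. The paper identifies $\bbF(f)$ through the uniqueness in the universal property, after checking $(f^*)^*\circ\delta_X=\delta_Y\circ f$ on generators, which is exactly your Step~3. Your Steps~1, 2 and~4 verify three things the paper leaves implicit: that $a\circ f^*$ lies in $\cin(\R^Y)$, that $(f^*)^*$ preserves the $\cin$-operations, and that $f^*$ is surjective, so $(f^*)^*$ is injective. Your write-up is therefore, if anything, more complete.
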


\begin{proof}
Since $\bbF(f)$ is the unique map of $\cin$-rings making \eqref{eq:A.12.10} commute,  it is enough to check that for any $x\in X$ 
\begin{equation} \label{eq:A.12.1}
  (f^*)^* (\delta_X(x))  = \delta_Y (f(x)).  
\end{equation}
Now, for any point $\varphi\in \R^Y$,
\[
\begin{split}
 (f^*)^* (\delta_X(x)) \, (\varphi)  &= (\delta_X(x) \circ f^*) (\varphi)  = \delta_X (\varphi \circ f) = (\varphi \circ f) (x) \\
 &= \varphi(f(x)) = \delta_Y(f(x))) \, (\varphi).  
\end{split}
\]
Thus \eqref{eq:A.12.1} holds and we are done.
\end{proof}
We note a corollary to the existence of the free functor $\bbF: \Set\to \cring$.  It will not be used in this paper.
\begin{corollary} Let $Y$ be a set and let $I$ denote the category whose
  objects are { finite } subsets of $Y$ and morphisms are
  inclusions. The $\cin$-ring $\cin(\R^Y)$ is the colimit of the
  functor $\bbF|_I:I\to \cring$.  Less formally
  \[
\cin(\R^Y) = \colim _{X \subseteq Y, X\textrm{ finite }}\cin(\R^X)
   \] 
\end{corollary}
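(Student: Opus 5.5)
We show that the cocone $\{\bbF(X\hookrightarrow Y)=(\pi_X)^*:\cin(\R^X)\to\cin(\R^Y)\}_{X\in I}$ is a colimit cocone of $\bbF|_I$. The shortest route is formal. The subsets form a directed poset, and $Y$, regarded as a set, is the colimit of the inclusion diagram $I\to\Set$, $X\mapsto X$; indeed $Y=\bigcup_{X\in I}X$, and a compatible family of maps $g_X:X\to S$ glues uniquely to a map $Y\to S$. The functor $\bbF:\Set\to\cring$ is a left adjoint (Lemma~\ref{lem:A.9} and the remark following it), so it preserves all colimits. Hence $\bbF(Y)=\cin(\R^Y)$, with the maps $\bbF(X\hookrightarrow Y)$, is the colimit of $\bbF|_I$. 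By the lemma computing $\bbF(f)=(f^*)^*$, these colimit maps are exactly the pullbacks $(\pi_X)^*$, and on $I$ itself the functor is $\bbF(X\subseteq Z)=(\pi^Z_X)^*$ (Notation~\ref{nota:A4}).

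As a sanity check, and to describe the colimit concretely, we also give a direct argument. Let $\scB$ be a $\cin$-ring and let $\{h_X:\cin(\R^X)\to\scB\}$ be a compatible family, so that $h_Z\circ(\pi^Z_X)^*=h_X$ whenever $X\subseteq Z$.
\begin{enumerate}
\item Define $\mu:Y\to\scB$ by $\mu(y):=h_{\{y\}}(\delta_{\{y\}}(y))$.
\item Lemma~\ref{lem:A.9} gives a unique $\tilde\mu:\cin(\R^Y)\to\scB$ with $\tilde\mu\circ\delta_Y=\mu$.
\item We check that $\tilde\mu\circ(\pi_X)^*=h_X$. Both sides are $\cin$-ring maps out of the free ring $\cin(\R^X)$, so by the uniqueness in Lemma~\ref{lem:A1} it suffices to compare them on the generators $x\in X$. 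Compatibility gives $h_X(\delta_X(x))=h_{\{x\}}(\delta_{\{x\}}(x))=\mu(x)$, which is the required agreement.
\item For uniqueness of $\tilde\mu$: every element of $\cin(\R^Y)$ has the form $\tilde f\circ\pi_X$ for some finite $X$ (Definition~\ref{def:A6}), so any mediating map is determined by the $h_X$.
\end{enumerate}

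The only point needing care is the claim that $\bbF$ sends the colimit in $\Set$ to a colimit in $\cring$, and this is exactly the statement that left adjoints preserve colimits. The direct argument has no genuine obstacle; its one subtle step is the compatibility used in step 3.
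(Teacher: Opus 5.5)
Your proposal is correct, and its main argument is the same as the paper's: $Y$ is the colimit in $\Set$ of its finite subsets, and $\bbF$, being a left adjoint, preserves colimits. Your direct check of the universal property is also correct and makes explicit that the colimit legs are the pullbacks $(\pi_X)^*$. It uses Lemma~\ref{lem:A.9}, comparison on generators, and Definition~\ref{def:A6} for uniqueness.
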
  

\begin{proof}  The set $Y$ is the colimit of its finite subsets.
 Since the functor $\bbF$ is left adjoint, it preserves colimits.  Thus,
\[
  \cin(\R^Y) = \bbF (Y) = \bbF (\colim_{X\subseteq Y, X\textrm{ finite }} X)
  =    \colim _{X \subseteq Y, X\textrm{ finite }} \bbF(X)  = \colim _{X \subseteq Y, X\textrm{ finite }} \cin(\R^X).
\]
\end{proof}

\begin{remark}\label{rmk:free-generators}
    A \( C^\infty \)-ring \( \mathbb F(X) \) freely generated by a set \( X \) is, in particular, an ordinary algebra, whose multiplication is induced by the smooth map
\[
h: \mathbb{R}^2 \to \mathbb{R}, \qquad h(x,y) :=  xy .
\]
However, \( \mathbb F(X) \) is not free as an ordinary \( \mathbb{R} \)-algebra. For example, if \( X=\{1,\ldots,.n\} \), then the corresponding free $\cin$-ring $\bbF(X)$ is the $\cin$-ring $C^\infty(\mathbb{R}^n) $ of smooth functions on $\R^n$, which is not free as an \( \mathbb{R} \)-algebra.

Moreover, in the setting of \( C^\infty \)-ring biderivations, one must verify compatibility with all smooth operations, not only with the multiplication map \((x,y)\mapsto xy\).
\end{remark}
\noindent
We next study derivations of free $\cin$-rings $\cin(\R^Y)$, $Y$ a set.

\begin{lemma} \label{lem:A13}
For an element $y$ in a set $Y$ there is a $\cin$-ring derivation
  \[
    \partial_y: \cin(\R^Y) \to \cin(\R^Y)
  \]
    given by
\begin{equation} \label{eq:A2.1}
  \partial_y (f)
  \,(p) =
  \left. \frac{d}{dt}\right|_0 f(p + t\Delta_y),
\end{equation}
for all $f\in \cin(\R^Y)$ and all  $p\in \R^Y$.  Here, as before,  $\Delta_y:
Y\to \R$ is the Kronecker delta function: $\Delta_y
(y') =1$ if $y=y'$ and is 0 otherwise.
\end{lemma}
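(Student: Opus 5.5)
The plan is to reduce everything to the finite-dimensional case through the presentation $f = \tilde f\circ \pi_X$ of Definition~\ref{def:A6}. There are three things to check: that the formula \eqref{eq:A2.1} is well defined, that its output lies in $\cin(\R^Y)$, and that the resulting map is an $\R$-linear $\cin$-ring derivation.

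\textbf{Well-definedness and smoothness.} Fix $f\in \cin(\R^Y)$ and write $f = \tilde f\circ \pi_X$ with $X\subset Y$ finite and $\tilde f\in \cin(\R^X)$. Enlarging $X$ changes neither $f$ nor the formula, so we may assume $y\in X$. We have $\pi_X(p+t\Delta_y) = \pi_X(p) + t\Delta_y|_X$, and $\Delta_y|_X$ is the Kronecker delta of $X$. Hence
\[
\partial_y(f)(p) = \left.\frac{d}{dt}\right|_0 \tilde f(\pi_X(p) + t\Delta_y|_X) = (\partial_y^X \tilde f)(\pi_X(p)),
\]
where $\partial_y^X$ is the ordinary partial derivative on the finite-dimensional vector space $\R^X$ in the direction $\Delta_y|_X$. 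The limit therefore exists, and $\partial_y f = (\partial^X_y\tilde f)\circ\pi_X \in \cin(\R^Y)$.

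If $y\notin X$, then $f(p+t\Delta_y)=f(p)$, so $\partial_y f = 0$. This is also consistent with the finite formula after enlarging $X$ to $X\cup\{y\}$. Since the defining formula involves only $f$ itself, no independence of choices needs to be argued beyond this.

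\textbf{Derivation property.} $\R$-linearity is immediate from linearity of $\frac{d}{dt}|_0$. Now take $h\in \cin(\R^n)$ and $f_1,\ldots,f_n\in \cin(\R^Y)$. As in the proof of Lemma~\ref{lem:A.7}, choose one finite $X\subset Y$ containing $y$ such that $f_i=\tilde f_i\circ\pi_X$ for all $i$. Then
\[
h_{\cin(\R^Y)}(f_1,\ldots,f_n)=\bigl(h\circ(\tilde f_1,\ldots,\tilde f_n)\bigr)\circ \pi_X .
\]
The ordinary chain rule on $\R^X$ gives
\[
\partial^X_y\bigl(h\circ(\tilde f_1,\ldots,\tilde f_n)\bigr)=\sum_i (\partial_i h)\circ(\tilde f_1,\ldots,\tilde f_n)\cdot \partial^X_y\tilde f_i .
\]
Precomposing with $\pi_X$ and using the formula from the first step yields
\[
\partial_y\bigl(h_{\cin(\R^Y)}(f_1,\ldots,f_n)\bigr)=\sum_i(\partial_i h)_{\cin(\R^Y)}(f_1,\ldots,f_n)\,\partial_y f_i .
\]
This is exactly Definition~\ref{def:der2}.

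The main (mild) obstacle is choosing a common finite $X$ that contains $y$ and presents all the $f_i$ simultaneously. After that, everything is the classical chain rule on a finite-dimensional vector space.
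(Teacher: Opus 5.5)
Your proof is correct and follows the paper's argument. You write $f=\tilde f\circ\pi_X$ with $X$ finite, identify $\partial_y f$ with the directional derivative $(\partial^X_y\tilde f)\circ\pi_X$ (which lies in $\cin(\R^Y)$), and deduce the derivation property from the finite-dimensional chain rule. Your explicit choice of a common finite $X$ presenting $f_1,\ldots,f_n$ simply spells out the step the paper compresses into the remark that directional derivatives on $\cin(\R^n)$ are $\cin$-ring derivations.
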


\begin{proof}
  To get off the ground we need to check that the right-hand side of
  \eqref{eq:A2.1} defines an element of $\cin(\R^Y)$.  It is clearly a
  function from $\R^Y$ to $\R$ and the issue is whether this function
  is in $\cin(\R^Y)$. Given $f\in \cin(\R^Y)$ there exists a finite
  set $X\subseteq Y$ and $f_X \in \cin(\R^X)$ with
  $f = f_X\circ \pi_X$, where as before $\pi_X: \R^Y\to \R^X$ is the
  canonical projection.  Note that $\pi_X (\Delta_y) = \Delta_y$, with
  the obvious abuse of notation and the convention that
  $\Delta_y:\R^X\to \R$ is 0 if $y\not \in X$.  Therefore
\[
 \left. \frac{d}{dt}\right|_0 f(p + t\Delta_y) =
 \left. \frac{d}{dt}\right|_0 f_X (\pi_X(p) + t\pi_X(\Delta_y)) = \left. \frac{d}{dt}\right|_0 f_X (\pi_X(p) + t\Delta_y). 
\]  
Since $X$ is a finite set, $\R^X$ is a finite dimensional vector
space.  For any smooth function $h\in \cin(\R^X)$ and any $x\in X$ the function
\[
  \partial_xh: \R^X\to \R^X , \quad \partial_xh(\psi) :=
  \left. \frac{d}{dt}\right|_0 h (\psi + t\Delta_x)
\]  
is $\cin$ --- it is the directional derivative of $h$ in the direction
of $\Delta_x\in \R^X$.  Hence
\[
\left. \frac{d}{dt}\right|_0 f(p + t\Delta_y)  = (\partial
_{\pi_X(y)} f_X \circ \pi_X)\, (p)
\]
for all $p\in \R^Y$.  It follows that $\partial_y (f)$ is an element of $\cin(\R^Y)$.

Since directional derivatives of functions in $\cin(\R^n) $ are
$\cin$-ring derivations, the maps $  \partial_y:
\cin(\R^Y) \to \cin(\R^Y)$ are $\cin$-ring derivations as well.
\end{proof}

\begin{remark} \label{rmrk:A15}
It follows from \eqref{eq:A2.1} that if the element $f= \delta_Y(y') \equiv y'$ then
\[
\partial_y f = \partial_y y' = \begin{cases}
    1 & y=y'\\
    0 & y\not = y' .
\end{cases}
\]
\end{remark}

\begin{lemma}\label{lem:Kahler_free}
For a free $\cin$-ring $\bbF(Y)$ the module of K\"ahler differentials $\Omega_{\bbF(Y)}$ is a $\bbF$-module freely generated by the set of symbols $\{dy\}_{y\in Y}$ (see Remark~\ref{rmrk:free} That is,
\[
\Omega_{\bbF(Y)} = \Free_{\bbF(Y)} (\{dy\}_{y\in y})= \bigoplus_{y\in Y} \bbF(Y)\, dy.
\]   
Moreover the universal derivation $d: \bbF(Y) \to \Omega_{\bbF(Y)}$ is given by
\[
da = \sum_{y\in Y} \partial_y a \, dy
\]
for all $a\in \bbF(Y)$.
\end{lemma}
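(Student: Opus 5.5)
The plan is to verify the universal property of Definition~\ref{def:Kahler} directly for the free module $\scN_0:=\bigoplus_{y\in Y}\bbF(Y)\,dy$ together with the map
\[
D:\bbF(Y)\to \scN_0,\qquad D(a):=\sum_{y\in Y}\partial_y a\, dy ,
\]
where $\partial_y$ are the $\cin$-derivations of Lemma~\ref{lem:A13}. Uniqueness of the module of K\"ahler differentials up to unique isomorphism (immediate from the universal property) then identifies $(\Omega^1_{\bbF(Y)},d)$ with $(\scN_0,D)$, which gives both claims of the lemma.

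First I will check that $D$ is well defined. By Remark~\ref{rmrk:A9}, each $a\in\bbF(Y)$ can be written as $a=f_{\bbF(Y)}(y_1,\ldots,y_n)$ with $f\in\cin(\R^n)$. From \eqref{eq:A2.1} and the chain rule, $\partial_y a=\sum_s(\partial_s f)_{\bbF(Y)}(y_1,\ldots,y_n)\,\partial_y y_s$, and by Remark~\ref{rmrk:A15} this vanishes unless $y\in\{y_1,\ldots,y_n\}$. Hence only finitely many $\partial_y a$ are nonzero, so the sum defining $D(a)$ is finite. Since each $\partial_y$ is a $\cin$-derivation, $D$ is a $\cin$-derivation componentwise.

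Next comes the universal property. Given a module $\scN$ and a $\cin$-derivation $X:\bbF(Y)\to\scN$, define $\varphi_X:\scN_0\to\scN$ on the basis by $\varphi_X(dy):=X(y)$; freeness of $\scN_0$ makes this a well-defined module map. To check $\varphi_X\circ D=X$, write $a=f_{\bbF(Y)}(y_1,\ldots,y_n)$ as above. The $\cin$-derivation rule gives
\[
X(a)=\sum_s(\partial_s f)_{\bbF(Y)}(y_1,\ldots,y_n)\,X(y_s).
\]
On the other hand, by the formula for $\partial_y a$ and Remark~\ref{rmrk:A15}, $D(a)=\sum_s(\partial_s f)_{\bbF(Y)}(y_1,\ldots,y_n)\,dy_s$, which $\varphi_X$ sends to the same expression. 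For uniqueness, any module map $\psi$ with $\psi\circ D=X$ must satisfy $\psi(dy)=\psi(D y)=X(y)$, because $D(y)=dy$ by Remark~\ref{rmrk:A15}. The elements $dy$ form a basis of $\scN_0$, so $\psi=\varphi_X$.

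I expect the main obstacle to be the chain-rule identity $\partial_y\big(f_{\bbF(Y)}(y_1,\ldots,y_n)\big)=\sum_s(\partial_s f)_{\bbF(Y)}(y_1,\ldots,y_n)\,\partial_y y_s$, together with the independence of the argument from the chosen presentation of $a$. Presentation-independence is harmless, since every object involved ($D$, $X$, $\varphi_X$) is defined intrinsically and the presentation is used only to compute. The chain rule reduces, via $f=f_X\circ\pi_X$ with $X$ a finite set, to the ordinary finite-dimensional chain rule on $\R^X$, exactly as in the proof of Lemma~\ref{lem:A13}.
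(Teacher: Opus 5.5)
Your proposal is correct and matches the paper's proof step by step: define $d$ on $\bigoplus_{y\in Y}\bbF(Y)\,dy$ via the derivations $\partial_y$, set $\imath_v(dy)=v(y)$, and check $\imath_v\circ d=v$ by writing $a=f_{\bbF(Y)}(y_1,\ldots,y_n)$ and using that both $\partial_y$ and $v$ are $\cin$-derivations. Your explicit uniqueness argument, which uses $D(y)=dy$ (Remark~\ref{rmrk:A15}) to force any competing module map to agree with $\imath_v$ on the basis, supplies a small step the paper leaves implicit.
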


\begin{proof}
We {\em define}  a map $d: \bbF(Y)\to \bigoplus_{y\in Y} \bbF(Y)\, dy$ by 
\[
da = \sum_{y\in Y} \partial_y a \, dy.
\]
Since each $a\in \bbF(Y)$ depends only on finitely many variables, $\partial_y a = 0$ for all but finitely many $y\in Y$.  Hence the sum on the right is actually finite. Since each $\partial_y$ is a $\cin$-ring derivation, so is the map $d$.  

It remains to check the universal properties of the map $d$ constructed above.
Namely we need to show that for any $\bbF(Y)$  module $\scM$ and for any $\cin$-ring derivation $v:\bbF(Y)\to \scM$ there is a unique map $\imath_v:\bigoplus_{y\in Y} \bbF(Y)\, dy \to \scM$ so that
\[
\imath_v (da) = v(a)
\]
for all $a\in \bbF(Y)$.  Since the $\bbF(Y)$-module $\bigoplus_{y\in Y} \bbF(Y)\, dy $ is freely generated by $\{dy\}_{y\in Y}$ there is a unique map
\[
\imath_v: \bigoplus_{y\in Y} \bbF(Y)\, dy \to \scM
\]
with 
\[
\imath_v (dy) = v(y)
\]
for all $y\in Y$.  

By Remark~\ref{rmrk:A9} for any $a\in \bbF(Y)$ there is $n\geq 0$, $f\in \cin(\R^n)$ and $y_1,\ldots, y_n \in Y \subset \bbF(Y)$ so that 
\[
a = f_{\bbF(Y)} (y_1, \ldots, y_n).
\]    
Since the maps $\partial_y: \bbF(Y)\to \bbF(Y)$ of Lemma~\ref{lem:A13} are $\cin$-ring derivations
\[
\begin{split}
\partial_y (f_{\bbF(Y)} (y_1, \ldots, y_n)) &= \sum_{i=1}^n (\partial_i f)_{\bbF(Y)}(y_1, \ldots, y_n))\, \partial_y(y_i)\\& \stackrel{\textrm{  Remark~\ref{rmrk:A15} }}{=}
\begin{cases}
0 & y\not = y_i \textrm{ for any }i\\
(\partial_i f)_{\bbF(Y)}(y_1, \ldots, y_n)) & y=y_i \textrm{ for some }i.
\end{cases}  
\end{split}\]
Therefore 
\[
da = \sum_{i=1}^n  (\partial_i f)_{\bbF(Y)}(y_1, \ldots, y_n))\, dy_i.
\]
Therefore 
\[
\imath_v da = \sum_{i=1}^n  (\partial_i f)_{\bbF(Y)}(y_1, \ldots, y_n))\, v(y_i).
\]
On the other hand, since $v$ is a $\cin$-ring derivation
\[
v(a) = v \left( (\partial_i f)_{\bbF(Y)}(y_1, \ldots, y_n)) \right) = \sum_{i=1}^n  (\partial_i f)_{\bbF(Y)}(y_1, \ldots, y_n))\, v(y_i)
\]
as well. Thus
\[
v(a) = \imath_v (da)
\]
for any $a\in \bbF(Y)$ and we are done.
\end{proof}

\begin{theorem} \label{thm:bider_gen}
Let $\bbF(X)$ be a $\cin$-ring freely generated by a set $X$, $\scM$ an $\bbF(X)$-module  and 
\[ 
    b:X\times X \to \scM, \qquad (x,y) \mapsto b_{xy}
\]       
a function with $b_{xy} = - b_{yx}$ for all $x,y\in X$.  Then there exists (a unique) skew-symmetric $\cin$-ring biderivation (cf.\ Definition~\ref{def:bracket})
\[
B: \bbF(X)\times \bbF(X) \to \scM
\]
with
\[
B(x,y) = b_{xy}
\]
for all $x,y\in X$.
\end{theorem}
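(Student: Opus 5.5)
The plan is to reduce everything to Lemma~\ref{lem:Kahler_free}, which identifies $\Omega^1_{\bbF(X)}$ with the free module $\bigoplus_{x\in X}\bbF(X)\,dx$, with $da=\sum_x \partial_x a\,dx$. I would build the biderivation by an explicit formula, then get uniqueness from the universal property of $d$, much as in the proof of Lemma~\ref{lem:2.19}. Concretely, define
\[
B(a,c):=\sum_{x,y\in X} \partial_x a\,\partial_y c\; b_{xy}\qquad (a,c\in \bbF(X)).
\]
The sum is finite, since each element of $\bbF(X)$ depends on only finitely many variables (Definition~\ref{def:A6}, Remark~\ref{rmrk:A9}), so $\partial_x a=0$ for all but finitely many $x$.

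\textbf{Existence.} Skew-symmetry follows at once from $b_{xy}=-b_{yx}$ after exchanging the summation indices. To see that $B$ is a $\cin$-derivation in the first slot, fix $c$. The map $a\mapsto B(a,c)$ is $\R$-linear and can be written as $\sum_x \partial_x a\cdot m_x$ with $m_x:=\sum_y \partial_y c\, b_{xy}\in \scM$. Each $\partial_x$ is a $\cin$-ring derivation by Lemma~\ref{lem:A13}. A finite $\bbF(X)$-linear combination of $\cin$-derivations with values in $\scM$ is again one, since $\CDer(\bbF(X),\scM)$ is a module. The infinite index set causes no problem: for a given expression $f_{\bbF(X)}(a_1,\dots,a_n)$, only finitely many $x$ contribute. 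An equivalent formulation is to let $\widetilde B:\Omega^1\otimes\Omega^1\to\scM$ be the unique module map with $dx\otimes dy\mapsto b_{xy}$, which exists because $\Omega^1\otimes\Omega^1$ is free on the $dx\otimes dy$. Then $B(a,c)=\widetilde B(da\otimes dc)$, and the derivation property comes from $d$ being a $\cin$-derivation. Derivation in the second slot follows by skew-symmetry. Finally, Remark~\ref{rmrk:A15} gives $\partial_x y=\delta_{xy}$, so $B(x,y)=b_{xy}$.

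\textbf{Uniqueness.} Suppose $B'$ is another skew-symmetric biderivation agreeing with $B$ on $X\times X$, and set $D=B-B'$. For fixed $y\in X$, the map $D(\cdot,y)$ is a $\cin$-derivation vanishing on $X$. By the universal property it factors as $\varphi\circ d$, where $\varphi$ is a module map from the free module on $\{dx\}$. Since $\varphi(dx)=0$ for every $x$, $\varphi=0$, so $D(a,y)=0$ for all $a$. Then for fixed $a$, the map $D(a,\cdot)$ is a derivation vanishing on $X$, hence zero by the same argument. Concretely, one can also use Remark~\ref{rmrk:A9}: $a=f_{\bbF(X)}(x_1,\dots,x_n)$ and the chain rule reduces everything to generators.

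The main obstacle is making sure the derivation property holds for \emph{all} smooth operations, not only for multiplication (cf.\ Remark~\ref{rmk:free-generators}). I will handle this entirely through Lemma~\ref{lem:Kahler_free} and Lemma~\ref{lem:A13}, where the $\cin$-compatibility has already been established, so no direct computation with arbitrary $f\in\cin(\R^n)$ should be needed.
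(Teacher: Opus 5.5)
Your proposal is correct and is essentially the paper's argument. The paper also uses Lemma~\ref{lem:Kahler_free} to identify $\Omega^1_{\bbF(X)}$ with the free module on $\{dx\}_{x\in X}$, extends $b$ to a module map $\widetilde{B}:\Lambda^2\Omega^1_{\bbF(X)}\to\scM$, and sets $B=\widetilde{B}\circ(d\wedge d)$; this is exactly your formula $\sum_{x,y}\partial_x a\,\partial_y c\,b_{xy}$. Your uniqueness argument through the universal property of $d$ is a useful addition, since the paper states uniqueness but never proves it.
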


\begin{proof}
The map $b:X\times X\to \scM$ corresponds to a map
\[
\bar{b}:X\to \Hom_\Set (X, \mathbf{U}(\scM)),
\] 
where  $\mathbf{U}(\scM)$ is the set underlying the module $\scM$.
Since $\Free_{\bbF(X)}$ is left-adjoint to the forgetful functor $\mathbf{U}:\bbF(X)\textrm{-}\Mod: \to \Set$, we have a bijection
\[
\Hom_\Set (X,  \mathbf{U}(\scM) )\xrightarrow{\simeq} \Hom_{ \Free_{\bbF(X)}\textrm{-}\Mod} (\Free_{\bbF(X)}(X), \scM).
\]
Since $\Free_{\bbF(X)}(X)$ is freely generated by $X$, the map  $\bar{b}$ gives rise to 
\[
\overline{B}: \Free_{\bbF(X)} \to
\Hom_{ \Free_{\bbF(X)}\textrm{-}\Mod} (\Free_{\bbF(X)}(X), \scM),
\]
which, in turn, corresponds to a $\bbF(X)$-bilinear map 
\[
\widehat{B}: \Free_{\bbF(X)}(X) \times \Free_{\bbF(X)}(X) \to \scM.
\]
Skew-symmetry of the map $b$ implies that the map $\widehat{B}$ is skew-symmetric as well.  We get a linear map
\[
\widetilde{B}:\Lambda^2 \Free_{\bbF(X)}(X)  \to \scM.
\]
By Lemma~\ref{lem:Kahler_free}
\[
\Free_{\bbF(X)}(X) = \Omega^1_{\bbF(X)}.
\]
We therefore have 
\[
d\wedge d: \bbF(X)\times \bbF(X)  \to \Lambda^2 \Free_{\bbF(X)}(X),\qquad 
(f, g) \mapsto df \wedge dg,
\]
a universal biderivation.  Following the map $d\wedge d$ by the map $\widetilde{B}$ we get
\[
B:= \widetilde{B} \circ d\wedge d:  \bbF(X)\times \bbF(X)  \to \scM,
\]
a skew-symmetric biderivation with the desired properties.
\end{proof}

\section{Biderivations and their Jacobiators} \label{app:B}
Recall that given a skew-symmetric biderivation $\lbr\cdot\,, \cdot \rbr: \scA \times \scA \to \scA$,  the corresponding {\sf Jacobiator} $J_{\lbr\cdot\,,\, \cdot \rbr}:\scA\times \scA \times \scA \to \scA$ is defined by 
\[
J_{\lbr\cdot\,,\, \cdot \rbr} (a,b, c) := \lbr a, \lbr b, c \rbr\rbr + \lbr b, \lbr c, a  \rbr\rbr
+ \lbr c, \lbr a, b \rbr\rbr\]
for all $a,b,c\in \scA$.
The goal of this appendix is to prove that if the Jacobiator vanishes on the set of generators of the $\cin$-ring, then it vanishes identically and consequently $\lbr\cdot, \cdot \rbr$ is a Poisson bracket:

\begin{theorem}\label{thm:Jacobiator}
Let $\lbr\cdot\,, \cdot \rbr: \scA \times \scA \to \scA$ be a bracket on a $\cin$-ring $\scA$.  Let $\Pi: \cin(\R^X)\to \scA$ be a surjective map of $\cin$-rings (so the set $\Pi(X)$ is a set of generators of the $\cin$-ring $\scA$).  If the Jacobiator $J_{\lbr\cdot\,,\, \cdot \rbr} $ vanishes on $\Pi(X)\times \Pi(X)\times \Pi(X) \subseteq \scA\times \scA\times \scA$ then it is identically zero.  Hence, the pair $(\scA, \lbr\cdot\,, \cdot \rbr)$ is a Poisson $\cin$-ring.
\end{theorem}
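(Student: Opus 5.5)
The plan is to show that the Jacobiator $J:=J_{\lbr\cdot\,,\, \cdot \rbr}$ is a $\cin$-ring derivation in each of its three arguments. Since $J$ is invariant under cyclic permutations, it suffices to check this in one slot. Once that is done, we pass from generators to arbitrary elements one slot at a time, using Remark~\ref{rmrk:A9} together with surjectivity of $\Pi$.

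\textbf{Step 1: $J$ is a $\cin$-derivation in its first slot.} Fix $b,c\in\scA$ and consider the map $a\mapsto J(a,b,c)$. Write $X_a:=\lbr a,\cdot\rbr$; this lies in $\CDer(\scA)$ because the bracket is a skew-symmetric biderivation. Then
\[
J(a,b,c) = X_a\lbr b,c\rbr - \lbr b, X_a c\rbr + \lbr c, X_a b\rbr .
\]
Equivalently, for fixed $b,c$ we have $J(\cdot,b,c)=\lbr \cdot,\lbr b,c\rbr\rbr + \lbr b,\lbr c,\cdot\rbr\rbr + \lbr c,\lbr\cdot, b\rbr\rbr$. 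The first and third terms are compositions of a derivation with a fixed element, so they are $\cin$-derivations in $a$.

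The middle term $a\mapsto \lbr b,\lbr c,a\rbr\rbr$ is the composite $X_b\circ X_c$ of two derivations, and is not itself a derivation. Apply it to $f_\scA(a_1,\ldots,a_n)$ using the chain rule twice:
\[
X_bX_c f_\scA(\vec a)=\sum_i (\partial_i f)_\scA(\vec a)\, X_bX_c a_i+\sum_{i,j}(\partial_j\partial_i f)_\scA(\vec a)\, X_b a_j\, X_c a_i .
\]
The third term $\lbr c,\lbr f_\scA(\vec a),b\rbr\rbr = -X_cX_b(\cdot)$ produces a symmetric second-order term of the opposite sign. These second-order terms cancel by the symmetry of mixed partials, $\partial_i\partial_j f=\partial_j\partial_i f$. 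This cancellation is the one genuinely $\cin$-flavoured point, and I expect it to be the main bookkeeping step. It requires the chain rule for $\cin$-derivations applied to the operation $(\partial_i f)_\scA$, which is again a $\cin$-operation, so Definition~\ref{def:der2} applies directly.

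The outcome of Step 1 is
\[
J(f_\scA(a_1,\ldots,a_n),b,c)=\sum_i(\partial_i f)_\scA(a_1,\ldots,a_n)\,J(a_i,b,c).
\]
By cyclic symmetry, $J$ is then a $\cin$-derivation in each slot.

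\textbf{Step 2: extension from generators to all of $\scA$.} Let $G=\Pi(X)$. By Remark~\ref{rmrk:A9}, every element of $\cin(\R^X)$ has the form $f(x_1,\ldots,x_n)$ with $x_i\in X$. Since $\Pi$ is a surjective $\cin$-homomorphism, every $a\in\scA$ can therefore be written as $a=f_\scA(g_1,\ldots,g_n)$ with $g_i\in G$. We now proceed slot by slot:
\begin{itemize}
\item Suppose $J$ vanishes on $G\times G\times G$. Fix $b,c\in G$. The derivation formula of Step 1 gives $J(a,b,c)=0$ for all $a\in\scA$.
\item Next fix $a\in\scA$ and $c\in G$, and apply the same argument in the second slot. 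This gives $J(a,b,c)=0$ for all $b\in\scA$.
\item Finally, apply the argument in the third slot. This gives $J\equiv0$.
\end{itemize}

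\textbf{Conclusion.} Skew-symmetry and the $\cin$-derivation property hold by assumption, since we started from a bracket. With the Jacobi identity now established, $\lbr\cdot\,,\cdot\rbr$ satisfies the requirements of Definition~\ref{def:Poisson_ring}, so $(\scA,\lbr\cdot\,,\cdot\rbr)$ is a Poisson $\cin$-ring.
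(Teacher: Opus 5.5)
Your proposal is correct and follows essentially the paper's route. The paper's Lemma~\ref{lem:Jacobiator} likewise shows that $J$ is a $\cin$-triderivation by expanding and cancelling the second-order terms, pairing the symmetric $(\partial_i\partial_j f)_\scA$ against an expression skew in $(i,j)$, and then concludes from vanishing on generators, which your slot-by-slot Step~2 spells out. One wording slip: you first assert that the third term $a\mapsto\{c,\{a,b\}\}=-X_cX_b(a)$ is itself a derivation, which is false; only the sum of the middle and third terms, $[X_b,X_c](a)$, is one, and your subsequent computation handles it correctly.
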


To prove the theorem we need a lemma.

\begin{lemma}\label{lem:Jacobiator}
The Jacobiator $J:= J_{\lbr\cdot\,,\, \cdot \rbr}:\scA\times \scA \times \scA \to \scA$ of a bracket 
$\lbr\cdot\,, \cdot \rbr: \scA \times \scA \to \scA$ on a $\cin$-ring $\scA$ is a $\cin$-ring triderivation.  That is, it is a $\cin$-ring derivation in each of the three slots.
\end{lemma}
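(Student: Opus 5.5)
By the cyclic symmetry of $J$, i.e.\ $J(a,b,c)=J(b,c,a)=J(c,a,b)$, it is enough to show that $J$ is a $\cin$-ring derivation in the first slot. So fix $b,c\in\scA$, and fix $f\in\cin(\R^n)$ and $a_1,\ldots,a_n\in\scA$. Put $a=f_\scA(a_1,\ldots,a_n)$ and write $g_i:=(\partial_i f)_\scA(a_1,\ldots,a_n)$, $g_{ij}:=(\partial_i\partial_j f)_\scA(a_1,\ldots,a_n)$. We expand each of the three terms of $J(a,b,c)$ using the biderivation property in Definition~\ref{def:1} and skew-symmetry. The key auxiliary fact is that $\partial_i f\in\cin(\R^n)$ is itself a smooth function. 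Hence the biderivation rule applies to $g_i$ too:
\[
\lbr g_i, e\rbr=\sum_j g_{ij}\,\lbr a_j,e\rbr .
\]

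The expansion goes term by term. For the first term,
\[
\lbr a,\lbr b,c\rbr\rbr=\sum_i g_i\lbr a_i,\lbr b,c\rbr\rbr .
\]
For the second, $\lbr b,\lbr c,a\rbr\rbr=\lbr b,\sum_i g_i\lbr c,a_i\rbr\rbr$, which by the (ordinary) Leibniz rule in the second slot equals
\[
\sum_i g_i\lbr b,\lbr c,a_i\rbr\rbr+\sum_i \lbr b,g_i\rbr\lbr c,a_i\rbr .
\]
Applying the auxiliary fact, the last sum becomes $\sum_{i,j} g_{ij}\lbr b,a_j\rbr\lbr c,a_i\rbr$. For the third term, $\lbr c,\lbr a,b\rbr\rbr$ similarly gives
\[
\sum_i g_i\lbr c,\lbr a_i,b\rbr\rbr+\sum_{i,j}g_{ij}\lbr c,a_j\rbr\lbr a_i,b\rbr .
\]

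Now add the three expansions. The terms linear in the $g_i$ combine to $\sum_i g_i J(a_i,b,c)$, which is exactly the derivation formula we want. What remains is the sum of the second-order terms:
\[
\sum_{i,j} g_{ij}\bigl(\lbr b,a_j\rbr\lbr c,a_i\rbr+\lbr c,a_j\rbr\lbr a_i,b\rbr\bigr).
\]
Since mixed partials commute, $g_{ij}=g_{ji}$, so we may swap $i$ and $j$ in the second summand. After the swap it reads $\lbr c,a_i\rbr\lbr a_j,b\rbr=-\lbr c,a_i\rbr\lbr b,a_j\rbr$, which cancels the first summand, so the second-order terms vanish.

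$\R$-linearity in each slot is immediate from bilinearity of the bracket, so $J$ is a $\cin$-derivation in each slot. The only delicate point is the second-order cancellation, which rests on applying the biderivation rule to $(\partial_i f)_\scA$ and on the equality $(\partial_i\partial_j f)_\scA=(\partial_j\partial_i f)_\scA$. I do not expect any real obstacle beyond careful bookkeeping of signs.
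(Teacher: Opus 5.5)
Your proposal is correct and follows the paper's proof essentially step for step. You reduce to the first slot, expand the three terms of $J(f_\scA(a_1,\ldots,a_n),b,c)$ with the biderivation rule (applying it also to $(\partial_i f)_\scA(a_1,\ldots,a_n)$), and cancel the second-order terms because $g_{ij}$ is symmetric while the accompanying bracket products are skew in $(i,j)$. The only difference is cosmetic: you justify the reduction to the first slot by the cyclic invariance of $J$, which holds by definition, whereas the paper cites $J$ being alternating; cyclic invariance alone suffices.
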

\begin{proof}
Since the bracket $\lbr\cdot\,, \cdot \rbr$  is skew-symmetric, the Jacobiator $J$ is alternating.  Therefore, it is enough to show that $J$ is a derivation in the first slot: for any $n\geq 0$, $a_1,\ldots, a_n, b, c\in \scA$ and $f\in \scA$
\begin{equation} \label{eq:B.2.1}
J(f_\scA (a_1, \dots, a_n), b, c) = \sum_{i=1}^n {\partial_i f}_{\scA}(a_1,\ldots, a_n) \, J(a_i, b, c).
\end{equation}
We set 
\begin{equation}\label{eq:1-2nd-deriv}
f_i := ({\partial_i f})_{\scA}(a_1,\ldots, a_n), \qquad f_{ij}:= ({\partial_i \partial_j f})_{\scA}(a_1,\ldots, a_n).    
\end{equation}
Using the fact that  the bracket is a biderivation, we expand the three terms of $J(f_\scA (a_1, \dots, a_n), b, c)$:
 \[
    \lbr f_\scA (a_1, \dots, a_n), \lbr b, c\rbr \rbr = \sum_{i=1}^n f_i  \lbr a_i, \lbr b, c\rbr \rbr ; 
\]
\[
\begin{split} 
\lbr b, \lbr c,   f_\scA (a_1, \dots, a_n)\rbr\rbr &= \lbr b, \sum_{i=1}^n f_i \lbr c, a_i \rbr\rbr\\
&=  \sum_{i=1}^n \lbr b, f_i \rbr \lbr c, a_i \rbr +  \sum_{i=1}^n f_i \lbr b, \lbr c, a_i \rbr\rbr
;
\end{split} 
\]
and 
\[
\begin{split} 
\lbr c, \lbr  f_\scA (a_1, \dots, a_n), b\rbr\rbr &= \lbr c, \sum_{i=1}^n f_i \lbr a_i , b \rbr\rbr\\
&=  \sum_{i=1}^n \lbr c, f_i \rbr \lbr a_i, b \rbr +  \sum_{i=1}^n f_i \lbr c , \lbr a_i , b \rbr\rbr.
\end{split} 
\]
Summing the expressions above we obtain
\[
J(f_\scA(a_1,\ldots, a_n), b, c) = \sum_i f_i \underbrace{\left( \lbr a_i, \lbr b, c\rbr\rbr + \lbr b, \lbr c, a_i\rbr\rbr + \lbr c, \lbr a_i, b\rbr\rbr \right)}_{J(a_i, b, c)} + E 
\]
where 
\[
E : = \sum_i \left( \lbr b, f_i\rbr \lbr c, a_i\rbr + \lbr c, f_i\rbr \lbr a_i, b\rbr \right).
\]
We now argue that $E= 0$, which implies that   \eqref{eq:B.2.1} holds.
Since $\lbr \cdot \,, \cdot  \rbr$ is a biderivation 
\[ E=     \sum_{i,j}
f_{ij}
\Bigl(
\lbr b, a_j \rbr \lbr c,a_i \rbr
+
\lbr c, a_j \rbr \lbr a_i,b \rbr
\Bigr), 
\]
where $f_{ij}$ are given by \eqref{eq:1-2nd-deriv}.  Since the biderivation $\lbr\cdot\,, \cdot \rbr$ is skew-symmetric  
\[ 
E = \sum_{i,j} f_{ij}\Bigl(
\lbr b, a_j \rbr \lbr c,a_i \rbr - \lbr b, a_i \rbr\lbr c, a_j \rbr 
\Bigr).
\]
Since the term in parentheses is skew-symmetric in $(i,j)$ and 
$
f_{ij}
$ is symmetric,  the whole double sum vanishes. This finishes the proof.
\end{proof}
\begin{proof}[Proof of Theorem~\ref{thm:Jacobiator}]
By Lemma~\ref{lem:Jacobiator}, the Jacobiator is a triderivation. Therefore, it vanishes identically if and only if it vanishes on a set of \(C^\infty\)-ring generators of \(\scA\).
\end{proof}

\section{An example of a $\cin$-ring where not all $\R$-algebra derivations are \(C^\infty\)-derivations}\label{app:counter-ex}
In this appendix, we provide an example of a $\cin$-ring $\scA$ and an $\R$-algebra derivation $X:\scA\to \scA$  which is not a \(C^\infty\)-derivation. Thus for $\scA$
\[
\Der (\scA)\nsubseteq  \CDer(\scA).
\]
To the best of our knowledge,  this is the first explicit example of this kind, see Lemma~\ref{lem:C1} below.  
The proof of existence of $\R$-algebra derivations which are not \(C^\infty\)-derivations when the target is a module is, in effect, due to Osborn \cite{Osborn}.  We recalled Osborn's result in Remark~\ref{alg_Kahler} above. Our proof of Lemma~\ref{lem:C1} is based on this module-valued example. We obtain the present example by way of the square zero extension.  

It is a lucky coincidence that for a smooth ($\cin$) manifold $M$ any $\R$-algebra derivation $v:\cin(M)\to \cin(M)$ is automatically a $\cin$-ring derivation.  This result generalizes to point-determined $\cin$-rings \cite{KL} and, more generally, to  finitely jet determined $\cin$-rings  \cite{Yamashita}: if $\scA$ is a finitely jet determined $\cin$-ring and $v:\scA\to \scA$ is an $\R$-algebra derivation then $v$ is a $\cin$-ring derivation. Since point determined and jet determined $\cin$-rings play no role in this paper, we will not define the terms and refer the curious reader to \cite{MR} for definitions.

\begin{lemma} \label{lem:C1}
Let $\scA$ be a $\cin$-ring, $\scM$ an $\scA$-module and $v: \scA\to \scM$ an $\R$-algebra derivation that is not a $\cin$-ring derivation. Let $\scA \ltimes \scM$ be the square zero extension.  The map
\[
X: \scA \ltimes \scM \to \scA \ltimes \scM, \qquad X(a, m):= (0, v(a))
\] 
is an $\R$-algebra derivation which is not a $\cin$-ring derivation.
\end{lemma}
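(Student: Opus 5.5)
The proof has two parts: first check that $X$ is an $\R$-algebra derivation, then show it is not a $\cin$-ring derivation by restricting to elements of the form $(a,0)$ and comparing with $v$.

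\emph{Step 1: $X$ is an $\R$-algebra derivation.} $\R$-linearity is immediate, since $v$ is $\R$-linear and the vector space structure on $\scA\ltimes\scM$ is componentwise. For the Leibniz rule, recall from Remark~\ref{rmrk:m-bm-sz} that $(a,m)(a',m')=(aa',am'+a'm)$. Hence
\[
X\big((a,m)(a',m')\big)=(0,v(aa'))=(0,\,a\,v(a')+a'\,v(a)).
\]
On the other side,
\[
(a,m)\cdot X(a',m')+(a',m')\cdot X(a,m)=(a,m)(0,v(a'))+(a',m')(0,v(a)),
\]
which equals $(0,a\,v(a'))+(0,a'\,v(a))$, because the product of two elements with zero first component lies in $0\oplus\scM$ and the $m\cdot m'$ terms do not occur. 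The two sides agree, using only that $v$ is an $\R$-algebra derivation.

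\emph{Step 2: $X$ is not a $\cin$-ring derivation.} Since $v$ fails to be a $\cin$-derivation, there are $n$, $f\in\cin(\R^n)$ and $a_1,\dots,a_n\in\scA$ with
\[
v\big(f_\scA(a_1,\dots,a_n)\big)\neq\sum_i(\partial_if)_\scA(a_1,\dots,a_n)\,v(a_i).
\]
Take the elements $(a_i,0)$ in $\scA\ltimes\scM$. By the formula in Remark~\ref{rmrk:m-bm-sz}, $f_{\scA\ltimes\scM}((a_1,0),\dots,(a_n,0))=(f_\scA(a_1,\dots,a_n),0)$, so the left side of the $\cin$-derivation identity for $X$ is $(0,v(f_\scA(a)))$. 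For the right side, the same formula gives $(\partial_if)_{\scA\ltimes\scM}((a_j,0))=((\partial_if)_\scA(a),0)$. Multiplying by $X(a_i,0)=(0,v(a_i))$ and summing yields
\[
\Big(0,\sum_i(\partial_if)_\scA(a)\,v(a_i)\Big).
\]
The second components differ by the choice of $f$ and $a_1,\dots,a_n$, so $X$ violates the defining identity of Definition~\ref{def:der2}.

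Nothing here is a real obstacle. The only care needed is in applying the square-zero-extension operation formula correctly, in particular checking that the operations restricted to $\scA\times0$ recover those of $\scA$. To obtain an explicit instance, combine this with Osborn's $d^{\mathrm{alg}}\colon\cin(\R)\to\Omega^1_{\cin(\R),\mathrm{alg}}$ from Remark~\ref{alg_Kahler}, taking $f=\sin$ and $a_1=t$.
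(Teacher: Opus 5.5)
Your proof is correct and follows essentially the same argument as the paper: the Leibniz rule comes from the square-zero multiplication $(a,m)(0,w)=(0,aw)$, and the failure of the $\cin$-chain rule for $X$ is detected on the elements $(a_1,0),\dots,(a_n,0)$, where the operations of $\scA\ltimes\scM$ reduce to those of $\scA$ in the first component. One small wording fix: in Step~1, what you actually use is that the product of an \emph{arbitrary} element with an element of $0\oplus\scM$ lies in $0\oplus\scM$, not that a product of two such elements does.
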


\begin{proof}
Recall from Remark~\ref{rmrk:m-bm-sz} that for a function
$f \in C^\infty(\mathbb{R}^n)$ and 
$(a_1,m_1),\dots,(a_n,m_n)\in \scA\ltimes \scM$ the corresponding operation on the square zero extension $\scA \ltimes \scM$ is 
\begin{equation}
f_{\scA\ltimes \scM}((a_1,m_1),\dots,(a_n,m_n)) := 
(f_\scA(a_1,\dots,a_n), 
\sum_{i=1}^n 
(\partial_i f)_{\scA}(a_1,\dots,a_n)\,\cdot\, m_i).
\end{equation}
Hence the addition (in the $\R$-algebra underlying) $\scA\ltimes \scM$ is given by
\[
\begin{split}
+: (\scA\ltimes \scM)\times (\scA\ltimes \scM) &\to \scA\ltimes \scM,\\ \quad 
((a_1, m_1),(a_2,  m_2)) &\mapsto (a_1, m_1)+ (a_2,  m_2):=  (a_1 +a_2, m_1+ m_2)
\end{split}
\]
(we take  $f(x,y) = x+y)$) and the multiplication is given by 
\[
\begin{split}
\cdot : (\scA\ltimes \scM)\times  (\scA\ltimes \scM) &\to \scA\ltimes \scM,\\ \quad   ((a_1,m_1), (a_2,m_2) )&\mapsto (a_1, m_1) \cdot (a_2, m_2) := (a_1 a_2, \, a_1m_2 + a_2m_1).
\end{split}
\]
(we take  $f(x,y) = xy$).

It is easy to see that the map $X$ is $\R$-linear.  Moreover,
\[
\begin{split}
 X((a, m) \cdot (a', m') )&= X(a_1a_2, a_1 m_2 +a_2 m_1)  = (0, v(a_1a_2))\\
 &= (a_1, m_1) (0, v(a_2)) + (a_2, m_2) (0, v(a_1) )\\
 &= (a_1, m_1) X(a_2, m_2) + (a_2, m_2) X(a_1, m_1 ).
\end{split}
\]
Thus $X$ is an $\R$-algebra derivation.

Since $v$ is not a $\cin$-ring derivation, there is $n>0$, $f\in \cin(\R^n)$ and $a_1,\ldots, a_n\in \scA$ so that
\[
v(a_1,\ldots, a_n) \not = \sum_{i=1}^n 
(\partial_i f)_{\scA}(a_1,\dots,a_n)\,\cdot\, v(a_i)
\]
And then 
\[
\begin{split}
 X(f_{\scA\ltimes \scM}((a_1,0), \ldots, (a_n, 0)))&= X ((f_\scA(a_1,\dots,a_n),\, 
\sum_{i=1}^n 
(\partial_i f)_{\scA}(a_1,\dots,a_n)\,\cdot\, m_i)\\
&= (0, v(f_\scA (a_1, \ldots, a_n))\\
&\not = (0, \sum_{i=1}^n 
(\partial_i f)_{\scA}(a_1,\dots,a_n)\,\cdot\, v(a_i))\\
&= \sum_{i=1}^n ((\partial_i f)_{\scA}(a_1,\dots,a_n), 0) \cdot (0, v(a_i))\\
&= \sum_{i=1}^n ((\partial_i f)_{\scA}(a_1,\dots,a_n), 0)\cdot X(a_i, 0).
\end{split}
\]
Hence $X$ is not a $\cin$-ring derivation.
\end{proof}
\begin{corollary}\label{lem:C2}
    There is a $\cin$-ring $\scA$ and an $\R$-algebra derivation $X:\scA \to \scA$ which is not a $\cin$-ring derivation. 
\end{corollary}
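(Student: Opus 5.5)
The plan is to combine Lemma~\ref{lem:C1} with Osborn's module-valued example recalled in Remark~\ref{alg_Kahler}. First I take $\scA_0 := \cin(\R)$ and $\scM := \Omega^1_{\scA_0,\mathrm{alg}}$, the module of ordinary $\R$-algebraic K\"ahler differentials of the underlying $\R$-algebra. Let $v := d^{\mathrm{alg}}\colon \scA_0 \to \scM$ be the universal algebraic derivation. By construction $v$ is an $\R$-algebra derivation. By Osborn's theorem, $d^{\mathrm{alg}}(\sin t) \neq \cos(t)\, d^{\mathrm{alg}} t$ in $\scM$.

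Next I check that this inequality is exactly the failure of the $\cin$-derivation identity of Definition~\ref{def:der2}. Take $n=1$, $f = \sin \in \cin(\R)$, and $a_1 = t$ (the identity function). Then $f_{\scA_0}(t) = \sin t$ and $(\partial_1 f)_{\scA_0}(t) = \cos t$. The $\cin$-derivation identity would therefore read $v(\sin t) = \cos(t)\, v(t)$, which fails. So $v$ is an $\R$-algebra derivation that is not a $\cin$-ring derivation.

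Finally I form the square zero extension $\scA := \scA_0 \ltimes \scM$ of Remark~\ref{rmrk:m-bm-sz}, which is a $\cin$-ring. I set $X(a,m) := (0, v(a))$. Lemma~\ref{lem:C1} then says directly that $X\colon \scA \to \scA$ is an $\R$-algebra derivation but not a $\cin$-ring derivation, which proves the corollary.

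The only substantive input is Osborn's nontriviality result (that $d^{\mathrm{alg}}$ does not respect composition with non-algebraic functions such as $\sin$). That is the real obstacle, but I would cite it rather than reprove it. If a self-contained argument were wanted, I would first try to show that $\sin$ is transcendental over $\R(t)$ and then build an $\R(t)$-derivation of a field containing $\sin t$ that sends $\sin t$ to something other than $\cos t$. That derivation would then have to be extended to $\cin(\R)$ through a module, which is the delicate part.
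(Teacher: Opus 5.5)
Your proposal is correct and matches the paper's proof: apply Lemma~\ref{lem:C1} to $\cin(\R)$ with the module $\Omega^1_{\cin(\R),\mathrm{alg}}$ and $v=d^{\mathrm{alg}}$, citing Osborn's theorem for the failure of the chain rule. The only difference is cosmetic --- the paper uses the non-algebraic function $e^x$ where you use $\sin$, an instance that Remark~\ref{alg_Kahler} also records.
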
 
   
\begin{proof}
We apply Lemma~\ref{lem:C1} to $\scA = \cin(\R)$, 
$\scM = \Omega^1 _{C^\infty(\R), \mathrm{alg}}$ 
the module of $\R$-algebraic Kähler differentials and 
$v= d^{\mathrm{alg}}:\cin(\R) \to \Omega^1_{C^\infty(\R), \mathrm{alg}}$ 
the universal $\R$-algebra derivation (cf.\ Remark~\ref{alg_Kahler}). Since $f(x) = e^x$ is not an algebraic function
\[
d^{\mathrm{alg}}(e^x)\neq e^x d^{\mathrm{alg}}x
\]
by a theorem of Osborn \cite{Osborn}. Hence, 
$d^{\mathrm{alg}}$ is not a $\cin$-derivation.  By  Lemma~\ref{lem:C1}
the map 
\[
X:\cin(\R)\ltimes \Omega^1 _{C^\infty(\mathbb R), \mathrm{alg}}  \to 
\cin(\R)\ltimes \Omega^1 _{C^\infty(\mathbb R), \mathrm{alg}} 
\]
is an \(\mathbb R\)-algebra derivation 
which is not a \(C^\infty\)-ring derivation.  
\end{proof}

\end{document}